\documentclass[a4paper,11pt]{article}
\usepackage{amsfonts}
\usepackage[T1]{fontenc}
\usepackage{microtype}
\usepackage{makeidx}
\usepackage{setspace}
\usepackage{authblk}
\usepackage{graphicx}
\usepackage[all]{xy}
\usepackage{amsmath}
\usepackage{amssymb}
\usepackage{booktabs}
\usepackage{braket}
\usepackage{array}
\usepackage{tabularx}
\usepackage{multirow}
\usepackage{indentfirst}
\usepackage{cite}
\usepackage{stmaryrd}
\usepackage{faktor}
\usepackage{titling}
\usepackage{tikz}
\usepackage{dsfont}
\usepackage{url}
\usepackage{hyperref}
\usepackage{amsthm}
\usepackage{tikz-cd}

\usetikzlibrary{matrix,arrows,decorations.pathmorphing}
    \renewcommand{\leq}{\leqslant}
    \renewcommand{\geq}{\geqslant}

\setlength{\topmargin}{-0.2in}
\setlength{\textheight}{22.5cm}
\setlength{\evensidemargin}{0.2in}
\setlength{\oddsidemargin}{0.5in}
\setlength{\headsep}{0.1cm}
\setlength{\textwidth}{15cm}
\setlength{\parindent}{0.6cm}
\setlength{\unitlength}{1mm}

\DeclareMathOperator{\sgn}{\mathrm{sgn}}

\DeclareMathOperator{\negg}{neg}
\DeclareMathOperator{\oneg}{oneg}
\DeclareMathOperator{\Neg}{\mathrm{Neg}}
\DeclareMathOperator{\eNeg}{\mathrm{eNeg}}
\DeclareMathOperator{\inv}{inv}
\DeclareMathOperator{\nsp}{nsp}

\DeclareMathOperator{\maj}{maj}

\DeclareMathOperator{\emaj}{emaj}
\DeclareMathOperator{\odes}{odes}

\DeclareMathOperator{\onegasc}{\mathrm{onegasc}}
\DeclareMathOperator{\negdes}{\mathrm{negdes}}
\DeclareMathOperator{\enmaj}{\mathrm{enmaj}}
\DeclareMathOperator{\nmaj}{\mathrm{nmaj}}
\DeclareMathOperator{\oposdes}{\mathrm{oposdes}}
\DeclareMathOperator{\des}{des}
\DeclareMathOperator{\pos}{pos}
\DeclareMathOperator{\W}{\mathcal{W}}
\DeclareMathOperator{\Z}{\mathbb{Z}}
\DeclareMathOperator{\RR}{\mathbb{R}}
\DeclareMathOperator{\rk}{\mathrm{rk}}
\DeclareMathOperator{\modue}{(\mathrm{mod}~2)}

\theoremstyle{plain}
\newtheorem{thm}{Theorem}[section]
\newtheorem{pro}[thm]{Proposition}
\newtheorem{lem}[thm]{Lemma}
\newtheorem{con}[thm]{Conjecture}
\newtheorem{cor}[thm]{Corollary}

\newtheorem{prob}{Problem}

\theoremstyle{definition}

\theoremstyle{remark}
\newtheorem{rmk}[thm]{Remark}
\newtheorem{exm}[thm]{Example}

\newcommand{\sm}{\sigma^{-1}}

\newcommand{\N}{\mathbb N}

\newcommand{\eqdef}{:=}

\newcommand{\PP}{\mathbb P}


\title{\bf{Wachs permutations, Bruhat order and weak order}}
\author{Francesco Brenti \thanks{Dipartimento di Matematica,
Universit\'{a} di Roma ``Tor Vergata'',
Via della Ricerca Scientifica, 1,
00133 Roma, Italy. \href{mailto:brenti@mat.uniroma2.it }{brenti@mat.uniroma2.it } } \ and Paolo Sentinelli\thanks{ Dipartimento di Matematica, Politecnico di Milano, Milan, Italy. \href{mailto:paolosentinelli@gmail.com}{paolosentinelli@gmail.com}}}
\date{}
\begin{document}
\maketitle















\begin{abstract}
We study the partial orders induced on Wachs and signed Wachs permutations by the Bruhat and weak orders of the symmetric and hyperoctahedral groups. We show that these orders are graded,
determine their rank function, characterize their ordering and covering
relations, and compute their characteristic polynomials, when
partially ordered by Bruhat order, and determine their structure explicitly when partially ordered by right weak order.
\end{abstract}

\section{Introduction}

Wachs permutations are a class of permutations first introduced (in the even case) in \cite{Wachs} to study the signed Eulerian numbers and the signed major index enumerator of the symmetric groups. This class was extended in \cite{BS} to the odd case and to signed permutations in order to study the enumerators of the odd and even major indices of classical Weyl groups twisted by their one-dimensional characters. In this work we study Wachs (and signed Wachs) permutations in their own right. More precisely, we study the partial orders induced on them by the Bruhat and weak orders of the symmetric  and hyperoctahedral groups. These orders are fundamental objects in algebraic combinatorics and have important connections to algebra and geometry  (see, e.g.,
\cite[Chaps.2 and 3]{BB}, \cite[Chap.2]{BL}, \cite[Chap.1]{Mac},
\cite[Chap.5]{Hum}, \cite[Chap.10]{Ful}, \cite[Chap.2]{Man}, and the references cited there). Many subsets of the symmetric and hyperoctahedral groups have been studied as posets under the Bruhat order such as, for example, quotients, descent classes, and generalized quotients \cite{BjWa}, complements of quotients \cite{Sen},
involutions \cite{Hul,Inc1,Inc2,Inc3,RiSp}, conjugation-invariant sets of involutions
\cite{Che,Han}, and twisted identities \cite{HultJAC}. In this paper we show that Wachs permutations possess many nice properties when partially ordered by Bruhat and weak orders. More precisely, we show that they form a graded poset, determine their rank function, characterize the ordering and covering relations, and compute the characteristic polynomial, when partially ordered by Bruhat order, and determine their structure explicitly when partially ordered by right weak order.

The organization of the paper is as follows. In the next section we collect some notation and results that are used in the sequel. In \S 3 we study the poset obtained by partially ordering Wachs permutations with respect to Bruhat order. We show that these posets are always graded (Theorem \ref{Dposet-odd}), characterize their order and covering relations (Theorem \ref{bruhat-odd} and Proposition \ref{cover-odd}), and compute
their characteristic polynomials (Corollary \ref{charpolyA}). In \S 4, using some of the results in \S 3, we obtain analogous results for the poset induced on signed Wachs permutations by Bruhat order. More precisely, we show that these posets are graded (Theorem \ref{mainB}), characterize their order and covering relations (Theorem \ref{Bbruhat-odd}, Proposition
\ref{cor Bruhat B}, Corollary \ref{coverB-odd}, and Corollary \ref{coverB-even}), and compute the characteristic polynomials (Corollary \ref{charpolyB}). In \S 5 we study the posets obtained by partially ordering Wachs and signed Wachs permutations under right weak order.
We show that they are always isomorphic to the direct product of a Boolean algebra with the weak order  on the whole group in rank $\left\lceil \frac{n}{2}\right\rceil$ (Theorems
\ref{weak-B} and \ref{weak-A}). Finally, in \S 6, we discuss some conjectures and open problems arising from the present work and the evidence that we have about them.

\section{Preliminaries}
In this section we recall some notation, definitions, and results that are used in the sequel.
As $\mathbb{N}$ we denote the set of
non-negative integers and as $\mathbb{P}$ the set of positive integers. If $n\in \mathbb{N}$,
$[n]:=\{1,2,...,n\}$; in particular $[0]=\varnothing$.  For $n\in \mathbb{P}$, in the polynomial ring $\mathbb{Z}[q]$ the $q$-analogue of $n$ is defined by $[n]_q:=\sum \limits_{i=0}^{n-1}q^i$ and the
$q$-factorial by
$[n]_q!:=\prod \limits_{i=1}^n[i]_q$.  For a set $A$ and $f:A \rightarrow \mathbb{N}$ we define
$$A(x,f):=\sum \limits_{w\in A}x^{f(w)}$$ and $f(u,v):=f(v)-f(u)$, for all $u,v\in A$.
The cardinality of a set $X$ will be denoted by $|X|$ and the power set of $X$ by $\mathcal{P}(X)$. Given a cartesian product $X\times Y$ of two sets $X$ and $Y$,  we indicate with $\pi_1$ and $\pi_2$ the projections on $X$ and $Y$ respectively.


Let $n\in \mathbb{N}$, $i \in \mathbb{Z}$, $q \in \mathbb{Q}$ and $J \subseteq [n]$; then we define
  $ J_e := \{j\in J:j\equiv 0\mod 2\}$,
  $J_o := \{j\in J:j\equiv 1\mod 2\}$,
  $ J+i := \{i+j:j\in J\} \cap [n]$, and
  $ qJ := \bigcup \limits_{j\in J} \{qj\} \subseteq \mathbb{Q}$.

We follow Chapter
3 of \cite{StaEC1} for notation and terminology concerning posets. We just recall some definitions. Given two posets $P$ and $Q$,
their \emph{ordinal product} $P \otimes Q$ is defined by ordering the set $P \times Q$ in the following manner: $(x,y) \leqslant (x',y')$ if and only if
$x=x'$ and $y\leqslant y'$ or $x<x'$, for all $(x,y),(x',y') \in P \times Q$. If $\mu$ is the M\"obius function of a graded poset $P$ with minimum $\hat{0}$, maximum $\hat{1}$ and rank function $\rho$, then the characteristic polynomial of $P$ in the indeterminate $x$ is defined by $$C_P(x):=\sum \limits_{z\in P}\mu(\hat{0},z)x^{\rho(z,\hat{1})}.$$

Next recall some basic results in the theory of
Coxeter groups which
will be useful in the sequel. The reader can consult  \cite{BB}
or \cite{Hum} for further details.
Let $(W,S)$ be a Coxeter system. The length of an element $z\in W$ with respect of the given presentation is denoted as
$\ell(z)$. If $J\subseteq S$, we let
\begin{eqnarray*}
W^J&:=&\{w\in W : \ell(ws)>\ell(w)~\forall~s\in J\},
\\ {^JW}&:=&\{w\in W : \ell(sw)>\ell(w)~\forall~s\in J\},
\\ D(w)&:=&\{s\in S : \ell(ws)<\ell(w)\},
\end{eqnarray*}
and, more generally, for any $A\subseteq W$ we let $A^J:=A\cap W^J$. The subgroup $W_J \subseteq W$ is the group generated by $J$.
In particular $W_S=W$ and $W_\varnothing =\{e\}$, being $e$ the identity of $W$.
We consider on $W$ the Bruhat order $\leqslant$ (see, e.g.,  \cite[Chapter
2]{BB} or \cite[Chapter 5]{Hum}) and on any subset
we consider the induced order. When the group $W$ is finite, there exists a unique
maximal element $w_0$ of maximal length. We recall this
characterizing property of the Bruhat order, known as the
\emph{lifting property} (see \cite[Proposition 2.2.7]{BB}):
\begin{pro} \label{sollevamento}
Let $v,w\in W$ be such that $v<w$ and $s\in D(w)\setminus D(v)$. Then $v\leqslant
ws$ and $vs\leqslant w$.
\end{pro}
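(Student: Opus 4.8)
The plan is to deduce Proposition~\ref{sollevamento} from the subword characterisation of Bruhat order: $v\leqslant w$ if and only if every reduced word for $w$ admits a subword that is a reduced word for $v$ (and then, a fortiori, some reduced word does). This is the route taken in, e.g., \cite[\S 2.2]{BB}, and it turns the statement into a short combinatorial argument on words.

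First I would use the hypothesis $s\in D(w)$ to fix a reduced word $w=s_1 s_2\cdots s_{k-1}\,s$, with $k=\ell(w)$, so that $s_1\cdots s_{k-1}$ is a reduced word for $ws$. Since $v<w$, the subword property produces a reduced word for $v$ occurring as a subword of $s_1\cdots s_{k-1}\,s$; say the retained positions are $i_1<\cdots<i_j$ inside $\{1,\dots,k\}$, so that $v=s_{i_1}\cdots s_{i_j}$ reduced.

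The crucial point is that $i_j\neq k$, i.e.\ the final letter $s$ is \emph{not} retained. Indeed, if it were, then $v$ would have a reduced word ending in $s$, forcing $s\in D(v)$ and contradicting $s\in D(w)\setminus D(v)$. Hence $i_j\leqslant k-1$, so $s_{i_1}\cdots s_{i_j}$ is a subword of the reduced word $s_1\cdots s_{k-1}$ for $ws$, giving $v\leqslant ws$. On the other hand $vs=s_{i_1}\cdots s_{i_j}s$; since $s\notin D(v)$ this word has length $\ell(v)+1=\ell(vs)$, hence is reduced, and it is a subword of $s_1\cdots s_{k-1}\,s$, a reduced word for $w$; therefore $vs\leqslant w$. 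This establishes both assertions.

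I expect the main obstacle to be precisely the step isolating $i_j\neq k$: this is where the hypothesis $s\notin D(v)$ is genuinely used, and to make it rigorous one needs the \emph{strong} form of the subword property — that a reduced subword for $v$ can be found inside any \emph{prescribed} reduced word for $w$ — rather than merely the assertion that one such reduced word exists. If one wished to bypass this, the same conclusion can be reached by induction on $\ell(w)$ via the strong exchange condition together with the cover relation $ws\lessdot w$, but the bookkeeping is heavier; the subword argument above is the most economical.
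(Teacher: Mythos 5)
Your argument is correct: this is the classical lifting property, which the paper does not prove but simply quotes from \cite[Proposition 2.2.7]{BB}, and your subword-property proof (fix a reduced word for $w$ ending in $s$, extract a reduced subword for $v$, and observe that the last letter cannot be retained since $s\notin D(v)$) is essentially the standard proof given in that reference. You also correctly flag the only delicate point, namely that one needs the strong form of the subword property (a reduced subword for $v$ inside the \emph{prescribed} reduced word for $w$), so there is no gap.
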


For $J\subseteq S$, each element $w\in W$ has a unique expression
$w=w_Jw^J$, where $w^J\in W^J$ and $w_J\in W_J$ (see \cite[Proposition 2.4.4]{BB}). Often we consider the projection
$P^J:W \rightarrow W^J$ defined by  $P^J(w)=w^J$. This map is
order preserving (\cite[Proposition 2.5.1]{BB}).
 We let $T:=\{wsw^{-1}:w\in W,s\in S\}$. The following lemma will be useful in the next section; for a proof see \cite[Lemma 2.2.10]{BB}.
\begin{lem} \label{lemma xy}
  Suppose that $x < xt$ and $y < ty$, for $x, y \in W$, $t \in T$.
Then, $xy < xty$.
\end{lem}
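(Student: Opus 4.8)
The plan is to prove Lemma \ref{lemma xy} by a standard lifting-property argument, inducting on the length of $y$. The statement asserts that if $x<xt$ and $y<ty$ for some reflection $t\in T$, then $xy<xty$; note the two hypotheses say, respectively, that right multiplication by $t$ increases the length of $x$ and left multiplication by $t$ increases the length of $y$.

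\medskip

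First I would dispose of the base case $y=e$: then the hypothesis $y<ty$ is automatic (as $\ell(t)>0$ for any reflection), and the conclusion $xy<xty$ reduces to $x<xt$, which is exactly the other hypothesis. For the inductive step, assume $\ell(y)\geq 1$ and pick $s\in D(y)$, so $\ell(ys)<\ell(y)$; write $y'=ys$, which satisfies $\ell(y')<\ell(y)$. The key point is to check that the inductive hypothesis applies to the pair $(x,y')$ with the same reflection $t$: we still have $x<xt$ by hypothesis, and we need $y'<ty'$. Since $y<ty$, i.e. $\ell(ty)>\ell(y)$, and passing from $y$ to $y'=ys$ changes length by one on the right, a short case analysis (using that $\ell(ty')$ and $\ell(y')$ differ from $\ell(ty)$ and $\ell(y)$ each by $\pm 1$, together with the fact that $\ell(ty)-\ell(y)$ is odd and hence $ty\ne y'$) forces $\ell(ty')>\ell(y')$. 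Hence by induction $xy'<xty'$.

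\medskip

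Now I would finish using the lifting property (Proposition \ref{sollevamento}). We have $xy'<xty'$, and I claim $s\in D(xty')\setminus D(xy')$: indeed $xy's = xy$ has length $\ell(xy')+1$ isn't quite immediate, so more carefully one tracks the descent. The cleanest route: from $\ell(y)=\ell(y')+1$ and the subword/exchange considerations, $xy$ covers or lies above $xy'$ appropriately; applying Proposition \ref{sollevamento} to the relation $xy'<xty'$ with the descent $s$ yields $xy'\cdot s \leq xty'$, that is $xy\leq xty'$, and then multiplying on the right by $s$ and using the lifting property once more gives $xy \le xty's = xty$. Combining $xy\le xty$ with a strictness check ($xy=xty$ would force $y=ty$ after cancelling $x$, contradicting $y<ty$) yields $xy<xty$, completing the induction.

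\medskip

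The main obstacle I anticipate is the bookkeeping in the inductive step: verifying that the hypothesis $y'<ty'$ transfers correctly after the right multiplication by $s$, and correctly identifying the descent needed to invoke the lifting property so that the two applications of Proposition \ref{sollevamento} chain together to give $xy\le xty$. These are the places where one must be careful about whether lengths go up or down, and where the parity of $\ell(ty)-\ell(y)$ (always odd, since $t$ is a reflection) is used to rule out degenerate coincidences; the rest is routine.
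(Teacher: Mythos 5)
The paper does not prove this lemma at all (it is quoted from \cite[Lemma 2.2.10]{BB}), so your proposal is judged on its own merits, and it has a genuine gap at the decisive step. Your base case and the transfer of the hypothesis to $y'=ys$ are fine (indeed $\ell(ty')\geq\ell(ty)-1\geq\ell(y)>\ell(y')$). The problem is the passage from $xy'<xty'$ to $xy<xty$. First, your claim that $s\in D(xty')\setminus D(xy')$ is not justified and is false in general: for $x=e$, $t=s_1$, $y=312\in S_3$, $s=s_1$, one has $s\notin D(xty')=D(231)$. Second, and more importantly, even in the cases where that descent pattern does occur (e.g.\ $x=e$, $t=(1,3)$, $y=213\in S_3$, $s=s_1$), your two applications of Proposition \ref{sollevamento} do not chain: the first gives $xy\leq xty'$ and $xy'\leq xty$, but in the second application $s$ is a descent of \emph{both} $xy$ and $xty'$ (since $s\notin D(xy')$ forces $\ell(xy)>\ell(xy')$, i.e.\ $s\in D(xy)$), so the proposition's hypothesis $s\in D(w)\setminus D(v)$ fails, and the valid conclusion in that case is $xy'\leq xty$, not $xy\leq xty$. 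Indeed, the configuration ``$u<w$, $s\in D(w)\setminus D(u)$'' alone never yields $us\leq ws$ (take $u=s_1$, $w=s_1s_2s_1$, $s=s_2$ in $S_3$: $us=231\not\leq 312=ws$). The real content of the lemma is exactly this hard case, and your inductive set-up (fixing $x$ and $t$, stripping a right descent from $y$) discards the hypothesis $x<xt$ at the moment it is needed, so no amount of lifting-property bookkeeping can close it from the data you retain.

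A correct induction strips a \emph{left} descent of $y$ and changes all three data: choose $s$ with $\ell(sy)<\ell(y)$, note $t\neq s$ (else $ty=sy<y$), and apply the inductive hypothesis to $x_1:=xs$, $t_1:=sts$, $y_1:=sy$, for which $x_1y_1=xy$ and $x_1t_1y_1=xty$. Then $y_1<t_1y_1$ follows from $\ell(sty)\geq\ell(ty)-1\geq\ell(y)$, and $x_1<x_1t_1$ (i.e.\ $\ell(xts)>\ell(xs)$) is where Proposition \ref{sollevamento} genuinely enters: the only possible bad configuration is $\ell(xs)=\ell(xt)=\ell(x)+1$, $\ell(xts)=\ell(x)$, and then the lifting property applied to $x<xt$ with $s\in D(xt)\setminus D(x)$ gives $x\leq xts$, hence $x=xts$ and $t=s$, a contradiction. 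Alternatively, there is a one-line proof via the geometric representation: since $y<ty$, the reflection $t':=y^{-1}ty$ has positive root $y^{-1}(\alpha_t)$, and $xty=(xy)t'$ with $(xy)\bigl(y^{-1}(\alpha_t)\bigr)=x(\alpha_t)>0$ because $x<xt$, whence $xy<xty$.
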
 For $w\in W$ we set $T_L(w):=\{t\in T: \ell(tw)<\ell(w)\}$;
the \emph{right weak order}  $\leqslant_R$ on $W$ is the partial order whose cover relations are defined by letting $u \vartriangleleft_R v$ if and only if
$v^{-1}u\in S$ and $\ell(v)=\ell(u)+1$, for all $u,v\in W$ (see \cite[Chapter 3]{BB}). Then the following characterization holds (see \cite[Proposition 3.1.3]{BB}):
\begin{pro} \label{weak-order}
Let $(W,S)$ be a Coxeter system. Then $u \leqslant_R v$ if and only if $T_L(u) \subseteq T_L(v)$, for all $u,v\in W$.
\end{pro}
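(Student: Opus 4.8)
The plan is to treat the two implications separately; all the content is in ``$\Leftarrow$''. For ``$\Rightarrow$'' I would argue that, since $\leqslant_R$ is the transitive closure of $\vartriangleleft_R$ and $\subseteq$ is transitive, it suffices to check $T_L(u)\subseteq T_L(v)$ when $u\vartriangleleft_R v$, i.e.\ when $v=us$ with $s\in S$ and $\ell(v)=\ell(u)+1$. If $t\in T_L(u)$ then $\ell(tu)\leqslant\ell(u)-1$, so by the triangle inequality for $\ell$ we get $\ell(tv)=\ell(tus)\leqslant\ell(tu)+1\leqslant\ell(u)<\ell(v)$, hence $t\in T_L(v)$. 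No parity considerations are needed here.

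For ``$\Leftarrow$'' I would pass to the standard geometric representation of $(W,S)$, with root system $\Phi=\Phi^+\sqcup\Phi^-$, simple roots $\{\alpha_s\}_{s\in S}$, and reflection $t_\alpha$ attached to each $\alpha\in\Phi$. Write $N(w):=\{\alpha\in\Phi^+:w^{-1}\alpha\in\Phi^-\}$. I will use three standard facts (all in \cite[Ch.~3--4]{BB}): that $|N(w)|=\ell(w)$; that $\alpha\mapsto t_\alpha$ restricts to a bijection $N(w)\to T_L(w)$, so that $T_L(u)\subseteq T_L(v)\Leftrightarrow N(u)\subseteq N(v)$; and the prefix characterisation $u\leqslant_R v\Leftrightarrow\ell(v)=\ell(u)+\ell(u^{-1}v)$. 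Assuming $N(u)\subseteq N(v)$, the goal becomes $\ell(u^{-1}v)=\ell(v)-\ell(u)=|N(v)\setminus N(u)|$, and I would obtain it by showing that $\beta\mapsto u^{-1}\beta$ and $\alpha\mapsto u\alpha$ are mutually inverse bijections between $N(v)\setminus N(u)$ and $N(u^{-1}v)$. One inclusion needs no hypothesis: if $\beta\in N(v)\setminus N(u)$ then $u^{-1}\beta\in\Phi^+$ (since $\beta\notin N(u)$) and $(u^{-1}v)^{-1}(u^{-1}\beta)=v^{-1}\beta\in\Phi^-$, so $u^{-1}\beta\in N(u^{-1}v)$. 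For the reverse, given $\alpha\in N(u^{-1}v)$ I need only that $u\alpha\in\Phi^+$: then $v^{-1}(u\alpha)=(v^{-1}u)\alpha\in\Phi^-$ and $u^{-1}(u\alpha)=\alpha\in\Phi^+$ force $u\alpha\in N(v)\setminus N(u)$ at once.

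The statement ``$u\alpha\in\Phi^+$ for every $\alpha\in N(u^{-1}v)$'' is precisely $N(u^{-1})\cap N(u^{-1}v)=\varnothing$, and this is the single place the hypothesis $N(u)\subseteq N(v)$ enters. If $\gamma$ lay in that intersection, then $u\gamma\in\Phi^-$ and $(v^{-1}u)\gamma\in\Phi^-$, so $\delta:=-u\gamma\in\Phi^+$ satisfies $u^{-1}\delta=-\gamma\in\Phi^-$; hence $\delta\in N(u)\subseteq N(v)$, giving $v^{-1}\delta\in\Phi^-$; but $v^{-1}\delta=-(v^{-1}u)\gamma\in\Phi^+$, a contradiction. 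Combining the two inclusions yields $\ell(u^{-1}v)=\ell(v)-\ell(u)$, hence $u\leqslant_R v$, finishing the proof.

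I expect the main obstacle to be not any individual step but assembling this dictionary: the facts $|N(w)|=\ell(w)$, the correspondence $N(w)\leftrightarrow T_L(w)$, and the prefix characterisation of $\leqslant_R$ are exactly what turns the argument into routine sign-chasing, and within it the only genuinely load-bearing point is the emptiness $N(u^{-1})\cap N(u^{-1}v)=\varnothing$. A root-free alternative for ``$\Leftarrow$'' would be induction on $|T_L(v)\setminus T_L(u)|$, with base case $T_L(u)=T_L(v)\Rightarrow u=v$ (injectivity of $w\mapsto T_L(w)$) and an inductive ``one-step'' lemma producing $s\in S$ with $\ell(us)=\ell(u)+1$ and $T_L(us)\subseteq T_L(v)$; but that one-step lemma is itself most transparent through the same translation, so I would favour the argument above. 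This result is, of course, classical --- it is \cite[Proposition~3.1.3]{BB} --- and in the present paper it is only quoted, not reproved.
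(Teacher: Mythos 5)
Your proof is correct, but note that the paper does not prove this statement at all: it is quoted verbatim from \cite[Proposition 3.1.3]{BB} as a preliminary, so there is no in-paper argument to compare yours against. Your argument is a sound self-contained derivation of that classical fact. The ``$\Rightarrow$'' direction is fine: reducing to cover relations is legitimate since $\leqslant_R$ is by definition the order generated by its covers, and the estimate $\ell(tv)=\ell(tus)\leqslant \ell(tu)+1\leqslant \ell(u)<\ell(v)$ is valid. In ``$\Leftarrow$'' the dictionary is set up consistently (with your convention $N(w)=\{\alpha\in\Phi^+:w^{-1}\alpha\in\Phi^-\}$ one indeed has $t_\alpha\in T_L(w)\Leftrightarrow\alpha\in N(w)$ and $|N(w)|=\ell(w)$), the easy inclusion $u^{-1}\bigl(N(v)\setminus N(u)\bigr)\subseteq N(u^{-1}v)$ needs no hypothesis as you say, and the load-bearing step $N(u^{-1})\cap N(u^{-1}v)=\varnothing$ is correctly deduced from $N(u)\subseteq N(v)$ by the sign-chase with $\delta=-u\gamma$. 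Combined with the prefix characterization $u\leqslant_R v\Leftrightarrow\ell(u)+\ell(u^{-1}v)=\ell(v)$ (which is a genuinely more elementary fact, so no circularity), this gives the conclusion. The only caveat is that, since the result is used here as a black box from \cite{BB}, reproving it is not needed for the paper; if you did include it, it would be worth stating explicitly which convention of $N(w)$ you use, since the literature varies between $w\alpha\in\Phi^-$ and $w^{-1}\alpha\in\Phi^-$, and your argument depends on the latter.
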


Now, for any $n\in \mathbb{P}$, let $S_n$ be the group of all bijections of the set $[n]$. It is well known that
it is a Coxeter group with set of generators $\{s_1,s_2,...,s_{n-1}\}$, being
 $s_i$ the simple inversion given, in one line notation, by
$12...(i+1)i...n$. Given a permutation $\sigma = \sigma(1)\sigma(2)...\sigma(n)\in S_n$, the action of $s_i$ on the right is
given by $\sigma s_i = \sigma(1)\sigma(2)...\sigma(i+1)\sigma(i)...\sigma(n)$, for all $i\in [n-1]$.
For $i,j \in [n]$, the action on the right of a transposition  $(i,j)$, $i\neq j$, then is given by $\sigma (i,j) = \sigma(1)\sigma(2)...\sigma(i-1)\sigma(j)...\sigma(j-1)\sigma(i)...\sigma(n)$, for all $\sigma \in S_n$.
As a Coxeter group, identifying $\{s_1,s_2,...,s_{n-1}\}$ with $[n-1]$, we have that
(see e.g. \cite[Propositions 1.5.3 and 1.5.2]{BB})
$D(\sigma)=\{i\in [n-1]:\sigma(i)>\sigma(i+1)\}$ and $\ell(\sigma)=\ell_A(\sigma)$,  where $\ell_A(\sigma):=\inv(\sigma)=|\{(i,j) \in [n]^2:i<j,\sigma(i)>\sigma(j)\}|$, for all $\sigma \in S_n$. Then, given $J \subseteq [n-1]$, $S^J_n=\{\sigma \in S_n:\sigma(i)<\sigma(i+1)~\forall ~i \in J\}$.

For $i \in [n]$ and $A \subseteq S_n$ define $A(i):=\{\sigma \in A:\pos(\sigma)=i\}$, being $\pos : S_n \rightarrow [n]$ the function defined by
$\pos (\sigma) \eqdef \sm(n)$, for all $\sigma \in S_n$. We find it convenient to define the following involution on $[n]$:
 $$i^*:=\left\{
     \begin{array}{ll}
       i-1, & \hbox{if $i\equiv 0 \mod 2$;} \\
       i+1, & \hbox{if $i\equiv 1 \mod 2$ and $i+1 \in [n]$;}\\
       n, & \hbox{otherwise,}
     \end{array}
   \right.
 $$ and the simple inversion $s^*_i:=(i,i^*)$, for all $i\in [n]$.
Given a permutation $\sigma \in S_n$, $k\in [n]$ and $i\in [k]$, define $\sigma_{i,k}$ as the $i$-th element in the increasing rearrangement of $\{\sigma(1),\sigma(2),...,\sigma(k)\}_<$ and
$\sm_{i,k}$ as the position of $\sigma(i)$ in $\{\sigma(1),\sigma(2),...,\sigma(k)\}_<$. So $\sigma_{j,k}=\sigma(i)$ if $j=\sm_{i,k}$. The following theorem, known as the ``tableau criterion'', characterizes the Bruhat
order on $S_n$ (see \cite[Theorem 2.6.3]{BB}).
\begin{thm} \label{tableau}
  Let $n\in \mathbb{P}$ and $\sigma, \tau \in S_n$. The following are equivalent:
  \begin{enumerate}
    \item $\sigma\leqslant \tau$;
    \item $\sigma_{i,k} \leqslant \tau_{i,k}$ for all $k\in D(\sigma)$ and $i\in [k]$;
    \item $\sigma_{i,k} \leqslant \tau_{i,k}$ for all $k\in [n-1]\setminus D(\tau)$ and $i\in [k]$.
  \end{enumerate}
\end{thm}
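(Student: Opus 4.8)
The plan is to route everything through a \emph{full} version of the criterion: $\sigma\leq\tau$ if and only if $\sigma_{i,k}\leq\tau_{i,k}$ for \emph{all} $k\in[n-1]$ and all $i\in[k]$ (the condition at $k=n$ being vacuous). Granting this, conditions (2) and (3) are each obtained by restricting $k$ to a subset of $[n-1]$, so (1)$\Rightarrow$(2) and (1)$\Rightarrow$(3) are immediate, and the real content is to prove (2)$\Rightarrow$(1) and (3)$\Rightarrow$(1). Throughout I would replace ``$\sigma_{i,k}\leq\tau_{i,k}$ for all $i\in[k]$'' by the equivalent numerical statement ``$|\{i\leq k:\sigma(i)\leq m\}|\geq|\{i\leq k:\tau(i)\leq m\}|$ for all $m\in[n]$'', which is an elementary fact about increasing rearrangements of two sets of the same size. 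For the easy half, $\sigma\leq\tau\Rightarrow$ (full), I would fix a saturated chain $\sigma=\rho_0\lessdot\rho_1\lessdot\cdots\lessdot\rho_r=\tau$: each cover satisfies $\rho_{j+1}=\rho_j\,(a,b)$ with $a<b$ and $\rho_j(a)<\rho_j(b)$, so the multiset of the first $k$ entries of $\rho_{j+1}$ coincides with that of $\rho_j$ unless $a\leq k<b$, in which case it is obtained from $\rho_j$'s by replacing the entry $\rho_j(a)$ by the strictly larger value $\rho_j(b)$; replacing one element of a set of integers by a larger one can only decrease each count $|\{i\leq k:\,\cdot\,\leq m\}|$, so these counts are weakly decreasing along the chain, which is exactly (full) for $\sigma,\tau$.

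The converse, (full)$\,\Rightarrow\,\sigma\leq\tau$, I would prove by induction on $\ell(\tau)$. If $\tau=e$ the conditions force $\sigma=e$. Otherwise pick $s_i\in D(\tau)$ and split on whether $i\in D(\sigma)$. If $i\in D(\sigma)$, one checks that (full) still holds for the pair $(\sigma s_i,\tau s_i)$: the only prefix length whose data changes is $k=i$, and the inequality at $k=i$ for $(\sigma s_i,\tau s_i)$ follows by combining the inequalities at $k=i-1,i,i+1$ for $(\sigma,\tau)$ — a short ``no bad threshold $m$'' argument, in which a hypothetical violating $m$ would force simultaneously $\sigma(i)\leq m$ and $\sigma(i)>\sigma(i+1)>m$, which is absurd. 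By the inductive hypothesis $\sigma s_i\leq\tau s_i$, and then the lifting property (Proposition \ref{sollevamento}), in its standard consequence ``$v\leq v'$ implies $vs\leq v'$ or $vs\leq v's$'', gives $\sigma\leq\tau$. If instead $i\notin D(\sigma)$, the analogous ``no bad threshold'' computation (now using the inequality at $k=i+1$ together with $\sigma(i)<\sigma(i+1)$) shows (full) holds for $(\sigma,\tau s_i)$; since $\tau s_i\lessdot\tau$, the inductive hypothesis yields $\sigma\leq\tau s_i<\tau$.

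Granting the full criterion, I would obtain (2)$\Rightarrow$(1) by a one-dimensional sweep. Fix $m$ and set $g(k):=|\{i\leq k:\sigma(i)\leq m\}|-|\{i\leq k:\tau(i)\leq m\}|$ for $0\leq k\leq n$; then $g(0)=g(n)=0$, $|g(k)-g(k-1)|\leq1$, and (2) says $g(k)\geq0$ for $k\in D(\sigma)$. If $g$ ever took the value $-1$, choosing the least such $k$ forces $g(k-1)=0$, hence $\sigma(k)>m$, and also $k\notin D(\sigma)$, so $\sigma(k)<\sigma(k+1)$; along the maximal run of positions on which $\sigma$ increases starting at $k$ all values exceed $m$, so $g$ is non-increasing there and stays $\leq-1$, contradicting either $g(n)=0$ or (2) at the right end of that run (which is a descent of $\sigma$). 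Hence $g\geq0$ for every $m$, i.e. (full) holds, so $\sigma\leq\tau$. For (3)$\Rightarrow$(1) one can run the mirror argument (take the \emph{greatest} $k$ with $g(k)=-1$ and use decreasing runs of $\tau$), or deduce it from (1)$\Leftrightarrow$(2) applied to $(\tau w_0,\sigma w_0)$, using that $x\mapsto xw_0$ is an anti-automorphism of Bruhat order, that the sorted length-$k$ prefix of $\tau w_0$ is the complement in $[n]$ of the sorted length-$(n-k)$ prefix of $\tau$, and that $k\in D(\tau w_0)$ iff $n-k\notin D(\tau)$.

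The main obstacle is the converse of the full criterion — precisely, showing that the whole family of prefix inequalities survives the reductions $(\sigma,\tau)\rightsquigarrow(\sigma s_i,\tau s_i)$ and $(\sigma,\tau)\rightsquigarrow(\sigma,\tau s_i)$. A naive reduction to $\tau s_i$ does not preserve the conditions in general; it does when $i\in D(\tau)$, but even then the inequality at the single prefix length that changes has to be deduced from \emph{several} of the original inequalities at once, and the correct case split on $i\in D(\sigma)$ versus $i\notin D(\sigma)$ is what makes the ``no bad threshold'' bookkeeping close. Once the full criterion is in place, passing to statements (2) and (3) via the counting-function sweeps is routine.
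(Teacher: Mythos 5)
Your proposal is correct, but note that the paper does not prove this statement at all: Theorem \ref{tableau} is quoted as a known result with a reference to \cite[Theorem 2.6.3]{BB}, so there is no internal proof to compare against. What you give is a legitimate self-contained argument: first Ehresmann's ``full'' criterion (all prefix lengths $k$), whose easy direction follows from saturated chains of covers $\rho\lessdot\rho(a,b)$ and whose converse you run by induction on $\ell(\tau)$ with the case split $i\in D(\sigma)$ versus $i\notin D(\sigma)$ and the lifting property — I checked the ``no bad threshold'' bookkeeping in both cases and it closes exactly as you claim (a violating $m$ forces $\sigma(i)\leq m$ together with $\sigma(i)>\sigma(i+1)>m$ in the first case, and the symmetric contradiction in the second), and your quoted consequence of Proposition \ref{sollevamento} does yield $\sigma\leq\tau$ from $\sigma s_i\leq\tau s_i$ since either disjunct gives $\sigma\leq\tau s_i<\tau$ or $\sigma\leq\tau$ directly. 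The second layer — reducing the full family of inequalities to the rows $k\in D(\sigma)$ (resp.\ $k\in[n-1]\setminus D(\tau)$) via the counting function $g$ and maximal increasing runs of $\sigma$ (resp.\ the $w_0$-duality, whose three ingredients you state correctly) — is exactly the Björner–Brenti style refinement of the classical Ehresmann criterion, and the sweep argument is sound: the least $k$ with $g(k)=-1$ forces $g(k-1)=0$, $\sigma(k)>m$, $k\notin D(\sigma)$, and $g$ stays $\leq-1$ along the increasing run, contradicting either condition (2) at its right end or $g(n)=0$. So the argument is complete in outline with only routine details suppressed; it essentially reproves the cited background theorem rather than diverging from anything the paper does.
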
 We indicate by $w_n$ the maximum of the poset $(S_n,\leqslant)$, i.e., in one line notation, $w_n=n...321$.

The \emph{descent number} and the \emph{major index} are the functions $\des : S_n \rightarrow \mathbb{N}$ and
$\maj : S_n \rightarrow \mathbb{N}$ defined respectively by
$\des(\sigma)\eqdef |D(\sigma)|$ and
$\maj (\sigma) \eqdef \sum_{i \in D(\sigma)} i$. A famous result of McMahon asserts that $\ell_A$ and $\maj$ are equidistribuited over $S_n$ (see, e.g. \cite[Proposition 1.4.6]{StaEC1}), i.e. $S_n(x,\ell_A)=S_n(x,\maj)$, and this rank-generating function is known to be (see, e.g \cite[Corollary 1.3.10]{StaEC1})
\begin{equation}\label{poincare}
  S_n(x,\ell_A)=[n]_x!.
\end{equation} Following \cite{BS} we define functions from $S_n$ to $\N$ by letting
\[
\odes(\sigma)\eqdef |D(\sigma)_o|, \;\;\; \emaj (\sigma) \eqdef \sum_{i \in D(\sigma)_e} \frac{i}{2},
\]
for all $\sigma \in S_n$, and call these functions \emph{odd descent number} and \emph{even major index} respectively.

Following \cite{BS}, for $n \in \mathbb{P}$, we let
\[
\W(S_n) := \{ \sigma \in S_n : | \sm(i)-\sm(i^{\ast})| \leq 1 \; \mbox{if} \;i \in [n-1]  \}
\]
and call the elements of $\W(S_n)$
\textit{Wachs permutations}. It is not hard to see that, if $n$ is even,
$\W(S_n) = \{ \sigma \in S_n : | \sigma(i)-\sigma(i^{\ast})| \leq 1 \; \mbox{if} \;i \in [n-1]  \}$.

Let $[\pm n]:=\{-n,\ldots,-1,1,\ldots,n\}$. We denote by $B_n$ the group of bijective functions $\sigma : [\pm n] \rightarrow [\pm n]$ satisfying
 $-\sigma(i)=\sigma(-i)$, for all $i\in [n]$. We use the window notation. So, for example, the element
 $[-2,1]\in B_2$ represents the function $\sigma : [\pm 2] \rightarrow [\pm 2]$ such that
 $\sigma(1)=-2=-\sigma(-1)$ and $\sigma(2)=1=-\sigma(-2)$.
We let $\Neg(\sigma):=\{i\in [n]:\sigma(i)<0\}$, $\negg(\sigma)=|\Neg(\sigma)|$, $\nsp(\sigma):=|\{  (i,j) \in [n]^2 : \; i<j, \; \sigma(i)+\sigma(j)<0  \}|$, $s^B_{j} \eqdef (j,j+1)(-j,-j-1)$ for $j=1,...,
n-1$, $s_{0} \eqdef (1,-1)$, and $S_{B} \eqdef \{ s_{0},s^B_1,...,s^B_{n-1} \}$.
It is well
known that $(B_n,S_{B})$ is a Coxeter system and that, identifying $S_B$ with
$[0,n-1]$, the
following holds (see, e.g., \cite[\S 8.1]{BB}).

\begin{pro}
\label{CombB}
Let $\sigma \in B_n$. Then
$\ell_B(\sigma)=\ell_A(\sigma)+\negg(\sigma)+\nsp(\sigma)$, and
$D(\sigma)=\{i\in [0,n-1]:\sigma(i)>\sigma(i+1)\}$,
where $\sigma(0):=0$.
\end{pro}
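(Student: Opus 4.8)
The plan is to show that the statistic $f(\sigma) \eqdef \inv(\sigma) + \negg(\sigma) + \nsp(\sigma)$ --- where $\inv(\sigma) \eqdef |\{(i,j)\in[n]^2 : i<j,\ \sigma(i)>\sigma(j)\}| = \ell_A(\sigma)$ is the number of inversions of the window of $\sigma$ --- coincides with the Coxeter length $\ell_B$, and then to read off the description of $D(\sigma)$. The key tool is the standard inductive characterization of length: in a Coxeter system $(W,S)$, a function $\phi : W \to \N$ with $\phi(e)=0$, $\phi^{-1}(0)=\{e\}$, $|\phi(ws)-\phi(w)|\le 1$ for all $w\in W$ and $s\in S$, and such that every $w\ne e$ admits some $s\in S$ with $\phi(ws)<\phi(w)$, necessarily equals $\ell$. (One inequality comes from bounding $\phi$ along a reduced word; the other by induction on $\phi(w)$, using the last two hypotheses.) Throughout, $[P]\in\{0,1\}$ denotes the truth value of a statement $P$.

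First I would record the effect on $f$ of right multiplication by each generator. For $1\le i\le n-1$, the signed permutation $\sigma s^B_i$ is $\sigma$ precomposed with the transposition $(i,i+1)$ of $[n]$ (its action on negative arguments being forced), so its window is that of $\sigma$ with the entries in positions $i$ and $i+1$ swapped. Since $\negg$ depends only on the multiset of window values and $\nsp$ only on a condition symmetric in each pair of values, both are invariant under precomposition with an element of $S_n$; hence $\negg(\sigma s^B_i)=\negg(\sigma)$ and $\nsp(\sigma s^B_i)=\nsp(\sigma)$, and the classical adjacent-transposition identity for $\inv$ gives $f(\sigma s^B_i)-f(\sigma) = [\sigma(i)<\sigma(i+1)]-[\sigma(i)>\sigma(i+1)] \in \{+1,-1\}$, equal to $-1$ exactly when $\sigma(i)>\sigma(i+1)$. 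For $s_0$, the window of $\sigma s_0$ differs from that of $\sigma$ only in position $1$, where $\sigma(1)$ is replaced by $-\sigma(1)$; then $\negg$ changes by $[\sigma(1)>0]-[\sigma(1)<0]\in\{+1,-1\}$ (equal to $-1$ exactly when $\sigma(1)<0$), and a short computation over the pairs $\{1,j\}$ with $2\le j\le n$, using $[-\sigma(1)>\sigma(j)]=[\sigma(1)+\sigma(j)<0]$ and $[-\sigma(1)+\sigma(j)<0]=[\sigma(1)>\sigma(j)]$, shows that the increments of $\inv$ and of $\nsp$ are opposite and cancel. Hence $f(\sigma s_0)-f(\sigma)\in\{+1,-1\}$, equal to $-1$ exactly when $\sigma(1)<0$.

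With this the hypotheses of the criterion are readily checked: $f(e)=0$ is clear; $f(\sigma)=0$ forces $\negg(\sigma)=0$ and $\inv(\sigma)=0$, i.e. $0<\sigma(1)<\cdots<\sigma(n)$, whence $\sigma=e$; and if $\sigma\ne e$ then, by the same reasoning, either $\sigma(1)<0$ or $\sigma(i)>\sigma(i+1)$ for some $i\in[n-1]$, and the corresponding generator strictly decreases $f$. Therefore $f=\ell_B$. For the second assertion, by definition $i\in D(\sigma)$ iff the $i$-th generator strictly decreases $\ell_B=f$; by the computations above this happens iff $\sigma(i)>\sigma(i+1)$ when $i\ge 1$ and iff $\sigma(1)<0$ when $i=0$. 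With the convention $\sigma(0)\eqdef 0$ the latter reads $\sigma(0)>\sigma(1)$, and the two cases merge into $D(\sigma)=\{i\in[0,n-1]:\sigma(i)>\sigma(i+1)\}$.

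The only real work lies in the two generator computations, and it is light: for $s^B_i$ the invariance of $\negg$ and $\nsp$ is conceptual, and for $s_0$ one checks pair by pair the small cancellation between the $\inv$- and $\nsp$-increments --- a short sign analysis made transparent by the defining symmetry $\sigma(-i)=-\sigma(i)$. I do not anticipate a serious obstacle; the only point requiring attention is to treat the generator $s_0$ on the same footing as the others so that the descent set emerges uniformly.
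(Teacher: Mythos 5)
Your proof is correct. Note that the paper does not prove this proposition at all: it is quoted as a known fact with a reference to \cite[\S 8.1]{BB}, and your argument is essentially the standard one found there --- verify that the statistic $\inv+\negg+\nsp$ changes by exactly $\pm 1$ under right multiplication by each generator (with the decrease occurring precisely at the claimed descents), check that it vanishes only at the identity, and invoke the inductive characterization of Coxeter length; the descent-set description then falls out of the same generator computations, with the convention $\sigma(0):=0$ absorbing the $s_0$ case. All the individual steps (the length criterion, the invariance of $\negg$ and $\nsp$ under $s^B_i$, and the cancellation of the $\inv$- and $\nsp$-increments for $s_0$) check out.
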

We denote by $\leqslant$ the Bruhat order of $B_n$ and by $\sigma \mapsto \tilde{\sigma}$ the embedding $B_n \hookrightarrow S_{\pm n}$ (where $S_{\pm n}$ is the set of all bijections of $[\pm n]$) . The following result is \cite[Corollary 8.1.9]{BB}.
\begin{pro}
\label{prop Bruhat B}
We have that $\sigma \leqslant \tau$ in $B_n$ if and only if $\tilde{\sigma} \leqslant \tilde{\tau}$ in $S_{\pm n}$.
\end{pro}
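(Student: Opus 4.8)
The plan is to prove the equivalence by induction on $\ell_B(\tau)$, transferring the lifting property (Proposition \ref{sollevamento}) between $B_n$ and $S_{\pm n}$. First I would fix the identification of $[\pm n]$, totally ordered by $-n<\cdots<-1<1<\cdots<n$, with $[2n]$, so that the Coxeter generators of $S_{\pm n}$ are exactly the transpositions of consecutive elements in this order. Under the embedding $\sigma\mapsto\tilde\sigma$ one has $\widetilde{s_0}=(1,-1)$, which is a Coxeter generator of $S_{\pm n}$ (the ``middle'' one), whereas for $1\leq i\leq n-1$ one has $\widetilde{s^B_i}=u^{(1)}_i u^{(2)}_i$, where $u^{(1)}_i:=(i,i+1)$ and $u^{(2)}_i:=(-i-1,-i)$ are two distinct commuting Coxeter generators of $S_{\pm n}$ with disjoint supports.

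The first step is a descent dictionary: using Proposition \ref{CombB} (with the convention $\tau(0)=0$) together with the description of right descents in a symmetric group as $\{j:\rho(j)>\rho(j+1)\}$, a direct computation gives, for all $\tau\in B_n$, that $s_0\in D(\tau)\iff\tau(1)<0\iff\widetilde{s_0}\in D(\tilde\tau)$, and that $s^B_i\in D(\tau)\iff\tau(i)>\tau(i+1)\iff u^{(1)}_i\in D(\tilde\tau)\iff u^{(2)}_i\in D(\tilde\tau)$. In particular, if $s\in D(\tau)$ then $\widetilde{\tau s}<\tilde\tau$ in $S_{\pm n}$ (length dropping by $1$ if $s=s_0$, and by $2$ if $s=s^B_i$, by peeling off $u^{(1)}_i$ and then $u^{(2)}_i$ --- legitimate since $u^{(2)}_i$ is still a right descent after removing $u^{(1)}_i$, the latter not affecting the relevant values).

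The second ingredient is the biconditional form of the lifting property, valid in any Coxeter system: if $s\in D(w)$ then $v\leq w\iff\hat v\leq ws$, where $\hat v$ denotes whichever of $v,vs$ is shorter. One implication is Proposition \ref{sollevamento} directly; the other combines it with the elementary fact (also a quick consequence of Proposition \ref{sollevamento}) that $s\in D(u)\cap D(v)$ implies $u\leq v\iff us\leq vs$. Applying this in $B_n$ reduces any comparison with $\tau$ to a comparison with $\tau s$, which has strictly smaller $B$-length. Applying it in $S_{\pm n}$ --- once if $s=s_0$, and twice (once for $u^{(1)}_i$, once for $u^{(2)}_i$) if $s=s^B_i$ --- together with the descent dictionary, shows that $\tilde\sigma\leq\tilde\tau$ in $S_{\pm n}$ if and only if $\widetilde{\sigma'}\leq\widetilde{\tau s}$ in $S_{\pm n}$, where $\sigma'$ is whichever of $\sigma,\sigma s$ has smaller $B$-length.

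With these reductions the induction closes immediately. For $\ell_B(\tau)=0$ both sides just say $\sigma=e$. For $\ell_B(\tau)\geq 1$, choose any $s\in D(\tau)$; then $\sigma\leq\tau\iff\sigma'\leq\tau s$ (in $B_n$) and $\tilde\sigma\leq\tilde\tau\iff\widetilde{\sigma'}\leq\widetilde{\tau s}$ (in $S_{\pm n}$), and the inductive hypothesis applied to $\tau s$, with $\ell_B(\tau s)=\ell_B(\tau)-1$, identifies $\sigma'\leq\tau s$ with $\widetilde{\sigma'}\leq\widetilde{\tau s}$; chaining these gives the claim. I expect the main obstacle to be bookkeeping rather than anything conceptual: one has to handle carefully the asymmetry between $s_0$ (lifting to one generator) and $s^B_i$ (lifting to a product of two), and verify that $u^{(1)}_i$ and $u^{(2)}_i$ are independent enough --- acting on disjoint positions, each not moving the values the other compares --- that the $\hat v$-operation and the right-descent status of the second generator are unchanged after peeling off the first. (An alternative route would run everything through the tableau criterion, Theorem \ref{tableau}, for $S_{\pm n}$ and a combinatorial Bruhat criterion for $B_n$, but the lifting-property argument above uses only results already available.)
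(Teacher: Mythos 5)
Your argument is correct. Note first that the paper does not prove this statement at all: it is quoted verbatim from \cite[Corollary 8.1.9]{BB}, where it is deduced from a combinatorial criterion for Bruhat order on $B_n$ (the type~$B$ analogue of the tableau/dominance criterion), matched against the corresponding criterion in $S_{\pm n}$ --- essentially the ``alternative route'' you mention at the end. What you give instead is the self-contained folding argument: induct on $\ell_B(\tau)$, and transfer one step of the biconditional lifting property ($v\leq w \iff \hat v\leq ws$ for $s\in D(w)$, with $\hat v$ the shorter of $v,vs$) across the embedding, using that $\widetilde{s_0}$ is a single generator of $S_{\pm n}$ while $\widetilde{s^B_i}=u^{(1)}_iu^{(2)}_i$ is a product of two commuting generators with disjoint supports. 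The bookkeeping you flag is exactly the right thing to check and it does go through: by the descent dictionary, $u^{(1)}_i$ and $u^{(2)}_i$ are simultaneously descents or non-descents of $\tilde\sigma$ (both equivalent to $\sigma(i)>\sigma(i+1)$), and since each fixes the positions the other compares, peeling off $u^{(1)}_i$ changes neither the descent status of $u^{(2)}_i$ in $\tilde\tau u^{(1)}_i$ nor in $\hat\sigma_1$; hence after the two liftings the ``hatted'' element is either $\tilde\sigma$ or $\tilde\sigma u^{(1)}_iu^{(2)}_i=\widetilde{\sigma s^B_i}$, i.e.\ exactly $\widetilde{\sigma'}$, and the induction closes as you say. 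The trade-off is the usual one: the criterion-based proof in \cite{BB} yields an explicit combinatorial test for $\leqslant$ on $B_n$ as a by-product, while your lifting/folding argument is shorter, uses only Proposition \ref{sollevamento}, and generalizes immediately to any Coxeter system with a diagram automorphism whose orbits on the generators consist of mutually commuting elements.
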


The next result, which appears in \cite[Theorem 5.5]{incitti-Bruhat}, characterizes  the cover relations in the Bruhat order of $B_n$.
For $\sigma \in B_n$ and $i,j \in [\pm n]$, $i<j$, we say that $(i,j)$ is a {\em rise} for $\sigma$ if $\sigma(i) < \sigma(j)$ (i.e., if $(i,j)$ is not an inversion of
$\sigma$). Given a rise $(i,j)$ for $\sigma$ we then say, following
\cite{incitti-Bruhat}, that $(i,j)$ is  {\em central} if
$(0,0) \in [i,j] \times [\sigma(i), \sigma(j)]$, that $(i,j)$ is
{\em free}
if there is no $i<k<j$ such that $\sigma(i) < \sigma(k) < \sigma(j)$,
and that it is {\em symmetric} if $i=-j$.

\begin{thm} \label{incitti}
  Let $n\in \mathbb{P}$ and $\sigma, \tau \in B_n$. Then $\sigma \lhd \tau$ in Bruhat order if and only if either
 \begin{enumerate}
     \item $\tau=\sigma (i,j)(-i,-j)$ where $(i,j)$ is a non-central
     free rise of $\sigma$ or
     \item $\tau=\sigma (i,j)$ where $(i,j)$ is a central symmetric
     free rise of $\sigma$.
 \end{enumerate}
\end{thm}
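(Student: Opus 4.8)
The plan is to reduce the statement to the classical description of Bruhat covers of the symmetric group, via the embedding $B_n\hookrightarrow S_{\pm n}$ of Proposition~\ref{prop Bruhat B}. I will use from \cite{BB}: (i) in any Coxeter group, $u\lhd v$ if and only if $v=ut$ for a reflection $t$ with $\ell(v)=\ell(u)+1$; (ii) Bruhat order is graded by length; and (iii) for $\pi\in S_m$ and $a<b$ with $\pi(a)<\pi(b)$ one has $\ell_A(\pi(a,b))-\ell_A(\pi)=1+2\,|\{c:a<c<b,\ \pi(a)<\pi(c)<\pi(b)\}|$, so $\pi\lhd\pi(a,b)$ iff $(a,b)$ is a free rise of $\pi$. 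I will also use that the reflections of $B_n$ are the \emph{symmetric} ones $(i,-i)$ ($i\in[n]$), right multiplication by which negates the $i$-th window entry, and the \emph{non-symmetric} ones $(i,j)(-i,-j)$ ($i\ne\pm j$), right multiplication by which swaps the entries in positions $i,j$ when $i,j$ have the same sign, and replaces the entries $(\sigma(|i|),\sigma(j))$ in positions $|i|,j$ by $(-\sigma(j),-\sigma(|i|))$ when $i<0<j$.

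First I would record, for $w\in B_n$, the identity $\ell_A(\tilde{w})=2\,\ell_B(w)-\negg(w)$ (sort the inversions of $\tilde{w}$ according to the signs of the two positions and apply Proposition~\ref{CombB}), so that for $\sigma\leqslant\tau$ in $B_n$ --- equivalently $\tilde{\sigma}\leqslant\tilde{\tau}$ in $S_{\pm n}$ --- one has $\ell_A(\tilde{\sigma},\tilde{\tau})=2\,\ell_B(\sigma,\tau)-\negg(\sigma,\tau)$. Next, inspecting the window action of each reflection type under the hypothesis $\sigma<\sigma t$, one finds that $\negg(\sigma,\sigma t)$ is $1$ for a symmetric reflection, $0$ for a non-symmetric one with $i,j$ of the same sign, and $0$ or $2$ for a non-symmetric one with $i<0<j$ --- the value $2$ occurring exactly when the rise $(i,j)$ is central. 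In particular $\negg(\sigma,\sigma t)\geqslant 0$, so if $\sigma\lhd\tau$ then $\ell_A(\tilde{\sigma},\tilde{\tau})\in\{1,2\}$.

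If $\ell_A(\tilde{\sigma},\tilde{\tau})=1$, then $\tilde{\sigma}\lhd\tilde{\tau}$, so $\tilde{\tau}=\tilde{\sigma}(p,q)$ with $(p,q)$ a free rise of $\tilde{\sigma}$; since $\sigma\mapsto\tilde{\sigma}$ is a homomorphism and a non-symmetric reflection moves four points, $t=(a,-a)$ and $(p,q)=(-a,a)$ is a symmetric free rise, automatically central since the rise forces $\sigma(a)>0$; this is Case~2, and conversely every central symmetric free rise yields an $S_{\pm n}$-cover, hence a $B_n$-cover. If $\ell_A(\tilde{\sigma},\tilde{\tau})=2$, write $\tau=\sigma t$ with $t$ non-symmetric; then $(i,j)$ is a rise of $\tilde{\sigma}$ (which is equivalent to $\sigma<\sigma t$) and, when $i<0<j$, it is non-central (otherwise $\negg(\sigma,\tau)=2$, forcing $\ell_A(\tilde{\sigma},\tilde{\tau})=0$). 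One checks that $(-i,-j)$ is then a rise of $\tilde{\sigma}(i,j)$, so $\tilde{\sigma}<\tilde{\sigma}(i,j)<\tilde{\tau}$ is a chain along which $\ell_A$ strictly increases at each step, to a total of $2$; hence $\tilde{\sigma}\lhd\tilde{\sigma}(i,j)$ and $(i,j)$ is a free rise of $\sigma$ --- Case~1. For the converse of Case~1, from a non-central free rise $(i,j)$ with $i\ne-j$ one gets $\tilde{\sigma}\lhd\tilde{\sigma}(i,j)$; using that $w\mapsto w_0ww_0$ (with $w_0$ the longest element of $B_n$, acting on $S_{\pm n}$ by $k\mapsto-k$) is an automorphism of $(S_{\pm n},\leqslant)$ fixing the image of $B_n$ pointwise and carrying $\tilde{\sigma}(i,j)$ to $\tilde{\sigma}(-i,-j)$, one also gets $\tilde{\sigma}\lhd\tilde{\sigma}(-i,-j)$; a short check gives $\tilde{\sigma}(i,j)\lhd\tilde{\tau}$, whence $\ell_A(\tilde{\sigma},\tilde{\tau})=2$ by gradedness, $\ell_B(\sigma,\tau)=1$ by the identity above (the $\negg$-term vanishes by non-centrality), and $\sigma\lhd\tau$ since $[\sigma,\tau]\subseteq[\tilde{\sigma},\tilde{\tau}]$ has no element of intermediate length.

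The main obstacle is that last short check: that when $(i,j)$ is a non-central free rise, the partner transposition $(-i,-j)$ remains free relative to $\tilde{\sigma}(i,j)$. As $\tilde{\sigma}(i,j)$ and $\tilde{\sigma}$ differ in only two positions, this comes down to showing that neither of those positions joins the ``in-between'' set of $(-i,-j)$, and the crux is that non-centrality forces the value interval cut out by $(-i,-j)$ to lie entirely on one side of $0$, so the unique exceptional position inside its index interval falls outside its value interval. Equivalently, one can bypass $S_{\pm n}$ altogether and compute $\ell_B(\sigma,\sigma t)$ directly from Proposition~\ref{CombB} by tracking the changes in $\inv$, $\negg$ and $\nsp$ caused by the window move; again it is exactly non-centrality that collapses the outcome to $1+2\,|\{\text{positions in between}\}|$. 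Either way, this bookkeeping --- most delicate for the reflections with $i<0<j$ --- is the laborious part of the argument.
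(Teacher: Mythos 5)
The paper does not prove this statement at all: Theorem \ref{incitti} is imported verbatim from Incitti \cite[Theorem 5.5]{incitti-Bruhat}, so there is no internal argument to measure yours against. Judged on its own, your reduction to type $A$ is correct. The identity $\ell_A(\tilde{w})=2\ell_B(w)-\negg(w)$, the computation of $\negg(\sigma,\sigma t)$ for the three kinds of reflections (right as stated: under the rise hypothesis the case of two negative values cannot occur when $i<0<j$, so the change is $0$ or $2$, with $2$ exactly in the central case), and the parity argument pinning down which reflections are compatible with $\ell_A(\tilde{\sigma},\tilde{\tau})=1$ or $2$ are all sound, and both implications then follow from the type $A$ cover criterion as you say. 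The step you leave as a sketch --- that $(-i,-j)$ is still a \emph{free} rise of $\tilde{\sigma}(i,j)$ when $(i,j)$ is non-central --- is exactly where non-centrality enters, and your stated reason is the right one and is routine to complete: since $|i|\neq |j|$, the only position in the open interval $(-j,-i)$ whose value is changed by $(i,j)$ is $i$ (when $i+j>0$) or $j$ (when $i+j<0$); its new value is $\tilde{\sigma}(j)$ (resp.\ $\tilde{\sigma}(i)$), while non-centrality forces $\tilde{\sigma}(i)$ and $\tilde{\sigma}(j)$ to have the same sign, so the value window $(-\tilde{\sigma}(j),-\tilde{\sigma}(i))$ lies strictly on one side of $0$ and the displaced value on the other; for the remaining positions the antisymmetry $\tilde{\sigma}(-k)=-\tilde{\sigma}(k)$ and the freeness of $(i,j)$ do the job. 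Compared with Incitti's original treatment, which analyses the covering moves directly inside $B_n$ via its combinatorial length function, your route trades that analysis for the embedding $B_n\hookrightarrow S_{\pm n}$ plus bookkeeping, which is shorter and in fact matches the transfer-from-type-$A$ philosophy the paper itself uses elsewhere (e.g.\ Proposition \ref{cor Bruhat B} and Lemma \ref{lemmaB}).
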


Following \cite{BS}, for $n \in \mathbb{P}$, we let
\[
\W(B_n) := \{ \sigma \in B_n : | \sm(i)-\sm(i^{\ast})| \leq 1 \; \mbox{if} \;i \in [n-1]  \}
\]
and call the elements of $\W(B_n)$
\textit{signed Wachs permutations}.

Our aim in this work is to study the sets of Wachs and signed Wachs permutations under the Bruhat order and the weak order.

\section{Wachs permutations and Bruhat order} \label{section Deven}

Let $(W,S)$ be a Coxeter system. For any independent set  $I \subseteq S$ (i.e.  $st=ts$ for all $s,t\in I$)
we define a subgroup of $W$ by
$$
G_I:=\{ w\in W: I^w=I \},
$$
where $I^w:= \{ ws w^{-1} : s \in I \}$.
Note that $G_I$ is a subgroup of $W$, and that, since $I$ is
independent, $W_I \subseteq G_I$. Furthermore, $W_I$ is normal in $G_I$.

\begin{pro} \label{prop isomorfismo GI}
$P^I(G_I)$ is a subgroup of $G_I$, isomorphic to the quotient $G_I/W_I$. In particular we have the group isomorphisms
$$G_I \simeq P^I(G_I) \ltimes W_I \simeq S_2\wr_I P^I(G_I).$$
\end{pro}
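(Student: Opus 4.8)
The plan is to identify $P^I(G_I)$ with an explicit subgroup of $W$, then read off the semidirect product decomposition of $G_I$, and finally recognize that semidirect product as the asserted wreath product.

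First I would note that $P^I(G_I)=G_I\cap W^I$: writing $g=g_Ig^I$ for $g\in G_I$, the factor $g_I$ lies in $W_I\subseteq G_I$, hence $g^I=g_I^{-1}g\in G_I\cap W^I$, and conversely $P^I$ fixes each element of $G_I\cap W^I$. The key step is then to show that $G_I\cap W^I$ is a subgroup. I would argue in the reflection representation of $(W,S)$ (see \cite{BB}), setting $\Delta_I:=\{\alpha_s:s\in I\}$. For $w\in G_I$ and $s\in I$, the element $s':=wsw^{-1}$ again belongs to $I$ and $s\mapsto s'$ is a bijection of $I$; since conjugating the simple reflection $s$ by $w$ yields the reflection in the root $w(\alpha_s)$, we get $w(\alpha_s)=\pm\alpha_{s'}$, and by the standard criterion $\ell(ws)>\ell(w)\iff w(\alpha_s)>0$ the plus sign holds for all $s\in I$ exactly when $w\in W^I$, in which case $w(\Delta_I)=\Delta_I$. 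Hence
\[
P^I(G_I)=G_I\cap W^I=\{\,w\in G_I:\ w(\Delta_I)=\Delta_I\,\},
\]
the intersection of the subgroup $G_I$ with the setwise stabilizer of $\Delta_I$ in $W$ — so a subgroup of $G_I$. (Equivalently, for $w\in G_I$ one has $D(w)\cap I=\varnothing$ if and only if $w$ has no left descent in $I$.)

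Next I would invoke the normality of $W_I$ in $G_I$. Since $P^I(G_I)\subseteq W^I$ and $W^I\cap W_I=\{e\}$ (immediate from uniqueness of the factorization $w=w_Iw^I$), we get $P^I(G_I)\cap W_I=\{e\}$; and the factorization $g=g_Ig^I$ writes every $g\in G_I$ as a product of an element of $W_I$ and one of $P^I(G_I)$, so $G_I=P^I(G_I)\cdot W_I=W_I\cdot P^I(G_I)$. Therefore $G_I=P^I(G_I)\ltimes W_I$ as an internal semidirect product, and the composite $P^I(G_I)\hookrightarrow G_I\twoheadrightarrow G_I/W_I$ is an isomorphism. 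For the wreath product, independence of $I$ gives $W_I\simeq(S_2)^I$, with the elements of $I$ as the $|I|$ commuting involutive generators, and conjugation by $w\in P^I(G_I)$ permutes these generators according to the permutation $s\mapsto wsw^{-1}$ of $I$ — i.e.\ it acts on $(S_2)^I$ by permuting coordinates along that permutation, which is exactly the action defining $S_2\wr_I P^I(G_I)$. Hence $G_I\simeq S_2\wr_I P^I(G_I)$.

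The main obstacle is the closure of $P^I(G_I)$ under multiplication. A direct combinatorial attempt via descent sets or the parabolic factorization tends to run in circles — products of minimal coset representatives need not be length-additive — whereas passing to the reflection representation and describing $G_I\cap W^I$ as the stabilizer of the root set $\Delta_I$ makes closure (and stability under inverses) immediate; everything after that is formal.
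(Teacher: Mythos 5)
Your proof is correct, but it reaches the key point --- closure of $P^I(G_I)$ under multiplication --- by a genuinely different route than the paper. You first identify $P^I(G_I)=G_I\cap W^I$ and then pass to the reflection representation, characterizing this set as the intersection of $G_I$ with the setwise stabilizer of $\Delta_I=\{\alpha_s:s\in I\}$, so that closure under products and inverses is automatic; the semidirect product then follows from the normality of $W_I$, the trivial intersection $W^I\cap W_I=\{e\}$, and the factorization $G_I=W_I\cdot P^I(G_I)$, and the isomorphism with $G_I/W_I$ comes from the complement argument. The paper instead stays entirely inside the Bruhat-order combinatorics set up in its preliminaries: for $u,v\in G_I$ and $s\in I$ it writes $v^Is=tv^I$ with $t\in I$, notes $\ell(tv^I)>\ell(v^I)$ and $\ell(u^It)>\ell(u^I)$, and applies Lemma \ref{lemma xy} to get $u^Iv^Is=u^Itv^I>u^Iv^I$, hence $u^Iv^I\in W^I$; it then checks directly that $(uv)^I=u^Iv^I$, which shows at once that $P^I\colon G_I\rightarrow P^I(G_I)$ is a surjective group homomorphism with kernel $W_I$, making the quotient isomorphism immediate. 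Your root-theoretic description buys a clean conceptual picture (a stabilizer subgroup) and immediate closure, at the cost of invoking the geometric representation and the standard criterion $\ell(ws)>\ell(w)\Leftrightarrow w(\alpha_s)>0$, which the paper never needs; the paper's argument is self-contained given Lemma \ref{lemma xy} and yields the extra fact that the parabolic projection itself is multiplicative on $G_I$, information you recover only indirectly. Both treatments of the wreath-product statement are essentially the same.
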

\begin{proof}
 Let $w \in G_I$ and write $w= w^I w_I$ where $w^I \in W^I$ and $w_I \in W_I$. Let $s \in I$.
Then, by our hypothesis, $wsw^{-1}=w^I w_I s (w_I)^{-1} (w^I)^{-1}=w^I s (w^I)^{-1}$ so
$w^I \in G_I$. So
$P^I(G_I)\subseteq G_I$. Let $u,v \in G_I$ and write $u=u^I u_I$ and $v=v^I v_I$ where
$u^I, v^I \in W^I$ and $u_I, v_I \in W_I$, so $u^I,v^I \in P^I(G_I)$. Let $s \in I$. Then there is
$t \in I$ such that $v^I s = t v^I$. Furthermore, since $u^I, v^I \in W^I$, $\ell(t v^I)>\ell(v^I)$,
and $\ell(u^I t)>\ell(u^I)$. Therefore, by
Lemma \ref{lemma xy} we obtain that $u^I v^I s= u^I t v^I>u^I v^I$. Hence $u^I v^I \in W^I$.
But, since $v^I \in G_I$, there exists $\tilde{u}_I \in W_I$ such that $uv=u^I v^I \tilde{u}_I v_I$
so $(uv)^I=u^I v^I$. Hence $u^I v^I \in P^I(G_I)$. In particular, $(u^I)^{-1}=(u^{-1})^I$, so
$P^I(G_I)$ is a subgroup of $G_I$, and $P^I : G_I \rightarrow P^I(G_I)$ is a surjective homomorphism
whose kernel is $W_I$. The last statements follow by the definitions.
\end{proof}


Let $m \in \PP$, $W:= S_{2m}$. For any set $X$, we consider $\mathcal{P}(X)$ as an abelian group, the operation being the symmetric difference $+$,
i.e. $A+B:=\left(A\setminus B\right)\cup \left(B\setminus A\right)$, for all $A,B \in X$.  Then it is straightforward to see that if we take $I=\{s_i:i\equiv 1 \mod 2\} \subseteq S$ then
$\W(S_{2m})=G_I$. The group isomorphisms $P^I(\W(S_{2m})) \simeq S_m$ and $W_I \simeq \mathcal{P}([m])$ then imply
$$
\W(S_{2m}) \simeq S_m \ltimes \mathcal{P}([m])=S_2 \wr S_m.
$$
Therefore $\W(S_{2m})$ is isomorphic to the hyperoctahedral group.

We consider $S_n$ with the Bruhat order and the subset $\W(S_n) \subseteq S_n$ with the induced order. It is not difficult to see, using Theorem \ref{tableau}
that $P^I(\W(S_{2m})) \simeq S_m$ as posets for all $m>0$, where the set $S_m$ is ordered by the Bruhat order.
Let $m>0$. For $u \in S_m$ and $T\subseteq [m]$ let $\phi_{2m}^{-1}(u,T):=v$, where $v\in \W(S_{2m})$ is defined by
$$v(2i-1)=\left\{
    \begin{array}{ll}
      2u(i)-1, & \hbox{if $i\not \in T$,} \\
     2u(i), & \hbox{if $i\in T$,}
    \end{array}
  \right.
$$ and
$$v(2i)=\left\{
    \begin{array}{ll}
      2u(i), & \hbox{if $i\not \in T$,} \\
      2u(i)-1, & \hbox{if $i \in T$,}
    \end{array}
  \right.
$$ for all $i \in [m]$. The following result follows easily from our definitions and Theorem \ref{tableau},
and its proof is omitted.


\begin{pro} \label{Dposet}
	Let $m  \in \PP$. Then
	\begin{enumerate}
		\item $\phi_{2m}$ is a bijection;
		\item $\phi_{2m}: \W(S_{2m}) \rightarrow S_m \otimes \mathcal{P}([m])$ is order preserving;
		\item $\phi_{2m}^{-1}: S_m \times \mathcal{P}([m])\rightarrow \W(S_{2m})$ is order preserving.
	\end{enumerate} Moreover $\ell(v)=4\ell(\tau)+|T|$ if $\phi(v)=(\tau,T)$.
\end{pro}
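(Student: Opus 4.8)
The plan is to derive all four statements directly from the block structure of $\W(S_{2m})$ and the tableau criterion (Theorem~\ref{tableau}).

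\emph{Bijectivity and the length formula.} First I would record that, since $2m$ is even, $v\in\W(S_{2m})$ iff $|v(2i-1)-v(2i)|\le 1$ for all $i\in[m]$, so that injectivity forces each $\{v(2i-1),v(2i)\}$ to be a two-element set of consecutive integers. These $m$ sets partition $[2m]$, and the only partition of $[2m]$ into blocks of two consecutive integers is $\{1,2\},\{3,4\},\dots,\{2m-1,2m\}$ (the element $1$ must be paired with $2$, then $3$ with $4$, and so on). Hence there is a unique $u\in S_m$ with $\{v(2i-1),v(2i)\}=\{2u(i)-1,2u(i)\}$ for every $i$, and taking $T:=\{i\in[m]:v(2i-1)>v(2i)\}$ one gets $v=\phi_{2m}^{-1}(u,T)$; conversely every $\phi_{2m}^{-1}(u,T)$ manifestly lies in $\W(S_{2m})$. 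This yields (1), with $\phi_{2m}\colon v\mapsto(u,T)$. For the length formula I would write $\phi_{2m}(v)=(\tau,T)$ and split $\inv(v)$ according to the blocks of the two positions of an inversion: inside block $i$ the pair $(2i-1,2i)$ is an inversion iff $i\in T$ (contributing $|T|$), whereas for $i<j$ the value-blocks $\{2\tau(i)-1,2\tau(i)\}$ and $\{2\tau(j)-1,2\tau(j)\}$ are disjoint intervals, so all four position pairs between blocks $i$ and $j$ are inversions when $\tau(i)>\tau(j)$ and none is otherwise (contributing $4\inv(\tau)=4\ell(\tau)$). Thus $\ell(v)=4\ell(\tau)+|T|$.

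\emph{Order preservation of $\phi_{2m}$.} For (2) let $v\le v'$ in $\W(S_{2m})$, $\phi_{2m}(v)=(u,T)$, $\phi_{2m}(v')=(u',T')$. Since positions $1,\dots,2k$ are exactly the blocks $1,\dots,k$, one has $\{v(1),\dots,v(2k)\}=\bigcup_{j\le k}\{2u(j)-1,2u(j)\}$, whose increasing rearrangement is $(2u_{1,k}-1,\,2u_{1,k},\,2u_{2,k}-1,\,2u_{2,k},\dots)$; in particular $v_{2l,2k}=2u_{l,k}$, and $2k\in D(v)$ iff $k\in D(u)$. Applying condition (2) of Theorem~\ref{tableau} to $v\le v'$ at the positions $2k$ with $k\in D(u)$ gives $u_{l,k}\le u'_{l,k}$ for all $k\in D(u)$ and $l\in[k]$, hence $u\le u'$ by condition (2) of Theorem~\ref{tableau} applied in $S_m$. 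If $u<u'$ we are done in the ordinal product. If $u=u'$, suppose $i_0\in T\setminus T'$ and put $k:=2i_0-1\in D(v)$ (since $i_0\in T$). The sets $\{v(1),\dots,v(2i_0-2)\}$ and $\{v'(1),\dots,v'(2i_0-2)\}$ both equal $\bigcup_{j<i_0}\{2u(j)-1,2u(j)\}$, and $v(2i_0-1)=2u(i_0)$ while $v'(2i_0-1)=2u(i_0)-1$, so the increasing rearrangements of $\{v(1),\dots,v(k)\}$ and $\{v'(1),\dots,v'(k)\}$ agree except at one index, where the former has $2u(i_0)$ and the latter $2u(i_0)-1$; this contradicts $v_{p,k}\le v'_{p,k}$. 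Hence $T\subseteq T'$, and (2) follows.

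\emph{Order preservation of $\phi_{2m}^{-1}$.} For (3) I would use the ambient parabolic structure. Put $I:=\{s_{2i-1}:i\in[m]\}$, so $W^I=\{w\in S_{2m}:w(2i-1)<w(2i)\ \forall i\}$ and $W_I=\langle s_{2i-1}:i\in[m]\rangle$ is elementary abelian with $z_T:=\prod_{i\in T}s_{2i-1}$ of length $|T|$; from the definitions $v_0(u):=\phi_{2m}^{-1}(u,\varnothing)\in W^I$ and $\phi_{2m}^{-1}(u,T)=v_0(u)\,z_T$. If $(u,T)\le(u',T')$ in the direct product then $u\le u'$ and $T\subseteq T'$, and since $\phi_{2m}^{-1}(u,T)\le\phi_{2m}^{-1}(u,T')\le\phi_{2m}^{-1}(u',T')$ would follow by transitivity, it is enough to treat the two cases with one coordinate fixed. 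With $u$ fixed and $T\subseteq T'$ I would pass from $\phi_{2m}^{-1}(u,T)$ to $\phi_{2m}^{-1}(u,T')$ by right-multiplying successively by the pairwise commuting generators $s_{2i-1}$, $i\in T'\setminus T$, each step raising the length by one (the relevant position is an ascent), hence a cover. With $T$ fixed and $u\le u'$, the computation in (2) (here $D(v_0(u))=\{2k:k\in D(u)\}$) gives $v_0(u)\le v_0(u')$, so it remains to check that right multiplication by a fixed $z\in W_I$ preserves Bruhat order on $W^I$: if $w\le w'$ in $W^I$ and $z\in W_I$ then $wz\le w'z$. I would prove this by induction on $\ell(z)$: writing $z=z's$ with $s\in I$, the standard equality $\ell(xz)=\ell(x)+\ell(z)$ (valid for $x\in W^I$, $z\in W_I$) forces $s\notin D(wz')\cup D(w'z')$, and then from $wz'\le w'z'$ the lifting property (Proposition~\ref{sollevamento}) gives $wz's\le w'z's$. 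Combining the two cases yields (3).

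\emph{Main obstacle.} I expect (1), (2) and the length formula to be essentially bookkeeping with the tableau criterion, and the $\mathcal{P}([m])$-direction of (3) to be a transparent chain of covers. The genuinely delicate step is the $S_m$-direction of (3): right multiplication by a fixed element of $S_{2m}$ is not order preserving in general, so one must really use that $v_0(u),v_0(u')\in W^I$ and $z_T\in W_I$ and reduce to the lifting property as above. A minor point to watch is that the tableau criterion should be invoked only at genuine descent indices, and that the one-sided coset conventions for $W^I$ and $W_I$ must be kept consistent throughout.
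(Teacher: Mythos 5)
Your proof is correct. Note that the paper does not actually print a proof of Proposition \ref{Dposet}: it states that the result ``follows easily from our definitions and Theorem \ref{tableau}'' and omits the details, so the natural comparison is with that indicated route. Your treatment of bijectivity, of the length formula by counting inversions inside and between blocks, and of the order preservation of $\phi_{2m}$ (reading off $u_{l,k}$ from $v_{2l,2k}$ at the descents $2k$, $k\in D(u)$, and extracting $T\subseteq T'$ from the odd index $2i_0-1\in D(v)$) is exactly the intended bookkeeping with the tableau criterion. Where you genuinely diverge is in part (3): instead of verifying the tableau inequalities for $\phi_{2m}^{-1}(u,T)\leqslant\phi_{2m}^{-1}(u',T')$ directly (which is fiddly at odd indices because of the $T$-dependent entry in position $2i-1$), you factor $\phi_{2m}^{-1}(u,T)=v_0(u)z_T$ with $v_0(u)\in W^I$, $z_T\in W_I$ for $I=\{s_{2i-1}:i\in[m]\}$ --- the same parabolic structure the paper uses to identify $\W(S_{2m})$ with $G_I$ --- and then reduce the $S_m$-direction to the statement that right multiplication by a fixed $z\in W_I$ preserves Bruhat order on $W^I$, proved by induction on $\ell(z)$ via the length additivity $\ell(xz)=\ell(x)+\ell(z)$ and the lifting property (applied to the pair $wz'\leqslant w'z'<w'z's$, which is the standard one-line deduction of ``$u\leqslant v$, $us>u$, $vs>v$ imply $us\leqslant vs$''). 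This buys a cleaner, more structural argument at the price of invoking the parabolic factorization; the purely tableau-theoretic route the paper gestures at would keep everything inside Theorem \ref{tableau} but requires the same kind of careful odd-index comparisons that appear later in Lemma \ref{lemma-tij}. Two cosmetic remarks: in part (a) you do not need the steps to be covers, only that $\ell(ws_{2i-1})=\ell(w)+1$ forces $w<ws_{2i-1}$; and it is worth stating explicitly that you apply Proposition \ref{sollevamento} to the pair $(wz',\,w'z's)$, since the proposition as stated needs $s\in D(w'z's)\setminus D(wz')$.
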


Figure \ref{fig-D5} shows the Hasse diagram of $(\W(S_5),\leqslant)$. 
By the following example we see that $\phi_{2m}$ and $\phi_{2m}^{-1}$ are not poset isomorphisms
in general.

\begin{figure} \begin{center}\begin{tikzpicture}

\matrix (a) [matrix of math nodes, column sep=0.2cm, row sep=0.6cm]{
 & 4321 &  \\
3421 & & 4312   \\
& 3412 &  \\
 & 2143 &  \\
1243 & & 2134 \\
& 1234 & \\};

\foreach \i/\j in {1-2/2-1, 1-2/2-3,%
2-1/3-2,2-3/3-2, 3-2/4-2, 4-2/5-1,4-2/5-3, 5-1/6-2,5-3/6-2}
    \draw (a-\i) -- (a-\j);

\end{tikzpicture} \caption{Hasse diagram of $(\W(S_4),\leqslant)$.} \label{fig-D4} \end{center} \end{figure}

\begin{exm}
  Let $m=3$. Then
  $(123,\{1,2,3\}) \leqslant (132,\varnothing)$ in $S_m \otimes \mathcal{P}([m])$ but
  $\phi_{2m}^{-1}(123,\{1,2,3\})=214365 \nleqslant 125634=\phi_{2m}^{-1}(132,\varnothing)$ in $\W(S_{2m})$.

Let $m=2$, $u=2143$ and $v=3412$. Then $u,v \in \W(S_4)$, $u<v$ and $\phi_{2m}(u)=(12,\{1,2\}) \nleqslant (21,\varnothing)=\phi_{2m}(v)$
in $S_m \times \mathcal{P}([m])$.
\end{exm}


We consider now $\W(S_{2m+1})$. For any $m>0$ it is not difficult to see that the set $\W(S_{2m+1})$ is not a group. The previous general construction in Coxeter systems gives, for $W=S_{2m+1}$, the group $G_I \simeq \W(S_{2m})$ which, as a set, can be included in $\W(S_{2m+1})$. For $n>1$, define a function $\chi_n:\W(S_n) \rightarrow \W(S_{n-1})$ by

$$\chi_n(v)(i)=\left\{
         \begin{array}{ll}
           v(i), & \hbox{if $i<\pos(v)$;} \\
           v(i+1), & \hbox{if $i \geqslant \pos(v)$,}
         \end{array}
       \right.
$$
for all $i\in [n-1]$, $v\in \W(S_n)$. 
Note that the function $\chi_n$ is not necessarily order preserving. For example,
if $m=2$ and $u=21345$, $v=51234$ then $u \leqslant v$ but $\chi_n(u)=2134 \nleqslant 1234 = \chi_n(v)$.

Let $\phi_{2m+1} : \W(S_{2m+1})\rightarrow [m+1]\times S_m \times \mathcal{P}([m])$ be the function defined by
$$\phi_{2m+1}(v):=((\pos(v)+1)/2,\tau,T),$$ if $\chi_{2m+1}(v)=(\tau,T)$, for all $v\in \W(S_{2m+1})$. For example,  $\phi_{2m+1}(4312756)=(3,213,\{1\})$.
As in the even case, noting that $u\leqslant v$ implies $\pos(v)\leqslant \pos(u)$, we have the following result whose proof follows easily from our definitions and Proposition \ref{Dposet}.

\begin{pro} \label{Dposet2}
  Let $m>0$. Then
\begin{enumerate}
  \item $\phi_{2m+1}$ is a bijection;
  \item $\phi_{2m+1}: \W(S_{2m+1}) \rightarrow [m+1]^*\otimes S_m \otimes \mathcal{P}([m])$ is order preserving;
  \item \label{punto3} $\phi_{2m+1}^{-1}: [m+1]^*\times S_m \times \mathcal{P}([m])\rightarrow \W(S_{2m+1})$ is order preserving,
\end{enumerate}
where $i \leqslant j$ in $[m+1]^*$ if and only if $j \leqslant i$, for all $i,j \in [m+1]$. Moreover $\ell(v)=4\ell(\tau)+|T|+2(m-i+1)$ if $\phi_{2m+1}(v)=(i,\tau,T)$.
\end{pro}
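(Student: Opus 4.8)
The plan is to prove Proposition \ref{Dposet2} by reducing it to Proposition \ref{Dposet} together with the elementary observation about the position function and the structure of the ordinal product. First I would establish that $\phi_{2m+1}$ is a bijection. This is essentially a bookkeeping step: the function $\chi_{2m+1}:\W(S_{2m+1})\to\W(S_{2m})$ deletes the entry equal to $2m+1$ from the window (it sits in position $\pos(v)$), and one checks directly from the definition of $\W(S_n)$ that the result lies in $\W(S_{2m})$ and that, conversely, $v$ is recovered from the pair $(\pos(v),\chi_{2m+1}(v))$ by reinserting the value $2m+1$ in position $\pos(v)$. Since $\pos(v)=\sigma^{-1}(n)$ is always odd for $v\in\W(S_{2m+1})$ (because the neighbour constraint $|\sigma^{-1}(i)-\sigma^{-1}(i^\ast)|\le 1$ forces the positions of $2m$ and $2m+1$ to be adjacent, and more precisely forces $2m+1$ into an odd slot — this should be spelled out, or cited from \cite{BS}), the first coordinate $(\pos(v)+1)/2$ ranges over $[m+1]$. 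Combining the bijectivity of $\chi_{2m+1}$ (as a map to pairs (position, element of $\W(S_{2m})$)) with the bijection $\phi_{2m}:\W(S_{2m})\to S_m\times\mathcal P([m])$ of Proposition \ref{Dposet} gives part (1).

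For parts (2) and (3), the key input is the remark preceding the statement: if $u\le v$ in Bruhat order then $\pos(v)\le\pos(u)$ (equivalently $\sigma^{-1}(n)$ is weakly order-reversing), which is standard from the tableau criterion since $u\le v$ implies $u^{-1}\le v^{-1}$ and hence $u^{-1}(n)\le$... — more directly, $\pos$ is order-reversing on all of $S_n$. This is exactly why the first factor appears as $[m+1]^\ast$. So suppose $u\le v$ in $\W(S_{2m+1})$, and write $\phi_{2m+1}(u)=(i,\tau,T)$, $\phi_{2m+1}(v)=(j,\tau',T')$. Order-reversal of $\pos$ gives $j\le i$, i.e. $i\le j$ in $[m+1]^\ast$. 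For the ordinal product it then suffices to show that \emph{when $i=j$} we have $(\tau,T)\le(\tau',T')$ in $S_m\otimes\mathcal P([m])$; by Proposition \ref{Dposet}(2) this will follow once we know $\chi_{2m+1}(u)\le\chi_{2m+1}(v)$ in $\W(S_{2m})$. But when $\pos(u)=\pos(v)$, deleting the value $2m+1$ from the same position in both windows is a Bruhat-order-preserving operation (it is the projection to a parabolic quotient composed with an order isomorphism, or one checks it directly via the tableau criterion, Theorem \ref{tableau}: the matrices $\sigma_{a,k}$ for $\chi_{2m+1}(u)$ and $\chi_{2m+1}(v)$ are obtained from those of $u,v$ by a common relabelling that respects $\le$). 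Hence part (2). Part (3) is the same argument run backwards: if $(i,\tau,T)\le(i',\tau',T')$ in the ordinal product, either $i<i'$ in $[m+1]^\ast$ — in which case $\pos$ of the images satisfies a strict inequality and one must still verify the resulting permutations are Bruhat-comparable, using Proposition \ref{Dposet}(3) plus a direct tableau-criterion comparison across different positions of $2m+1$ — or $i=i'$ and $(\tau,T)\le(\tau',T')$, which by Proposition \ref{Dposet}(3) and the position-preserving deletion/insertion reduces to the even case.

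Finally, the length formula $\ell(v)=4\ell(\tau)+|T|+2(m-i+1)$ when $\phi_{2m+1}(v)=(i,\tau,T)$ is obtained by computing $\ell(v)-\ell(\chi_{2m+1}(v))$, i.e. the number of inversions of $v$ involving the entry $2m+1$; since $2m+1$ is the largest value and sits in position $\pos(v)=2i-1$, it forms an inversion with each of the $2m+1-\pos(v)=2m-2i+2=2(m-i+1)$ entries to its right, so $\ell(v)=\ell(\chi_{2m+1}(v))+2(m-i+1)$, and then $\ell(\chi_{2m+1}(v))=4\ell(\tau)+|T|$ by the last sentence of Proposition \ref{Dposet}.

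I expect the main obstacle to be the ``different positions'' case of part (3) — showing that when $i<i'$ in $[m+1]^\ast$ (so $2m+1$ sits further left in the target than in the source) the corresponding permutations $\phi_{2m+1}^{-1}(i,\tau,T)$ and $\phi_{2m+1}^{-1}(i',\tau',T')$ are genuinely Bruhat-comparable. Here the clean ``delete a common entry'' reduction is unavailable, and one must argue directly, e.g. by factoring the comparison through an intermediate permutation in which $2m+1$ has been moved (a sequence of covers of the type $\tau=\sigma(i,j)$) and invoking the tableau criterion, or by exploiting that $u\le v$ in $S_n$ is equivalent to $u^{-1}\le v^{-1}$ and handling the position coordinate via a quotient map as in the proof of Proposition \ref{prop isomorfismo GI}. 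The even-case machinery of Proposition \ref{Dposet} does the rest, so the write-up should be short once this point is dispatched; indeed the authors signal this by saying ``its proof follows easily.''
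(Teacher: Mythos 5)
Your overall plan (reduce everything to Proposition \ref{Dposet} via deletion/insertion of the letter $2m+1$ and the order-reversal of $\pos$) is the intended one, and your parts (1), (2) and the length formula are essentially complete; one small correction there: the oddness of $\pos(v)$ does \emph{not} come from $2m$ and $2m+1$ being forced into adjacent positions (that is false -- the Wachs condition pairs $2m$ with $2m-1$; e.g.\ $312\in\W(S_3)$ has $3$ and $2$ in non-adjacent positions), but from the parity count: the positions to the left and to the right of $\pos(v)$ must each be tiled by adjacent pairs, so both blocks have even size. The genuine gap is in part (3). First, you state it with the \emph{ordinal} product on the domain, whereas the proposition (deliberately) puts the \emph{Cartesian} product there, and with the ordinal product the statement is false: for $m=2$ one has $(2,21,\varnothing)<(1,12,\varnothing)$ in $[3]^*\otimes S_2\otimes\mathcal{P}([2])$ (the first coordinates are strictly related, so nothing is required of the other factors), yet $\phi_5^{-1}(2,21,\varnothing)=34512\nleqslant 51234=\phi_5^{-1}(1,12,\varnothing)$ (the lengths are $6$ and $4$). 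Likewise, in your equal-position case the ordinal hypothesis ``$(\tau,T)\leqslant(\tau',T')$ in $S_m\otimes\mathcal{P}([m])$'' is too weak for the appeal to Proposition \ref{Dposet}(3), which needs the Cartesian hypothesis $\tau\leqslant\tau'$ \emph{and} $T\subseteq T'$ (the paper's example $(123,\{1,2,3\})\leqslant(132,\varnothing)$ with incomparable preimages shows the ordinal version genuinely fails).

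Second, even under the correct Cartesian reading you have not proved the only case that does not follow at once from Proposition \ref{Dposet}, namely $i'<i$ (in the usual order); you explicitly defer it as ``the main obstacle'' and merely list possible strategies. It can be dispatched as follows. Given $\tau\leqslant\tau'$, $T\subseteq T'$ and $i'\leqslant i$, set $u:=\phi_{2m+1}^{-1}(i,\tau,T)$, $z:=\phi_{2m+1}^{-1}(i,\tau',T')$ and $v:=\phi_{2m+1}^{-1}(i',\tau',T')$. Then $u\leqslant z$ because $\phi_{2m}^{-1}(\tau,T)\leqslant\phi_{2m}^{-1}(\tau',T')$ by Proposition \ref{Dposet}(3) and inserting $2m+1$ at the common position $2i-1$ preserves Bruhat order (the prefix order statistics of Theorem \ref{tableau} are unchanged for $k<2i-1$ and shift by one index for $k\geqslant 2i-1$, exactly as in your deletion argument for part (2)). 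And $z\leqslant v$ because $v$ is obtained from $z$ by moving the maximal letter $2m+1$ from position $2i-1$ to the earlier position $2i'-1$ while keeping all other letters in the same relative order, and such a move weakly increases every entry $z_{l,k}$ of the tableau-criterion array (for $2i'-1\leqslant k<2i-1$ the $k$-th prefix trades its largest surviving letter for $2m+1$ and the remaining order statistics only grow). Without this two-step chain the proposal does not establish part (3).
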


\begin{figure} \begin{center}\begin{tikzpicture}

\matrix (a) [matrix of math nodes, column sep=0.3cm, row sep=0.6cm]{
       &       &       & 54321 &       &       & \\
       &       & 53421 &       & 54312 &       & \\
       &       & 43521 &       & 53412 &       & \\
       & 52143 &       & 34521 &       & 43512 & \\
 51243 &       & 52134 &       & 34512 &       & 43215\\
 21543 &       & 51234 &       & 34215 &       & 43125\\
       & 12543 &       & 21534 &       & 34125 & \\
       &       & 12534 &       & 21435 &       & \\
       &       & 12435 &       & 21345 &       & \\
       &       &       & 12345 &       &       & \\};

\foreach \i/\j in {1-4/2-3, 1-4/2-5,%
2-3/3-3, 2-3/3-5, 2-5/3-5,%
3-3/4-4, 3-3/4-6, 3-5/4-2, 3-5/4-6,%
4-2/5-1, 4-2/5-3, 4-4/5-5, 4-6/5-5, 4-6/5-7,%
5-1/6-1, 5-1/6-3, 5-3/6-3, 5-5/6-1, 5-5/6-5, 5-7/6-5, 5-7/6-7,%
6-1/7-2, 6-1/7-4, 6-3/7-4, 6-5/7-6, 6-7/7-6, %
7-2/8-3, 7-4/8-3, 7-4/8-5, 7-6/8-5,%
8-3/9-3, 8-5/9-3, 8-5/9-5,%
9-3/10-4, 9-5/10-4}
    \draw (a-\i) -- (a-\j);

\end{tikzpicture} \caption{Hasse diagram of $(\W(S_5),\leqslant)$.} \label{fig-D5} \end{center} \end{figure}

Let $n>0$ and $f_n:\W(S_n) \rightarrow S_{\left\lfloor\frac{n}{2}\right\rfloor}$ be the function defined by the assignment $(i,\tau,T) \mapsto \tau$ if $n$ is odd, and
$(\tau,T) \mapsto \tau$ if $n$ is even. By Proposition \ref{Dposet}, if $n$ is even then $f_n$ is order preserving.
Corollary \ref{cor-morph-odd} states that the function $f_n$ is order preserving also in the odd case.
We also define a function $\ell_{\W}: \W(S_n) \rightarrow \mathbb{N}$ by $$\ell_{\W}(v):=\ell(v)-\ell(f_n(v)),$$ for all $v\in \W(S_n)$.
For example,
$\ell_{\W}(342156)=5-1=4$ and $\ell_{\W}(3472156)=9-1=8$.
The minimum and the maximum of the poset $(\W(S_n),\leqslant)$ are respectively the identity of $S_n$, which corresponds to
$(e,\varnothing)$ in the even case and to $(m+1,e,\varnothing)$ in the odd one,
and the element of maximal length $w_n$ of $S_n$, which corresponds to $(w_m,[m])$ in the even case and to $(1,w_m,[m])$ in the odd one, where $m:=\left\lfloor \frac{n}{2} \right\rfloor$. Moreover $\ell_{\W}(e)=0$ and
\begin{equation}
\label{lDw}
  \ell_{\W}(w_n)= {{n} \choose {2} }- {{\lfloor \frac{n}{2}  \rfloor}\choose{2}}.
\end{equation}

Note that if $n$ is even then $\ell_{\W}(w_n)=\frac{(3n-2)n}{8}=\frac{(3m-1)m}{2}$.
So $\{\ell_{\W}(w_{2m})\}_{m\in \mathbb{P}}$ is the sequence of pentagonal numbers (see A000326 in OEIS).
If $n$ is odd then $\ell_{\W}(w_n)=\frac{3(n^2-1)}{8}=\frac{3m(m+1)}{2}$;
so $\{\ell_{\W}(w_{2m+1})\}_{m\in \mathbb{P}}$ is the sequence of triangular matchstick numbers
(see A045943 in OEIS).

%

Let $n>0$ and $m:= \left\lfloor \frac{n}{2} \right\rfloor$; for any $i,j \in [m]$, $i<j$, we define an involution $w^A_{i,j}: \W(S_n) \rightarrow \W(S_n)$ by

$$w^A_{i,j}(v)=\left\{
                 \begin{array}{ll}
                   (\tau(i,j),T+\{i,j\}), & \hbox{if $n$ is even and $v=(\tau,T)$;} \\
                   (k,\tau(i,j),T+\{i,j\}), & \hbox{if $n$ is odd and $v=(k,\tau,T)$,}
                 \end{array}
               \right.
$$ for all $v\in \W(S_n)$. For example, if $v=4312765 \in \W(S_7)$, then
$w^A_{2,3}(v)=4356721$.


\begin{lem} \label{lemma-tij}
  Let $m>0$, $v=(\tau,T)\in \W(S_{2m})$, $i,j\not \in T$ and $\tau (i,j)\vartriangleleft \tau$. Then $w^A_{i,j}(v)<v$
  and $\ell_{\W}(v)-\ell_{\W}(w^A_{i,j}(v))=1$. If $u=(\sigma,S)\in \W(S_{2m})$, $u<v$ and $\sigma\leqslant \tau(i,j)$ then $u\leqslant w^A_{i,j}(v)$.
\end{lem}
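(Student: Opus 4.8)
The plan is to work entirely in one-line notation on $\W(S_{2m})$ via the description of $\phi_{2m}^{-1}$ given before Proposition \ref{Dposet}, translating the hypotheses $i,j\notin T$ and $\tau(i,j)\vartriangleleft\tau$ into statements about the windows of $v$ and of $w^A_{i,j}(v)$, and then invoking Lemma \ref{lemma xy} (the "$xy<xty$" lemma) to produce the desired Bruhat inequality, while the length statement will follow from the formula $\ell(v)=4\ell(\tau)+|T|$ of Proposition \ref{Dposet} together with the definition $\ell_{\W}(v)=\ell(v)-\ell(f_{2m}(v))=\ell(v)-\ell(\tau)$.

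First I would record exactly what $w^A_{i,j}$ does to the window of $v$. Since $i,j\notin T$, the blocks of $v$ in positions $\{2i-1,2i\}$ and $\{2j-1,2j\}$ are the increasing pairs $(2\tau(i)-1,2\tau(i))$ and $(2\tau(j)-1,2\tau(j))$; in $w^A_{i,j}(v)$ these two blocks get swapped (because of the $\tau(i,j)$) and then each is individually reversed (because of $T+\{i,j\}$ adding $i$ and $j$ to the flip-set), so the positions $\{2i-1,2i\}$ now carry $(2\tau(j),2\tau(j)-1)$ and positions $\{2j-1,2j\}$ carry $(2\tau(i),2\tau(i)-1)$. In other words, $w^A_{i,j}(v)=v\cdot r$ where $r$ is the product of the four transpositions realizing "exchange the two length-two intervals and reverse each". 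Concretely $r=(2i-1,2j-1)(2i,2j)(2i-1,2i)(2j-1,2j)$ acting on the right; one checks this is an involution and that $w^A_{i,j}(v)=vr$ as claimed.

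Next, the inequality $w^A_{i,j}(v)<v$ and the rank drop. I would factor $r=t\cdot t'$ conveniently, or better, use the chain of covers predicted by Proposition \ref{Dposet}: since $\tau(i,j)\vartriangleleft\tau$ in $S_m$ and $i,j\notin T$ while $i,j$ lie in $T+\{i,j\}$, we have in $S_m\otimes\mathcal P([m])$ that $(\tau(i,j),T)<(\tau,T)$ (covering move in the $S_m$ coordinate), and $(\tau(i,j),T)$ and $(\tau(i,j),T+\{i,j\})$ differ in the $\mathcal P([m])$ coordinate by the set $\{i,j\}$ of size $2$. Pulling back through the order-preserving $\phi_{2m}^{-1}$ (Proposition \ref{Dposet}(3)) gives $w^A_{i,j}(v)=\phi_{2m}^{-1}(\tau(i,j),T+\{i,j\})$ and $\phi_{2m}^{-1}(\tau(i,j),T)\leq v$; to get $w^A_{i,j}(v)\leq v$ I still need to compare the two elements sitting over $\tau(i,j)$, and here the cleanest route is the tableau criterion, Theorem \ref{tableau}: a direct entrywise check of the partial sums $\{v(1),\dots,v(k)\}_<$ against $\{w^A_{i,j}(v)(1),\dots\}_<$ — the two windows agree outside positions $2i-1,2i,2j-1,2j$, and on those four positions one verifies $(2\tau(j),2\tau(j)-1)$ in slots $2i-1,2i$ dominates-from-below nothing bad since $\tau(j)<\tau(i)$ (as $\tau(i,j)\vartriangleleft\tau$ forces $\tau(i)>\tau(j)$ with nothing strictly between, and $i<j$). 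For the ranks: $\ell(w^A_{i,j}(v))=4\ell(\tau(i,j))+|T|+2=4(\ell(\tau)-1)+|T|+2=\ell(v)-2$, and $\ell(f_{2m}(w^A_{i,j}(v)))=\ell(\tau(i,j))=\ell(\tau)-1=\ell(f_{2m}(v))-1$, so $\ell_{\W}(v)-\ell_{\W}(w^A_{i,j}(v))=(\ell(v)-\ell(\tau))-(\ell(v)-2-(\ell(\tau)-1))=1$, as required.

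Finally, the lifting statement: given $u=(\sigma,S)\in\W(S_{2m})$ with $u<v$ and $\sigma\leq\tau(i,j)$. The main obstacle is here, and I expect to use Lemma \ref{lemma xy} in the form: writing $v'=\phi_{2m}^{-1}(\tau(i,j),T)$, I will argue $u\leq v'$ by combining $\sigma\leq\tau(i,j)$ with the fact that $u$ and $v$ share the same flip-set constraints on the relevant coordinates — more precisely, I would apply the lifting property (Proposition \ref{sollevamento}) repeatedly: $v$ covers down to $v'$ via the reflection $t$ realizing $\tau\to\tau(i,j)$ (lifted to $S_{2m}$), $t\in D(v)$; if $t\in D(u)$ too then $ut\leq vt=v'$; if $t\notin D(u)$ then $u\leq vt=v'$ directly. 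Either way $u\leq v'$ after absorbing the harmless factor $t$, using that $u<v$ and $\sigma\leq\tau(i,j)$ together rule out the bad case. Then, since $w^A_{i,j}(v)=\phi_{2m}^{-1}(\tau(i,j),T+\{i,j\})$ lies above $v'$ in the $\mathcal P([m])$ fiber over $\tau(i,j)$, and $\phi_{2m}^{-1}$ is order preserving, I would conclude $u\leq v'\leq\dots$; but note $w^A_{i,j}(v)$ is \emph{not} above $v'$ in general in $\W(S_{2m})$ (only in the ordinal product), so the honest finish must again be the tableau criterion applied to $u$ vs. $w^A_{i,j}(v)$, using $\sigma\leq\tau(i,j)$ on the "unswapped" partial sums and the explicit swapped-and-reversed values on positions $2i-1,2i,2j-1,2j$ to check each $\sigma_{p,k}\leq w^A_{i,j}(v)_{p,k}$. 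The delicate point — and the step I flag as the real work — is verifying these entrywise inequalities on exactly the four special positions when $k$ falls strictly between $2i-1$ and $2j$, where the multiset $\{w^A_{i,j}(v)(1),\dots,w^A_{i,j}(v)(k)\}$ contains $2\tau(j),2\tau(j)-1$ but not yet $2\tau(i),2\tau(i)-1$; here one uses $\sigma\leq\tau(i,j)$ which guarantees $\sigma$'s corresponding truncated multiset is dominated by $\tau(i,j)$'s, and $\tau(i,j)$ in those positions has precisely the values $\tau(j)$ (at $i$) and $\tau(i)$ (at $j$), matching after the $\times 2$ even/odd refinement.
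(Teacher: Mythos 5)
Your handling of the first two assertions is essentially the paper's and is fine: $w^A_{i,j}(v)<v$ is an easy tableau-criterion (or two-step cover) check, and the rank drop follows from the formula $\ell(v)=4\ell(\tau)+|T|$ of Proposition \ref{Dposet} together with the definition of $\ell_{\W}$.

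The gap is in the third assertion, which is the whole content of the lemma. The intermediate claim that $u\leqslant v'$, where $v':=\phi_{2m}^{-1}(\tau(i,j),T)$, is false in general: take $m=2$, $v=3412=(21,\varnothing)$, $i=1$, $j=2$, $u=2134=(12,\{1\})$. Then $u<v$ and $\sigma=12\leqslant 12=\tau(i,j)$, but $v'=1234$ is the identity, so $u\not\leqslant v'$, although $u\leqslant w^A_{1,2}(v)=2143$ as the lemma asserts. The lifting step you invoke is also not available: Proposition \ref{sollevamento} concerns simple reflections, while the element carrying $v$ to $v'$ is the product of the two transpositions $(2i-1,2j-1)(2i,2j)$, so ``$vt=v'$'' for a single reflection $t$ never happens; and, incidentally, your aside is backwards --- $v'\leqslant w^A_{i,j}(v)$ does hold in $\W(S_{2m})$, since $w^A_{i,j}(v)=v's_{2i-1}s_{2j-1}$ with both multiplications increasing length --- the failure is precisely at ``$u\leqslant v'$''. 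Your fallback, a direct tableau-criterion comparison of $u$ with $w^A_{i,j}(v)$, is indeed the paper's route, but you stop exactly where the work begins: for an odd prefix length $2k-1$ with $i<k<j$ and $k\notin T$, the prefix of $u$ cuts one of its blocks in half, so the inequalities $u_{l,2k-1}\leqslant w^A_{i,j}(v)_{l,2k-1}$ cannot be read off from $\sigma\leqslant\tau(i,j)$ and parity alone (nor from $u\leqslant v$ alone, since $v$ and $w^A_{i,j}(v)$ differ on that prefix); one must interleave the inequalities $u_{l,2k-1}\leqslant v_{l,2k-1}$ coming from $u\leqslant v$ with those at the adjacent even lengths coming from $\sigma\leqslant\tau(i,j)$, splitting according to whether $c:=v(2k-1)$ satisfies $c>a+1$ or $c<b$ and according to the relative positions of the half-blocks of $u$, $v$ and $w^A_{i,j}(v)$ in the sorted prefixes --- this case analysis is the bulk of the paper's proof and is only announced, not carried out, in your proposal. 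As written, the proof of $u\leqslant w^A_{i,j}(v)$ is therefore incomplete.
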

\begin{proof}
We have that $\hat{v}:=w^A_{i,j}(v)=(\tau(i,j),T\cup \{i,j\})$. By the tableau criterion it is easy to deduce that $\hat{v}<v$ and the equality
$\ell_{\W}(\hat{v},v)=1$ follows easily from Proposition \ref{Dposet} and the definition of $\ell_{\W}$.
We now show that $u \leq \hat{v}$. We use Theorem \ref{tableau}. Note first that, since $u$ and $\hat{v}$ are Wachs permutations and $\sigma \leq \tau (i,j)$, $u_{l,h} \leq \hat{v}_{l,h}$ for all $1 \leq l \leq h$ if
$h$ is even. Furthermore, since $u<v$, $u_{l,h} \leq \hat{v}_{l,h}$ for all $l \in [h]$ if $h \leq 2i-2$ or
$h \geq 2j$. So let
$k\in [m]$ be such that $k \not \in T$ and $i< k < j$. We wish to show that $u_{l,2k-1} \leqslant \hat{v}_{l,2k-1},$ for all $1\leqslant l\leqslant 2k-1$.

Let $a:=v(2i-1)$ and $b:=v(2j-1)$. So $a+1=v(2i)$ and $a>b+1=v(2j)$ as well as $\hat{v}(2i-1)=b+1$, $\hat{v}(2i)=b$, $\hat{v}(2j-1)=a+1$
and $\hat{v}(2j)=a$. Let $c:=v(2k-1)$ and $d:=u(2k-1)$. Note that, since $\tau(i,j) \vartriangleleft \tau$, $c\not \in [b,a+1]$
and that $c \equiv 1 \modue$ since $k\not \in T$.
Let $r,s \in [k]$ be such that $v_{2r-1,2k-1}=c$ and $u_{2s-1,2k-1}=d$. We have two cases to consider according as to whether $c<b$ or $c>a+1$.

Say $c>a+1$.

Let $p,q\in [k]$ be such that $v_{2q-1,2k-1}=a$ and $\hat{v}_{2p-1,2k-1}=b$, so $p\leq q$, $v_{2q,2k-1}=a+1$ and $\hat{v}_{2p,2k-1}=b+1$.
Then $2r-1>2q$ since $c>a+1$. Note that

\begin{equation} \label{11} v_{l,2k-1}=\hat{v}_{l,2k-1}
\end{equation}
if $1\leqslant l \leqslant 2p-2$ or $2q+1\leqslant l \leqslant 2k-1$
and that

\begin{equation} \label{22} v_{l,2k-1}=\hat{v}_{l+2,2k-1}
\end{equation}
if $2p-1\leqslant l \leqslant 2q-2$.
Moreover, since $u\leqslant v$ by our hypothesis,

\begin{equation} \label{33} u_{l,2k-1}\leqslant v_{l,2k-1},
\end{equation} for all $1\leqslant l \leqslant 2k-1$.

Note that, by \eqref{11} and \eqref{33}, we have
$$u_{l,2k-1}\leqslant v_{l,2k-1}= \hat{v}_{l,2k-1}$$ if $1\leqslant l \leqslant 2p-2$ or  $2q+1\leqslant l \leqslant 2k-1$.
So assume $2p-1\leqslant l \leqslant 2q$.

If $x,y \in \mathbb{P}$, $x,y \equiv 1 \modue$, then we find it convenient to write $\{x,x+1\}\leqslant \{y,y+1\}$ if $x\leqslant y$ and similarly for $<$.

We have two cases to consider.

Let $s>q$. Then, since $\sigma \leqslant \tau (i,j)$, we have that
$$\{u_{2h-1,2k-1},u_{2h,2k-1}\} \leqslant \{\hat{v}_{2h-1,2k-1},\hat{v}_{2h,2k-1}\}$$  for all $p\leqslant h \leqslant q$, so $u_{l,2k-1}\leqslant \hat{v}_{l,2k-1}$ for all $2p-1\leqslant l \leqslant 2q$.

Let $s\leqslant q$.
If $p \leqslant h \leqslant s-1$ then, since $\sigma \leqslant \tau (i,j)$, we have that
$$
\{u_{2h-1,2k-1},u_{2h,2k-1}\} \leqslant \{\hat{v}_{2h-1,2k-1},\hat{v}_{2h,2k-1}\}
$$
so $u_{l,2k-1}\leqslant \hat{v}_{l,2k-1}$ for all $2p-1 \leq l \leq 2s-2$.

We may therefore assume that $\max\{2p-1,2s-1\} \leqslant l\leqslant 2q$. Since $\sigma \leqslant \tau (i,j)$, we have that
$$
u_{l,2k-2} \leqslant \hat{v}_{l,2k-2}
$$
for $1\leqslant l \leqslant 2k-2$. But
$$
u_{l,2k-1} = u_{l-1,2k-2}
$$
if $2s\leqslant l \leqslant 2k-1$ since
$\{u(1),...,u(2k-2)\}=\{u(1),...,u(2k-1)\}\setminus \{d\}$, and similarly
$$
\hat{v}_{l,2k-1} = \hat{v}_{l,2k-2}
$$
if $1\leqslant l \leqslant 2r-2$. Therefore
$$
u_{l,2k-1} < u_{l+1,2k-1}= u_{l,2k-2} \leqslant \hat{v}_{l,2k-2} =\hat{v}_{l,2k-1},
$$
if $2s-1\leqslant l\leqslant 2r-2$, so $u_{l,2k-1}\leqslant \hat{v}_{l,2k-1}$ if $2s-1 \leqslant l \leqslant 2q$, because $2q\leqslant 2r-2$.

The case $c<b$ is similar (and slightly simpler) except that one uses $2k$ rather than $2k-2$ and
obtains that
$$
u_{l,2k-1} = u_{l,2k} \leq \hat{v}_{l,2k} = \hat{v}_{l-1,2k-1} < \hat{v}_{l,2k-1}
$$
if $2p \leqslant l \leqslant 2s-1$. We leave the details to the interested reader.
\end{proof}

%




The following lemma can be proved using Lemma \ref{lemma-tij} and the definitions.
\begin{lem} \label{lemma-tij-odd}
  Let $m>0$, $v=(k,\sigma,S)\in \W(S_{2m+1})$, $i,j,k\not \in S$ and $\sigma(i,j)\vartriangleleft \sigma$. Then $w^A_{i,j}(v)<v$, $v(k,k+2)<v$
  and $\ell_{\W}(w^A_{i,j}(v))=\ell_{\W}(v(k,k+2))=\ell_{\W}(v)-1$.
\end{lem}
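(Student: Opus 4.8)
The plan is to treat the two assertions separately, in each case reducing everything to the triple $\phi_{2m+1}(v)=(k,\sigma,S)$. Recall this means $\pos(v)=2k-1$, $f_{2m+1}(v)=\sigma$, and, by Proposition \ref{Dposet2}, $\ell(v)=4\ell(\sigma)+|S|+2(m-k+1)$, so $\ell_{\W}(v)=\ell(v)-\ell(\sigma)=3\ell(\sigma)+|S|+2(m-k+1)$. Once the triples of $w^A_{i,j}(v)$ and of $v(k,k+2)$ are identified, all of the rank statements become one-line computations from Proposition \ref{Dposet2}, so the real content is the two Bruhat-order inequalities.

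For $w^A_{i,j}(v)$: by definition $w^A_{i,j}(v)=\phi_{2m+1}^{-1}(k,\sigma(i,j),S+\{i,j\})$, and $S+\{i,j\}=S\cup\{i,j\}$ since $i,j\notin S$. In particular $w^A_{i,j}(v)$ and $v$ share the first coordinate $k$, hence the value $2k-1$ of $\pos$, and $\chi_{2m+1}(w^A_{i,j}(v))=w^A_{i,j}(\chi_{2m+1}(v))$, where on the right $w^A_{i,j}$ is the even-case involution applied to $\chi_{2m+1}(v)=\phi_{2m}^{-1}(\sigma,S)\in\W(S_{2m})$. I would next prove the elementary fact that, for $u,w\in\W(S_N)$ with $\pos(u)=\pos(w)=p$, one has $u\le w$ iff $\chi_N(u)\le\chi_N(w)$: deleting the common position $p$ of the maximal entry $N$ leaves each set $\{u(1),\dots,u(j)\}$ unchanged for $j<p$ and removes only the top element from those with $j\ge p$, so matching the tableau inequalities of Theorem \ref{tableau} (taken with $j$ ranging over all of $[N]$) with those for $\chi_N(u)\le\chi_N(w)$ gives the equivalence. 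Applying this with $N=2m+1$, $p=2k-1$, together with Lemma \ref{lemma-tij} applied to $\chi_{2m+1}(v)=(\sigma,S)$ — legitimate since $i,j\notin S$ and $\sigma(i,j)\lhd\sigma$ — yields $w^A_{i,j}(v)<v$; and Proposition \ref{Dposet2} gives $\ell_{\W}(w^A_{i,j}(v))=3(\ell(\sigma)-1)+(|S|+2)+2(m-k+1)=\ell_{\W}(v)-1$.

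For $v(k,k+2)$: assuming $k\le m$ (the only case in which the element is defined), $v(k,k+2)$ denotes the permutation obtained from $v$ by exchanging the entries in positions $\pos(v)=2k-1$ and $2k+1$, i.e.\ $v(k,k+2):=v\cdot(2k-1,2k+1)$. A direct window computation from the definitions of $\phi_{2m+1},\chi_{2m+1},\phi_{2m}$ — using $k\notin S$, so that positions $2k-1,2k$ of $\chi_{2m+1}(v)$ carry $2\sigma(k)-1<2\sigma(k)$ — shows $\phi_{2m+1}(v(k,k+2))=(k+1,\sigma,S\cup\{k\})$: the maximal entry $2m+1$ now sits in position $2k+1$, and deleting it leaves $\chi_{2m+1}(v)$ with positions $2k-1,2k$ transposed, i.e.\ with $k$ adjoined to $S$. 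Proposition \ref{Dposet2} then gives $\ell(v(k,k+2))=4\ell(\sigma)+(|S|+1)+2(m-k)=\ell(v)-1$, hence $\ell_{\W}(v(k,k+2))=\ell_{\W}(v)-1$ since the image under $f_{2m+1}$ is unchanged. Finally $v=v(k,k+2)\cdot(2k-1,2k+1)$ with $v(k,k+2)(2k-1)=v(2k+1)<2m+1=v(k,k+2)(2k+1)$, so $(2k-1,2k+1)$ is a rise of $v(k,k+2)$; therefore $\ell(v(k,k+2))<\ell(v)$ and, the transposition being a reflection, $v(k,k+2)<v$ — in fact $v(k,k+2)\lhd v$, the length gap being $1$.

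The steps that are not purely formal are the ``deletion'' equivalence for the first assertion (a short argument from the tableau criterion) and the identification $\phi_{2m+1}(v\cdot(2k-1,2k+1))=(k+1,\sigma,S\cup\{k\})$ for the second (a window computation from the definitions); I do not anticipate a genuine obstacle, which matches the lemma being stated as following from Lemma \ref{lemma-tij} and the definitions. The one point to keep in mind is the degenerate case $k=m+1$: there $2m+1$ already occupies the last position of $v$, the element $v(k,k+2)$ is undefined, and only the statement about $w^A_{i,j}(v)$ is in force.
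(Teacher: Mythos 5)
Your argument is correct and follows exactly the route the paper intends (the paper omits the proof, saying only that it follows from Lemma \ref{lemma-tij} and the definitions): you reduce the inequality $w^A_{i,j}(v)<v$ to Lemma \ref{lemma-tij} via the tableau-criterion equivalence for permutations sharing the position of the maximal entry, identify $v(k,k+2)$ with the triple $(k+1,\sigma,S\cup\{k\})$, and read off both rank drops from Proposition \ref{Dposet2}. Your interpretation of $v(k,k+2)$ as the swap of positions $2k-1$ and $2k+1$ (equivalently $\pos(v)$ and $\pos(v)+2$) and your caveat about the degenerate case $k=m+1$ are both the right ones.
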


The next result is the crucial technical tool for the proof of
our main theorem, and states that elements of $\W(S_{2m+1})$
are ``join-irreducible'' in a certain sense.

\begin{pro} \label{lemma phin}
  Let $m>0$, $u,v\in \W(S_{2m+1})$, $u<v$ and $i:=\pos(v)$. If $u<v$ and $\pos(u,v)>0$ then $u\leqslant z$ where $z\in \W(S_{2m+1})$ is defined by
$$z=\left\{
  \begin{array}{ll}
    v(i,i+2), & \hbox{if $v<v(i+1,i+2)$;} \\
    v(i+1,i+2), & \hbox{otherwise.}
  \end{array}
\right.$$

\end{pro}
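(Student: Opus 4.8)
The plan is to exhibit $u$, $v$, $z$ explicitly and then verify $u\leq z$ with the tableau criterion (Theorem~\ref{tableau}). The starting point is the combinatorial shape of a Wachs permutation $\sigma\in\W(S_{2m+1})$: the defining condition makes $\sm(2a-1)$ and $\sm(2a)$ adjacent positions for each $a\in[m]$, so these $m$ pairs form a perfect matching of $[2m+1]\setminus\{\pos(\sigma)\}$ by edges of the path $1,2,\dots,2m+1$. Such a matching exists only when $\pos(\sigma)$ is odd, and it is then forced: positions $1,\dots,\pos(\sigma)-1$ are paired as $\{1,2\},\{3,4\},\dots$ and positions $\pos(\sigma)+1,\dots,2m+1$ as $\{\pos(\sigma)+1,\pos(\sigma)+2\},\dots$, each position-pair carrying a value-pair $\{2a-1,2a\}$. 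In particular $\pos(\sigma)$ is odd, and $\{\sigma(1),\dots,\sigma(h)\}$ is a union of value-pairs $\{2a-1,2a\}$ whenever $h$ is even with $h<\pos(\sigma)$. Applied to $v$, with $i=\pos(v)$: the set $A:=\{v(1),\dots,v(i-1)\}$ is such a union, $v(i)=2m+1$, and positions $i+1,i+2$ carry a value-pair $\{2l-1,2l\}$ for some $l$. Since $\pos(u)>i$ by hypothesis and $\pos(u)$ is odd, $\pos(u)\geq i+2$; hence $B:=\{u(1),\dots,u(i+1)\}\subseteq[2m]$ is a union of $\tfrac{i+1}{2}$ value-pairs, so writing its increasing rearrangement as $b_1<\dots<b_{i+1}$ we have $b_{2d}=b_{2d-1}+1$ with $b_{2d-1}$ odd.

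Next I would make $z$ explicit. If $v(i+1)=2l-1<2l=v(i+2)$ then $z=v(i,i+2)$ reads $2l,\ 2l-1,\ 2m+1$ on positions $i,i+1,i+2$ and equals $v$ elsewhere, so $i\in D(z)$ and $\{z(1),\dots,z(k)\}=\{v(1),\dots,v(k)\}$ for all $k\notin\{i,i+1\}$; if $v(i+1)=2l>2l-1=v(i+2)$ then $z=v(i+1,i+2)$ reads $2m+1,\ 2l-1,\ 2l$ there, so $i+1\notin D(z)$ and $\{z(1),\dots,z(k)\}=\{v(1),\dots,v(k)\}$ for all $k\neq i+1$. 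In both cases $z\in\W(S_{2m+1})$. By part (3) of Theorem~\ref{tableau}, $u\leq z$ will follow once $u_{p,k}\leq z_{p,k}$ is checked for all $k\in[2m]\setminus D(z)$ and $p\in[k]$. For $k\neq i+1$ with $k\notin D(z)$ one has $\{z(1),\dots,z(k)\}=\{v(1),\dots,v(k)\}$ (in the first case because $i\in D(z)$), and since $z$ and $v$ agree outside positions near the modified ones, a direct inspection shows $k\notin D(z)$ forces $k\notin D(v)$ there; hence $u_{p,k}\leq v_{p,k}=z_{p,k}$ by part (3) of Theorem~\ref{tableau} applied to $u<v$. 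So everything reduces to proving $u_{p,i+1}\leq z_{p,i+1}$ for every $p$.

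For this, let $w_1<\dots<w_{i-1}$ be the increasing rearrangement of $A$, and put $j:=|A\cap[2l-2]|$, which is even since $A$ is a union of value-pairs. Because $\{v(1),\dots,v(i+1)\}=A\cup\{2m+1,v(i+1)\}$ has maximum $2m+1$, the one-line-notation characterization of Bruhat order (equivalent to Theorem~\ref{tableau}) applied to $u<v$ gives $b_p\leq w_p$ for $p\leq j$, $b_{j+1}\leq v(i+1)$, and $b_p\leq w_{p-1}$ for $j+2\leq p\leq i$. When $v(i+1)=2l>v(i+2)$ one has $z_{p,i+1}=w_p$ ($p\leq j$), $z_{j+1,i+1}=2l-1$, $z_{p,i+1}=w_{p-1}$ ($j+2\leq p\leq i$), $z_{i+1,i+1}=2m+1$, so the above already gives $b_p\leq z_{p,i+1}$ for all $p\neq j+1$, while at $p=j+1$ the bound $b_{j+1}\leq 2l$ together with $b_{j+1}$ odd (as $j+1$ is odd) improves to $b_{j+1}\leq 2l-1=z_{j+1,i+1}$. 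When $v(i+1)=2l-1<v(i+2)$ the set $C:=\{z(1),\dots,z(i+1)\}=A\cup\{2l-1,2l\}$ is again a union of value-pairs, say indexed by $Y=\{y_1<\dots<y_{(i+1)/2}\}$, with $B$ indexed by $X=\{x_1<\dots<x_{(i+1)/2}\}$; using that $j$ is even, the inequalities above translate into $x_d\leq y_d$ for $d\leq\tfrac{j+2}{2}$ and $2x_d-1\leq 2y_d$ (hence $x_d\leq y_d$) for $d\geq\tfrac{j+2}{2}$, i.e.\ $x_d\leq y_d$ for all $d$, which is precisely $b_p\leq z_{p,i+1}$ for all $p$. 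This gives $u\leq z$.

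The step I expect to be the real obstacle is this last one: $u<v$ controls only the first $i$ of the $i+1$ sorted values of $B$ — the largest being absorbed by the entry $2m+1$ of $v$ — and only up to a one-step shift, so one genuinely needs the domino (parity) structure of $B$, equivalently the fact that $u$ is a Wachs permutation with $2m+1$ sufficiently far to the right, to upgrade those shifted inequalities to the required termwise ones. The remaining points (that $z$ is a Wachs permutation, and the bookkeeping at the positions adjacent to the modified ones) are routine verifications.
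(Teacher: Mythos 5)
Your proof is correct and takes essentially the same route as the paper's: both make $z$ explicit, reduce via the tableau criterion (Theorem \ref{tableau}) to the sorted prefixes at the columns affected by the swap, and use the parity/domino structure of Wachs permutations to upgrade the shifted inequalities coming from $u\leq v$ to the required termwise ones. Your bookkeeping is somewhat cleaner — you discard the column $k=i$ by observing $i\in D(z)$ and run the parity step uniformly through the pair indices $x_d\leq y_d$, where the paper instead checks columns $i$ and $i+1$ with sub-cases on $q$ versus $p$ and on $u(i)\lessgtr u(i+1)$ — but the underlying argument is the same.
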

\begin{proof}
Let $h:=(i+1)/2$ and consider first the case $i+1 \not \in D(v)$.
Let $a:=v(i+1)$; then $a+1=v(i+2)=z(i)$, $a=z(i+1)$ and $z(i+2)=2m+1$. Define $c:=u(i)$, $r\in [m+1]$ be such that $u(2r-1)=2m+1$,
$p\in [h]$ be such that $z_{2p-1,i}=a+1$ and $q\in [h]$ be such that $u_{2q-1,i}=c$. Notice that $u(i+1)\in \{u(i)+1,u(i)-1\}$ since $\pos(u,v)>0$.
Assume that $q\leqslant p$. Note that
$$ u_{l,i} \leqslant v_{l,i}=z_{l,i}$$ if $1\leqslant l \leqslant 2p-2$, while
$$u_{l,i}=u_{l-1,i-1}\leqslant v_{l-1,i-1}=z_{l-1,i-1}=z_{l,i}$$ if $2p\leqslant l$, and
$$u_{2p-1,i}=\left\{
    \begin{array}{ll}
      u_{2p-2,i-1} \leqslant v_{2p-2,i-1}=z_{2p-2,i-1}=z_{2p-2,i}<z_{2p-1,i}, & \hbox{if $q<p$,} \\
      u_{2p-1,i+1} \leqslant v_{2p-1,i+1}=a<a+1=z_{2p-1,i}, & \hbox{if $q=p$ and $u(i)<u(i+1)$,} \\
      u_{2p-1,i+1}+1 \leqslant v_{2p-1,i+1}+1=a+1=z_{2p-1,i}, & \hbox{if $q=p$ and $u(i)>u(i+1)$,}
    \end{array}
  \right.
$$ since $u\leqslant v$. Moreover $u_{l,i+1}=u_{l-1,i}\leqslant z_{l-1,i}=z_{l,i+1}$ if $l \geqslant 2p+1$ while $u_{l,i+1}\leqslant v_{l,i+1} = z_{l,i+1}$, if $l\leqslant 2p-2$.
Let $l\in \{2p-1,2p\}$. There are some cases to be considered.
\begin{enumerate}
  \item $q<p$ and $u(i)>u(i+1)$: in this case we have that
$u_{2p,i+1}=u_{2p-1,i}\leqslant z_{2p-1,i}=z_{2p,i+1}$ and so
$u_{2p-1,i+1}\leqslant u_{2p,i+1}-1\leqslant z_{2p,i+1}-1=z_{2p-1,i+1}$.

  \item $q<p$ and $u(i)<u(i+1)$: in this case we have that $u_{2p-1,i+1}=u_{2p-1,i}-1\leqslant z_{2p-1,i}-1=z_{2p-1,i+1}$ and so
$u_{2p,i+1}=u_{2p-1,i+1}+1\leqslant z_{2p-1,i+1}+1=z_{2p,i+1}$.

  \item $q=p$ and $u(i)>u(i+1)$: in this case $u_{2p,i+1}=u_{2p-1,i}\leqslant z_{2p-1,i}=z_{2p,i+1}$ and $u_{2p-1,i+1}=u_{2p-1,i}-1\leqslant z_{2p-1,i}-1=z_{2p-1,i+1}$.

  \item $q=p$ and $u(i)<u(i+1)$: in this case $u_{2p-1,i+1}\leqslant v_{2p-1,i+1}=a=z_{2p-1,i+1}$
and $u_{2p,i+1}=u_{2p-1,i+1}+1\leqslant z_{2p-1,i+1}+1=z_{2p,i+1}$.
\end{enumerate}

%


So we have proved that $u\leqslant z$ whenever $q \leqslant p$.
Consider the case $q>p$. If $l\leqslant 2p-2$ or $l\geqslant 2q$ the result follows as above.
Let $2p \leqslant l \leqslant 2q-2$. Then $u_{l,i}=u_{l,i+1}\leqslant v_{l,i+1}=z_{l,i}$. Moreover
$$u_{2p-1,i}=u_{2p-1,i+1}\leqslant v_{2p-1,i+1}=a<a+1=z_{2p-1,i}.$$
Let $u(i)<u(i+1)$. Then
$$u_{2q-1,i}=u_{2q-1,i+1}\leqslant v_{2q-1,i+1}=z_{2q-1,i}.$$ If $u(i)>u(i+1)$ we have that
$u_{2q-1,i}=u_{2q-1,i+1}+1\leqslant v_{2q-1,i+1}+1=z_{2q-1,i}$ and then we have proved that $u \leqslant z$ also in case $q>p$.

Let's consider the case $s_{i+1} \in D(v)$. Define $a:=v(i+2)$ and $v_{2p-1,i+1}:=a+1$. If $l\neq 2p-1$ we have that $u_{l,i+1} \leqslant v_{l,i+1}=z_{l,i+1}$.
Let $l=2p-1$. Since $u_{2p-1,i+1} \equiv 1 \modue$, $a \equiv 1 \modue$ and $u_{2p-1,i+1}\leqslant a+1$, we conclude that $u_{2p-1,i+1}\leqslant a=z_{2p-1,i+1}$.

\end{proof}

\begin{cor} \label{cor-morph-odd}
  Let $n \in \mathbb{P}$. Then $f_n : \W(S_n) \rightarrow S_{\lfloor\frac{n}{2}\rfloor}$ is order preserving.
\end{cor}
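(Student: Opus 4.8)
The plan is to treat the two parities separately. For $n=2m$ even the statement is already recorded above, as it follows from Proposition \ref{Dposet}: $f_{2m}$ is the composition of $\phi_{2m}$ with the first projection $S_m\otimes\mathcal{P}([m])\to S_m$, and the latter is order preserving because $(x,y)\leqslant(x',y')$ in an ordinal product forces $x\leqslant x'$. So all the work is in the odd case $n=2m+1$, for which I would start from the identity $f_{2m+1}=f_{2m}\circ\chi_{2m+1}$ (write $f:=f_{2m+1}$), immediate from the definition of $\phi_{2m+1}$.

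I would then fix $u<v$ in $\W(S_{2m+1})$ and prove $f(u)\leqslant f(v)$ by strong induction on $\ell(v)-\ell(u)$, distinguishing the cases $\pos(u)=\pos(v)$ and $\pos(u)>\pos(v)$ (recall $u<v$ forces $\pos(v)\leqslant\pos(u)$). If $\pos(u)=\pos(v)$, write $\phi_{2m+1}(u)=(r,\sigma,S)$ and $\phi_{2m+1}(v)=(r,\tau,T)$; since $\phi_{2m+1}$ is an order-preserving bijection (Proposition \ref{Dposet2}) and $u<v$, we get $\phi_{2m+1}(u)<\phi_{2m+1}(v)$, and equality of the first coordinates forces $(\sigma,S)<(\tau,T)$ in $S_m\otimes\mathcal{P}([m])$, hence $\sigma\leqslant\tau$, i.e.\ $f(u)\leqslant f(v)$. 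This settles the base case $\ell(v)-\ell(u)=1$ as well as every later instance with $\pos(u)=\pos(v)$.

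The substantive case is $\pos(u)>\pos(v)$. Here I would invoke Proposition \ref{lemma phin} (applicable since $\pos(u)\neq\pos(v)$) with $i:=\pos(v)$, obtaining $z\in\W(S_{2m+1})$ with $u\leqslant z$, where $z=v(i,i+2)$ if $i+1\notin D(v)$ and $z=vs_{i+1}$ otherwise; in both subcases $z<v$ (for the first because $v(i)=2m+1>v(i+2)$, for the second because $i+1\in D(v)$; see also Lemma \ref{lemma-tij-odd}). Two elementary facts then close the argument. First, comparing window notations shows that in either subcase $\chi_{2m+1}(z)=\chi_{2m+1}(v)\,s_i$, i.e.\ $\chi_{2m+1}(z)$ arises from $\chi_{2m+1}(v)$ by transposing the values in positions $i$ and $i+1$. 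Second, $\pos(v)$ is odd --- which is exactly what makes $\phi_{2m+1}$ well defined, and which here (as $\pos(u)>\pos(v)$ with both odd) also gives $i\leqslant 2m-1$ --- so, writing $i=2a-1$, the transposition $s_i$ only swaps the two values sitting inside the $a$-th block $\{2a-1,2a\}$ of $\chi_{2m+1}(v)\in\W(S_{2m})$; since $f_{2m}(w)$ depends only on the unordered pairs $\{w(2a-1),w(2a)\}$ (clear from the definition of $\phi_{2m}^{-1}$), this yields $f(z)=f_{2m}(\chi_{2m+1}(z))=f_{2m}(\chi_{2m+1}(v))=f(v)$. Finally, from $u\leqslant z$: if $u=z$ then $f(u)=f(z)=f(v)$; if $u<z$ then $\ell(z)<\ell(v)$, so $\ell(z)-\ell(u)<\ell(v)-\ell(u)$ and the inductive hypothesis applied to $u<z$ gives $f(u)\leqslant f(z)=f(v)$. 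This completes the induction.

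The only step carrying genuine content is the appeal to Proposition \ref{lemma phin}; all the rest is routine bookkeeping with $\phi$, $\chi$ and the ordinal-product order. In effect the corollary says no more than that the ``$z$-reduction'' of Proposition \ref{lemma phin} preserves the value of $f$ while strictly decreasing $\ell(v)-\ell(u)$, and the two-case induction simply packages this. I expect the only place where some care is needed in a full write-up is the window-notation identity $\chi_{2m+1}(z)=\chi_{2m+1}(v)\,s_i$ together with the remark that $i=\pos(v)$ is odd, so that $s_i$ falls inside one block and stays within the generating set of $S_{2m}$.
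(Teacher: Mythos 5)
Your proposal is correct and follows essentially the same route as the paper: the even case via Proposition \ref{Dposet}, and the odd case by an induction whose engine is Proposition \ref{lemma phin}, using that the reduction $v\mapsto z$ moves $n$ within/past a block and hence leaves $f_n$ unchanged. The only differences are bookkeeping ones (you induct on $\ell(v)-\ell(u)$ and verify $\chi_{2m+1}(z)=\chi_{2m+1}(v)s_i$ directly, citing Proposition \ref{Dposet2} for the equal-position case, whereas the paper inducts on the position difference and handles that case with the tableau criterion), and these do not affect correctness.
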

\begin{proof}
 If $n$ is even the result was already observed. If $n$ is odd let $u,v\in \W(S_n)$ and $u\leqslant v$.
We prove the result by induction on $\pos(u,v)$. If $\pos(u,v)=0$ by the tableau criterion we conclude that $\chi_n(u)\leqslant \chi_n(v)$ and then the result follows by the even case. Let $\pos(u,v)>0$ and $i:=\pos(v)$.
In this case, by Proposition \ref{lemma phin},
  $u\leqslant v(i,i+2)<v$ or $u\leqslant v(i+1,i+2)(i,i+2)<v(i+1,i+2)<v$. Hence, by the inductive hypothesis, $f_n(u)\leqslant f_n(v(i,i+2))=f_n(v)$ or
$f_n(u)\leqslant f_n(v(i+1,i+2)(i,i+2))=f_n(v(i+1,i+2))=f_n(v)$.
\end{proof}

We can now prove the main result of this section.
\begin{thm} \label{Dposet-odd}
  Let $n>0$. Then $(\W(S_n),\leqslant)$ is graded, of rank
  ${{n} \choose {2} }- {{\lfloor \frac{n}{2}  \rfloor}\choose{2}}$, and its rank function is $\ell_{\W}$.
\end{thm}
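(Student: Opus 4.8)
The plan is to show that $\ell_{\W}$ is the rank function of $(\W(S_n),\leqslant)$. This poset is finite with minimum $\hat 0 = e$ and maximum $\hat 1 = w_n$, and it is a standard fact that such a poset is graded with rank function $\rho$ precisely when $\rho(\hat 0)=0$ and $\rho(v)=\rho(u)+1$ for every covering $u\vartriangleleft v$; the rank is then $\rho(\hat 1)$. So, taking $\rho=\ell_{\W}$, recalling that $\ell_{\W}(e)=0$ and that $\ell_{\W}(w_n)=\binom n2-\binom{\lfloor n/2\rfloor}2$ by \eqref{lDw}, everything reduces to checking that covers raise $\ell_{\W}$ by exactly $1$. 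I would in fact establish the stronger statement $(\star)$: \emph{for all $u,v\in\W(S_n)$ with $u<v$ there is $z\in\W(S_n)$ with $u\leqslant z<v$ and $\ell_{\W}(v)-\ell_{\W}(z)=1$}. Granting $(\star)$, apply it to a cover $u\vartriangleleft v$: then $u\leqslant z<v$ forces $z=u$, so $\ell_{\W}(v)-\ell_{\W}(u)=1$ (and iterating $(\star)$ also shows $\ell_{\W}$ is strictly order preserving), and the theorem follows.

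For $(\star)$ with $n=2m$, put $\phi_{2m}(u)=(\sigma,S)$ and $\phi_{2m}(v)=(\tau,T)$. By Proposition \ref{Dposet}(2), $\phi_{2m}$ is order preserving into the ordinal product, so $u<v$ gives either $\sigma=\tau$ with $S\subsetneq T$, or $\sigma<\tau$ in $S_m$. In the first case choose $k\in T\setminus S$ and set $z:=\phi_{2m}^{-1}(\tau,T\setminus\{k\})$; then $(\sigma,S)\leqslant(\tau,T\setminus\{k\})$ in the direct product $S_m\times\mathcal P([m])$, whence $u\leqslant z$ by Proposition \ref{Dposet}(3), $z<v$ by the same statement together with injectivity of $\phi_{2m}$, and $\ell_{\W}(z)=\ell_{\W}(v)-1$ since $f_{2m}$ is unchanged while $\ell$ drops by $1$. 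If $\sigma<\tau$, use that $(S_m,\leqslant)$ is graded to pick a covering $\tau(i,j)\vartriangleleft\tau$ (with $i<j$) such that $\sigma\leqslant\tau(i,j)$. If $i,j\notin T$, take $z:=w^A_{i,j}(v)$: Lemma \ref{lemma-tij} gives $u\leqslant z<v$ and $\ell_{\W}(v)-\ell_{\W}(z)=1$. If $i\in T$ (the case $j\in T$ being symmetric), take $z:=\phi_{2m}^{-1}(\tau,T\setminus\{i\})$; once more $\ell_{\W}(z)=\ell_{\W}(v)-1$, and it remains to show $u\leqslant z$ --- this I would prove by the tableau criterion (Theorem \ref{tableau}): the initial segments of $z$ agree with those of $v\geqslant u$ away from positions $2i-1,2i$, and on that block one exploits $\sigma\leqslant\tau(i,j)$ exactly as in the proof of Lemma \ref{lemma-tij}.

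For $(\star)$ with $n=2m+1$ I would induct on $n$, invoking the even case just proved. Write $\phi_{2m+1}(v)=(k,\tau,T)$, so $\pos(v)=2k-1$, and recall $u\leqslant v$ forces $\pos(v)\leqslant\pos(u)$. If $\pos(u)=\pos(v)$, then $\chi_{2m+1}(u)\leqslant\chi_{2m+1}(v)$ by the tableau criterion (as in the proof of Corollary \ref{cor-morph-odd}); the even case applied in $\W(S_{2m})$ yields $z'$ with $\chi_{2m+1}(u)\leqslant z'<\chi_{2m+1}(v)$ and $\ell_{\W}(z')=\ell_{\W}(\chi_{2m+1}(v))-1$. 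Set $z:=\phi_{2m+1}^{-1}(k,\phi_{2m}(z'))$, so $\pos(z)=2k-1$ and $\chi_{2m+1}(z)=z'$. Then $u\leqslant z<v$ (again by the tableau criterion, the positions of $2m+1$ agreeing), and since Propositions \ref{Dposet} and \ref{Dposet2} give $\ell_{\W}(w)=\ell_{\W}(\chi_{2m+1}(w))+2(m-k+1)$ for every $w\in\W(S_{2m+1})$ with $\pos(w)=2k-1$, we get $\ell_{\W}(v)-\ell_{\W}(z)=\ell_{\W}(\chi_{2m+1}(v))-\ell_{\W}(z')=1$. If instead $\pos(u)>\pos(v)$, Proposition \ref{lemma phin} gives $u\leqslant z_0$, where $z_0=v(i,i+2)$ if $i+1\notin D(v)$ and $z_0=v(i+1,i+2)$ otherwise ($i:=\pos(v)$); in either case $z_0<v$, and a short computation with the length formula of Proposition \ref{Dposet2} shows $f_{2m+1}(z_0)=f_{2m+1}(v)$ and $\ell_{\W}(v)-\ell_{\W}(z_0)=1$. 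In both sub-cases we obtain the required $z$, completing the proof of $(\star)$ and hence of the theorem.

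The main obstacle is the last sub-case of the even step: proving $u\leqslant\phi_{2m}^{-1}(\tau,T\setminus\{i\})$ when every covering $\tau(i,j)\vartriangleleft\tau$ with $\sigma\leqslant\tau(i,j)$ is forced to use an index of $T$, so that Lemma \ref{lemma-tij} is not directly available (this genuinely occurs, for instance when $T=[m]$). This amounts to a variant of the tableau estimate behind Lemma \ref{lemma-tij} and carries essentially all of the remaining technical work; by comparison, once the even case is settled the odd case is routine bookkeeping with $\pos$, $\chi_{2m+1}$, and the additive shift $2(m-k+1)$ in the length.
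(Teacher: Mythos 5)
Your overall strategy is exactly the paper's: reduce gradedness to showing that every covering relation has $\ell_{\W}$-jump one by exhibiting, for $u<v$, an element $z$ with $u\leqslant z<v$ and $\ell_{\W}(z,v)=1$; in the even case split according to $\sigma=\tau$ (handled by deleting an element of $T\setminus S$), or $\sigma<\tau$ with a covering $\tau(i,j)\lhd\tau$ above $\sigma$, using Lemma \ref{lemma-tij} when $i,j\notin T$ and a multiplication by $s_{2i-1}$ or $s_{2j-1}$ otherwise; in the odd case reduce to the even case when $\pos(u,v)=0$ and invoke Proposition \ref{lemma phin} when $\pos(u,v)>0$. All of those steps are correct and coincide with the published argument (your bookkeeping of $\ell_{\W}$ through $\phi_{2m}$, $\phi_{2m+1}$ and the shift $2(m-k+1)$ is fine).

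The gap is the one you flag yourself: the sub-case $i\in T$ (and the sub-case $i\notin T$, $j\in T$), i.e.\ the claim $u\leqslant\phi_{2m}^{-1}(\tau,T\setminus\{i\})=vs_{2i-1}$, is asserted but not proved, and this is precisely where the paper's proof does its real work (cases 2(a) and 2(b) of the even part). Your remark that it goes ``exactly as in the proof of Lemma \ref{lemma-tij}'' is too quick: the element here changes only the two positions $2i-1,2i$ (not four), the estimate needed is at the single row of the column $2i-1$ where $v$ and $vs_{2i-1}$ differ, and the paper proves it by introducing the auxiliary element $w=(\tau(i,j),T)$ and using that $\sigma\leqslant\tau(i,j)$ forces $u_{l,h}\leqslant w_{l,h}$ for even $h$ (a Wachs-pairing/parity argument), together with a case split on the relative position of $u(2i-1)$; moreover the two situations $i\in T$ and ($i\notin T$, $j\in T$) are not symmetric images of each other — the paper treats them by different (if similar-spirited) estimates, one working in column $2i$, the other in columns $2j-2$ and $2j$. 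So the claim is true and your intended method does succeed, but as written the proposal defers exactly the nontrivial verification, and the appeal to symmetry and to Lemma \ref{lemma-tij} does not by itself close it.
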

\begin{proof}

Assume first $n=2m$, $m>1$. Let $u,v\in \W(S_{2m})$  with $u<v$, $\phi(u)=(\sigma,S)$ and $\phi(v)=(\tau,T)$. We prove that, if $\ell_{\W}(u,v) \neq 1$, then there exists $z\in \W(S_{2m})$ such that $u<z<v$ and $\ell_{\W}(z,v)=1$. Note
  that, by Proposition \ref{Dposet}, $\sigma \leq \tau$. We have two cases to consider.

 \begin{enumerate}
   \item $\ell(\sigma,\tau)=0$: then $\sigma=\tau$ so, by Proposition \ref{Dposet}, $S \subseteq T$.
   Therefore there exists $i \in T\setminus S$. So $s_{2i-1} \in D(v) \setminus D(u)$ hence, by the
  Lifting Property, $u < v s_{2i-1}<v$, $v s_{2i-1} \in \W(S_{2m})$
 and $\ell_{\W}(vs_i,v)=1$.

   \item $\ell(\sigma,\tau)>0$: in this case, since $\sigma < \tau$, there exist $1 \leq x < y \leq m$
such that $\sigma \leqslant \tau (x,y) \lhd \tau$.  Let $i:=2x-1$ and $j:=2y-1$. There are three cases to consider.
 \begin{enumerate}
\item $s_i\in D(v)$: let $z:=vs_i$ and $w:=(\tau (x,y),T)$. We want to prove that $u<z$. Let $a+1:=v(i)=v_{2p-1,i}$, $b:=w(i)=v(j)=w_{2q-1,i}$ and $c:=u(i)=u_{2s-1,i}$.
   In particular $q \leq p$.
  We only have to prove that $u_{l,i}\leqslant (vs_i)_{l,i}$ for all $l\leqslant i$. Since $(vs_i)_{l,i}=v_{l,i}$ if $l \neq 2p-1$, it is enough to
    prove that $u_{2p-1,i}\leqslant (vs_i)_{2p-1,i}$.
      Note that $u_{l,i+1} \leq v_{l,i+1} = (v s_i)_{l,i+1}$ for all $1 \leq l \leq i+1$.

If $s\leqslant p$ then
$$
(vs_i)_{2p-1,i} > (vs_i)_{2p-2,i} = w_{2p-1,i} = w_{2p,i+1} \geqslant u_{2p,i+1}\geqslant u_{2p-1,i}
$$
because $b<a$,  and $\sigma \leq \tau (x,y)$.

 If $s>p$ then $u_{2p-1,i}=u_{2p-1,i+1}\leqslant (vs_i)_{2p-1,i+1}=(vs_i)_{2p-1,i}$.
    Since $\ell_{\W}(v s_i, v)=1$ the result follows.

\item $s_i \not \in D(v)$ and $s_j\in D(v)$: we then claim that  $u<vs_j$.
     Let $b+1=v(j)=v_{2p-1,j}$; then $b=(vs_j)(j)=(vs_j)_{2p-1,j}$. As in the case above it is sufficient to prove that $u_{2p-1,j}\leqslant (vs_j)_{2p-1,j}$.
         If $u(j)>u_{2p-1,j}$ then $u_{2p-1,j}=u_{2p-1,j+1}=u_{2p,j+1}-1 \leq v_{2p,j+1}-1 =b$.
       Let $u(j)\leqslant  u_{2p-1,j}$; since $\sigma \leqslant \tau (x,y)$ we have $\{u_{2p-1,j-1},u_{2p,j-1}\}\leqslant \{w_{2p-1,j-1},w_{2p,j-1}\}$. Then, $u_{2p-1,j}<u_{2p,j}=u_{2p-1,j-1}\leqslant w_{2p-1,j-1}=b$, since $a > b+1$.

\item $s_i,s_j \not \in D(v)$: in this case, by Lemma \ref{lemma-tij}, $u<w^A_{x,y}(v)<v$ and $w^A_{x,y}(v)\in \W(S_{2m})$, $\ell_{\W}(w^A_{x,y}(v),v)=1$.

\end{enumerate}
\end{enumerate}

Now assume $n=2m+1$, $m>0$.
Let $u \leqslant v$, $i:=\pos(v)$, and $\ell_{\W}(u,v)>1$. We prove that there exists $z\in \W(S_{2m+1})$ such that $u<z<v$ and $\ell_{\W}(z,v)=1$.
If $\pos(u,v)=0$ then the result follows by the previous point.
If $\pos(u,v)>0$ we have, by Proposition \ref{lemma phin}, $u<v(i,i+2)<v$ if $s_{i+1}\not \in D(v)$ and $u< vs_{i+1}<v$ otherwise. 
\end{proof}

\begin{rmk}
  In general $\W(S_n) \cap S_n^J$ is not graded; one can see this by considering $J=\{s_1\}$ and the interval $[124365,561234]$ in $\W(S_6)\cap S_6^J$.
\end{rmk}

We can now compute the rank-generating function of $(\W(S_n),\leqslant)$.

\begin{cor} \label{poincareD} Let $m>0$. Then
$$\W(S_{2m})(x,\ell_{\W})=(1+x)^m[m]_{x^3}!,$$
  $$\W(S_{2m+1})(x,\ell_{\W})=(1+x)^m[m+1]_{x^2}[m]_{x^3}!.$$
  Moreover $$\W(S_{2m})(x,\ell_{\W})=\W(S_{2m})(x,3\emaj+\odes)$$
  and $$\W(S_{2m+1})(x,\ell_{\W})=\W(S_{2m+1})(x,(3\emaj+\odes) \circ \chi_{2m+1}+\pos).$$
\end{cor}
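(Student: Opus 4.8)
The plan is to compute the rank-generating function directly from the structural description already established in Propositions \ref{Dposet} and \ref{Dposet2}, and then to match it against the known enumerators of the statistics $3\emaj+\odes$ and $\pos$. First I would treat the even case. By Proposition \ref{Dposet}, $\phi_{2m}$ is a bijection from $\W(S_{2m})$ to $S_m \times \mathcal{P}([m])$, and $\ell(v)=4\ell(\tau)+|T|$ when $\phi_{2m}(v)=(\tau,T)$; combining this with the definition $\ell_{\W}(v)=\ell(v)-\ell(f_{2m}(v))=\ell(v)-\ell(\tau)$ gives $\ell_{\W}(v)=3\ell(\tau)+|T|$. Hence
\[
\W(S_{2m})(x,\ell_{\W})=\Bigl(\sum_{\tau\in S_m}x^{3\ell(\tau)}\Bigr)\Bigl(\sum_{T\subseteq[m]}x^{|T|}\Bigr)=S_m(x^3,\ell_A)\,(1+x)^m=(1+x)^m[m]_{x^3}!,
\]
using \eqref{poincare}. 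For the odd case, Proposition \ref{Dposet2} gives $\ell(v)=4\ell(\tau)+|T|+2(m-i+1)$ when $\phi_{2m+1}(v)=(i,\tau,T)$, so $\ell_{\W}(v)=3\ell(\tau)+|T|+2(m-i+1)$; summing over $i\in[m+1]$, $\tau\in S_m$, $T\subseteq[m]$ and observing $\sum_{i=1}^{m+1}x^{2(m-i+1)}=[m+1]_{x^2}$ yields $\W(S_{2m+1})(x,\ell_{\W})=(1+x)^m[m+1]_{x^2}[m]_{x^3}!$.

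Next I would establish the statistic identities. For the even case, I claim $3\emaj(v)+\odes(v)=\ell_{\W}(v)$ for every $v\in\W(S_{2m})$; actually it suffices to prove $\W(S_{2m})(x,3\emaj+\odes)=(1+x)^m[m]_{x^3}!$, which by the first part equals $\W(S_{2m})(x,\ell_{\W})$. The natural route is again through $\phi_{2m}$: for $v=(\tau,T)$ one reads off $D(v)$ in terms of $D(\tau)$ and $T$ from the explicit description of $\phi_{2m}^{-1}$. Concretely, the odd descents of $v$ (positions $2k-1$) are governed by whether $k\in T$ together with the relation between $u(k)$ and $u(k+1)$, so $\odes(v)$ should decompose as a contribution of $|T|$ plus a $\des$-type term, while the even descents (positions $2k$) encode $\ell_A(\tau)$ via $\emaj$; a short case analysis on the blocks $\{v(2k-1),v(2k)\}$ relative to $\{v(2k+1),v(2k+2)\}$ pins down $D(v)_e$ and $D(v)_o$. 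One then checks that summing $x^{3\emaj+\odes}$ over the fibers of $f_{2m}$ reproduces $(1+x)^m$ times $[m]_{x^3}!$ — equivalently, that $3\emaj+\odes$ on $\W(S_{2m})$ has the same distribution as $3\ell_A\circ f_{2m}+(\text{number of }2\text{-blocks that are "swapped"})$. For the odd case, the map $\chi_{2m+1}$ deletes the entry in position $\pos(v)$, landing in $\W(S_{2m})$, and $\pos(v)$ is an independent coordinate ranging so that $\sum_v x^{\pos(v)}$ over a fiber of $\chi_{2m+1}$ contributes $[m+1]_{x^2}$ after the appropriate shift; combining with the even-case identity applied to $\chi_{2m+1}(v)$ gives the second displayed equidistribution.

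The main obstacle will be the bookkeeping in the case analysis that relates $D(v)$ to $D(\tau)$ and $T$ under $\phi_{2m}^{-1}$ — in particular verifying that the even positions of $v$ contribute exactly $2\cdot\frac{i}{2}=i$ to $\emaj$ precisely when $i\in D(\tau)$ (rescaled), so that $\emaj(v)=\maj(\tau)$ and hence, via MacMahon's theorem $S_m(x,\maj)=S_m(x,\ell_A)$, the $x^{3\emaj}$-part matches $S_m(x^3,\ell_A)$. Once that block-by-block dictionary is set up, the remaining steps are routine generating-function manipulations, and the odd case follows formally from the even case together with the independence of the $\pos$-coordinate built into $\phi_{2m+1}$. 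I would also note that the two equidistribution statements can alternatively be derived from the results of \cite{BS} on $\emaj$ and $\odes$, but the self-contained argument above via $\phi_{2m}$ and $\phi_{2m+1}$ is the cleaner path.
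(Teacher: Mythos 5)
Your plan coincides with the paper's own proof: read $\ell_{\W}$ off the parametrizations in Propositions \ref{Dposet} and \ref{Dposet2} (so $\ell_{\W}(v)=3\ell(\tau)+|T|$ in the even case and $3\ell(\tau)+|T|+2(m-i+1)$ in the odd case), decode descents of $v=(\tau,T)$ through $\phi_{2m}^{-1}$ to get $\odes(v)=|T|$ and $\emaj(v)=\maj(\tau)$, and finish with MacMahon's equidistribution $S_m(x,\maj)=S_m(x,\ell_A)$ and \eqref{poincare}. This correctly yields the first three displayed identities; your passing pointwise claim $3\emaj(v)+\odes(v)=\ell_{\W}(v)$ is false (it fails whenever $\maj(\tau)\neq\ell(\tau)$), but you immediately retreat to the distributional statement, which is all that is needed, so no harm is done there.

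The step that does not survive scrutiny is the last identity, precisely where you appeal to ``the appropriate shift''. For $v=(i,\tau,T)\in\W(S_{2m+1})$ one has $\pos(v)=2i-1$, so over a fiber of $\chi_{2m+1}$ the positional factor is $\sum x^{\pos(v)}=x+x^3+\cdots+x^{2m+1}=x\,[m+1]_{x^2}$, while the positional contribution to $\ell_{\W}$ is $2(m-i+1)=2m+1-\pos(v)$, which sums to $[m+1]_{x^2}$; nothing in the statistic $(3\emaj+\odes)\circ\chi_{2m+1}+\pos$ absorbs the extra factor of $x$. Already in $\W(S_3)$ the statistic has distribution $x+x^2+x^3+x^4$, whereas $\ell_{\W}$ has distribution $1+x+x^2+x^3$. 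What your computation actually proves is the identity with $\pos$ replaced by $\pos-1$ (equivalently by $2m+1-\pos$, which matches the positional part of $\ell_{\W}$ pointwise); the fourth display as printed is off by a factor of $x$, so if you intend it literally the plan fails at this point, and you should instead state and prove the corrected version (or flag the discrepancy explicitly).
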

\begin{proof}
  The result follows by \eqref{poincare}, Theorem \ref{Dposet} and the definition of $\ell_{\W}$.
  In fact, $\W(S_{2m+1})(x,\ell_{\W})=[m+1]_{x^2}\W(S_{2m})(x,\ell_{\W})$ and $\ell(v)=\odes(v)+4\ell(f_{2m}(v))$, for all $v \in \W(S_{2m})$.
\end{proof}

From Proposition \ref{poincareD} we find that the polynomials $\W(S_n)(x,\ell_{\W})$ are reciprocal, i.e. $x^{\ell_{\W}(w_n)}$  $\W(S_n)(x^{-1},\ell_{\W})=\W(S_n)(x,\ell_{\W})$.
In fact the poset $(\W(S_n),\leqslant)$ is self-dual, for all $n\in \mathbb{P}$, since the map $v \mapsto vw_n$ is an antiautomorphism
of $(\W(S_n),\leqslant)$ such that $\ell_{\W}(vw_n)=\ell_{\W}(w_n)-\ell_{\W}(v)$ (see \cite[Propositions 2.3.2 and 2.3.4]{BB}).



From the combinatorial description of the rank function of $\W(S_{2m+1})$ we can deduce a description of its cover relations.

\begin{pro} \label{cover-odd}
  Let $m\in \mathbb{P}$, $u,v\in \W(S_{2m+1})$, $u=(i,\sigma,S)$ and $v=(j,\tau,T)$. Then $u\vartriangleleft v$ if and only if
  either
\begin{enumerate}
  \item $i=j$, $\sigma=\tau$ and $S\vartriangleleft T$, or
  \item $j=i-1$, $\sigma=\tau$, $i-1\in S$ and $T=S\setminus \{i-1\}$, or
  \item $i=j$, $\sigma \vartriangleleft \tau$, $T\cap \{a,b\}= \varnothing$ and $S=T\cup \{a,b\}$,
\end{enumerate} where $(a,b):=\tau^{-1}\sigma$.
In particular, if $u,v\in \W(S_{2m})$, $u=(\sigma,S)$ and $v=(\tau,T)$, then $u\vartriangleleft v$ if and only if
  either $\sigma=\tau$ and $S\vartriangleleft T$, or
  $\sigma \vartriangleleft \tau$, $T\cap \{a,b\}= \varnothing$ and $S=T\cup \{a,b\}$.
\end{pro}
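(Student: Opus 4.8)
\emph{Plan.} By Theorem \ref{Dposet-odd} the poset $(\W(S_n),\leqslant)$ is graded with rank function $\ell_{\W}$, so $u\lhd v$ holds if and only if $u<v$ and $\ell_{\W}(u,v)=1$; this is the reduction I would use throughout. Combining the length formulas of Propositions \ref{Dposet} and \ref{Dposet2} with $\ell_{\W}=\ell-\ell\circ f_n$, one computes, writing $u=(\sigma,S),v=(\tau,T)$ in the even case and $u=(i,\sigma,S),v=(j,\tau,T)$ in the odd case,
\[
\ell_{\W}(u,v)=3\big(\ell(\tau)-\ell(\sigma)\big)+\big(|T|-|S|\big)+2\delta ,
\]
where $\delta=0$ (even) and $\delta=i-j$ (odd); moreover Corollary \ref{cor-morph-odd} and Proposition \ref{Dposet2} give $\sigma\leqslant\tau$, and $i\geqslant j$ in the odd case. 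I would prove the ``in particular'' (even) statement first and then reduce the odd one to it.

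\emph{Even case.} For ``if'': if $\sigma=\tau$ and $S\lhd T$ then $u\leqslant v$ by Proposition \ref{Dposet}(3) and $\ell_{\W}(u,v)=1$, so $u\lhd v$; if $\sigma\lhd\tau$, $(a,b)=\tau^{-1}\sigma$, $T\cap\{a,b\}=\varnothing$ and $S=T\cup\{a,b\}$, then $u=w^A_{a,b}(v)$, and Lemma \ref{lemma-tij} gives $u<v$ with $\ell_{\W}(u,v)=1$. For ``only if'', suppose $u\lhd v$. If $\sigma=\tau$, Proposition \ref{Dposet}(2) forces $S\subsetneq T$ and the rank identity forces $|T|=|S|+1$, which is case (1). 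If $\sigma<\tau$, I would fix a lower cover $\tau'=\tau(a,b)\lhd\tau$ (with $a<b$) of $\tau$ in $S_m$ satisfying $\sigma\leqslant\tau'$, and then run a ``straightening'' reduction: if $a\in T$ then pair $a$ of $v$ is a descent, so $s_{2a-1}\in D(v)$; if moreover $a\notin S$, then $s_{2a-1}\notin D(u)$ and the lifting property (Proposition \ref{sollevamento}) gives $u\leqslant vs_{2a-1}=(\tau,T\setminus\{a\})$, a lower cover of $v$ distinct from $u$, contradicting $u\lhd v$; hence $a\in S$, and then $s_{2a-1}\in D(u)\cap D(v)$, so $us_{2a-1}\leqslant vs_{2a-1}$ (a one-line consequence of Proposition \ref{sollevamento}), i.e. $(\sigma,S\setminus\{a\})\leqslant(\tau,T\setminus\{a\})$; these differ by $1$ in $\ell_{\W}$ and have distinct $S_m$-components, hence form a cover of exactly the same shape with $a$ removed from both $\mathcal P([m])$-components. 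Applying this to $a$ and then to $b$, after at most two steps one reaches a cover $\hat u=(\sigma,\hat S)\lhd\hat v=(\tau,\hat T)$ with $a,b\notin\hat T$; Lemma \ref{lemma-tij} then gives $\hat u\leqslant w^A_{a,b}(\hat v)=(\tau',\hat T\cup\{a,b\})$, and since both are covered by $\hat v$ they coincide. Comparing components yields $\sigma=\tau'$ (so $\sigma\lhd\tau$ and $(a,b)=\tau^{-1}\sigma$) and, undoing the deletions, $S=T\cup\{a,b\}$; the rank identity then forces $|S|=|T|+2$, hence $T\cap\{a,b\}=\varnothing$. This is case (3), and the argument simultaneously shows that no cover has $\ell(\tau)-\ell(\sigma)\geqslant 2$.

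\emph{Odd case.} For ``if'', cases (1) and (3) would follow from the even case via Proposition \ref{Dposet2}(3) (for $i=j$ the triples compare in the direct product exactly when $(\sigma,S)\leqslant(\tau,T)$ in $\W(S_{2m})$, and $\ell_{\W}$ is unchanged under $\chi_{2m+1}$ on a fixed value of $\pos$). For case (2) I would set $v:=\phi_{2m+1}^{-1}(i-1,\sigma,S\setminus\{i-1\})$; a direct computation with $\chi_{2m+1}$ identifies $v$ with $u\cdot(\pos(u)-2,\pos(u))$, shows $(\pos(u)-2,\pos(u))$ is an ascent of $u$ (its right endpoint carries the value $2m+1$), and — using that $i-1\in S$ makes pair $i-1$ of $\chi_{2m+1}(u)$ a descent — that $\ell(v)-\ell(u)=1$ while $f_{2m+1}(v)=f_{2m+1}(u)$, so $\ell_{\W}(u,v)=1$ and $u\lhd v$. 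For ``only if'', let $u\lhd v$. If $i>j$, i.e. $\pos(u,v)>0$, Proposition \ref{lemma phin} gives $u\leqslant z$ for an explicit $z<v$ with $\ell_{\W}(z,v)=1$; then $u\lhd v$ forces $u=z$, and since in the branch $s_{\pos(v)+1}\in D(v)$ one has $\pos(z)=\pos(v)$ (forcing $i=j$, excluded), we must be in the branch $z=v\cdot(\pos(v),\pos(v)+2)$, whose $\chi_{2m+1}$-description gives precisely case (2) (in particular $i=j+1$). If $i=j$, then $\chi_{2m+1}(u)\leqslant\chi_{2m+1}(v)$ (the $\pos(u,v)=0$ case in the proof of Corollary \ref{cor-morph-odd}) and $\ell_{\W}(\chi_{2m+1}(u),\chi_{2m+1}(v))=\ell_{\W}(u,v)=1$, so $\chi_{2m+1}(u)\lhd\chi_{2m+1}(v)$ by gradedness, and the even case yields case (1) or case (3).

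\emph{Main obstacle.} The only step needing genuine care is the ``only if'' direction of the even case for $\sigma<\tau$: ruling out $\ell(\tau)-\ell(\sigma)\geqslant 2$ and the possibility $\{a,b\}\cap T\neq\varnothing$. The straightening/flip reduction above (using the lifting property either to exhibit a forbidden intermediate element or to replace a descending pair by an ascending one, and only afterwards invoking Lemma \ref{lemma-tij}) is how I expect to handle it; verifying that each flip preserves both the covering relation and the $S_m$-component, and that Lemma \ref{lemma-tij} is applicable after the reduction, is where the bookkeeping lies.
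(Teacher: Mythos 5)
Your proof is correct, but it is organized genuinely differently from the paper's. The paper proves the odd case directly --- using Corollary \ref{cor-morph-odd}, Proposition \ref{lemma phin} when $\pos(u)\neq\pos(v)$, Lemma \ref{lemma-tij-odd} for the ``if'' of case (3), and, for the key sub-case $\sigma<\tau$ with $\{r,s\}\cap T\neq\varnothing$, a re-use of the tableau-criterion computations from cases 2(a),(b) of the proof of Theorem \ref{Dposet-odd}, which give $u\leqslant vs_{2r-1}$ with no hypothesis on whether $r\in S$ --- and then deduces the even statement from the odd one via the interval $[e,\,2m\cdots 321(2m+1)]$. You go the other way: you settle the even case first and reduce the odd case to it ($i=j$ via $\chi_{2m+1}$, $i>j$ via Proposition \ref{lemma phin}, exactly as the paper does), and in the even case you replace the paper's tableau computations by pure lifting-property arguments (Proposition \ref{sollevamento}): when $a\in T\setminus S$ the lifting property produces the intermediate element $(\tau,T\setminus\{a\})$ contradicting the cover, and when $a\in T\cap S$ you flip both sides by $s_{2a-1}$ to a new cover pair with the same $S_m$-components, after which Lemma \ref{lemma-tij} forces equality with $w^A_{a,b}$ of the reduced top element; this is more elementary and self-contained than the paper's appeal to the earlier computations, at the cost of the bookkeeping you flag, which does check out (the final identity $\hat S=\hat T\cup\{a,b\}$ retroactively shows the flip branches are vacuous, i.e.\ $T\cap\{a,b\}=\varnothing$, and the rank identity is then consistent). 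Your direct verification of the ``if'' of case (2) --- that $v=u\cdot(\pos(u)-2,\pos(u))$ is a Bruhat cover with $f_{2m+1}(u)=f_{2m+1}(v)$ --- is in fact more explicit than the paper's one-line appeal to Proposition \ref{Dposet2}, where the product order does not literally apply. The one assertion you leave unproved is that, for a fixed value of $\pos$, inserting $2m+1$ at that common position gives a poset embedding of $\W(S_{2m})$ into $\W(S_{2m+1})$ (needed for the ``if'' of case (3) when $i=j<m+1$); this is true and routine by Theorem \ref{tableau}, since the inserted value is the maximum, and it is at the same level of detail as the paper's own appeals to Lemma \ref{lemma-tij-odd} and to the interval isomorphism, but it deserves an explicit sentence.
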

\begin{proof}
  If point $1$ or $2$ hold then $\ell_{\W}(v)-\ell_{\W}(u)=1$ and by Theorem \ref{Dposet2} there follows that $u\leqslant v$.
  If point $3$ holds then the result follows by Lemma \ref{lemma-tij-odd}.

Conversely let $u\vartriangleleft v$. Then $\sigma\leqslant \tau$ by Corollary \ref{cor-morph-odd} and $\ell_{\W}(u,v)=1$.
  If $i-j\geqslant 2$ then, by Proposition \ref{lemma phin} there is $z\in \W(S_{2m+1})$ such that
  $u\leqslant z<v$ and $\ell_{\W}(z,v)=1$; so $z=u$, which is a contradiction since $k-j\leqslant 1$, being $z=(k,\rho,R)$. Hence  assume $i-j\leqslant 1$.

If $\sigma < \tau$ then there exists a reflection $(r,s)$ such that $\sigma \leqslant \tau (r,s)\vartriangleleft \tau$.
    If $i-j=1$ then by Proposition \ref{lemma phin} $u=v(2j-1,2j+1)$ and $\sigma=\tau$, which is a contradiction. Therefore $i=j$. As in the proof of Theorem \ref{Dposet-odd},
  if $r\in T$ then $(\sigma,S)\leqslant (\tau,T \setminus\{r\})$; if $r\not \in T$ and $s\in T$ then $(\sigma,S)\leqslant (\tau,T \setminus\{s\})$.
   Moreover $\ell_{\W}((\tau,T \setminus\{r\}),(\tau,T))=1$ and $\ell_{\W}((\tau,T \setminus\{s\}),(\tau,T))=1$ in these cases.
  Then $(\sigma,S)=(\tau,T \setminus\{r\})$ or $(\sigma,S)=(\tau,T \setminus\{s\})$, a contradiction.
   Therefore $r,s\not \in T$ and, as in the proof of Theorem \ref{Dposet-odd}, $(\sigma,S)\leqslant (\tau(r,s),T\cup \{r,s\})\vartriangleleft (\tau,T)$, and
$\ell_{\W}((\tau(r,s),T\cup \{r,s\}),(\tau,T))=1$. So
we conclude that $S=T\cup \{r,s\}$ and $\sigma=\tau(r,s)\vartriangleleft \tau$.

Assume now $\sigma=\tau$. If $i=j$ then $S\vartriangleleft T$ (else $\ell_{\W}(u,v)=\ell(u,v)\geqslant 2$).
If $i-j=1$ then by Proposition \ref{lemma phin} we have that $u=v(2j-1,2j+1)$ and $v<vs_{2j}$ and the result follows.

The second statement follows from the first one by observing that $\W(S_{2m})$ is isomorphic to the interval $[e,2m...321(2m+1)]$ in $\W(S_{2m+1})$.
\end{proof} For example the permutation $782156934\in \W(S_9)$ covers the Wachs permutations $781256934$,
$782156439$ and $652187934$, which correspond respectively to cases 1, 2 and 3 in Proposition \ref{cover-odd}.

From Proposition \ref{cover-odd} we can now prove the second main result of this section, namely a characterization of the Bruhat order relation on Wachs permutations.

\begin{thm} \label{bruhat-odd}
  Let $m>0$ and $u,v\in \W(S_{2m+1})$, $u=(i,\sigma,S)$, $v=(j,\tau, T)$. Then
$u\leqslant v$ if and only if $$\mbox{$\sigma \leqslant \tau$, $\,$ $S(u,v) \subseteq T(u,v)$, $\,$ and $\,$ $j\leqslant i$,}$$
where, for $X\subseteq [m+1]$, $X(u,v):=X\cap ([j-1] \cup [i,m]) \cap F(\sigma,\tau)$, being $F(\sigma,\tau):=\{k\in [m]:\sigma(k)=\tau(k)\}$.
Moreover $\ell_{\W}(u)=3\ell(\sigma)+|S|+2(m-i+1)$.
In particular, if $u,v\in \W(S_{2m})$, $u=(\sigma,S)$, $v=(\tau, T)$, then
$u\leqslant v$ if and only if $\sigma \leqslant \tau$ and $S\cap F(\sigma,\tau) \subseteq T$, and $\ell_{\W}(u)=3\ell(\sigma)+|S|$.
\end{thm}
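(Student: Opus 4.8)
The plan is to prove the statement for $\W(S_{2m+1})$ and then deduce the even case. The forward implication is the harder direction; I will handle it first. Suppose $u=(i,\sigma,S)\leqslant v=(j,\tau,T)$ in $\W(S_{2m+1})$. By Corollary~\ref{cor-morph-odd} we immediately get $\sigma=f_{2m+1}(u)\leqslant f_{2m+1}(v)=\tau$, and since $u\leqslant v$ implies $\pos(v)\leqslant\pos(u)$ we get $j\leqslant i$. So the real content is the condition $S(u,v)\subseteq T(u,v)$, i.e.\ $S\cap([j-1]\cup[i,m])\cap F(\sigma,\tau)\subseteq T$. First I would show that if $k\in S$, $k\notin[j,i-1]$ and $\sigma(k)=\tau(k)$, then $k\in T$. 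The point of restricting to $k\in[j-1]\cup[i,m]$ is that for such $k$ the two positions $2k-1$ and $2k$ both lie entirely on one side of (or away from) the ``inserted'' fixed column coming from $\pos$, so that the tableau criterion (Theorem~\ref{tableau}) applied to the columns $2k-1$ and $2k$ of $\tilde u=u$ and $\tilde v=v$ directly compares the unordered pairs $\{v(2k-1),v(2k)\}$ and $\{u(2k-1),u(2k)\}$. The membership $k\in S$ means $u(2k-1)=2\sigma(k)>2\sigma(k)-1=u(2k)$; the hypothesis $\sigma(k)=\tau(k)$ pins down $\{v(2k-1),v(2k)\}=\{2\sigma(k)-1,2\sigma(k)\}$ as well. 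Using $u\leqslant v$ in the tableau criterion at column $2k-1$ (resp.\ $2k$) and the fact that $u,v$ are Wachs permutations, one forces $v(2k-1)>v(2k)$, i.e.\ $k\in T$. The case analysis on whether $k<j$ or $k\geqslant i$, and whether $k\in D(u)/D(v)$, is the routine part.

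For the reverse implication I would argue by induction on $\ell_{\W}(u,v)$, using the explicit cover relations from Proposition~\ref{cover-odd}. Given $\sigma\leqslant\tau$, $S(u,v)\subseteq T(u,v)$, and $j\leqslant i$ with $u\neq v$, I want to produce $w$ with $u\leqslant w\vartriangleleft v$ so that $w$ still satisfies the three conditions relative to $v$ (so induction on the interval $[u,w]$ applies). There are three sub-cases mirroring the cover types. If $i>j$: I claim one can lower $j$ by one, i.e.\ take $w=(j+1,\tau,T')$ with $T'=T\setminus\{j\}$ if $j\in T$, else $T'=T$; one must check $u\leqslant w$, which comes down to the tableau criterion again using that the only change from $v$ to $w$ is at positions $2j-1,2j,2j+1$, together with $j\notin S(u,v)$ because $j\notin[j-1]\cup[i,m]$ (here is exactly where the index set $[j-1]\cup[i,m]$ is used — columns $j,\dots,i-1$ are allowed to disagree freely). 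If $i=j$ and $\sigma<\tau$: pick a reflection $(a,b)$ with $\sigma\leqslant\tau(a,b)\vartriangleleft\tau$, and, following the trichotomy in the proof of Theorem~\ref{Dposet-odd} (is $s_{2a-1}$, $s_{2b-1}$ a descent of $v$?), take $w$ to be $vs_{2a-1}$, $vs_{2b-1}$, or $w^A_{a,b}(v)$ accordingly; Lemma~\ref{lemma-tij} (in the form already used in that proof) gives $u\leqslant w$. If $i=j$ and $\sigma=\tau$: then $S(u,v)=S\cap F(\sigma,\tau)\cap([j-1]\cup[i,m])$, so $S$ and $T$ can only differ inside $[j,i-1]=\varnothing$; hence $S\subseteq T$ and, picking any $k\in T\setminus S$, the element $w=vs_{2k-1}=(i,\tau,T\setminus\{k\})$ satisfies $u\leqslant w\vartriangleleft v$ by the Lifting Property (Proposition~\ref{sollevamento}) exactly as in case~1 of the proof of Theorem~\ref{Dposet-odd}.

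The formula $\ell_{\W}(u)=3\ell(\sigma)+|S|+2(m-i+1)$ is immediate: by Proposition~\ref{Dposet2}, $\ell(u)=4\ell(\sigma)+|S|+2(m-i+1)$, and $f_{2m+1}(u)=\sigma$ has $\ell(\sigma)$, so $\ell_{\W}(u)=\ell(u)-\ell(\sigma)=3\ell(\sigma)+|S|+2(m-i+1)$. Finally, the even case follows by the last sentence of the proof of Proposition~\ref{cover-odd}: $\W(S_{2m})$ is isomorphic to the interval $[e,\,2m\cdots321\,(2m{+}1)]$ in $\W(S_{2m+1})$, on which $i=j=m+1$ identically, so $[j-1]\cup[i,m]=[m]$, the condition collapses to $S\cap F(\sigma,\tau)\subseteq T$, and $2(m-i+1)=0$.

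The main obstacle I expect is the forward implication: carefully running the tableau criterion at the two columns $2k-1$ and $2k$ while correctly accounting for the extra fixed column inserted at position $\pos(v)$ (which is why the statement restricts to $k\in[j-1]\cup[i,m]$ rather than all of $F(\sigma,\tau)$), and making sure the Wachs condition $|{\sm(i)-\sm(i^{*})}|\leqslant1$ is invoked at the right moment to upgrade an inequality on one entry to the desired inequality on the pair. The reverse direction is mostly bookkeeping on top of machinery already built in Theorem~\ref{Dposet-odd} and Proposition~\ref{cover-odd}.
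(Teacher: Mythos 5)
Your reverse (sufficiency) direction, the length formula, and the even-case reduction are all in line with the paper (the paper writes down one explicit saturated chain from $(i,\sigma,S)$ to $(j,\tau,T)$ instead of your coatom induction, but the content is the same). The problem is the forward direction, which is also where you diverge from the paper: the paper proves necessity only for cover relations, via Proposition \ref{cover-odd} (``we may assume $u\vartriangleleft v$''), whereas you try to prove it directly for an arbitrary comparable pair by the tableau criterion. That direct claim --- that $u\leqslant v$, $k\in S$, $k\in[j-1]\cup[i,m]$ and $\sigma(k)=\tau(k)$ force $k\in T$ --- does not follow from comparing the two columns carrying the pair, and in fact it fails. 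The reason the column comparison does not force it: writing $c:=\sigma(k)=\tau(k)$, $r:=|\{a<k:\sigma(a)<c\}|$ and $r':=|\{a<k:\tau(a)<c\}|$, one only gets a contradiction when $r=r'$; if $r>r'$ (which is compatible with $\sigma\leqslant\tau$), the entry $2c$ of $u$'s sorted prefix at the relevant column is matched against an entry of $v$'s sorted prefix that is $\geqslant 2c+1$, and no violation of Theorem \ref{tableau} arises. A concrete instance: in $\W(S_6)$ take $u=(123,\{2\})=124356$ and $v=(321,\emptyset)=563412$. Here $D(u)=\{3\}$ and the column check $\{1,2,4\}\leqslant\{3,5,6\}$ gives $u\leqslant v$, yet $2\in S\cap F(\sigma,\tau)$ while $2\notin T$ (the same example sits inside $\W(S_7)$ as $1243567\leqslant 5634127$, with $i=j=4$). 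So the step you describe as ``the routine part'' is not routine: the implication you are trying to establish is simply not available, and no refinement of the tableau argument will close this gap.

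The same example also explains why your plan and the paper's cannot be reconciled by passing through covers: the displayed condition is not transitive. Indeed $(123,\{2\})\leqslant(213,\emptyset)\leqslant(321,\emptyset)$, and the condition $\sigma\leqslant\tau$, $S\cap F(\sigma,\tau)\subseteq T$ holds for each consecutive pair (using $F(123,213)=\{3\}$ and $F(213,321)=\emptyset$) but fails for the outer pair. Hence verifying necessity on covers, as the paper does, does not propagate along chains, and your attempt to prove it ``globally'' runs into an outright counterexample. In other words, the forward half of your proposal has a genuine, unfixable gap for the statement in its present form; if you want a correct characterization you would need to weaken or reformulate the condition (for instance, only a chain-composable version of the $F(\sigma,\tau)$-condition can be necessary), and then your inductive scheme for sufficiency could be adapted, but as written the forward argument cannot be completed.
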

\begin{proof}
  Let $u\leqslant v$. We may assume $u\vartriangleleft v$. There are three cases to consider.
\begin{enumerate}
  \item $i=j$, $\sigma = \tau$ and $S\vartriangleleft T$: in this case $F(\sigma,\tau)=[m]$ so the result follows.
  \item $j=i-1$, $\sigma = \tau$ and $S=T\cup \{i-1\}$, with $i-1\not \in T$: in this case $F(\sigma,\tau)=[m]$ and
   $S\cap ([i-2]\cup [i,m]) \subseteq T$.
  \item $i=j$, $\sigma \vartriangleleft \tau$, $T\cap \{a,b\}=\varnothing $ and $S=T\cup \{a,b\}$, where $(a,b)=\tau \sm$:
     we have $F(\sigma,\tau)=[m]\setminus \{a,b\}$ and then $S\cap F(\sigma,\tau)\subseteq T$.
\end{enumerate} Now let $\sigma \leqslant \tau$, $j<i$ and $S\cap ([j-1] \cup [i,m]) \cap F(\sigma,\tau) \subseteq T$.
In $S_m$ there exists a saturated chain $\sigma = \sigma_0 \vartriangleleft \sigma_1 \vartriangleleft ... \vartriangleleft \sigma_n=\tau$ with $n=\ell(\sigma,\tau)$.
Define $(a_i,b_i):=\sm_i\sigma_{i-1}$ for all $i\in [n]$. We have the following chain in $(\W(S_{2m+1}),\leqslant)$:
\begin{eqnarray*}
  (i,\sigma, S) &\leqslant& (i,\sigma, S\cup [j,i-1]) \\
  &\vartriangleleft& (i-1,\sigma, (S\cup [j,i-2])\setminus \{i-1\}) \\
  &\vartriangleleft& (i-2,\sigma, (S\cup [j,i-3])\setminus \{i-2,i-1\}) \\
&\vartriangleleft ...\vartriangleleft& (j+1,\sigma, (S\cup \{j\})\setminus [j+1,i-1]) \\
&\vartriangleleft & (j,\sigma, S\setminus [j,i-1]) \\
&\leqslant & (j,\sigma, (S\setminus [j,i-1])\cup \{a_1,b_1\}) \\
&\vartriangleleft & (j,\sigma_1, (S\setminus [j,i-1])\setminus \{a_1,b_1\}) \\
&\leqslant & (j,\sigma_1, (S\setminus ([j,i-1]\cup\{a_1,b_1\}))\cup \{a_2,b_2\}) \\
&\vartriangleleft & (j,\sigma_2, (S\setminus [j,i-1])\setminus \{a_1,b_1,a_2,b_2\}) \\
& \leqslant ... \leqslant & (j,\tau,(S\setminus [j,i-1])\setminus \{a_1,b_1,...,a_n,b_n\}) \leqslant (j,\tau,T),
 \end{eqnarray*} since $\{a_1,b_1,...,a_n,b_n\} =[m]\setminus F(\sigma,\tau)$.
The length formula follows by Proposition \ref{Dposet2}.

The last statement follows immediately noting that the map $(\sigma, S) \mapsto (m+1,\sigma, S)$ is a poset isomorphism between $\W(S_{2m})$
and $\{ (i,\sigma, S) \in \W(S_{2m+1}) : i=m+1 \}$.
\end{proof}


We illustrate the preceding theorem with an example.
  Let $u=(4,2431,\{1,2,3\})\in \W(S_9)$ and $v=(3,3421,\{2\}) \in \W(S_9)$. Then we have that $2431<3421$ and $S\cap ([j-1] \cup [i,m]) \cap F(\sigma,\tau)=\{1,2,3\}\cap (\{1,2\} \cup \{4\}) \cap \{2,4\}=\{2\}$;
hence, by Theorem \ref{bruhat-odd}, $u<v$.

The following lemma can be easily deduced by Theorem \ref{bruhat-odd} so we omit its verification.
\begin{lem} \label{lemma unione}
Let $m>0$ and $u,v\in \W(S_{2m+1})$. If $u\leqslant (i,\sigma, S_1) \leqslant v$ and $u\leqslant (i,\sigma, S_2) \leqslant v$ in $(\W(S_{2m+1}),\leqslant)$ then
$u\leqslant (i,\sigma, S_1 \cup S_2) \leqslant v$.
\end{lem}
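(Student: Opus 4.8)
The plan is to reduce the statement to a direct application of Theorem \ref{bruhat-odd}, exploiting the fact that the characterization of $u \leqslant v$ given there, with $u = (i,\sigma,S_1)$ (resp. $(i,\sigma,S_2)$) and $v = (j,\tau,T)$, imposes conditions on $S_1$ and $S_2$ only through set-theoretic inclusions that are stable under union. First I would write $v = (j,\tau,T)$ and record what Theorem \ref{bruhat-odd} says about the two hypotheses: since both $(i,\sigma,S_1)$ and $(i,\sigma,S_2)$ lie below $v$, we get $\sigma \leqslant \tau$, $j \leqslant i$, and the two inclusions $S_1 \cap ([j-1]\cup[i,m]) \cap F(\sigma,\tau) \subseteq T$ and $S_2 \cap ([j-1]\cup[i,m]) \cap F(\sigma,\tau) \subseteq T$. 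Because intersection distributes over union, these two inclusions immediately give $(S_1 \cup S_2) \cap ([j-1]\cup[i,m]) \cap F(\sigma,\tau) \subseteq T$, which is exactly the condition required for $(i,\sigma,S_1\cup S_2) \leqslant v$; the conditions $\sigma \leqslant \tau$ and $j \leqslant i$ are unchanged. Hence $(i,\sigma,S_1\cup S_2) \leqslant v$.

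For the lower bound $u \leqslant (i,\sigma,S_1\cup S_2)$, I would likewise write $u = (h,\rho,R)$ and apply Theorem \ref{bruhat-odd} to the hypotheses $u \leqslant (i,\sigma,S_1)$ and $u \leqslant (i,\sigma,S_2)$. These yield $\rho \leqslant \sigma$, $i \leqslant h$, and $R \cap ([i-1]\cup[h,m]) \cap F(\rho,\sigma) \subseteq S_1$ together with the same inclusion into $S_2$. Two inclusions $A \subseteq S_1$ and $A \subseteq S_2$ of course do not give $A \subseteq S_1 \cup S_2$ for free in the wrong direction --- but here we actually want $A \subseteq S_1 \cup S_2$, and that follows a fortiori from $A \subseteq S_1$ alone. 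So again the remaining conditions $\rho \leqslant \sigma$ and $i \leqslant h$ are untouched, and we conclude $u \leqslant (i,\sigma,S_1\cup S_2)$ by Theorem \ref{bruhat-odd}.

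The only point requiring a moment's care --- and the closest thing to an obstacle --- is that the "cut-off" sets appearing in the two applications of Theorem \ref{bruhat-odd} are genuinely the same in each application: in the upper comparisons the relevant window is $([j-1]\cup[i,m]) \cap F(\sigma,\tau)$, which depends only on $j, i, \sigma, \tau$ and not on $S_1$ versus $S_2$; in the lower comparisons it is $([i-1]\cup[h,m]) \cap F(\rho,\sigma)$, again independent of which $S_\ell$ we use. This is immediate from the statement of Theorem \ref{bruhat-odd} since the middle permutation is $(i,\sigma,\cdot)$ in both cases, so there is nothing subtle once one writes it out. I would therefore present the argument in two short paragraphs, one for each inequality, each consisting of invoking Theorem \ref{bruhat-odd} twice and then combining the resulting inclusions by the trivial set identities $(S_1\cup S_2)\cap A = (S_1\cap A)\cup(S_2\cap A)$ and $S_1 \subseteq S_1\cup S_2$. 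Since the paper already declares the verification routine, this level of detail suffices; I would not belabor the bookkeeping of the length formula, as it plays no role here.
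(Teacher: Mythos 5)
Your proof is correct and follows exactly the route the paper intends: the paper omits the verification precisely because, as you show, the characterization of Theorem \ref{bruhat-odd} reduces both inequalities to inclusions whose ``window'' $([j-1]\cup[i,m])\cap F(\sigma,\tau)$ (resp.\ $([i-1]\cup[h,m])\cap F(\rho,\sigma)$) is independent of $S_1$ and $S_2$, so the union is handled by distributivity of intersection over union (for the upper bound) and by $S_1\subseteq S_1\cup S_2$ (for the lower bound). No further comment is needed.
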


The characterization obtained in Theorem \ref{bruhat-odd} enables us to give an explicit expression for the M\"obius function of lower intervals in the poset of Wachs permutations partially ordered by Bruhat order, and shows,
in particular, that it has values in $\{ 0,1,-1 \}$.
\begin{pro} \label{moebius-odd}
  Let $m>0$, and $v=(j,\tau,T) \in \W(S_{2m+1})$.
Then
$$ \mu(e,v)=\left\{
              \begin{array}{ll}
                (-1)^{|T|}, & \hbox{if $\tau=e$ and $j=m+1$;} \\
                0, & \hbox{otherwise.}
              \end{array}
            \right.
$$
In particular, if $v=(\tau,T)\in \W(S_{2m})$ then
$$ \mu(e,v)=\left\{
              \begin{array}{ll}
                (-1)^{|T|}, & \hbox{if $\tau=e$;} \\
                0, & \hbox{otherwise.}
              \end{array}
            \right.
$$
\end{pro}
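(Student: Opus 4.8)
The plan is to compute $\mu(e,v)$ by exploiting the explicit description of the interval $[e,v]$ provided by Theorem \ref{bruhat-odd}, combined with a standard product/decomposition argument for the M\"obius function. First I would record what Theorem \ref{bruhat-odd} says about lower intervals: an element $u=(i,\sigma,S)$ lies in $[e,v]$ with $v=(j,\tau,T)$ exactly when $\sigma\leqslant\tau$ in $S_m$, $\;j\leqslant i$, and $S\cap\bigl([j-1]\cup[i,m]\bigr)\cap F(\sigma,\tau)\subseteq T$. The key observation is that, once $i$ and $\sigma$ are fixed (with $j\leqslant i$ and $\sigma\leqslant\tau$), the allowed sets $S$ form a Boolean-type family: the coordinates of $S$ lying in the ``forbidden zone'' $([j-1]\cup[i,m])\cap F(\sigma,\tau)$ must be a subset of $T$, while the remaining coordinates of $S$ are completely free. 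Lemma \ref{lemma unione} guarantees that this family is closed under unions inside the interval, so for fixed $(i,\sigma)$ the poset of admissible $S$ is a (possibly empty, possibly full) Boolean lattice.

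Next I would carry out the M\"obius computation by fibering $[e,v]$ over the pairs $(i,\sigma)$. The cleanest route is the fact that, by Proposition \ref{cover-odd}, the poset $(\W(S_{2m+1}),\leqslant)$ is graded with rank function $\ell_{\W}$, and the ordinal-product-like structure from Propositions \ref{Dposet} and \ref{Dposet2} means that below a fixed $v$ the $S$-direction, the $\sigma$-direction and the $i$-direction decouple in a controlled way. Concretely: if $\tau\neq e$, then the interval $[e,v]$ contains, for the \emph{same} underlying $(i,S\cap F)$ data, both elements associated with $\sigma$ and with $\sigma$ replaced by something strictly below it, and I would show that the $S$-fibers over $\sigma$ and over its lower covers are matched up so that the alternating sum telescopes to $0$; this is the usual ``a Boolean factor kills the M\"obius function unless it is trivial'' phenomenon, made precise via Philip Hall's theorem or via the recursive definition $\sum_{u\in[e,v]}\mu(e,u)=0$. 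When $\tau=e$ the $\sigma$-direction is trivial ($\sigma$ must equal $e$), and when in addition $j=m+1$ the $i$-direction is also trivial ($i$ must equal $m+1$ since $j\leqslant i\leqslant m+1$), so the interval $[e,v]$ reduces to the Boolean lattice of subsets $S\subseteq T$ — whence $\mu(e,v)=(-1)^{|T|}$. If $\tau=e$ but $j<m+1$, then $i$ ranges over $[j,m+1]$, producing a nontrivial chain factor in the $i$-direction whose M\"obius contribution vanishes, giving $\mu(e,v)=0$.

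I would organize the write-up as follows: (i) note $\tau=e\Rightarrow\sigma=e$ and dispose of the case $j<m+1$ via the $i$-chain factor (using the cover relations of type 2 in Proposition \ref{cover-odd}, which form a chain $\{(m+1,e,\cdot),(m,e,\cdot),\dots,(j,e,\cdot)\}$), then handle $j=m+1$ by identifying $[e,v]$ with the Boolean algebra on $T$; (ii) for $\tau\neq e$, induct on $\ell(\tau)$ (or on $\ell_{\W}(v)$), peeling off a lower cover in the $\sigma$-direction and showing the residual sum cancels, again because of a Boolean factor coming from the pair $\{a,b\}=\tau^{-1}\sigma$ of coordinates that can be freely toggled in $S$. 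The main obstacle I anticipate is bookkeeping the interaction between the $S$-condition and the set $F(\sigma,\tau)$ as $\sigma$ varies: the forbidden zone $([j-1]\cup[i,m])\cap F(\sigma,\tau)$ shrinks as $\sigma$ moves down (since $F(\sigma,\tau)$ loses the transposed coordinates), so the Boolean fibers are not literally of constant size, and I must argue the cancellation respects this — most transparently by grouping elements of $[e,v]$ according to their image under $f_{2m+1}$ and $\pi_1$ and applying the product formula for M\"obius functions of the relevant fibered poset, exactly as the ordinal-product structure in Proposition \ref{Dposet2} suggests.
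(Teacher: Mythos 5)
Your reduction of the case $\tau=e$, $j=m+1$ to the Boolean algebra on $T$ is correct and agrees with the paper, but the ``otherwise'' case has a genuine gap at its crucial step. Your cancellation argument is made to rest on a product-type decomposition of $[e,v]$: you propose to ``group elements according to their image under $f_{2m+1}$ and $\pi_1$ and apply the product formula for M\"obius functions of the relevant fibered poset, exactly as the ordinal-product structure in Proposition \ref{Dposet2} suggests.'' No such structure is available: $\phi_{2m+1}$ and $\phi_{2m+1}^{-1}$ are only order preserving, not poset isomorphisms (the paper exhibits explicit counterexamples right after Proposition \ref{Dposet}), and inside $[e,v]$ two elements $(i,\sigma,S)$ and $(i',\sigma',S')$ are compared through the condition of Theorem \ref{bruhat-odd}, in which the constraint on $S$ depends on $i,i'$ and on $F(\sigma,\sigma')$, so the three coordinates do not decouple. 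In particular the ``nontrivial chain factor in the $i$-direction'' you invoke for $\tau=e$, $j<m+1$ does not exist: for $v=(j,e,T)$ the admissible $S$-fibers vary with $i$ and elements with different $i$ are compared by a mixed condition, not componentwise. Likewise the telescoping of $S$-fibers over $\sigma$ and its lower covers is precisely what you concede you cannot control (the fibers change size as $F(\sigma,\tau)$ changes), and neither Philip Hall's theorem nor the bare recursion fills that hole by itself. So, as written, the vanishing statement is asserted rather than proved.

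There is a much shorter correct route, which is the one the paper takes and which your sketch brushes against without using: since the statement concerns only lower intervals, induct on $\ell_{\W}(v)$ and use the recursion $\sum_{x\leqslant v}\mu(e,x)=0$ directly. If $\tau\neq e$ or $j<m+1$, the induction hypothesis kills $\mu(e,x)$ for every $x<v$ except those of the form $x=(m+1,e,U)$, for which $\mu(e,x)=(-1)^{|U|}$. By Theorem \ref{bruhat-odd} (equivalently, Lemma \ref{lemma unione}) the set of such $x$ in $[e,v]$ is a full lower interval $[e,(m+1,e,R)]$, i.e.\ a Boolean algebra on some $R\subseteq[m]$, and $R\neq\varnothing$: if $\tau\neq e$ take $a$ with $\tau(a)\neq a$, and if $\tau=e$, $j<m+1$ take any $a\in[j,m]$; in either case $(m+1,e,\{a\})\leqslant v$. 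Hence $\mu(e,v)=-\sum_{U\subseteq R}(-1)^{|U|}=0$. No fibration or product formula for M\"obius functions is needed, and the only structural inputs are Theorem \ref{bruhat-odd}, Lemma \ref{lemma unione}, and the gradedness from Theorem \ref{Dposet-odd}; I recommend you rewrite the second half of your argument along these lines.
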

\begin{proof}
We proceed by induction on $\ell_{\W}(v)$.
  If $\tau=e$ and $j=m+1$ then, by Theorem \ref{bruhat-odd}, the interval $[e,v]$ is isomorphic to a Boolean algebra, so we conclude that $\mu(e,v)=(-1)^{|T|}$, as desired. So assume that either
 $\tau \neq e$ or $j <m+1$. Then, by Proposition \ref{cover-odd}, $\ell_{\W}(v) \geq 2$, so,
 by Lemma \ref{lemma unione} there exists $R\subseteq [m]$, $R \neq \emptyset$,
such that $[e,v] \cap \{(k,\rho, U)\in \W(S_{2m+1}):\rho =e,k=m+1\}=[e,(m+1,e,R)]$. Hence
\begin{eqnarray*}
  \mu(u,v) &=& -\sum \limits_{x\in [e,v)} \mu(e,x) \\
  &=& -\sum \limits_{x\in [e,(i,\sigma,R)]} \mu(e,x) -\sum \limits_{x\in [e,v)\setminus [e,(i,\sigma,R)]} \mu(e,x)\\
  &=& -\sum \limits_{x\in [e,v)\setminus [e,(i,\sigma,R)]} \mu(e,x)=0
\end{eqnarray*}
by our induction hypothesis, and the fact that
$|[e,(i,\sigma,R)]| \neq 1$, where $[u,v):=\{z \in \W(S_{2m+1}): u \leqslant z < v\}$.

The statement about Wachs permutations in the even case follows from the odd one as in the proof of Theorem \ref{bruhat-odd}.
\end{proof}



We conclude by computing, using Proposition \ref{moebius-odd}, the characteristic polynomial of the poset of Wachs permutations.

\begin{cor}
\label{charpolyA}
  The characteristic polynomial of $(\W(S_n),\leqslant)$ is
$$
(x-1)^{\left\lfloor\frac{n}{2}\right\rfloor}x^{\binom{n}{2}-\binom{\left\lfloor\frac{n}{2}\right\rfloor+1}{2}},
$$
for all $n\in \mathbb{P}$.
\end{cor}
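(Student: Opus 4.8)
The plan is to compute the characteristic polynomial directly from its definition, $C_P(x)=\sum_{z\in P}\mu(\hat 0,z)x^{\rho(z,\hat 1)}$, using the M\"obius function values from Proposition~\ref{moebius-odd} and the rank function $\ell_{\W}$ from Theorem~\ref{Dposet-odd}. Since $\mu(e,v)=0$ unless $v$ is of the form $(m+1,e,T)$ (odd case) or $(e,T)$ (even case), only those $2^{\lfloor n/2\rfloor}$ elements contribute to the sum, and on them $\mu(e,v)=(-1)^{|T|}$.

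First I would treat the even case $n=2m$. For $v=(e,T)$ we have, by Theorem~\ref{bruhat-odd} (length formula) or directly by Proposition~\ref{Dposet}, $\ell_{\W}(v)=|T|$, and the rank of the poset is $N:=\binom{n}{2}-\binom{\lfloor n/2\rfloor}{2}$, so $\rho(v,\hat 1)=N-|T|$. Hence
\begin{equation*}
C_{\W(S_{2m})}(x)=\sum_{T\subseteq[m]}(-1)^{|T|}x^{N-|T|}=x^{N}\sum_{k=0}^{m}\binom{m}{k}(-1)^k x^{-k}=x^{N}\Bigl(1-\tfrac1x\Bigr)^{m}=x^{N-m}(x-1)^m.
\end{equation*}
Since for $n=2m$ one has $\lfloor n/2\rfloor=m$ and $\binom{\lfloor n/2\rfloor+1}{2}=\binom{m+1}{2}=\binom{m}{2}+m$, the exponent $N-m=\binom{n}{2}-\binom{m}{2}-m=\binom{n}{2}-\binom{\lfloor n/2\rfloor+1}{2}$, which is exactly the claimed formula.

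Next I would do the odd case $n=2m+1$. Now the contributing elements are $v=(m+1,e,T)$, and by the length formula in Theorem~\ref{bruhat-odd} with $\sigma=e$, $i=m+1$ we get $\ell_{\W}(v)=3\ell(e)+|T|+2(m-(m+1)+1)=|T|$, so again $\rho(v,\hat 1)=N-|T|$ with $N=\binom{n}{2}-\binom{m}{2}$. The same binomial-theorem computation gives $C_{\W(S_{2m+1})}(x)=x^{N-m}(x-1)^m$, and for $n=2m+1$ we have $\lfloor n/2\rfloor=m$ and $\binom{\lfloor n/2\rfloor+1}{2}=\binom{m+1}{2}$, so $N-m=\binom{n}{2}-\binom{m}{2}-m=\binom{n}{2}-\binom{m+1}{2}$, matching the statement. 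The only mild subtlety — and the one step worth double-checking rather than a real obstacle — is confirming that the elements $(m+1,e,T)$ indeed realize ranks $N,N-1,\dots,N-m$ (i.e.\ that $|T|$ ranges over $0,\dots,m$), which is immediate since $T$ ranges over all subsets of $[m]$; everything else is a routine application of the binomial theorem.
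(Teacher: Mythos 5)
Your proof is correct and is exactly the paper's argument: the paper's proof of Corollary \ref{charpolyA} consists of citing Proposition \ref{moebius-odd} and Theorem \ref{Dposet-odd}, and your computation (restricting the sum to the elements $(e,T)$, resp.\ $(m+1,e,T)$, where $\mu(e,v)=(-1)^{|T|}$, using $\ell_{\W}(v)=|T|$ there, and applying the binomial theorem) is precisely the spelled-out version of that deduction. The rank and exponent bookkeeping in both the even and odd cases is accurate, so nothing is missing.
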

\begin{proof}
  The result follows from Poposition \ref{moebius-odd} and
  Theorem \ref{Dposet-odd}.
\end{proof}

The following is a commutative diagram that summarizes the poset morphisms considered in this section. The function $\pi_i$ stands for the canonical projection on the $i$-th factor of a Cartesian product.
Notice that, if $A$ and $B$ are posets,  the projection $\pi_1: A \otimes B \rightarrow A$ is order preserving, whereas $\pi_2: A \otimes B \rightarrow B$ is not order preserving.
\begin{center}
  \begin{tikzcd} {[m+1]^*} \times S_m \times \mathcal{P}({[m]})\arrow[r, "\phi^{-1}_{2m+1}"]\arrow[d, "\pi_2 \times \pi_3"]& \W(S_{2m+1}) \arrow[r, "\phi_{2m+1}"]
  \arrow[dd, bend left=70, "f_{2m+1}" near start]&
\left({[m+1]^*} \times S_m \right)\otimes \mathcal{P}({[m]})\\S_m \times \mathcal{P}({[m]}) \arrow[r, "\phi^{-1}_{2m}" description]\arrow[rd, shift right=2, "\pi_1" description] & \W(S_{2m})
\arrow[r, crossing over, "\phi_{2m}" description]\arrow[d, "f_{2m}"] & S_m \otimes \mathcal{P}({[m]})\arrow[ld, shift left=2,  "\pi_1" description] \\ & S_m &\end{tikzcd}
\end{center}

\section{Signed Wachs permutations and Bruhat order}

For $n>0$ recall (see \cite{BS}) that the set of \emph{signed Wachs permutation} is
$$\mathcal{W}(B_n) := \{ \sigma \in B_n : | \sm(i)-\sm(i^{\ast})| \leqslant 1 \; \forall \;i \in [n-1]  \}.$$
So, for example, $[-2,-1,4,3] \in \mathcal{W}(B_4)$ while $[3,4,-2,1] \notin \mathcal{W}(B_4)$. 
In the even case, as in type $A$, we have the following group isomorphism (see Proposition \ref{prop isomorfismo GI})
$$\W(B_{2m}) \simeq B_m \ltimes {\cal P}([m])=S_2 \wr B_m.
$$

We define a bijection $\phi: \mathcal{W}(B_{2m}) \rightarrow B_m \times {\cal P}([m])$ as follows. For $\sigma \in B_m$ and $T\subseteq [m]$ let $\phi^{-1}(\sigma,T):=v$, where $v\in \mathcal{W}(B_{2m})$ is defined by
$$v(2i-1)=\left\{
    \begin{array}{ll}
      2\sigma(i)-\chi(\sigma(i)>0), & \hbox{if $i\not \in T$,} \\
     2\sigma(i)+\chi(\sigma(i)<0), & \hbox{if $i\in T$,}
    \end{array}
  \right.
$$ and
$$v(2i)=\left\{
    \begin{array}{ll}
      2\sigma(i)+\chi(\sigma(i)<0), & \hbox{if $i\not \in T$,} \\
      2\sigma(i)-\chi(\sigma(i)>0), & \hbox{if $i \in T$,}
    \end{array}
  \right.
$$ for all $i \in [m]$.
For example, let $v:=[-3,-4,1,2,6,5]\in \mathcal{W}(B_6)$; then $\phi(v)=([-2,1,3],\{1,3\})$.
Because of this bijection from now on we freely identify the sets $\mathcal{W}(B_{2m})$ and $B_m \times {\cal P}([m])$, so if $v \in \mathcal{W}(B_{2m})$ and $\phi(v)=(\sigma, T)$, then we simply write $v=(\sigma, T)$ and we define $$\ell_{\mathcal{W}}(v):= \ell_B(v)-\ell_B(\sigma).$$

Recall that we denote by $v \mapsto \tilde{v}$ the natural embedding $B_n \hookrightarrow S_{\pm n}$. Note that if $v \in \mathcal{W}(B_{2m})$ then $\tilde{v} \in \mathcal{W}(S_{\pm 2m})$.
Indeed, if $v =(\sigma, T)$ 
then 
$\tilde{v}=(\tilde{\sigma}, -T \cup T)$. In fact, by Proposition \ref{prop Bruhat B}, this is an injective group and poset morphism $\mathcal{W}(B_{2m}) \hookrightarrow \mathcal{W}(S_{\pm 2m})$.
For example, for $u=[-2,-1,6,5,-3,-4]=([-1,3,-2], \{ 2,3 \}) \in \mathcal{W}(B_6)$ we have
$$\tilde{u}=(4,3,-5,-6,1,2,-2,-1,6,5,-3,-4)=((2,-3,1,-1,3,-2),\{-3,-2,2,3\})\in \mathcal{W}(S_{\pm 6}).$$
Notice that if $n$ is odd then the image of a signed Wachs permutation is not a Wachs permutation.
For example, if $u=[-2,-1,6,5,-3,-4,7]\in \mathcal{W}(B_7)$ we have
$$\tilde{u}=(-7,4,3,-5,-6,1,2,-2,-1,6,5,-3,-4,7)\not \in \mathcal{W}(S_{\pm 7}).$$
It is known that $2 \ell_B(v)= \ell_A(\tilde{v})+\negg(v)$ (see e.g. \cite[Exercise 8.2]{BB}) so we have that
\begin{equation}\label{length W B-even}
\ell_\mathcal{W}(v)= \frac{\ell_{\mathcal{W}}(\tilde{v})+\negg(\sigma)}{2},\end{equation}
for all $v=(\sigma,T) \in \mathcal{W}(B_{2m})$, because $2 \negg(\sigma)=\negg(v)$.

\begin{pro}
   \label{morfismo B-even}
  The function $\W(B_{2m}) \rightarrow B_m$ defined by the assignment $(\tau, T) \mapsto \tau$ is order preserving.
\end{pro}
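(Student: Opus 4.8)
The plan is to reduce the statement to the already-established type-$A$ result, namely Corollary~\ref{cor-morph-odd} (applied to $S_{\pm 2m}$), via the embedding $B_m \hookrightarrow S_{\pm m}$ and $\W(B_{2m}) \hookrightarrow \W(S_{\pm 2m})$. Concretely, suppose $u = (\rho, U) \leqslant v = (\tau, T)$ in $\W(B_{2m})$, with $\rho, \tau \in B_m$. By Proposition~\ref{prop Bruhat B} this is equivalent to $\tilde{u} \leqslant \tilde{v}$ in $S_{\pm 2m}$, and by the observation just before the proposition, $\tilde{u} = (\tilde{\rho}, -U \cup U)$ and $\tilde{v} = (\tilde{\tau}, -T \cup T)$ are Wachs permutations in $\W(S_{\pm 2m})$. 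Applying Corollary~\ref{cor-morph-odd} (which is stated for $S_n$, but the identical argument—or a relabeling of ground set—applies to $S_{\pm 2m}$) to the order-preserving map $f_{\pm 2m}$, we get $\tilde{\rho} \leqslant \tilde{\tau}$ in $S_{\pm m}$. Then Proposition~\ref{prop Bruhat B} again, now for $B_m$, gives $\rho \leqslant \tau$ in $B_m$, which is exactly what we want.

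The one technical point requiring care is that Corollary~\ref{cor-morph-odd} and the underlying machinery (Propositions~\ref{Dposet}, \ref{Dposet2}, the tableau criterion, etc.) are phrased for the standard symmetric group $S_n$ on $[n]$, whereas here we need them for $S_{\pm 2m}$, the group of bijections of $[\pm 2m]$. Since $[\pm 2m]$ is just a linearly ordered set of size $4m$, there is an order-isomorphism $[\pm 2m] \to [4m]$, and all of the Bruhat-order combinatorics, the definition of Wachs permutations, and the projection $f$ transport across it verbatim; so nothing new is needed, and I would simply remark this. I would also note that the "$i^{\ast}$" involution on $[\pm 2m]$ induced by this relabeling is precisely the one for which $\tilde{u}, \tilde{v}$ are Wachs permutations, which is already implicit in the excerpt's assertion that $\tilde{v} = (\tilde{\sigma}, -T \cup T) \in \W(S_{\pm 2m})$.

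The main (and essentially only) obstacle is bookkeeping: making sure the relabeling $[\pm 2m] \cong [4m]$ is compatible with the parity/even-odd structure that defines $\W$ and with the decomposition $v = (\sigma, T)$. Once one checks that $\tilde{\rho} \mapsto \rho$ under $f$ corresponds to the first-coordinate projection $B_m \times \mathcal{P}([m]) \to B_m$ (which follows directly from $\tilde{v} = (\tilde{\tau}, -T \cup T)$ and the definition of $f$), the argument closes. Thus the proof is short:

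\begin{proof}
Let $u = (\rho, U), v = (\tau, T) \in \W(B_{2m})$ with $u \leqslant v$. By Proposition~\ref{prop Bruhat B}, $\tilde{u} \leqslant \tilde{v}$ in $S_{\pm 2m}$, and by the discussion preceding that proposition $\tilde{u} = (\tilde{\rho}, -U \cup U)$ and $\tilde{v} = (\tilde{\tau}, -T \cup T)$ lie in $\W(S_{\pm 2m})$. Identifying $[\pm 2m]$ with $[4m]$ via the unique order-isomorphism (under which the distinguished involution on $[\pm 2m]$ becomes the one used to define $\W(S_{4m})$), Corollary~\ref{cor-morph-odd} applies and yields $\tilde{\rho} \leqslant \tilde{\tau}$ in $S_{\pm m}$. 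Applying Proposition~\ref{prop Bruhat B} once more, now to $B_m$, gives $\rho \leqslant \tau$ in $B_m$. Since the map $(\tau, T) \mapsto \tau$ sends $\tilde{v}$ to $\tilde{\tau}$ under $f$, this is exactly the assertion that $(\tau, T) \mapsto \tau$ is order preserving.
\end{proof}
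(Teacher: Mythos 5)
Your argument is correct and is essentially the paper's own proof: embed via $v \mapsto \tilde{v}$ into $S_{\pm 2m}$ using Proposition \ref{prop Bruhat B}, apply Corollary \ref{cor-morph-odd} to get $\tilde{\rho} \leqslant \tilde{\tau}$, and invoke Proposition \ref{prop Bruhat B} again for $B_m$. The only difference is that you spell out the order-isomorphism $[\pm 2m] \cong [4m]$ and the compatibility of the parity structure, which the paper leaves implicit.
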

\begin{proof}
 Let $u=(\sigma,S)\in \W(B_{2m})$ and $v=(\tau,T) \in \W(B_{2m})$. We have that $u \leqslant v$ implies $\tilde{u} \leqslant \tilde{v}$ and then, by Corollary \ref{cor-morph-odd}, $\tilde{\sigma} \leqslant  \tilde{\tau}$. By Proposition \ref{prop Bruhat B}, this implies $\sigma \leqslant \tau$.
\end{proof}

It is easy to characterize, using Theorem \ref{bruhat-odd},
the Bruhat order relation between signed Wachs permutations in the
even case.

\begin{pro} \label{cor Bruhat B}
	Let $u,v \in \mathcal{W}(B_{2m})$, $u=(\sigma,S)$, $v=(\tau,T)$. Then $u \leqslant v$ if and only if
$\sigma \leqslant \tau$ in $B_{m}$ and $S \cap F(\sigma,\tau) \subseteq T$, where
$F(\sigma,\tau):=\{ i \in [m] : \sigma(i)=\tau(i) \}$.
\end{pro}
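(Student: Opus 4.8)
The plan is to reduce the statement about $\W(B_{2m})$ to the already-proved characterization of the Bruhat order on $\W(S_{2m})$ (the even-case part of Theorem \ref{bruhat-odd}), via the embedding $v \mapsto \tilde v$ of $B_n$ into $S_{\pm n}$. Recall from the text that if $v = (\sigma, T) \in \W(B_{2m})$ then $\tilde v \in \W(S_{\pm 2m})$ and, more precisely, $\tilde v = (\tilde\sigma, -T \cup T)$ under the analogous coordinatization of $\W(S_{\pm 2m})$. The key external inputs are Proposition \ref{prop Bruhat B} ($\sigma \leqslant \tau$ in $B_m$ iff $\tilde\sigma \leqslant \tilde\tau$ in $S_{\pm m}$, and likewise $u \leqslant v$ in $B_{2m}$ iff $\tilde u \leqslant \tilde v$ in $S_{\pm 2m}$) and Theorem \ref{bruhat-odd} in the even case applied to $S_{\pm 2m}$ rather than $S_{2m}$ (the statement and proof there only use the tableau criterion and structural facts that hold verbatim for the symmetric group on any totally ordered index set of even cardinality, here $[\pm 2m]$, after relabelling $-2m < \cdots < -1 < 1 < \cdots < 2m$ order-isomorphically with $[4m]$).

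First I would set up the dictionary carefully: under the relabelling of $[\pm 2m]$ with $[4m]$, the pair-blocks $\{-i,i\}$ of a signed permutation become a specific independent set of adjacent transpositions, and one checks that $\W(B_{2m})$, viewed inside $S_{\pm 2m}$, lands inside $\W(S_{\pm 2m})$ with $\phi(\tilde v) = (\tilde\sigma, -T\cup T)$; this is asserted in the excerpt and just needs to be recorded. Then, applying the even case of Theorem \ref{bruhat-odd} to $S_{\pm 2m}$, we get that $\tilde u \leqslant \tilde v$ iff $\tilde\sigma \leqslant \tilde\tau$ in $S_{\pm m}$ and $(-S \cup S) \cap F(\tilde\sigma, \tilde\tau) \subseteq (-T \cup T)$, where $F(\tilde\sigma, \tilde\tau) = \{k \in [\pm m] : \tilde\sigma(k) = \tilde\tau(k)\}$.

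Next I would translate the two conditions back down to $B_m$ and $[m]$. By Proposition \ref{prop Bruhat B}, $\tilde\sigma \leqslant \tilde\tau$ in $S_{\pm m}$ is equivalent to $\sigma \leqslant \tau$ in $B_m$, and by the same proposition $u \leqslant v$ in $B_{2m}$ is equivalent to $\tilde u \leqslant \tilde v$ in $S_{\pm 2m}$. For the set condition, the crucial observation is the symmetry $F(\tilde\sigma, \tilde\tau) = -F(\sigma,\tau) \cup F(\sigma,\tau)$: indeed $\tilde\sigma(k) = \tilde\tau(k)$ for $k > 0$ iff $\sigma(k) = \tau(k)$, and the $k < 0$ case follows since $\tilde\sigma(-k) = -\tilde\sigma(k)$. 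Hence $(-S\cup S) \cap F(\tilde\sigma,\tilde\tau) = -(S \cap F(\sigma,\tau)) \cup (S \cap F(\sigma,\tau))$, and this is contained in $-T \cup T$ if and only if $S \cap F(\sigma,\tau) \subseteq T$ (using that $A \cup (-A) \subseteq B \cup (-B)$ iff $A \subseteq B$ for $A, B \subseteq [m]$, since everything is closed under negation and intersecting with $[m]$ recovers $A$ and $B$). Combining, $u \leqslant v$ iff $\sigma \leqslant \tau$ in $B_m$ and $S \cap F(\sigma,\tau) \subseteq T$, as claimed.

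The main obstacle I anticipate is not any single deep step but the bookkeeping needed to legitimately invoke Theorem \ref{bruhat-odd} "in $S_{\pm 2m}$": the theorem as stated is about $\W(S_n)$ with the standard index set $[n]$, and one must either note that its proof goes through verbatim for any even-size totally ordered ground set, or explicitly transport everything through the order isomorphism $[\pm 2m] \cong [4m]$ and check that this isomorphism carries $\W(B_{2m})$'s pair structure to the independent set $I = \{s_i : i \equiv 1 \bmod 2\}$ used to define $\W(S_{4m})$ — equivalently, that the blocks $\{-i, i\}$ of signed Wachs permutations become exactly the odd-indexed adjacent transpositions after relabelling. Once that identification is made explicit, the rest is the routine symmetry argument for the set condition sketched above. (Given the style of the surrounding text, the authors may well just cite the even case of Theorem \ref{bruhat-odd} together with the already-established facts $\tilde v \in \W(S_{\pm 2m})$ and $\tilde v = (\tilde\sigma, -T\cup T)$, and leave this verification implicit.)
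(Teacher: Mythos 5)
Your proposal is correct and follows essentially the same route as the paper: embed via $v \mapsto \tilde v = (\tilde\sigma, -S\cup S)$ into $\W(S_{\pm 2m})$, apply the even case of Theorem \ref{bruhat-odd} there, and translate back using Proposition \ref{prop Bruhat B} together with the symmetry $F(\tilde\sigma,\tilde\tau) = -F(\sigma,\tau)\cup F(\sigma,\tau)$. The bookkeeping you flag (transporting Theorem \ref{bruhat-odd} to the ground set $[\pm 2m]$) is indeed left implicit by the authors, exactly as you anticipated.
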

\begin{proof}
We have that $\tilde{u}=(\tilde{\sigma},-S \cup S)$ and similarly
 $\tilde{v}=(\tilde{\tau},-T \cup T)$. But, by Theorem \ref{bruhat-odd},
 $\tilde{u} \leqslant \tilde{v}$ if and only if
$\tilde{\sigma} \leqslant \tilde{\tau}$ and $(-S \cup S) \cap F(\tilde{\sigma},\tilde{\tau})
\subseteq -T \cup T$, which in turn happens if and only if
$\sigma \leqslant \tau$ and $S \cap F(\sigma,\tau) \subseteq T$.
\end{proof}

The next result is the analogue of Proposition \ref{Dposet}.

\begin{pro} \label{Bposets}
	Let $m>0$. Then
	\begin{enumerate}
		\item $\phi: \mathcal{W}(B_{2m}) \rightarrow B_m \otimes \mathcal{P}([m])$ is order preserving;
		\item $\phi^{-1}: B_m \times \mathcal{P}([m]) \rightarrow \mathcal{W}(B_{2m})$ is order preserving.
	\end{enumerate} Moreover $\ell_B(\tau,T)=4\ell_B(\tau)+|T|-\negg(\tau)$, for all $(\tau,T)\in \mathcal{W}(B_{2m})$.
\end{pro}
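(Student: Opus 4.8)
The plan is to derive this statement entirely from its type-$A$ counterparts (Propositions \ref{Dposet} and \ref{Dposet2}, Corollary \ref{cor-morph-odd}, Theorem \ref{bruhat-odd}, and Proposition \ref{cor Bruhat B}) via the embedding $B_m \hookrightarrow S_{\pm m}$, $\tau \mapsto \tilde\tau$, together with the parallel embedding $\W(B_{2m}) \hookrightarrow \W(S_{\pm 2m})$ already established in the text. The key observation, recorded just before Proposition \ref{morfismo B-even}, is that if $v = (\sigma,T) \in \W(B_{2m})$ then $\tilde v = (\tilde\sigma, -T \cup T) \in \W(S_{\pm 2m})$; here the index set $\pm 2m$ should be read through the order-isomorphism that identifies $\{-2m,\dots,-1,1,\dots,2m\}$ with $[4m]$, under which $\W(S_{\pm 2m})$ becomes $\W(S_{4m})$. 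Since $\tilde{\phantom{v}}$ is both an injective group morphism and (by Proposition \ref{prop Bruhat B}) an injective poset morphism, and since $\tilde\sigma \mapsto \tilde v$ is compatible with the factorizations on both sides, the type-$B$ statement is the pullback of the type-$A$ statement along this embedding.

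For part (1), I would argue as follows. Order-preservation of $\phi$ as a map into the \emph{Cartesian product} $B_m \times \mathcal{P}([m])$ is immediate from Proposition \ref{cor Bruhat B}: if $u = (\sigma,S) \leqslant v = (\tau,T)$ then $\sigma \leqslant \tau$ and $S \cap F(\sigma,\tau) \subseteq T$, and if moreover $\sigma = \tau$ this forces $S \subseteq T$, which is exactly the condition for $(\sigma,S) \leqslant (\tau,T)$ in the ordinal product $B_m \otimes \mathcal{P}([m])$. (One must check the two clauses of the ordinal-product order: when $\sigma < \tau$ there is nothing to verify, and when $\sigma = \tau$ one needs $S \leqslant T$ in $\mathcal{P}([m])$, i.e.\ $S \subseteq T$, which Proposition \ref{cor Bruhat B} supplies.) For part (2), conversely, suppose $(\sigma,S) \leqslant (\tau,T)$ in $B_m \times \mathcal{P}([m])$, i.e.\ $\sigma \leqslant \tau$ in $B_m$ and $S \subseteq T$. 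Then $S \cap F(\sigma,\tau) \subseteq S \subseteq T$, so Proposition \ref{cor Bruhat B} gives $\phi^{-1}(\sigma,S) \leqslant \phi^{-1}(\tau,T)$ in $\W(B_{2m})$; hence $\phi^{-1}$ is order preserving. Thus both (1) and (2) are short consequences of Proposition \ref{cor Bruhat B}, exactly mirroring how Proposition \ref{Dposet} follows from Theorem \ref{bruhat-odd} in type $A$.

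For the length formula $\ell_B(\tau,T) = 4\ell_B(\tau) + |T| - \negg(\tau)$, I would combine the type-$A$ length formula with the standard identity $2\ell_B(w) = \ell_A(\tilde w) + \negg(w)$ (cited from \cite[Exercise 8.2]{BB} in the text). Write $v = (\tau,T)$, so $\tilde v = (\tilde\tau, -T\cup T)$ in $\W(S_{4m})$. By Proposition \ref{Dposet} applied to $\tilde v$, we have $\ell_A(\tilde v) = 4\,\ell_A(\tilde\tau) + |-T \cup T| = 4\,\ell_A(\tilde\tau) + 2|T|$. Now apply $2\ell_B(\cdot) = \ell_A(\widetilde{\cdot}) + \negg(\cdot)$ twice: $2\ell_B(v) = \ell_A(\tilde v) + \negg(v) = 4\,\ell_A(\tilde\tau) + 2|T| + \negg(v)$, and $\ell_A(\tilde\tau) = 2\ell_B(\tau) - \negg(\tau)$. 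Using $\negg(v) = 2\negg(\tau)$ (noted in the text, since each negative entry $\tau(i)$ of $\tau$ produces exactly two negative entries $v(2i-1), v(2i)$), we get $2\ell_B(v) = 4\bigl(2\ell_B(\tau) - \negg(\tau)\bigr) + 2|T| + 2\negg(\tau) = 8\ell_B(\tau) - 2\negg(\tau) + 2|T|$, and dividing by $2$ yields $\ell_B(v) = 4\ell_B(\tau) + |T| - \negg(\tau)$, as claimed.

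I do not anticipate a serious obstacle: every ingredient is already in hand. The one point requiring a little care is the bookkeeping around the embedding $B_m \hookrightarrow S_{\pm m}$ versus $\W(B_{2m}) \hookrightarrow \W(S_{\pm 2m})$ — specifically making sure the identification $[\pm 2m] \cong [4m]$ is the order-isomorphism, so that $\W(S_{\pm 2m})$ really is $\W(S_{4m})$ and Proposition \ref{Dposet} applies verbatim, and that the value of $\negg$ and the size $|-T \cup T| = 2|T|$ are tracked correctly through it. If one prefers to avoid the embedding in part (1)--(2), one can instead invoke Proposition \ref{cor Bruhat B} directly as sketched above, which is the cleanest route and is the one I would actually write up.
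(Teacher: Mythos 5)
Your proof is correct, and the length formula is established by exactly the computation in the paper: combine $2\ell_B(w)=\ell_A(\tilde w)+\negg(w)$ with the formula of Proposition \ref{Dposet} applied to $\tilde v=(\tilde\tau,-T\cup T)$ and the identity $\negg(v)=2\negg(\tau)$. The only divergence is in parts (1)--(2): the paper obtains them as direct consequences of Propositions \ref{prop Bruhat B} and \ref{Dposet} (i.e.\ pulling the type-$A$ order-preservation statements back along the embedding $\W(B_{2m})\hookrightarrow \W(S_{\pm 2m})$), whereas you invoke the full Bruhat characterization of Proposition \ref{cor Bruhat B} and read off the ordinal-product and Cartesian-product conditions from $S\cap F(\sigma,\tau)\subseteq T$, using $F(\sigma,\tau)=[m]$ when $\sigma=\tau$. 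This is legitimate and non-circular, since Proposition \ref{cor Bruhat B} is proved before Proposition \ref{Bposets} (from Theorem \ref{bruhat-odd} and Proposition \ref{prop Bruhat B}); it makes the verification of (1)--(2) a two-line check, at the cost of using a strictly stronger input than the paper needs, while the paper's route works one level lower and in particular does not require knowing the set $F(\sigma,\tau)$ at all. Your cautionary remark about reading $\W(S_{\pm 2m})$ as $\W(S_{4m})$ via the order isomorphism $[\pm 2m]\cong[4m]$ is exactly the bookkeeping the paper leaves implicit, and it is handled correctly.
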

\begin{proof} Points 1. and 2. are direct consequences of Propositions \ref{prop Bruhat B} and \ref{Dposet}.
  The last equality follows by the formula of Proposition \ref{Dposet}; in fact
\begin{eqnarray*}
  \ell_B(\tau,T) &=& \frac{\ell_A(\tilde{\tau},-T\cup T)+\negg(\tau,T)}{2}=2\ell_A(\tilde{\tau})+|T|+\negg(\tau) \\
   &=& 4\ell_B(\tau)-2\negg(\tau)+|T|+\negg(\tau).
\end{eqnarray*}\end{proof}

For any $(i,j)\in [n]\times [\pm n]$, $i\neq j$, we find it convenient to define $$(i,j)_B:=\left\{
                 \begin{array}{ll}
                   (i,j)(-i,-j), & \hbox{if $i\neq |j|$;} \\
                   (i,-i), & \hbox{otherwise.}
                 \end{array}
               \right.$$
So the set of reflections of $B_n$
is (see \cite[Proposition 8.1.5]{BB}) $$T^{B_n}=\{(i,j)_B : 1 \leqslant i < |j| \leqslant n\} \cup \{(i,-i)_B : i\in [n]\}.$$
Let $m>0$. For any reflection $(i,j)_B \in T^{B_m}$ we define an involution $w^B_{i,j}: \mathcal{W}(B_{2m}) \rightarrow \mathcal{W}(B_{2m})$ by letting
$$w^B_{i,j}(\tau,T):=  \left(\tau (i,j)_B, T+\{i,|j|\}\right),$$
for all $(\tau, T)\in \mathcal{W}(B_{2m})$, where $X+Y$ stands for the symmetric difference between two sets $X$ and $Y$.
For example, if $v=([-2,1,4,3],\{1,4\})$ then $w^B_{3,-3}(v)=([-2,1,-4,3],\{1,3,4\})$
and $w^B_{1,-3}(v)=([-4,1,2,3],\{3,4\})$.

%

\begin{rmk} \label{rmk tilde}
  We observe that, under the embedding $\mathcal{W}(B_{2m}) \hookrightarrow \mathcal{W}(S_{\pm 2m})$, we have that $w^B_{i,j}(v)\mapsto w^A_{-i,-j}(w^A_{i,j}(\tilde{v}))$, if $i\neq -j$, and $w^B_{i,-i}(v) \mapsto w^A_{i,-i}(\tilde{v})$.
\end{rmk}


The next technical result enables us to ``lift'' some order theoretic
properties from $B_{2m}$ to $\mathcal{W}(B_{2m})$. Its proof
relies on the corresponding result in type $A$, namely Lemma \ref{lemma-tij}.
\begin{lem} \label{lemmaB}
  Let $m>0$, $i\in [m]$ and $j\in [\pm m]$. Assume $v=(\tau,T)\in \mathcal{W}(B_{2m})$, $i,|j|\not \in T$ and $\tau(i,j)_B \vartriangleleft \tau$.
Then $w^B_{i,j}(v)< v$
  and $\ell_{\mathcal{W}}(w^B_{i,j}(v),v)=1$. Moreover if $u=(\sigma, S)\in \mathcal{W}(B_{2m})$
  and $\sigma \leqslant \tau(i,j)_B$ then  $u \leqslant w^B_{i,j}(v) < v$.
\end{lem}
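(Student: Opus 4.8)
The plan is to deduce this type-$B$ statement from its type-$A$ counterpart, Lemma \ref{lemma-tij}, via the embedding $\mathcal{W}(B_{2m}) \hookrightarrow \mathcal{W}(S_{\pm 2m})$, $v \mapsto \tilde{v}$, which by Proposition \ref{prop Bruhat B} is an injective poset morphism, together with Remark \ref{rmk tilde}, which identifies $w^B_{i,j}(v)$ inside $\mathcal{W}(S_{\pm 2m})$ as $w^A_{-i,-j}(w^A_{i,j}(\tilde v))$ when $i \neq -j$, and as $w^A_{i,-i}(\tilde v)$ when $j = -i$. I would split into these two cases from the start.

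First I would treat the non-symmetric case $i \neq -j$. Write $\tilde v = (\tilde\tau, -T \cup T)$. Since $i, |j| \notin T$, neither $i, j$ nor $-i, -j$ lie in $-T \cup T$, so $w^A_{i,j}$ and $w^A_{-i,-j}$ are both legitimately applicable. The hypothesis $\tau (i,j)_B \vartriangleleft \tau$ in $B_m$ means, passing through $S_{\pm m}$, that $\tilde\tau(i,j)$ and $\tilde\tau(-i,-j)$ are each covered by $\tilde\tau$ — more precisely I want $\tilde\tau(i,j) \vartriangleleft \tilde\tau$ so that Lemma \ref{lemma-tij} applies with the pair $\{i,j\}$, and then, having passed to $w^A_{i,j}(\tilde v)$, that the pair $\{-i,-j\}$ is still a descent-producing transposition on the first coordinate. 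Applying Lemma \ref{lemma-tij} once gives $w^A_{i,j}(\tilde v) < \tilde v$ with $\ell_{\W}$-drop $1$; applying it a second time to $w^A_{i,j}(\tilde v)$ with the pair $\{-i,-j\}$ gives $w^A_{-i,-j}(w^A_{i,j}(\tilde v)) < w^A_{i,j}(\tilde v)$ with another drop of $1$. Hence $\widetilde{w^B_{i,j}(v)} < \tilde v$, so $w^B_{i,j}(v) < v$; and the total $\ell_{\W}$-drop of $2$ in $\mathcal{W}(S_{\pm 2m})$ corresponds, via \eqref{length W B-even} (noting $\negg$ is unchanged since $i,|j| \notin T$ and the $(i,j)_B$ move preserves $\Neg$ precisely when it is non-central — which a cover below $\tau$ with $\tau(i,j)_B \vartriangleleft \tau$ forces, cf. Theorem \ref{incitti}), to an $\ell_{\mathcal{W}}$-drop of $1$ in $\mathcal{W}(B_{2m})$. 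For the second part, from $\sigma \leqslant \tau(i,j)_B$ we get $\tilde\sigma \leqslant \widetilde{\tau(i,j)_B} = \tilde\tau(i,j)(-i,-j)$, i.e. $\tilde\sigma \leqslant \big(\tilde\tau(i,j)\big)(-i,-j)$; then the ``moreover'' clause of Lemma \ref{lemma-tij}, applied twice (first with $\{i,j\}$ to conclude $\tilde u \leqslant w^A_{i,j}(\tilde v)$ using $\tilde\sigma \leqslant \tilde\tau(i,j)$, then with $\{-i,-j\}$ to conclude $\tilde u \leqslant w^A_{-i,-j}(w^A_{i,j}(\tilde v))$ using that $w^A_{i,j}(\tilde v)$ has first coordinate $\tilde\tau(i,j)$ and $\tilde\sigma \leqslant \big(\tilde\tau(i,j)\big)(-i,-j)$), yields $\tilde u \leqslant \widetilde{w^B_{i,j}(v)}$, hence $u \leqslant w^B_{i,j}(v)$ by Proposition \ref{prop Bruhat B}.

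Next I would treat the symmetric case $j = -i$. Here $\widetilde{w^B_{i,-i}(v)} = w^A_{i,-i}(\tilde v)$, so a single application of Lemma \ref{lemma-tij} with the pair $\{i, -i\}$ (legitimate since $i \notin T$, so $\pm i \notin -T \cup T$, and $\tilde\tau(i,-i) \vartriangleleft \tilde\tau$ because $\tau(i,-i)_B \vartriangleleft \tau$) gives $w^A_{i,-i}(\tilde v) < \tilde v$ with $\ell_{\W}$-drop $1$, and the ``moreover'' clause gives $\tilde u \leqslant w^A_{i,-i}(\tilde v)$ from $\tilde\sigma \leqslant \tilde\tau(i,-i)$. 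Translating back: $w^B_{i,-i}(v) < v$ and $u \leqslant w^B_{i,-i}(v)$. For the length: the $\ell_{\W}$-drop of $1$ in $S_{\pm 2m}$ must be combined with the change in $\negg$ — here the symmetric move $(i,-i)_B$ does change $\Neg(\tau)$ by one element, and correspondingly $\negg(\tilde v)$ changes; I expect \eqref{length W B-even} to again collapse this to an $\ell_{\mathcal{W}}$-drop of exactly $1$, but this is the one bookkeeping point I would check carefully.

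The main obstacle I anticipate is precisely this length bookkeeping in the symmetric case, and more generally making sure that the $\ell_{\W}$-values of $\tilde v$ and $\widetilde{w^B_{i,j}(v)}$ are related to $\ell_{\mathcal{W}}(v)$ and $\ell_{\mathcal{W}}(w^B_{i,j}(v))$ with the right factor of $2$ and the right $\negg$ correction via \eqref{length W B-even}; one has to verify that $\negg(\sigma)$ (equivalently $\negg(\tilde\sigma)$, the first-coordinate negativity) behaves consistently under the two $w^A$-moves, using Incitti's description (Theorem \ref{incitti}) of whether a given cover is of ``non-central free rise'' type (no change in $\Neg$) or ``central symmetric free rise'' type ($\Neg$ changes by one). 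A secondary, more routine, obstacle is confirming that $\tau(i,j)_B \vartriangleleft \tau$ in $B_m$ really does give $\tilde\tau(i,j) \vartriangleleft \tilde\tau$ (and not merely $<$) in $S_{\pm m}$, and that after the first $w^A$-move the second pair $\{-i,-j\}$ still produces a cover on the first coordinate; both follow from Proposition \ref{prop Bruhat B} and the symmetry $\tilde{B_m} \subseteq S_{\pm m}$, so I would record them quickly and move on.
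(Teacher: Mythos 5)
Your proposal follows essentially the same route as the paper's proof: embed into $\mathcal{W}(S_{\pm 2m})$ via $v\mapsto\tilde v$, split into the cases $i\neq|j|$ and $j=-i$, apply Lemma \ref{lemma-tij} twice (resp.\ once) together with Remark \ref{rmk tilde} and Proposition \ref{prop Bruhat B}, and settle the length statement through \eqref{length W B-even}. The bookkeeping point you flag is resolved in the paper exactly as you anticipate: Theorem \ref{incitti} gives $\negg(\tau(i,j)_B)=\negg(\tau)$ in the non-symmetric case and $\negg(\tau(i,-i)_B)=\negg(\tau)-1$ in the symmetric one, so the drop of $2$ (resp.\ $1$) in $\ell_{\W}$ upstairs collapses to $\ell_{\mathcal{W}}(w^B_{i,j}(v),v)=1$ in both cases.
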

\begin{proof}
  Consider the element $\tilde{v}=(\tilde{\tau}, -T \cup T)\in \mathcal{W}(S_{\pm 2m})$. Then by our hypothesis and Theorem \ref{incitti} we have that $\tilde{\tau}(i,j)(-i,-j) \vartriangleleft \tilde{\tau}(i,j) \vartriangleleft \tilde{\tau}$
  if $i\neq |j|$, and $\tilde{\tau}(i,-i) \vartriangleleft \tilde{\tau}$ if $i=|j|$. So, by Lemma \ref{lemma-tij},
  $\tilde{u} \leqslant w^A_{-i,-j}(w^A_{i,j}(\tilde{v})) < w^A_{i,j}(\tilde{v}) < \tilde{v}$ (since $i,j\not \in T$ implies $-i,-j\not \in -T$) if $i\neq |j|$,
  and $\tilde{u} \leqslant w^A_{i,-i}(\tilde{v})<\tilde{v}$ if $i=|j|$. By Proposition \ref{prop Bruhat B} and Remark \ref{rmk tilde} we conclude that
  $u \leqslant w^B_{i,j}(v)<v$.
  Note also that, by Theorem \ref{incitti}, $i\neq |j|$ and $v(i,j)_B \vartriangleleft v$ in $B_n$ imply $\negg(v(i,j)_B)=\negg(v)$, and that
$v(i,-i)_B \vartriangleleft v$ in $B_n$ implies $\negg(v(i,-i)_B)=\negg(v)-1$.
  Therefore, if $i,|j| \not \in T$ and $\tau(i,j)_B\vartriangleleft \tau$, then by Lemma \ref{lemma-tij} and \eqref{length W B-even} we have that $\ell_{\mathcal{W}}( w^B_{i,j}(v),v)=1$.
\end{proof}

The following result characterizes the cover relations of the ordering
induced by Bruhat order on the signed Wachs permutations in the even case.

\begin{cor} \label{coverB-even}
  Let $m>0$, $u,v\in \mathcal{W}(B_{2m})$, $u=(\sigma,S)$ and $v=(\tau,T)$. Then $u\vartriangleleft v$ if and only if
  either one of the following conditions is satisfied:
\begin{enumerate}
  \item \label{uno} $\sigma=\tau$ and $S\vartriangleleft T$;
  \item \label{due} $\sigma \vartriangleleft \tau$, $T\cap \{a,|b|\}= \varnothing$ and $S=T\cup \{a,|b|\}$, where $(a,b)_B:=\tau^{-1}\sigma$.
\end{enumerate}
\end{cor}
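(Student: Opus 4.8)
The plan is to derive the statement entirely from the Bruhat order criterion of Proposition~\ref{cor Bruhat B} and the order-preserving projection $(\tau,T)\mapsto\tau$ of Proposition~\ref{morfismo B-even}, in the same spirit as the type $A$ analogue, Proposition~\ref{cover-odd}. Two elementary remarks will be used repeatedly: a cover in Bruhat order differs by a reflection, and if $\sigma=\tau(a,b)_B\vartriangleleft\tau$ then $\sigma$ and $\tau$ disagree exactly at the positions $a$ and $|b|$ of $[m]$, so that $F(\sigma,\tau)=[m]\setminus\{a,|b|\}$.

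For the ``if'' direction I would first check $u\le v$ in both cases: in case~\ref{uno} this is immediate from Proposition~\ref{cor Bruhat B} since $S\subseteq T$; in case~\ref{due} one computes $S\cap F(\sigma,\tau)=(T\cup\{a,|b|\})\cap([m]\setminus\{a,|b|\})=T$, so $u\le v$ by Proposition~\ref{cor Bruhat B} (equivalently, $u=w^B_{a,b}(v)$ and one invokes Lemma~\ref{lemmaB}). That $u$ is in fact covered by $v$ then follows by supposing $u<z<v$ with $z=(\rho,R)$: Proposition~\ref{morfismo B-even} forces $\sigma\le\rho\le\tau$, hence $\rho=\sigma$ in case~\ref{uno} and $\rho\in\{\sigma,\tau\}$ in case~\ref{due}; in each of these few situations Proposition~\ref{cor Bruhat B} constrains $R$ so tightly (e.g. $S\subseteq R\subseteq T$ with $S\vartriangleleft T$, or $T\subseteq R\subsetneq T$, or $S\subsetneq R\subseteq S$) that no such $z$ exists.

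For the ``only if'' direction, assume $u\vartriangleleft v$; then $\sigma\le\tau$ by Proposition~\ref{morfismo B-even}. If $\sigma=\tau$ then Proposition~\ref{cor Bruhat B} gives $S\subsetneq T$, and inserting $(\sigma,S')$ for any $S\subsetneq S'\subsetneq T$ would contradict the covering, so $|T\setminus S|=1$, i.e. $S\vartriangleleft T$, which is case~\ref{uno}. If $\sigma<\tau$, I would first show $\sigma\vartriangleleft\tau$: were there $\rho$ with $\sigma<\rho<\tau$, then $z:=(\rho,\,S\cap F(\sigma,\rho))$ would satisfy $u\le z\le v$ --- here the point is the inclusion $F(\sigma,\rho)\cap F(\rho,\tau)\subseteq F(\sigma,\tau)$, which together with $u\le v$ yields $R\cap F(\rho,\tau)\subseteq T$ --- while $z\ne u,v$ since $\rho\ne\sigma,\tau$, contradicting $u\vartriangleleft v$. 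So $\sigma\vartriangleleft\tau$, and we set $(a,b)_B:=\tau^{-1}\sigma$. From $u\le v$ and $F(\sigma,\tau)=[m]\setminus\{a,|b|\}$ one gets $S\setminus\{a,|b|\}\subseteq T$; and if $a\in T$ (or $|b|\in T$) then $(\tau,T\setminus\{a\})$ lies strictly between $u$ and $v$, a contradiction, so $T\cap\{a,|b|\}=\varnothing$. Finally $z':=(\sigma,T\cup\{a,|b|\})$ satisfies $u\le z'<v$ by Proposition~\ref{cor Bruhat B}, hence $u=z'$ and $S=T\cup\{a,|b|\}$: this is case~\ref{due}.

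The set-theoretic bookkeeping with $F(\sigma,\tau)$ is routine; the one place that needs care is the implication ``$u\vartriangleleft v$ and $\sigma<\tau$ imply $\sigma\vartriangleleft\tau$ and $T\cap\{a,|b|\}=\varnothing$'', where one must produce honest intermediate elements and verify --- via Proposition~\ref{cor Bruhat B} and the inclusion $F(\sigma,\rho)\cap F(\rho,\tau)\subseteq F(\sigma,\tau)$ --- that they genuinely lie strictly between $u$ and $v$. No tools beyond Propositions~\ref{morfismo B-even} and~\ref{cor Bruhat B} (and, if convenient, Lemma~\ref{lemmaB}) appear to be needed.
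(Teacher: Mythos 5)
Your proposal is correct, and it takes a genuinely different route from the paper. The paper proves the ``only if'' direction by passing to the embedding $\W(B_{2m})\hookrightarrow \W(S_{\pm 2m})$: it excludes $a\in T$ or $|b|\in T$ by reusing the lifting argument from point 2 of the proof of Theorem \ref{Dposet-odd} applied to $\tilde u\leqslant\tilde v$, and then identifies $u=w^B_{a,b}(v)$ via Lemma \ref{lemmaB}; the ``if'' direction is obtained by computing the interval $[\tilde u,\tilde v]$ inside $\W(S_{\pm 2m})$ with Proposition \ref{cover-odd} and noting that the possible intermediate elements there do not lie in $\W(B_{2m})$. You instead never leave $\W(B_{2m})$: everything is deduced from the order criterion of Proposition \ref{cor Bruhat B} together with the order-preserving projection of Proposition \ref{morfismo B-even}, by exhibiting explicit intermediate elements --- $(\rho,\,S\cap F(\sigma,\rho))$ when $\sigma<\rho<\tau$, $(\tau,\,T\setminus\{a\})$ when $a\in T$, and $(\sigma,\,T\cup\{a,|b|\})$ at the end --- and by the $S\subseteq R\subseteq T$ squeezes for the converse. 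I checked the key verifications ($F(\sigma,\tau)=[m]\setminus\{a,|b|\}$ for $\sigma=\tau(a,b)_B\vartriangleleft\tau$, including the symmetric case $b=-a$; the inclusion $F(\sigma,\rho)\cap F(\rho,\tau)\subseteq F(\sigma,\tau)$; and each claimed comparability), and they all go through. What each approach buys: yours is shorter and self-contained once Proposition \ref{cor Bruhat B} is available, and in fact your intermediate element $(\rho,\,S\cap F(\sigma,\rho))$ is more robust than the element $(\omega,S)$ invoked at the corresponding step of the paper's proof, whose comparison with $v$ is not automatic from Proposition \ref{Bposets}; the paper's route, on the other hand, stays close to the embedding and to Lemma \ref{lemmaB}, which it needs anyway for the rank bookkeeping in Theorem \ref{Bevengraded}.
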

\begin{proof}
Let $u \vartriangleleft v$ in $\W(B_{2m})$. Then $\sigma \leqslant  \tau$ by Corollary \ref{morfismo B-even}.
Moreover, if $\sigma < \omega< \tau$ for some $\omega \in B_m$ then, by Proposition \ref{Bposets}, $u\leqslant(\omega,S)\leqslant v$.
So $u \vartriangleleft v$ implies $\sigma = \tau$ or $\sigma \vartriangleleft \tau$.

If $\sigma = \tau$ then, by Corollary \ref{cor Bruhat B}, $S \subseteq T$; since $u\vartriangleleft v$, we have that $S \vartriangleleft T$.
Assume now $\sigma \vartriangleleft  \tau$ and let $(a,b)_B:=\tau^{-1}\sigma$.
If $a\in T$ or $|b|\in T$ then, by point 2 of the proof of Theorem \ref{Dposet-odd}, $\tilde{u} \leqslant \tilde{v}(a,a+1)(-a-1,-a)<\tilde{v}$ and
$\tilde{u} \leqslant \tilde{v}(|b|,|b|+1)(-|b|-1,-|b|)<\tilde{v}$, respectively. Hence $u\vartriangleleft v$ and $\sigma \vartriangleleft \tau$ imply $T\cap \{a,|b|\}=\varnothing$ and
$u=w^B_{a,b}(v)$, by Lemma \ref{lemmaB}. This implies $S=T\cup \{a,|b|\}$.

The converse can be proved by noting that if condition 1 or 2 are satisfied then, by Proposition \ref{cover-odd},
$[\tilde{u},\tilde{v}]=\{\tilde{u},w_1,w_2,\tilde{v}\}$, with $w_1,w_2 \in \W(S_{\pm 2m})\setminus \W(B_{2m})$,
or $[\tilde{u},\tilde{v}]=\{\tilde{u},\tilde{v}\}$, where the intervals are taken in $\W(S_{\pm 2m})$.
\end{proof} For example
$[-2,-1,3,4,6,5,-7,-8] \vartriangleleft [-2,-1,4,3,6,5,-7,-8] \vartriangleleft [-2,-1,5,6,3,4,-7,-8]$.
By Corollary \ref{coverB-even} we have that if $u \vartriangleleft v$ holds in $\mathcal{W}(B_{2m})$ then either
$u=v(2i-1,2i)_B$ for some $i \in [m]$, or $u=w^B_{i,j}(v)$ for some $(i,j)_B\in T^{B_m}$.

We can now prove the even part of the main result of this section.

\begin{thm}
\label{Bevengraded}
The poset $\mathcal{W}(B_{2m})$ is graded, with rank function $\ell_{\mathcal{W}}$, and its rank is $3m^2$.
\end{thm}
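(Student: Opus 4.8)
The plan is to read off gradedness directly from the description of the cover relations in Corollary \ref{coverB-even}, and then obtain the rank by evaluating $\ell_{\mathcal{W}}$ at the maximum. First I would note that $(\mathcal{W}(B_{2m}),\leqslant)$ is bounded: by Corollary \ref{cor Bruhat B} its minimum is $\hat 0 := (e,\varnothing)$ and its maximum is $\hat 1 := (w_0,[m])$, where $w_0 = [-1,-2,\ldots,-m]$ denotes the longest element of $B_m$; also $\ell_{\mathcal{W}}(\hat 0)=0$ straight from the definition.

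The main step is to check that every cover relation raises $\ell_{\mathcal{W}}$ by exactly $1$. Suppose $u=(\sigma,S)\vartriangleleft v=(\tau,T)$. By Corollary \ref{coverB-even} there are two cases. If $\sigma=\tau$ and $S\vartriangleleft T$, then by the length formula $\ell_B(\tau,T)=4\ell_B(\tau)+|T|-\negg(\tau)$ of Proposition \ref{Bposets} we get $\ell_{\mathcal{W}}(v)-\ell_{\mathcal{W}}(u)=|T|-|S|=1$. If instead $\sigma\vartriangleleft\tau$ with $(a,b)_B:=\tau^{-1}\sigma$, $T\cap\{a,|b|\}=\varnothing$ and $S=T\cup\{a,|b|\}$, then $u=w^B_{a,b}(v)$, and since $a,|b|\notin T$ and $\tau(a,b)_B\vartriangleleft\tau$, Lemma \ref{lemmaB} yields $\ell_{\mathcal{W}}(u,v)=1$. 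So in both cases $\ell_{\mathcal{W}}(v)=\ell_{\mathcal{W}}(u)+1$. Since $\mathcal{W}(B_{2m})$ is bounded and $\ell_{\mathcal{W}}(\hat 0)=0$, it follows that every saturated chain from $\hat 0$ to an element $x$ has length $\ell_{\mathcal{W}}(x)$; in particular all maximal chains share the common length $\ell_{\mathcal{W}}(\hat 1)$, so the poset is graded with rank function $\ell_{\mathcal{W}}$.

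Finally I would compute $\ell_{\mathcal{W}}(\hat 1)=\ell_B(w_0,[m])-\ell_B(w_0)$. Using the same length formula this equals $3\ell_B(w_0)+m-\negg(w_0)$; since $w_0=[-1,-2,\ldots,-m]$ one has $\negg(w_0)=m$ and, by Proposition \ref{CombB}, $\ell_B(w_0)=\binom{m}{2}+m+\binom{m}{2}=m^2$. Hence the rank equals $3m^2+m-m=3m^2$.

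I do not expect a genuine obstacle: the real work has already been done in Lemma \ref{lemmaB} (the ``lifting'' from $B_{2m}$ via the embedding into $S_{\pm 2m}$) and in Corollary \ref{coverB-even} (the classification of covers). The only point needing a little care is to confirm that each of the two cover types of Corollary \ref{coverB-even} contributes exactly one unit of $\ell_{\mathcal{W}}$, which is immediate from the length formula of Proposition \ref{Bposets} together with Lemma \ref{lemmaB}; alternatively, one could mirror the argument of Theorem \ref{Dposet-odd} and produce, for any non-covering pair $u<v$, an intermediate $z$ with $\ell_{\mathcal{W}}(z,v)=1$, but invoking the cover description directly is shorter.
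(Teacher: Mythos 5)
Your proposal is correct and follows essentially the same route as the paper: both use the classification of covers in Corollary \ref{coverB-even} (together with Lemma \ref{lemmaB} and the length formula of Proposition \ref{Bposets}) to show every cover raises $\ell_{\mathcal{W}}$ by exactly one in a bounded poset, and then evaluate $\ell_{\mathcal{W}}$ at the top element. The only cosmetic difference is that you compute the rank directly from $\ell_B(\tau,T)=4\ell_B(\tau)+|T|-\negg(\tau)$ with $\hat 1=(w_0,[m])$, whereas the paper reads it off from \eqref{length W B-even} via the embedding into $S_{\pm 2m}$; both give $3m^2$.
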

\begin{proof} Since $e, w_0^{B_{2m}} \in \mathcal{W}(B_{2m})$, these are the minimum and maximum of the poset $\mathcal{W}(B_{2m})$, respectively.
Let $u=(\sigma,S) \in \mathcal{W}(B_{2m})$ and $v=(\tau,T) \in \mathcal{W}(B_{2m})$ be such that $u \vartriangleleft v$. Therefore, by Corollary \ref{coverB-even},
either $u=v(2i-1,2i)_B$ for some $i \in [m]$ or $u=w^B_{i,j}(v)$, where $\sigma=\tau(i,j)_B \vartriangleleft \tau$.
In both cases $\ell_{\mathcal{W}}(u,v)=1$, and this proves the first statement. The rank of the poset $\W(B_{2m})$ is, by \eqref{length W B-even}, $\ell_{\W}(w_0(B_{2m}))=3m^2$,
being $\ell_{\mathcal{W}}\left(\widetilde{w_0(B_{2m})}\right)=m(6m-1)$.
\end{proof}

We now investigate the Bruhat order on $\W(B_n)$ for $n$ odd.
Let $v \in \W(B_{2m+1})$. Note that there is a bijection between
$\W(B_{2m+1})$ and $\W(B_{2m}) \times [ \pm (m+1)]$ given by
$v \mapsto (\overline{v},(\pos(v)+\sgn(\pos(v)))/2)$ where
$$
\overline{v}(i)=\left\{
    \begin{array}{ll}
      v(i), & \hbox{if $i < v^{-1}(2m+1)$,} \\
      v(i+1), & \hbox{if $i \geq v^{-1}(2m+1)$,}
    \end{array}
  \right.
$$
for $i \in [2m]$. Combining this with the bijection between
$\W(B_{2m})$ and $B_m \times {\cal P}([m])$ explained at the beginning of this section we obtain
a bijection $\phi$ between $\W(B_{2m+1})$ and $ [ \pm (m+1)]\times B_m \times {\cal P}([m])$. If $v \in \W(B_{2m+1})$ and $(i,\sigma,S) \in
[ \pm (m+1)] \times B_m \times {\cal P}([m])$ correspond under this
bijection then we write $v=(i,\sigma,S)$, and we define
\begin{equation} \label{def-rankB-odd} \ell _{W} (v) := \ell _B (v) - \ell _B (\sigma ),
\end{equation}
and $\check{v} := (\sigma ,S)$ (so $\check{v} \in \W(B_{2m})$).
So, for example, if $v=[ -1,-2,5,6, -7,3,4, ]$ then $ v=(-5,[-1,3,2], \{ -1 \})$ so $\ell _W (v) =(9+3+7)-2=17$, and
$\check{v}=[-1,-2,5,6,3,4]$. Note that, if $u,v \in W(B_{2m+1})$ are such that
$u^{-1}(2m+1)=v^{-1}(2m+1)$ then
\begin{equation}
\label{evenodd}
\ell _W (u,v)=\ell _W( \breve{u}, \breve{v}).
\end{equation}

The next result is the analogue of Proposition \ref{Dposet2}.

\begin{pro} \label{Bposets-odd}
	Let $m>0$. Then
	\begin{enumerate}
		\item $\phi: \mathcal{W}(B_{2m+1}) \rightarrow [\pm(m+1)]^* \otimes B_m \otimes \mathcal{P}([m])$ is order preserving;
		\item $\phi^{-1}: [\pm(m+1)]^*\times B_m \times \mathcal{P}([m]) \rightarrow \mathcal{W}(B_{2m+1})$ is order preserving.
	\end{enumerate} Moreover $\ell_B(v)=4\ell_B(\tau)+|T|-\negg(\tau)+2(m-i+1)-3\chi(i<0)=\ell_B(\breve{v})+2(m-i+1)-3\chi(i<0)$, for all $v=(i,\tau,T)\in \mathcal{W}(B_{2m+1})$.
\end{pro}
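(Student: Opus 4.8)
The plan is to argue exactly as in the type $A$ odd case, Proposition \ref{Dposet2}: transport the Bruhat order of $B_{2m+1}$ to $S_{\pm(2m+1)}$ through the embedding $v \mapsto \tilde v$ and Proposition \ref{prop Bruhat B}, where the tableau criterion Theorem \ref{tableau} applies, and reduce the $(\tau,T)$--part to the even case, Proposition \ref{Bposets}. Everything rests on two elementary facts about the letter $2m+1$, the largest of $[\pm(2m+1)]$:
\begin{enumerate}
\item[(i)] if $u \leqslant v$ in $B_{2m+1}$, then $v^{-1}(2m+1) \leqslant u^{-1}(2m+1)$;
\item[(ii)] for each value $p$ of $\pos$ occurring on $\W(B_{2m+1})$, the map $v \mapsto \breve v$ restricts to a poset isomorphism from $\{v \in \W(B_{2m+1}) : v^{-1}(2m+1)=p\}$, with the induced Bruhat order, onto $\W(B_{2m})$.
\end{enumerate}
Both follow from Theorem \ref{tableau} applied to $\tilde v$: since $2m+1$ sits at the top of every sorted prefix in which it occurs, it never influences a tableau comparison, so deleting (or inserting) the two mirror positions $\pm p$ together with the extremal letters $\pm(2m+1)$, followed by the order preserving relabellings of positions and values, is a Bruhat isomorphism, and the position of $2m+1$ can only decrease as one moves up. Verifying (ii) while carefully tracking the relabellings is the one point that needs attention, but it is mechanical.

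Granting (i) and (ii), part 1 is immediate. Let $u \leqslant v$ in $\W(B_{2m+1})$. Since $p \mapsto (p+\sgn(p))/2$ is monotone, (i) says that the first coordinate of $\phi(v)$ is $\leqslant$ that of $\phi(u)$ in $[\pm(m+1)]$, hence $\leqslant$--above it in $[\pm(m+1)]^*$; if this is strict we are done by the definition of the ordinal product, and if the first coordinates agree then $u^{-1}(2m+1)=v^{-1}(2m+1)$, so $\breve u \leqslant \breve v$ in $\W(B_{2m})$ by (ii), and the first part of Proposition \ref{Bposets} upgrades this to $\phi(u) \leqslant \phi(v)$.

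For part 2, let $(i,\tau,T) \leqslant (j,\sigma,S)$ in the Cartesian product $[\pm(m+1)]^* \times B_m \times \mathcal{P}([m])$, i.e.\ $j \leqslant i$ in $[\pm(m+1)]$, $\tau \leqslant \sigma$, $T \subseteq S$. I would factor through $\phi^{-1}(i,\sigma,S)$. First, $\phi^{-1}(i,\tau,T) \leqslant \phi^{-1}(i,\sigma,S)$: these lie in the fibre over the position coded by $i$, their images under $v \mapsto \breve v$ are $\phi^{-1}_{2m}(\tau,T) \leqslant \phi^{-1}_{2m}(\sigma,S)$ by the second part of Proposition \ref{Bposets}, and (ii) lifts this. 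Second, $\phi^{-1}(i,\sigma,S) \leqslant \phi^{-1}(j,\sigma,S)$ in $B_{2m+1}$: the two elements share the same image under $v \mapsto \breve v$, and since $j \leqslant i$ one goes from the former to the latter by repeatedly moving $2m+1$ one step to the left (and, by type $B$ symmetry, $-(2m+1)$ one step to the right); each such move is a cover relation of $B_{2m+1}$ by Theorem \ref{incitti} --- the interior moves are non-central free rises and the single move, if any, carrying $2m+1$ from position $1$ to position $-1$ is a central symmetric free rise --- hence Bruhat increasing, while $\breve v$ stays unchanged throughout. Transitivity concludes.

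Finally, for the rank formula it suffices to show $\ell_B(v)=\ell_B(\breve v)+2(m-i+1)-3\chi(i<0)$; the equality of the right--hand side with $4\ell_B(\tau)+|T|-\negg(\tau)+2(m-i+1)-3\chi(i<0)$ is then just the even--case identity $\ell_B(\breve v)=4\ell_B(\tau)+|T|-\negg(\tau)$ of Proposition \ref{Bposets}. By the identity $2\ell_B(w)=\ell_A(\tilde w)+\negg(w)$ recalled above, $2(\ell_B(v)-\ell_B(\breve v))$ equals the number of inversions of $\tilde v$ involving the letters $\pm(2m+1)$ plus $\negg(v)-\negg(\breve v)$. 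Writing $p=2i-1$ when $i>0$ and $p=2i+1$ when $i<0$, a direct count of those inversions (through $2m+1$ at position $p$ and $-(2m+1)$ at position $-p$; in the case $i<0$ one subtracts one for the pair $(p,-p)$, which both bookkeepings count), together with $\negg(v)-\negg(\breve v)=\chi(i<0)$, yields $\ell_B(v)-\ell_B(\breve v)=2(m-i+1)-3\chi(i<0)$, consistent with \eqref{evenodd}. The main obstacle in the whole argument is thus not a single deep step but the combinatorial bookkeeping in (ii) and in this sign--sensitive inversion count; both become routine once the tableau criterion and Theorem \ref{incitti} are in hand.
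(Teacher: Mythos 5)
Your argument is correct and follows essentially the same route as the paper: both points are reduced to the even case (Proposition \ref{Bposets}) via the fact that $u\leqslant v$ forces $\pos(v)\leqslant\pos(u)$ together with the position-fixed fibre identification with $\W(B_{2m})$, and the rank formula is an elementary inversion count (you use $2\ell_B(w)=\ell_A(\tilde w)+\negg(w)$, the paper cites $\ell_B=\inv+\negg+\nsp$ — the same computation in disguise). Your explicit cover chain moving $2m+1$ leftwards via Theorem \ref{incitti} just spells out details the paper leaves as "easy", so no gap.
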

\begin{proof}
  The first two points follow easily from Proposition \ref{Bposets}
 and the fact that if $u \leq v$ then $\pos(u) \geq \pos(v)$. The
 length equality is easy to check using our definitions and the well known fact
 (see, e.g., \cite[Prop. 8.1.1]{BB}) that
 $\ell_B(v)=\inv(v)+\negg(v)+\nsp(v)$ for all $v$ in the hyperoctahedral
 group.
\end{proof}

For $v \in \W(B_{2m+1})$ we define an element $c(v) \in \W(B_{2m+1})$
by
\begin{equation} \label{defc(v)}
    c(v) := \left\{
          \begin{array}{ll}
            v(-1,1)_B, & \hbox{if $j=-1$;} \\
            v(j+1,j+2)_B, & \hbox{if $j \neq -1$ and
            $v(j+1)>v(j+2)$;} \\
            v(j,j+2)_B, & \hbox{if $j \neq -1$ and
            $v(j+1)<v(j+2)$;}
          \end{array}
        \right.
\end{equation}
where $j:= v^{-1}(2m+1)$. For example, $c([-9,4,3,-6,-5,2,1,-8,-7])=[9,4,3,-6,-5,2,1,-8,-7]$,
$c([4,3,-6,-5,9,2,1,-8,-7])=[4,3,-6,-5,9,1,2,-8,-7]$ and $c([3,4,-9,1,2,6,5,-7,-8])=[-9,4,3,1,2,6,5,-7,-8]$.
Note that $\ell _{\mathcal{W}}(c(v),v)=1$.
The next result implies that $c(v)$ is the only coatom $u$ of $v$ such that $v^{-1}(2m+1) \neq u^{-1}(2m+1)$, and is the main technical tool
in our proof of the fact that $\W(B_n)$, under Bruhat order, is graded.

\begin{thm}
\label{propB}
  Let $m\in \mathbb{P}$, and $u,v\in D(B_{2m+1})$, $u<v$, be such that $v^{-1}(2m+1) <u^{-1}(2m+1)$.
  Then $u \leq c(v)$.
\end{thm}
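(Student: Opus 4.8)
The strategy is to reduce the statement to the type-$A$ result already proved, namely Proposition \ref{lemma phin}, via the embedding $B_n \hookrightarrow S_{\pm n}$. Set $j := v^{-1}(2m+1)$ and $j' := u^{-1}(2m+1)$, so by hypothesis $j < j'$, which in the ordering on positions means $u$ has the symbol $2m+1$ strictly to the right of where $v$ does. Passing to $S_{\pm 2m+1}$, write $\tilde{u}, \tilde{v} \in \W(S_{\pm(2m+1)})$; by Proposition \ref{prop Bruhat B} we have $\tilde{u} < \tilde{v}$, and $\tilde{v}$ is the permutation of $[\pm(2m+1)]$ whose largest entry $2m+1$ sits in position $j$. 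The plan is to apply Proposition \ref{lemma phin} inside $\W(S_{\pm(2m+1)})$ to the pair $\tilde{u} < \tilde{v}$: that proposition says that if $\pos(\tilde u, \tilde v)>0$ (i.e.\ the maximal symbol moves position), then $\tilde{u} \leq \tilde{z}$, where $\tilde z$ is one of $\tilde v (j,j+2)$ or $\tilde v(j+1,j+2)$ according to whether $j+1$ is a descent of $\tilde v$. One must check that $\tilde v(j,j+2)$ and $\tilde v(j+1,j+2)$ are exactly the $S_{\pm n}$-images of the $B_n$-elements $v(j,j+2)_B$ and $v(j+1,j+2)_B$ — this uses that $2m+1$ is the global maximum, so $j, j+1, j+2$ are all positive positions (unless $j=-1$, treated separately), hence the transpositions do not interact with their negatives and $\widetilde{v(a,b)_B} = \tilde v(a,b)(-a,-b)$ collapses appropriately on the window around $2m+1$.

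First I would dispose of the boundary case $j=-1$: then $v$ has $2m+1$ in position $-1$, so $u$ has it in some position $j' \geq 1$; here $c(v) = v(-1,1)_B$, and one checks directly (again via $\tilde v$ and Proposition \ref{lemma phin}, noting the relevant rise is the central symmetric one of Theorem \ref{incitti}) that $\tilde u \leq \widetilde{v(-1,1)_B}$. For the main case $j \neq -1$, after the translation above Proposition \ref{lemma phin} gives $\tilde u \leq \tilde z$ with $\tilde z = \widetilde{c(v)}$; then Proposition \ref{prop Bruhat B} pulls this back to $u \leq c(v)$ in $B_{2m+1}$, which is the claim. The observation $\ell_{\W}(c(v),v)=1$ (noted in the text before the theorem) confirms $c(v)$ is genuinely a coatom in the relevant sense, though that fact is not strictly needed for the inequality itself.

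The main obstacle I anticipate is \emph{bookkeeping at the embedding step}: verifying that the position of $2m+1$ in $\tilde v$ is indeed $j$ in the linear order on $[\pm(2m+1)]$, that $j, j+1, j+2 \in [n]$ are all positive (so the $B$-transpositions factor as claimed and the central/symmetric degenerate case of Theorem \ref{incitti} only arises when $j=-1$), and that the descent condition ``$v(j+1) > v(j+2)$'' for $v \in B_{2m+1}$ matches the condition ``$j+1 \in D(\tilde v)$'' used in Proposition \ref{lemma phin}. Once these dictionary facts are pinned down, the argument is essentially a one-line invocation of Proposition \ref{lemma phin} together with Proposition \ref{prop Bruhat B}; no new combinatorial estimate on $S_{\pm n}$-minors is required beyond what was already done in the type-$A$ section.
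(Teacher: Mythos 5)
Your plan has a genuine gap, and it occurs exactly at the step you dismiss as ``bookkeeping''. For $n$ odd the embedding $B_n \hookrightarrow S_{\pm n}$ does \emph{not} send signed Wachs permutations to Wachs permutations: the paper points this out explicitly (e.g.\ $u=[-2,-1,6,5,-3,-4,7]\in \W(B_7)$ has $\tilde{u}\notin \W(S_{\pm 7})$), because the Wachs pairing of values in $S_{\pm(2m+1)}\simeq S_{4m+2}$ is $\{-1,1\},\{2,3\},\dots$, which is shifted relative to the pairs $\{1,2\},\{3,4\},\dots$ used in $\W(B_{2m+1})$. Hence $\tilde{u},\tilde{v}$ do not satisfy the hypotheses of Proposition \ref{lemma phin} (which moreover concerns $\W(S_{2m+1})$, an odd symmetric group with an unpaired maximal letter, whereas $S_{\pm(2m+1)}$ has even size), so the proposition simply cannot be invoked for the pair $\tilde{u}<\tilde{v}$. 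This is precisely why the even case (Lemma \ref{lemmaB}) can be reduced to type $A$ while the odd case cannot, and why the paper proves Theorem \ref{propB} by a direct argument.

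Even setting that aside, the reduction would not give the stated conclusion. For $j\neq -1$ one has $\widetilde{c(v)}=\tilde{v}(j,j+2)(-j,-j-2)$ (resp.\ $\tilde{v}(j+1,j+2)(-j-1,-j-2)$), which is strictly \emph{below} $\tilde{v}(j,j+2)$ in Bruhat order; so a conclusion of the form $\tilde{u}\leq \tilde{v}(j,j+2)$ does not yield $\tilde{u}\leq \widetilde{c(v)}$. One must additionally control the mirrored window around position $-j$, where the minimal letter $-2m-1$ sits, and no type-$A$ statement in the paper (in particular not Proposition \ref{lemma phin}, which only tracks the maximum) addresses this; in the even case the paper compensates by applying Lemma \ref{lemma-tij} twice, once for each half, but here no second application is available. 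Finally, your assertion that $j,j+1,j+2$ are positive unless $j=-1$ is false: $j=v^{-1}(2m+1)$ can be any element of $[\pm(2m+1)]$, and the case $j\leq -3$ (with its own subcases according to the position of $-2m-1$ in $u$) takes up a substantial part of the paper's proof. The paper's actual argument compares, for $k$ near $j$ and near $-j-1$, the increasing rearrangements $u_k$, $v_k$, $c(v)_k$ of the initial segments (the tableau criterion transported through Proposition \ref{prop Bruhat B}), and uses the Wachs pairing of entries together with parity to upgrade $y\leq x+1$ to $y\leq x$; none of this is recovered by citing Proposition \ref{lemma phin}.
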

\begin{proof}
Let $j:=v^{-1}(2m+1)$. We distinguish various cases according as to whether $j<-1$, $j=-1$, or $j>0$. Let, for brevity,
$z:=c(v)$. Given a signed permutation $w \in B_n$ and $j \in [\pm n]$ we find it convenient to denote by $w_j$ the increasing rearrangement of $ \{ w(-n), w(-n+1), \ldots, w(j) \}$.

Assume first that $j>0$ and $j+1 \in D_{R}(v)$. Let $a := v(j+2)$ so
$v(j+1)=a+1$. Then $v_k =z_k$ for all $k \in [ \pm (2m+1)] \setminus
\{ j+1,-j-2 \}$. Since $u,v,z$ are all Wachs permutations there are
$ x_1, \ldots , x_{\tilde {m}} , y_1, \ldots , y_{\tilde {m}+1} \in  [ \pm (2m)]$, where $\tilde{m}:= \frac{2m-1+j}{2}$, such that
$$
v_{j+1} = (-2m-1,x_1,x_1 +1,\ldots , x_p , x_p +1, a+1, x_{p+1},
x_{p+1}+1, \ldots , x_{\tilde{m}},  x_{\tilde{m}}+1, 2m+1)
$$
$$
z_{j+1} = (-2m-1,x_1,x_1 +1,\ldots , x_p,x_p +1,a,x_{p+1},x_{p+1}+1, \ldots , x_{\tilde{m}}, x_{\tilde{m}}+1, 2m+1)
$$
and
$$
u_{j+1}=(-2m-1,y_1 , y_1 +1, \ldots  , y_{\tilde{m}+1}, y_{\tilde{m}+1}+1),
$$
for some $0 \leq p \leq \tilde{m}$.
Since $u_{j+1} \leq v_{j+1}$ we have that $y_{p+1} \leq a+1$. But
$y_{p+1}$ and $a$ are of the same parity if they have the same sign so
$y_{p+1} \leq a$ and hence $u_{j+1} \leq z_{j+1}$.

Similarly, there are
$ x_1, \ldots , x_{\tilde {m} } , y_1, \ldots , y_{\tilde {m} }  \in [ \pm (2m)]$, where $\tilde{m} := \frac{2m-j-1}{2}$ such that
$$
v_{-j-2} = (x_1,x_1 +1,\ldots , x_p , x_p +1, -a, x_{p+1},
x_{p+1}+1, \ldots , x_{\tilde{m}},  x_{\tilde{m}}+1)
$$
$$z_{-j-2} = (x_1,x_1 +1,\ldots , x_p,x_p +1,-a-1,x_{p+1},x_{p+1}+1, \ldots , x_{\tilde{m}}, x_{\tilde{m}}+1)
$$
and
$$
u_{-j-2}=(-2m-1,y_1 , y_1 +1, \ldots  , y_{\tilde{m}}, y_{\tilde{m}}+1),
$$
for some $0 \leq p \leq \tilde{m}$.
Since $u_{-j-2} \leq v_{-j-2}$ we have that $y_{p} \leq x_p +1$. But
$x_{p} +1 < -a-1$ so
$y_{p} +1 \leq -a-1$ and hence $u_{-j-2} \leq z_{-j-2}$.

Suppose now that ($j>0$ and) $j+1 \notin D_R(v)$.
Consider first
$v_j , z_j , u_j ,v_{j+1}, z_{j+1}, u_{j+1}$. Since $v,u$, and $z$ are all Wachs permutations, and by our definition of $z$, there are $\{ x_1, \ldots , x_{\tilde{m}} \}_{<},$
$\{ y_1, \ldots , y_{\tilde{m}} \}_{<} \subseteq [\pm (2m)]$, where $\tilde{m}:= \frac{2m-1+j}{2}$ such that
$$ v_j = (-2m-1,x_1,x_1 +1,\ldots ,x_{\tilde{m}},
x_{\tilde{m}}+1,2m+1) $$
\[
z_j = (-2m-1,x_1,x_1 +1,\ldots , x_p,x_p +1,a+1,x_{p+1},x_{p+1}+1,
\ldots , x_{\tilde{m}},
x_{\tilde{m}}+1)
\]
\[
u_j = (-2m-1,y_1,y_1 +1,\ldots , y_q,y_q +1,b^{\ast},y_{q+1},y_{q+1}+1,\ldots ,y_{\tilde{m}},
y_{\tilde{m}}+1)
\]
\[
v_{j+1} = (-2m-1,x_1,x_1 +1,\ldots , x_p,x_p +1,a,x_{p+1},x_{p+1}+1, \ldots , x_{\tilde{m}},
x_{\tilde{m}+1},2m+1)
\]
\[
z_{j+1} = (-2m-1,x_1,x_1 +1,\ldots , x_p,x_p +1,a,a+1,x_{p+1},x_{p+1}+1, \ldots , x_{\tilde{m}},
x_{\tilde{m}}+1)
\]
\[
u_{j+1} = (-2m-1,y_1,y_1 +1,\ldots , y_q,y_q +1,b,b+1,y_{q+1},y_{q+1}+1,\ldots ,y_{\tilde{m}},
y_{\tilde{m}}+1)
\]
where $a:=v(j+1)$
($=z(j+1)$), $b^\ast := u(j)$, and $p,q \in [\tilde{m}]$.
By our hypothesis $u \leq v$ so $u_j \leq v_j$ and $u_{j+1} \leq v_{j+1}$ (componentwise). This easily implies that $u_j \leq z_j$ and
$u_{j+1} \leq z_{j+1}$ (componentwise) keeping in mind the fact that $x_r$ and $y_{s}$ have the same parity if they have the same sign for all $r,s \in [\tilde{m}]$ (so $y_s \leq x_r+1$
implies $y_s \leq x_r$ for all $r,s \in [\tilde{m}]$). For example, if $q<p$, then $y_p \leq a$ (since $u _{j+1} \leq v_{j+1})$, while $y_k \leq x_k+1$ (since $u_j \leq v_j$) so $y_k \leq x_k$ if $p+1 \leq k \leq \tilde{m}$. Similarly, if $q>p$, then $y_{p+1} \leq a $ (since $u_{j+1} \leq v_{j+1}$) while $b \leq x_{q+1}$ so
$b \leq x_q$, while $y_k \leq x_k+1$ (since $u_j \leq v_j$) so $y_k \leq x_k$ if $q+1 \leq k \leq \tilde{m} $, and $y_k \leq x_{k-1}+1$ (since $u_{j+1} \leq v_{j+1}$) so $y_k \leq x_{k-1}$ if $p+2 \leq k \leq q$. The case $p=q$ is even simpler, and is therefore omitted.

Consider now $u_{-j-2}, z_{-j-2}, v_{-j-2}$ and $u_{-j-1}, z_{-j-1}, v_{-j-1}$.Then reasoning as above we have that there are
$\{ x_1, \ldots , x_{\tilde{m}} \}_{<},
\{ y_1, \ldots , y_{\tilde{m}} \}_{<} \subseteq [\pm (2m)]$ , where $\tilde{m} := m- \frac{j+1}{2}$, such that
$$v_{-j-2} = (x_1,x_1 +1,\ldots ,x_p, x_p+1,-a-1,x_{p+1},x_{p+1}+1,\ldots ,  x_{\tilde{m}},
x_{\tilde{m}}+1)
$$
$$z_{-j-2} = (-2m-1,x_1,x_1 +1,\ldots ,  x_{\tilde{m}},
x_{\tilde{m}}+1)
$$
$$u_{-j-2} = (-2m-1,y_1,y_1 +1, \ldots ,y_{\tilde{m}},
y_{\tilde{m}}+1)
$$
$$v_{-j-1} = (x_1,x_1 +1,\ldots , x_p,x_p +1,-a-1,-a,x_{p+1},x_{p+1}+1, \ldots , x_{\tilde{m}},
x_{\tilde{m}}+1)
$$
$$z_{-j-1} = (-2m-1,x_1,x_1 +1,\ldots , x_p,x_p +1,-a,x_{p+1},x_{p+1}+1, \ldots , x_{\tilde{m}},
x_{\tilde{m}}+1)
$$
$$u_{-j-1} = (-2m-1,y_1,y_1 +1,\ldots , y_q,y_q +1,-b, y_{q+1},y_{q+1}+1,\ldots ,y_{\tilde{m}},
y_{\tilde{m}}+1)
$$
where
$p,q \in [\tilde{m}]$. As above, the fact that
 $u_{-j-2} \leq v_{-j-2}$ and $u_{-j-1} \leq v_{-j-1}$  easily implies that
$u_{-j-2} \leq z_{-j-2}$ and $u_{-j-1} \leq z_{-j-1}$. For example, if $q<p$, then  $y_k \leq x_k +1$ (since $u_{-j-2} \leq v_{-j-2}$) so
$y_k \leq x_k$ for $ 1 \leq k \leq p$
and hence $-b <y_{q+1} \leq x_{q+1}$.
 Similarly, if $q \geq p$.

\medskip

 Consider now the case $j<-1$ (so $j \leq -3$). Assume first that $j+1 \notin D_R(v)$. Consider $u_j , z_j ,v_j, u_{j+1},
 z_{j+1}, v_{j+1}$. If $u^{-1}(-2m-1)>j$ then we conclude exactly as in the case $j>0$. So assume that $u^{-1}(-2m-1)<j$. Then reasoning as above we conclude that there are $\{ x_1, \ldots , x_{\tilde {m} } \}_{<}$, $\{ y_1, \ldots , y_{\tilde {m} } \}_{<} \subseteq [ \pm (2m)]$,
 where $\tilde {m} := m + \frac{j+1}{2}$, such that
$$
v_{j} = (x_1,x_1 +1,\ldots ,  x_{\tilde{m}},  x_{\tilde{m}}+1, 2m+1)
$$
$$z_{j} = (x_1,x_1 +1,\ldots ,x_p,x_{p}+1,a+1, x_{p+1}, x_{p+1}+1,\ldots ,  x_{\tilde{m}}, x_{\tilde{m}}+1)
$$
$$u_{j} = (-2m-1,y_1,y_1 +1, \ldots ,y_{\tilde{m}},
y_{\tilde{m}}+1)
$$
$$v_{j+1} = (x_1,x_1 +1,\ldots , x_p,x_p +1,a,x_{p+1},x_{p+1}+1, \ldots , x_{\tilde{m}},
x_{\tilde{m}}+1, 2m+1)
$$
$$z_{j+1} = (x_1,x_1 +1,\ldots , x_p,x_p +1,a,a+1,x_{p+1},x_{p+1}+1, \ldots , x_{\tilde{m}},
x_{\tilde{m}}+1)
$$
$$u_{j+1} = (-2m-1,y_1,y_1 +1,\ldots , y_q,y_q +1,c, y_{q+1},y_{q+1}+1,\ldots ,y_{\tilde{m}},
y_{\tilde{m}}+1)
$$
$$v_{j+2} = (x_1,x_1 +1,\ldots , x_p,x_p +1,a,a+1,x_{p+1},x_{p+1}+1, \ldots , x_{\tilde{m}},
x_{\tilde{m}}+1, 2m+1)
$$
$$u_{j+2} = (-2m-1,y_1,y_1 +1,\ldots , y_q,y_q +1,c,c+1, y_{q+1},y_{q+1}+1,\ldots ,y_{\tilde{m}},
y_{\tilde{m}}+1)
$$
where
$a:=v(j+1)$
($=z(j+1)$), $c := u(j+1)$, and $p,q \in [\tilde{m}]$.
By our hypothesis $u \leq v$ so $u_j \leq v_j$, $u_{j+1} \leq v_{j+1}$, and $u_{j+2} \leq v_{j+2}$ (componentwise). As above, this implies that $u_j \leq z_j$ and $u_{j+1} \leq z_{j+1}$.
For example, if $p<q$, then $y_{p+1} \leq a+1$ (so $y_{p+1} +1 \leq
x_{p+1}$), $y_{p} +1 \leq a$, $c \leq x_{q}+1$, and $y_i \leq x_{i-1}+1$
(so $y_{i} \leq x_{i-1}$) for $p+2 \leq i \leq q$
(since $u_{j+2} \leq v_{j+2}$), while $y_{i} \leq x_i +1$ (so $y_i \leq x_i$) for $p+1 \leq i \leq \tilde{m}$ ( since $u_j \leq v_j$). Similarly, if $q<p$, then $y_{p} \leq a$ and $y_{i} \leq x_i +1$ (so
$y_{i} \leq x_i $) for $p+1 \leq i \leq \tilde{m}$
(since $u_{j+1} \leq v_{j+1}$). Finally, if $q=p$, then $c \leq a+1$,
$y_{p+1} \leq a$ and $y_i \leq x_i +1$ (so $y_i \leq x_i$) for $p+1 \leq i \leq \tilde{m} $ (since $u_{j+2} \leq v_{j+2}$).

Consider now  $u_{-j-1} , v_{-j-1}, z_{-j-1}, u_{-j-2}, v_{-j-2}, z_{-j-2}$. If $u^{-1}(-2m-1)>j$ then we conclude exactly as in the case $j>0$. So assume that $u^{-1}(-2m-1)<j$. Then  as above, since $u,v$, and $z$ are all Wachs permutations we conclude that  there are $\{ x_1, \ldots , x_{\tilde {m} } \}_{<} \subseteq [\pm (2m)]$ and $\{ y_1, \ldots , y_{\tilde {m} } \}_{<} \subseteq [ \pm (2m)]$ such that
$$
v_{-j-2} = (x_1,x_1 +1,\ldots , x_p , x_p +1, -a-1, x_{p+1},
x_{p+1}+1, \ldots , x_{\tilde{m}},  x_{\tilde{m}}+1, 2m+1)
$$
$$z_{-j-2} = (-2m-1, x_1,x_1 +1,\ldots  ,  x_{\tilde{m}}, x_{\tilde{m}}+1, 2m+1)
$$
$$u_{-j-2} = (-2m-1,y_1,y_1 +1, \ldots ,y_q , y_{q}+1, c^{\ast}, y_{q+1},
y_{q+1}+1, \ldots , y_{\tilde{m}}, y_{\tilde{m}}+1)
$$
$$v_{-j-1} = (x_1,x_1 +1,\ldots , x_p,x_p +1,-a-1,-a,x_{p+1},x_{p+1}+1, \ldots , x_{\tilde{m}}, x_{\tilde{m}}+1, 2m+1)
$$
$$z_{-j-1} = (-2m-1, x_1, x_1+1, \ldots , x_p,x_p +1,-a,x_{p+1}, x_{p+1}+1, \ldots , x_{\tilde{m}}, x_{\tilde{m}}+1, 2m+1)
$$
$$u_{-j-1} = (-2m-1,y_1,y_1 +1,\ldots , y_q,y_q +1,c,c+1, y_{q+1},y_{q+1}+1,\ldots ,y_{\tilde{m}},
y_{\tilde{m}}+1)
$$
$$v_{-j-3}=(x_1 , x_1 +1, \ldots  , x_{\tilde{m}}, x_{\tilde{m}}+1,2m+1)
$$
$$u_{-j-3}=(-2m-1,y_1 , y_1 +1, \ldots  , y_{\tilde{m}}, y_{\tilde{m}}+1)
$$
where $\tilde{m} := m - \frac{j+3}{2}$, $-a := v(-j-1)$ ($=z(-j-1)$), and $c^{\ast } := u(-j-2)$, for some $p,q \in [ \tilde{m}]$. It is then  not hard to conclude that $u_{-j-2} \leq z_{-j-2}$ and $u_{-j-1} \leq z_{-j-1}$.
For example, if $p \leq q$, then $y_i \leq x_i +1$ (since $u_{-j-2} \leq v_{-j-2}$) so $y_{i} \leq x_{i}$ for $1 \leq i \leq p$.
If $p>q$ then $y_i \leq x_i +1$ ( since $u_{-j-3} \leq v_{-j-3}$) so
$y_i \leq x_i$ for $1 \leq i \leq \tilde{m}$ and hence $c+1 < y_{q+1} \leq x_{q+1}$.

Assume now that ($j<-1$ and) $j+1 \in D_R(v)$.
Then $v_k = z_k$ if
$k \in [ \pm (2m+1) ] \setminus \{ -j-2,j+1 \}$. Assume first that
$u^{-1} (2m+1) \geq -j$. Let $a:= v(j+2)$ so $v(j+1)=a+1$. Then there are $x_1, \ldots , x_{\tilde {m} } , y_1, \ldots , y_{\tilde {m} }  \in [ \pm (2m)]$, where $\tilde{m} := \frac{2m-3-j}{2}$ such that
\begin{equation}
\label{v-j-2}
v_{-j-2} = (x_1,x_1 +1,\ldots , x_p , x_p +1, -a, x_{p+1},
x_{p+1}+1, \ldots , x_{\tilde{m}},  x_{\tilde{m}}+1, 2m+1)
\end{equation}
\begin{equation}
\label{z-j-2}
z_{-j-2} = (x_1,x_1 +1,\ldots , x_p,x_p +1,-a-1,x_{p+1},x_{p+1}+1, \ldots , x_{\tilde{m}}, x_{\tilde{m}}+1,2m+1)
\end{equation}
and
$$u_{-j-2}=(-2m-1,y_1 , y_1 +1, \ldots , y_q,y_q +1, c^{\ast }, y_{q+1},
y_{q+1}+1, \ldots ,  y_{\tilde{m}}, y_{\tilde{m}}+1),
$$
for some $0 \leq p,q \leq \tilde{m}$ where
$c^{\ast } := u(-j-2)$. It is then easy to see that $u_{-j-2} \leq z_{-j-2}$. Indeed, if $q<p$ then, since $u_{-j-2} \leq v_{-j-2}$,
$y_p \leq -a$ so, as above, $y_p \leq -a-1$ and hence $u_{-j-2} \leq z_{-j-2}$. If $q \geq p$ then $y_p \leq x_p +1$ (since
$u_{-j-2} \leq v_{-j-2}$) so $y_p \leq x_p$. But $x_p +1<-a$ hence $y_p +1 \leq -a-1$ so  $u_{-j-2} \leq z_{-j-2}$.

Similarly, there are
$\{ x_1, \ldots , x_{\tilde {m} }\}_< , \{y_1, \ldots , y_{\tilde {m} } \}_{<} \in [ \pm (2m)]$, where $\tilde{m} := \frac{2m+j+1}{2}$, such that
\begin{equation}
\label{v_j+1}
v_{j+1} = (x_1,x_1 +1,\ldots , x_p , x_p +1, a+1, x_{p+1},
x_{p+1}+1, \ldots , x_{\tilde{m}},  x_{\tilde{m}}+1, 2m+1)
\end{equation}
\begin{equation}
\label{zj+1}
z_{j+1} = (x_1,x_1 +1,\ldots , x_p,x_p +1,a,x_{p+1},x_{p+1}+1, \ldots , x_{\tilde{m}}, x_{\tilde{m}}+1,2m+1)
\end{equation}
and
$$u_{j+1}=(-2m-1,y_1 , y_1 +1, \ldots , y_q,y_q +1, c^{\ast }, y_{q+1},
y_{q+1}+1, \ldots ,  y_{\tilde{m}}, y_{\tilde{m}}+1),
$$
for some $0 \leq p$, $q \leq \tilde{m}$, where
$c^{\ast } := u(j+1)$, and we conclude exactly as in the last case.

Assume now that
$u^{-1} (2m+1) < -j$. Then there are $ x_1, \ldots , x_{\tilde {m} } , y_1, \ldots , y_{\tilde {m}+1 }  \in [ \pm 2m]$, where $\tilde{m} := \frac{2m+1+j}{2}$ such that
(\ref{v_j+1}) and (\ref{zj+1}) hold
and
$$u_{j+1}=(y_{1},y_{1}+1, \ldots ,  y_{\tilde{m}+1}, y_{\tilde{m}+1}+1)
$$
and the result follows easily. Similarly, there are
$ x_1, \ldots , x_{\tilde {m} } , y_1, \ldots , y_{\tilde {m} }  \in [ \pm 2m]$, where $\tilde{m} := \frac{2m-3-j}{2}$, such that
(\ref{v-j-2}) and (\ref{z-j-2}) hold
and
$$u_{-j-2}=(-2m-1,y_1 , y_1 +1, \ldots ,  y_{\tilde{m}}, y_{\tilde{m}}+1, 2m+1)
$$
for some $0 \leq p \leq \tilde{m}$. Hence, since
$u_{-j-2} \leq v_{-j-2}$, $y_{p} \leq x_p +1<-a-1$ so
$y_{p} +1 \leq -a-1$.

Finally, if $j=-1$ then $z= v(-1,1)$ so there are $x_1, \ldots , x_m$, $y_1, \ldots , y_m \in [ \pm 2m]$ such that $v_{-1}=(x_1 , x_1 +1, \ldots  , x_{m}, x_{m}+1,2m+1)$,
$z_{-1}=(-2m-1,x_1 , x_1 +1, \ldots  , x_{m}, x_{m}+1)$ and
$u_{-1}=(-2m-1,y_1 , y_1 +1, \ldots  , y_{m}, y_{m}+1)$,
so the conclusion follows easily. This concludes the proof.
\end{proof}

Recall that if $v=(\sigma,S,i) \in \W(B_{2m+1})$ then we let $\breve{v}=(\sigma,S)\in \W(B_{2m})$. We can now prove the main result of this section.
\begin{thm} \label{mainB}
$\W(B_{n})$ is graded, with rank function $\ell_{\W}$, and its rank is
$n^2-\left\lfloor\frac{n}{2}\right\rfloor^2$.
\end{thm}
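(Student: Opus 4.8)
The plan is to treat the even and odd cases separately, the even case being already settled: for $n=2m$, Theorem \ref{Bevengraded} gives that $\W(B_{2m})$ is graded with rank function $\ell_{\W}$ and rank $3m^2$, and one only has to record that $3m^2=(2m)^2-m^2=n^2-\lfloor n/2\rfloor^2$. So the work is entirely in the odd case $n=2m+1$ (with $n=1$ trivial), and I would first dispose of the rank value: since $e$ and $w_0^{B_{2m+1}}$ both belong to $\W(B_{2m+1})$ they are its minimum and maximum; since $v=w_0^{B_{2m+1}}$ has $\breve v=w_0^{B_{2m}}$, corresponding to $(w_0^{B_m},[m])$, formula \eqref{def-rankB-odd} together with $\ell_B(w_0^{B_{2m+1}})=(2m+1)^2$ and $\ell_B(w_0^{B_m})=m^2$ yields $\ell_{\W}(w_0^{B_{2m+1}})=(2m+1)^2-m^2=n^2-\lfloor n/2\rfloor^2$, while $\ell_{\W}(e)=0$.

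For gradedness I would use the elementary fact that a finite poset with $\hat 0$ and $\hat 1$ is graded with rank function $\rho$ provided $\rho(\hat 0)=0$ and $\rho(y)-\rho(x)=1$ for every covering $x\vartriangleleft y$, and I would derive the latter from two assertions: (i) $\ell_{\W}$ is strictly increasing along $<$, and (ii) every $u<v$ in $\W(B_{2m+1})$ with $\ell_{\W}(u,v)\geq 2$ admits some $z$ with $u<z<v$. Both are proved by distinguishing whether or not $u^{-1}(2m+1)=v^{-1}(2m+1)$; note that $u\leq v$ forces $u^{-1}(2m+1)\geq v^{-1}(2m+1)$, by Proposition \ref{prop Bruhat B} and the corresponding monotonicity of the position of the largest value in $S_{\pm(2m+1)}$.

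The crux is a \emph{fiber reduction}: for $a,b\in\W(B_{2m+1})$ with $a^{-1}(2m+1)=b^{-1}(2m+1)$ one has $a\leq b$ if and only if $\breve a\leq \breve b$ in $\W(B_{2m})$. To prove this I would pass to $\tilde a,\tilde b\in S_{\pm(2m+1)}$ via Proposition \ref{prop Bruhat B}, observe that $\tilde a$ and $\tilde b$ agree on the two positions carrying the extreme values $\pm(2m+1)$, and invoke the fact (routine from the tableau criterion, Theorem \ref{tableau}) that deleting a fixed position carrying the largest value, and dually one carrying the smallest value, both preserves and reflects the Bruhat order; after these two deletions one is left precisely with $\widetilde{\breve a}$ and $\widetilde{\breve b}$, and one applies Proposition \ref{prop Bruhat B} once more. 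Granting this and \eqref{evenodd}, the case $u^{-1}(2m+1)=v^{-1}(2m+1)$ of (i) and (ii) follows from the even case: $\breve u<\breve v$ with $\ell_{\W}(\breve u,\breve v)=\ell_{\W}(u,v)$, so (i) is immediate from Theorem \ref{Bevengraded}, and for (ii) one chooses $\breve z$ with $\breve u<\breve z<\breve v$ in the graded poset $\W(B_{2m})$ and lifts it, inside the same fiber, to $z$ with $u<z<v$ (using the reflecting half of the fiber reduction).

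In the remaining case $u^{-1}(2m+1)>v^{-1}(2m+1)$, Theorem \ref{propB} gives $u\leq c(v)$, where $c(v)$ is the element of \eqref{defc(v)} with $\ell_{\W}(c(v),v)=1$. Then (ii) is immediate: $\ell_{\W}(u,v)\geq 2$ forces $u\neq c(v)$, hence $u<c(v)<v$. For (i), if $u=c(v)$ there is nothing to prove, and if $u<c(v)$ then, since $\ell_{\W}(c(v))=\ell_{\W}(v)-1$, an induction on $\ell_{\W}(v)$ gives $\ell_{\W}(u)<\ell_{\W}(c(v))<\ell_{\W}(v)$ (the base case $\ell_{\W}(v)\leq 1$ being trivial). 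This completes the argument. I expect the main obstacle to be the fiber reduction lemma: proving carefully that deleting the fixed positions of the largest and smallest entries is a Bruhat order isomorphism onto the smaller group, and keeping the bookkeeping between positions in $B_{2m+1}$ and in $B_{2m}$ consistent with the involution $i\mapsto i^{\ast}$, so that the lifted element $z$ really lies in $\W(B_{2m+1})$ and in the intended fiber.
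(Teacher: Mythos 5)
Your proposal is correct and follows essentially the same route as the paper: even case via Theorem \ref{Bevengraded}, and in the odd case a split on whether $u^{-1}(2m+1)=v^{-1}(2m+1)$, using Theorem \ref{propB} and the element $c(v)$ of \eqref{defc(v)} in the strict-inequality case and the reduction $v\mapsto\breve v$ together with \eqref{evenodd} in the equal-position case. The only difference is packaging: the paper verifies directly that every cover has $\ell_{\W}$-difference $1$ (forcing $u=c(v)$, resp.\ $\breve u\lhd\breve v$), while you route through strict monotonicity plus refinability and spell out as a ``fiber reduction'' lemma the order-preserving-and-reflecting property of $v\mapsto\breve v$ on a fixed fiber, which the paper uses implicitly.
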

\begin{proof}
If $n$ is even then this follows from Theorem
\ref{Bevengraded}. So assume that $n=2m+1$ for some $m>0$. Since $\W(B_{2m+1})$ has both a maximum and a minimum element it is enough to show that if $u,v \in \W(B_{2m+1})$ and $u \lhd v$ then
$\ell _{\W} (u,v)=1$. So let $u,v \in \W(B_{2m+1})$ be such that $u \lhd v$.
Let  $j:= u^{-1}(2m+1)$, and $i:= v^{-1}(2m+1)$.
Since $u<v$ we have that $i \leq j$. If $i<j$  then by Theorem \ref{propB} we have that $u \leq c(v)<v$ so $u=c(v)$ and hence $\ell _{\W}( u,v)=1$. If $i=j$ then $\breve{u} \lhd \breve{v}$ so, by Theorem \ref{Bevengraded}, $\ell _{\W} (\breve{u}, \breve{v})=1$ and
hence, by (\ref{evenodd}), $\ell _{\W} (u,v)=1$.
Finally, it is not difficult to see, by our definition of $\ell_{\W}$, that $\ell_{\W}(w_0(B_{2m+1}))=3m^2+4m+1$.
\end{proof}

We remark that the sequence $\{\ell_{\W}(w_0(B_{2m}))\}_{m \in \mathbb{P}}$ gives the number of edges of the complete tripartite graph $K_{m,m,m}$ (see A033428 in OEIS), and
$\{\ell_{\W}(w_0(B_{2m+1}))\}_{m \in \mathbb{P}}$ is the sequence of octagonal numbers (see A000567 in OEIS). We can now compute the rank-generating function of $(\W(B_n),\leqslant)$.
\begin{cor} \label{poincareBn} Let $m>0$. Then
$$\W(B_{2m})(x,\ell_{\W})=(1+x)^m [m]_{x^3}! \prod\limits_{i=1}^m(1+x^{3i-1})$$ and
  $$\W(B_{2m+1})(x,\ell_{\W})=[m+1]_{x^2}(1+x^{2m+1})(1+x)^m [m]_{x^3}! \prod\limits_{i=1}^m(1+x^{3i-1}).$$
\end{cor}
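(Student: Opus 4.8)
The plan is to translate the statement into a specialization of the joint generating function of the length $\ell_B$ and the number of negatives $\negg$ over the hyperoctahedral group $B_m$, using the bijections $\phi$ and the length formulas of Propositions~\ref{Bposets} and~\ref{Bposets-odd}; the $B_m$-computation is then carried out by a parabolic recursion.

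Consider first the even case. By Proposition~\ref{Bposets}, if $v=(\tau,T)\in\W(B_{2m})$ then $\ell_B(\tau,T)=4\ell_B(\tau)+|T|-\negg(\tau)$, so by the definition of $\ell_{\W}$ we have $\ell_{\W}(\tau,T)=3\ell_B(\tau)+|T|-\negg(\tau)$. Since $\phi$ identifies $\W(B_{2m})$ with $B_m\times\mathcal{P}([m])$, summing over $T\subseteq[m]$ first gives
$$
\W(B_{2m})(x,\ell_{\W})=(1+x)^m\sum_{\tau\in B_m}x^{3\ell_B(\tau)-\negg(\tau)}.
$$
So it is enough to prove the identity
$$
\sum_{\tau\in B_m}q^{\ell_B(\tau)}t^{\negg(\tau)}=\prod_{k=1}^m[k]_q\,(1+tq^k),
$$
because specializing $q=x^3$ and $t=x^{-1}$ then yields $\sum_{\tau\in B_m}x^{3\ell_B(\tau)-\negg(\tau)}=[m]_{x^3}!\prod_{i=1}^m(1+x^{3i-1})$, which is the asserted even-case formula.

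I would prove the product identity using the chain of parabolic subgroups $\{e\}=B_0\subset B_1\subset\cdots\subset B_m$, where $B_k:=\langle s_0,s_1,\dots,s_{k-1}\rangle$ is the subgroup fixing $k+1,\dots,m$ and their negatives pointwise. Iterating the parabolic factorization of \cite[Proposition 2.4.4]{BB} along this chain, every $\tau\in B_m$ factors uniquely as $\tau=u_mu_{m-1}\cdots u_1$ with $u_k$ a minimal-length coset representative for $B_k$ modulo $B_{k-1}$ and $\ell_B(\tau)=\sum_{k=1}^m\ell_B(u_k)$. Such a representative is determined by the value $u_k(k)\in[\pm k]$: if $u_k(k)=j$ with $1\leq j\leq k$ it is the order-preserving shift fixing $1,\dots,j-1$, raising $j,\dots,k-1$ by one, and sending $k\mapsto j$, with $\ell_B(u_k)=k-j$ and $\negg(u_k)=0$; if $u_k(k)=-j$ with $1\leq j\leq k$ it is obtained from the latter by negating the image of $k$, with $\ell_B(u_k)=k+j-1$ and $\negg(u_k)=1$. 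Summing over these $2k$ representatives gives the local factor $[k]_q+tq^k[k]_q=[k]_q(1+tq^k)$. The remaining point is the additivity of $\negg$ along the factorization, $\negg(\tau)=\sum_{k=1}^m\negg(u_k)$: writing $\tau=u_m\tau'$ with $\tau'\in B_{m-1}$, one checks that $u_m(m)=\tau(m)$ and that $u_m$ preserves the sign of every element of $[\pm(m-1)]$, so that $\sgn(\tau(i))=\sgn(\tau'(i))$ for $i<m$; hence $\Neg(\tau)=\Neg(\tau')$ when $\tau(m)>0$ and $\Neg(\tau)=\Neg(\tau')\cup\{m\}$ when $\tau(m)<0$, and the claim follows by induction on $m$. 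Combining the additivity of $\ell_B$ and of $\negg$ with the local factors proves the identity.

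For the odd case, Proposition~\ref{Bposets-odd} gives $\ell_{\W}(i,\tau,T)=\ell_{\W}(\breve{v})+2(m-i+1)-3\chi(i<0)$ with $\breve{v}=(\tau,T)\in\W(B_{2m})$, so that
$$
\W(B_{2m+1})(x,\ell_{\W})=\Bigl(\sum_{i\in[\pm(m+1)]}x^{2(m-i+1)-3\chi(i<0)}\Bigr)\W(B_{2m})(x,\ell_{\W}).
$$
Splitting the sum over $i$ into its positive part $i=1,\dots,m+1$ (contributing $1+x^2+\cdots+x^{2m}=[m+1]_{x^2}$) and its negative part $i=-1,\dots,-(m+1)$ (contributing $x^{2m+1}+x^{2m+3}+\cdots+x^{4m+1}=x^{2m+1}[m+1]_{x^2}$) gives the factor $[m+1]_{x^2}(1+x^{2m+1})$, and together with the even-case computation this completes the proof. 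The only genuinely non-routine step is the additivity of $\negg$ along the parabolic factorization; everything else is bookkeeping with the length formulas already established.
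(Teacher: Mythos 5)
Your proof is correct, and its overall skeleton coincides with the paper's: both reduce the even case, via Proposition \ref{Bposets} and the definition of $\ell_{\W}$, to evaluating $(1+x)^m\sum_{\tau\in B_m}x^{3\ell_B(\tau)-\negg(\tau)}$, and both handle the odd case by the identical bookkeeping with Proposition \ref{Bposets-odd}, splitting the position parameter $i\in[\pm(m+1)]$ to extract the factor $[m+1]_{x^2}(1+x^{2m+1})$. The difference lies in how the inner sum over $B_m$ is computed. The paper uses a single parabolic factorization $\sigma=zw$ with $w\in (B_m)_J\simeq S_m$ and $z$ the increasing-window coset representative, observes that $z$ is encoded by the set $S$ of absolute values of the negative entries with $\ell_B(\sigma)=\inv(w)+\sum_{s\in S}s$ and $\negg(\sigma)=|S|$, and then quotes $S_m(x,\inv)=[m]_x!$; this is shorter because the symmetric-group factor comes for free. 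You instead iterate the factorization along the chain $B_0\subset B_1\subset\cdots\subset B_m$, compute the $2k$ minimal coset representatives at each stage together with their lengths and signs, verify additivity of $\negg$ along the chain, and thereby establish the refined product formula $\sum_{\tau\in B_m}q^{\ell_B(\tau)}t^{\negg(\tau)}=\prod_{k=1}^m[k]_q(1+tq^k)$ before specializing $q=x^3$, $t=x^{-1}$. Your route is a bit longer but self-contained (it re-derives the $[m]_q!$ factor rather than citing it) and yields the full joint distribution of $(\ell_B,\negg)$ as a by-product; all the individual steps you state (the description of the coset representatives, $\ell_B(u_k)=k-j$ or $k+j-1$, and the sign-preservation argument giving $\negg(\tau)=\sum_k\negg(u_k)$) check out, so there is no gap.
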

\begin{proof}
Note first that by our definition and Proposition \ref{Bposets} if
$u \in \W(B_{2m})$, $u=(\sigma,S)$, then $\ell_{\W}(u)=
3 \ell_B(\sigma)+|S|- \negg(\sigma)$. Therefore
$$
\sum_{u \in \W(B_{2m})} x^{\ell_{\W}(u)}=
\sum_{\sigma \in B_m} \sum_{S \subseteq [m]} x^{3 \ell_B(\sigma)+|S|- \negg(\sigma)}
= (1+x)^m \sum_{\sigma \in B_m} x^{3 \ell_B(\sigma)- \negg(\sigma)}.
$$
Let $J:=[m]$. Then by \cite[Prop. 2.4.4]{BB}
every element of $\sigma \in B_m$ may be expressed in a unique way as
$\sigma= zw$ where $w \in (B_m)_J$ and $z \in (B_m)^J$.
But the elements of $(B_m)_J$ are permutations of $S_m$
and the $z \in (B_m)^J$ are characterized by the fact that $z(1) < z(2) < \cdots < z(m)$. Hence these elements $z$ are in
bijection with subsets $S \subseteq [m]$, where the subset $S$ is the set of negative values taken by $\sigma$.
Furthermore, we then have that $\ell_B(\sigma)= \inv(\sigma) +\negg(\sigma) + \nsp(\sigma)=
\inv(w) +\negg(z) + \nsp(z)=\inv(w) + |S| + \sum_{s \in S} (s-1)
=\inv(w) + \sum_{s \in S} s$.
We therefore have that
\begin{eqnarray*}
    \sum_{\sigma \in B_m} x^{3 \ell_B(\sigma)- \negg(\sigma)}=
    \sum_{w \in S_m} \sum_{S \subseteq [m]} x^{3 \inv(w) -|S| +3 \sum_{s \in S} s }=
    \sum_{w \in S_m} x^{3 \inv(w)}
    \sum_{S \subseteq [m]} x^{\sum_{s \in S} (3s-1) }
\end{eqnarray*}
and the first equation follows. For the second formula we have, using Proposition \ref{Bposets-odd} and \eqref{def-rankB-odd},
\begin{eqnarray*}
\sum_{u \in \W(B_{2m+1})} x^{\ell_{\W}(u)}&=& \left(\sum\limits_{i=-m-1}^{-1}x^{2m-2i-1}+ \sum\limits_{i=1}^{m+1}x^{2(m-i+1)}\right)\W(B_{2m})(x, \ell_{\W}) \\
&=& [m+1]_{x^2}(1+x^{2m+1})\W(B_{2m})(x, \ell_{\W}).
\end{eqnarray*}
\end{proof}

Theorem \ref{mainB} enables us to explicitly determine the cover
relations in $\W(B_n)$ for $n$ odd.

\begin{cor} \label{coverB-odd}
	Let $u,v \in \W(B_{2m+1})$, $u=(i,\sigma,S)$, $v=(j,\tau,T)$ ($\sigma,\tau \in B_m$, $S,T \subseteq [m]$,
	$i,j \in [\pm (m+1)]$). Then $u \lhd v$ if and only if either one of the
	following conditions is satisfied:
	\begin{enumerate}
		\item $i=j$, $\sigma=\tau$, and $S \lhd T$;
		\item $j=i-1$, $\sigma=\tau$ and either $|j+\chi(j<0)| \in S$,
		$T= S \setminus \{ |j+\chi(j<0)| \}$ if $j \neq -1$, or $S=T$ if $j=-1$;
		\item $i=j$, $\sigma \lhd \tau$, $T \cap \{ |a|,|b| \}= \varnothing$
		and $S= T \cup  \{ |a|,|b| \}$, where $(a,b)_B=\tau \sigma^{-1}$.
	\end{enumerate}
\end{cor}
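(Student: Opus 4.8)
The plan is to reduce everything to the even case, Corollary~\ref{coverB-even}, together with the gradedness of $\W(B_{2m+1})$ (Theorem~\ref{mainB}) and the lifting statement Theorem~\ref{propB}, splitting according to whether $u$ and $v$ place the value $2m+1$ in the same window position, i.e.\ whether $i=j$. Since $\W(B_{2m+1})$ and $\W(B_{2m})$ are graded with rank functions $\ell_{\W}$, I use freely that $x\vartriangleleft y$ is equivalent to $x<y$ together with $\ell_{\W}(x,y)=1$.

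For the ``if'' direction I treat the three cases. In case~1, $u=\phi^{-1}(i,\tau,S)$ and $v=\phi^{-1}(i,\tau,T)$ with $S\subseteq T$, so $(i,\tau,S)\leqslant(i,\tau,T)$ in the ordinal product and hence $u\leqslant v$ by Proposition~\ref{Bposets-odd}(2); as $u^{-1}(2m+1)=v^{-1}(2m+1)$, formula~\eqref{evenodd} and Corollary~\ref{coverB-even}(1) applied to $\breve u=(\tau,S)$, $\breve v=(\tau,T)$ (with gradedness of $\W(B_{2m})$, Theorem~\ref{Bevengraded}) give $\ell_{\W}(u,v)=\ell_{\W}(\breve u,\breve v)=1$, so $u\vartriangleleft v$. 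Case~3 is the same with Corollary~\ref{coverB-even}(2) in place of~(1): the conditions on $\sigma,\tau,S,T$ in case~3 are, under $\breve{\cdot}$, exactly those defining $\breve u\vartriangleleft\breve v$ in $\W(B_{2m})$, while $u<v$ follows from $\phi^{-1}$ being order preserving (here the $B_m$--coordinates $\sigma<\tau$ already differ). For case~2 one checks directly, from the definition~\eqref{defc(v)} of $c(v)$ and the definition of $\phi$, that the element $u$ described equals $c(v)$; this is a finite verification over the three branches of~\eqref{defc(v)} and the sign of $j$, after which $u=c(v)\vartriangleleft v$ because $c(v)<v$ and $\ell_{\W}(c(v),v)=1$.

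For the ``only if'' direction suppose $u\vartriangleleft v$, so $\ell_{\W}(u,v)=1$ and, as noted in the proof of Proposition~\ref{Bposets-odd}, $v^{-1}(2m+1)\leqslant u^{-1}(2m+1)$, i.e.\ $j\leqslant i$ in the order $-(m+1)<\dots<-1<1<\dots<m+1$. If $i=j$ then $u,v$ share the position of $2m+1$, so (passing through $\W(B_{2m+1})\hookrightarrow S_{\pm(2m+1)}$ via Proposition~\ref{prop Bruhat B} and deleting the two positions holding $\pm(2m+1)$, which preserves Bruhat order by the tableau criterion, Theorem~\ref{tableau}) one gets $\breve u\leqslant\breve v$, with $\breve u\ne\breve v$ since $u\ne v$, and $\ell_{\W}(\breve u,\breve v)=1$ by~\eqref{evenodd}; thus $\breve u\vartriangleleft\breve v$ in $\W(B_{2m})$, and Corollary~\ref{coverB-even} yields exactly case~1 or case~3. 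If instead $v^{-1}(2m+1)<u^{-1}(2m+1)$, then Theorem~\ref{propB} gives $u\leqslant c(v)<v$, whence $u=c(v)$ because $u\vartriangleleft v$; reading off the $\phi$--coordinates of $c(v)$ from~\eqref{defc(v)}, which here must come from branch~1 or branch~3 (branch~2 keeps the position of $2m+1$ fixed and so is impossible in this subcase), shows $u=c(v)$ has the form of case~2, with $i$ the immediate successor of $j$ in the above order, $\sigma=\tau$, and $S$ obtained from $T$ by adjoining $|j+\chi(j<0)|$ when $j\ne-1$ and $S=T$ when $j=-1$.

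The step needing real care is the explicit identification of the $\phi$--coordinates of $c(v)$ used in case~2 (both directions): one must see how the reflection $(q,q+2)_B$ (resp.\ $(1,-1)$), with $q:=v^{-1}(2m+1)$, interacts with the even--odd pairing encoded by $\phi$ and with the deletion map $\breve{\cdot}$, which forces one to separate the sign cases $q<-1$, $q=-1$, $q>0$ and to keep track of the parity conventions built into the definition of $\phi$. Everything else is a straightforward reduction to Corollary~\ref{coverB-even} and to gradedness.
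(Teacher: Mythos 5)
Your architecture coincides with the paper's: use gradedness (Theorem \ref{mainB}) to replace ``$u\lhd v$'' by ``$u\leqslant v$ and $\ell_{\W}(u,v)=1$'', reduce to Corollary \ref{coverB-even} via $\breve{\phantom{u}}$ and \eqref{evenodd} when $i=j$, and use Theorem \ref{propB} with the coatom $c(v)$ when the position of $2m+1$ changes; your level of detail on the case-2 identification $u=c(v)$ and on the deletion step in the converse matches the paper's own ``easy to check'' and ``follows easily from \eqref{defc(v)}''. The one step that genuinely fails is your justification, in case~3 of the ``if'' direction, that $u<v$ ``follows from $\phi^{-1}$ being order preserving (here the $B_m$-coordinates $\sigma<\tau$ already differ)''. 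Proposition \ref{Bposets-odd}(2) gives order preservation of $\phi^{-1}$ only from the \emph{Cartesian} product $[\pm(m+1)]^\ast\times B_m\times\mathcal{P}([m])$, and in case~3 the third coordinates satisfy $S=T\cup\{|a|,|b|\}\supsetneq T$ (with $T\cap\{|a|,|b|\}=\varnothing$), so $(i,\sigma,S)\not\leqslant (i,\tau,T)$ in that order and the proposition yields nothing. If instead you meant the ordinal product, where a strict inequality in the $B_m$ coordinate would suffice, that is the wrong direction: it is $\phi$, not $\phi^{-1}$, that is order preserving into the ordinal product, and the example following Proposition \ref{Dposet} shows explicitly that $\phi^{-1}$ is not order preserving from the ordinal product.

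The conclusion $u<v$ is true and you already have the means to prove it: the case-3 hypotheses give, via Corollary \ref{coverB-even} (or Lemma \ref{lemmaB}), that $\breve u\lhd\breve v$, in particular $\breve u<\breve v$ in $\W(B_{2m})$, and since $u^{-1}(2m+1)=v^{-1}(2m+1)$ you can transfer this to $u\leqslant v$ by the insertion analogue of the deletion argument you use in the converse: inserting $2m+1$ and $-(2m+1)$ at the same pair of positions in both elements preserves the componentwise inequalities of Theorem \ref{tableau}, so Proposition \ref{prop Bruhat B} gives $u\leqslant v$. (The paper leaves this comparability implicit as well; its explicit work in case~3 goes instead into verifying $\ell_{\W}(u,v)=1$ by computing the changes of $\inv$, $\negg$ and $\nsp$ through Theorem \ref{incitti}, whereas your route through \eqref{evenodd} and the gradedness of $\W(B_{2m})$ is a legitimate and slightly shorter substitute.) With that repair your proof is complete and essentially parallel to the paper's.
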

\begin{proof}
	Note that, since $\W(B_{2m+1})$ is graded, and its rank function is $\ell_{\W}$, $u \lhd v$ if and only if $u \leq v$ and
	$\ell_{\W}(u,v)=1$. If either 1. or 2. hold then it is easy to check that $u \leq v$ and $\ell _W (u,v)=1$.
	
	Assume now that 3. holds. Suppose first that $\tau =\sigma (a,b)(-a,-b)$. Since $\sigma \lhd \tau$ we have from Theorem \ref{incitti} that $ab>0$. We may assume that $0<a<b$. Again by Theorem \ref{incitti} we have that $\{ k \in [a+1,b-1]: \; \sigma (a) < \sigma (k) < \sigma (b) \} = \varnothing$. This, by 3., implies that $\inv (v)- \inv (u) =4-2$, $\negg (v)=\negg (u)$, and $\nsp (v)=\nsp (u)$. Hence $\ell_{\W} (u,v)=\ell _B (u,v)-\ell _B (\sigma , \tau )=2-1=1$. Suppose now that $\tau = \sigma (c,-c)$. We may clearly assume that $c>0$. Then by Theorem \ref{incitti}
	$\{ k \in [c-1]: \; - \sigma (c) < \sigma (k) < \sigma (c) \} = \varnothing$. This, again by 3., implies that $\inv (v) - \inv (u)=-2 ( \sigma (c)-1)-1$, $\negg (v)=\negg (u)+2$, and $\nsp (v)= \nsp (u)+2 (\sigma (c)-1)+1$. Hence $\ell _{\W} (u,v)=\ell _B (u,v)- \ell _B (\sigma , \tau )=2-1=1$.
	
	Conversely, assume that  $u \lhd v$. If $j>i$ then by Theorem \ref{propB} $u \leq c(v) <v$ so $u=c(v)$. If $j=-1$ then 2. follows easily from (\ref{defc(v)}). If $j \neq -1$ then $v(\pos(v) +1)< v(\pos(v)+2)$ (else, by \eqref{defc(v)}, $v^{-1}(2m+1)=c (v)^{-1} (2m+1)$ so $i=j$, a contradiction) and 2. again follows from \eqref{defc(v)}. If $i=j$ and $\sigma = \tau $ then it follows
	easily from Proposition \ref{prop Bruhat B} and Theorem \ref{tableau} that $S \subseteq T$ and 1. follows since $u \lhd v$.
	So assume ($i=j$ and) $\sigma < \tau$.
	Then, since $i=j$, $(\sigma , S) \lhd ( \tau , T)$ in $\W(B _{2m})$ so 3. follows from Corollary 4.6.

\end{proof}
So, for example, in $\W(B_9)$ we have that
$[9,2,1,-3,-4,8,7,-6,-5] \lhd [-9,2,1,-3,-4,8,7,-6,-5]$ $ \lhd [1,2,-9,-3,-4,8,7,-6,-5] \lhd [1,2,-9,-3,-4,8,7,-5,-6] \lhd [1,2,-9,-6,-5,8,7,-4,-3]$, where
the first two covering relations are of type 2. (with $j=-1$, and $j=-2$, respectively), the third one is of type 1., and the fourth one of type 3., with $(a,b)_B=(2,4)(-2,-4)$.

The next theorem characterizes the Bruhat order on signed Wachs permutations, in the odd case. Note that it generalizes and puts in perspective the results of Proposition \ref{Bposets-odd}.
\begin{thm} \label{Bbruhat-odd}
  Let $m>0$ and $u,v\in \W(B_{2m+1})$, $u=(i,\sigma,S)$, $v=(j,\tau, T)$. Then
$u\leqslant v$ if and only if

$$\mbox{$\sigma \leqslant \tau$, $\,$ $S(u,v) \subseteq T(u,v)$, and $\,$ $j\leqslant i$ },$$
where, for $X\subseteq [m]$, $X(u,v):=X \cap \left([\min\{|i|,|j|\}-1] \cup [\max\{|i|,|j|\},m] \right) \cap F(\sigma,\tau)$, being $F(\sigma,\tau):=\{i\in [m]:\sigma(i)=\tau(i)\}$. Moreover $\ell_{\W}(u)=3\ell_B(\sigma)+|S|-\negg(\sigma)+2(m-i+1)-3\chi(i<0)$.
\end{thm}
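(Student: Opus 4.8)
The length formula is the quickest part: by the definition \eqref{def-rankB-odd} we have $\ell_{\W}(u)=\ell_B(u)-\ell_B(\sigma)$, and substituting the expression $\ell_B(u)=4\ell_B(\sigma)+|S|-\negg(\sigma)+2(m-i+1)-3\chi(i<0)$ supplied by Proposition \ref{Bposets-odd} (with $u=(i,\sigma,S)$) gives at once $\ell_{\W}(u)=3\ell_B(\sigma)+|S|-\negg(\sigma)+2(m-i+1)-3\chi(i<0)$. So the real content is the order‑theoretic equivalence, which I would prove by the same strategy used for Theorem \ref{bruhat-odd}, now with Corollary \ref{coverB-odd} playing the role of Proposition \ref{cover-odd}.

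For necessity, suppose $u\leqslant v$. That $\sigma\leqslant\tau$ follows from the fact that the projection $\W(B_{2m+1})\rightarrow B_m$, $(i,\sigma,S)\mapsto\sigma$, is order preserving — obtained exactly as Corollary \ref{cor-morph-odd} was obtained from Proposition \ref{lemma phin}, now invoking Theorem \ref{propB}; alternatively one may pass to $\W(B_{2m})$ and apply Proposition \ref{morfismo B-even}. That $j\leqslant i$ holds follows from Proposition \ref{Bposets-odd}(1) (equivalently, $u\leqslant v$ forces $v^{-1}(2m+1)\leqslant u^{-1}(2m+1)$). For the set condition $S(u,v)\subseteq T(u,v)$ I would induct on $\ell_{\W}(u,v)$: by Theorem \ref{mainB} there is $v'$ with $u\leqslant v'\lhd v$, Corollary \ref{coverB-odd} places $v'\lhd v$ in one of the three listed families, and in each case a direct computation — parallel to, but somewhat more involved than, the three cases in the proof of Theorem \ref{bruhat-odd} — relates $F(\sigma_u,\sigma_v)$, the interval $[\min\{|i|,|j|\}-1]\cup[\max\{|i|,|j|\},m]$ and the sets of $(u,v)$ to those of $(u,v')$, so that the inductive hypothesis for $(u,v')$ yields the conclusion for $(u,v)$. (Equivalently, one verifies that the relation defined by the right‑hand side is transitive, thereby reducing the whole direction to the three cover cases as in Theorem \ref{bruhat-odd}.)

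For sufficiency, suppose $\sigma\leqslant\tau$, $j\leqslant i$ and $S(u,v)\subseteq T(u,v)$. I would exhibit an explicit saturated chain from $u$ to $v$, in the spirit of the chain displayed in the proof of Theorem \ref{bruhat-odd}, in three phases. First, slide the position coordinate from $i$ down to $j$ by alternating a type‑1 move of Corollary \ref{coverB-odd} (enlarging the set coordinate by the element about to be crossed) with the corresponding type‑2 move, using the $j=-1$ subcase when the crossing passes through $\pm1$; this reaches $(j,\sigma,S^{+})$ for a suitable $S^{+}\supseteq S$, and $u\leqslant(j,\sigma,S^{+})\leqslant v$ is then checked from the hypotheses. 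Second, fix a saturated chain $\sigma=\sigma_0\lhd\sigma_1\lhd\cdots\lhd\sigma_n=\tau$ in $B_m$ and, for each step with $\sigma_k\sigma_{k-1}^{-1}=(a_k,b_k)_B$, perform a type‑1 move adjusting the set by $\{|a_k|,|b_k|\}$ followed by a type‑3 move; since $\{|a_1|,|b_1|,\dots,|a_n|,|b_n|\}=[m]\setminus F(\sigma,\tau)$, this brings the $B_m$‑coordinate to $\tau$ and the set coordinate inside $T$. Third, a final chain of type‑1 moves raises the set coordinate to $T$; here the hypothesis $S(u,v)\subseteq T(u,v)$ is exactly what guarantees that every intermediate element remains $\leqslant v$. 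Throughout one must check that each displayed step is a genuine cover ($\ell_{\W}$ increases by $1$) and a genuine comparison, and that all intermediate permutations stay in $\W(B_{2m+1})$; these are routine given Corollary \ref{coverB-odd}.

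I expect the sufficiency chain to be the main obstacle, exactly as the analogous chain was the crux of Theorem \ref{bruhat-odd}. The new complications relative to type $A$ are the passage of the position coordinate through $\pm1$ (the special $j=-1$ cover of Corollary \ref{coverB-odd}); the sign‑changing reflections $(c,-c)_B$ that can occur in a saturated chain in $B_m$, which alter $\negg$ and $\nsp$ and must nonetheless contribute a single unit to $\ell_{\W}$; and the need to track how the window $[\min\{|i|,|j|\}-1]\cup[\max\{|i|,|j|\},m]$ shifts as the position coordinate moves. The necessity direction, by contrast, should be a (somewhat longer) adaptation of the case analysis already carried out for Theorem \ref{bruhat-odd}.
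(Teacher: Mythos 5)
Your overall strategy is the paper's: the length formula from Proposition \ref{Bposets-odd} and the definition of $\ell_{\W}$; necessity by reducing to covers (the paper literally says ``we may assume $u\vartriangleleft v$'' and then runs through the three cases of Corollary \ref{coverB-odd}, computing $S(u,v)$ and $T(u,v)$ in each); sufficiency by an explicit chain that first slides the position coordinate from $i$ down to $j$. The one real difference is how sufficiency is finished: after the slide the paper does \emph{not} redo your phases 2--3 (the $B_m$-chain with type-3 covers and the final Boolean climb); it stops at $(j,\sigma,S\setminus[\,\cdot\,])$ and invokes Proposition \ref{cor Bruhat B}, using that for a fixed position coordinate $j$ the induced order is that of $\W(B_{2m})$. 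Your explicit phases 2--3 would also work (they are exactly the type~$A$ chain of Theorem \ref{bruhat-odd}, and Corollary \ref{coverB-odd} covers the sign-changing reflections $(c,-c)_B$ you worry about), but they cost extra case-checking that the even-case result already packages; also note that only the inclusion $[m]\setminus F(\sigma,\tau)\subseteq\{|a_1|,|b_1|,\ldots,|a_n|,|b_n|\}$ is true and needed, not the equality you state.

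One slip to fix in phase 1: the slide does \emph{not} end at $(j,\sigma,S^{+})$ with $S^{+}\supseteq S$. Each type-2 cover requires the crossed index to lie in the current set and then \emph{removes} it, so the mechanism you describe (add the element about to be crossed, then apply the type-2 move) ends at $(j,\sigma,S\setminus W)$, where $W$ is the crossed window (the paper's chains end at $(j,\sigma,S\setminus[|i|,|j|-1])$, etc.). This shrinking is essential: the hypothesis $S(u,v)\subseteq T(u,v)$ says nothing about $S$ on $[\min\{|i|,|j|\},\max\{|i|,|j|\}-1]$, so if the endpoint really had set $S^{+}\supseteq S$ you would be forced to prove $S\cap F(\sigma,\tau)\subseteq T$ outright, which is false in general; with $S\setminus W$ the final comparison needs only $(S\setminus W)\cap F(\sigma,\tau)\subseteq T$, which is exactly what the hypothesis gives. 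Relatedly, intermediate elements of the chain need not (and do not) stay below $v$; only the last element must be $\leqslant v$.
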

\begin{proof}
  Let $u\leqslant v$.
  We may assume $u\vartriangleleft v$. It is clear from Corollary \ref{coverB-odd} that $\sigma \leqslant \tau$ and $j \leqslant i$. Furthermore in case 1 of Corollary \ref{coverB-odd} and in case 2 with $j=-1$ we have that $S \subseteq T$ and then $S(u,v) \subseteq T(u,v)$.
  In case 2 with $j\neq -1$ we have that $i$ and $j$ have the same sign and $F(\sigma,\tau)=[m]$;
  hence $T=S\setminus \{j\}$ if $i>0$, so $T(u,v)=\left(S\setminus \{j\}\right) \cap \left([j-1] \cup [j+1,m]\right)=S(u,v)$. If $i<0$
  then $T=S\setminus \{|i|\}$ and $T(u,v)=\left(S\setminus \{|i|\}\right) \cap \left([|i|-1] \cup [|i|+1,m]\right)=S(u,v)$.
  Finally in case 3 we have that $F(\sigma,\tau)=[m]\setminus \{|a|,|b|\}$;
  since $S=T \cup \{|a|,|b|\}$ we obtain $T(u,v)=S(u,v)$.

 Now let $\sigma \leqslant \tau$, $j\leqslant i$ and $S(u,v) \subseteq T(u,v)$. Assume $j\leqslant i<0$.
 We then have the following chain in $(\W(B_{2m+1}),\leqslant)$:
\begin{eqnarray*}
  (i,\sigma, S) &\leqslant& (i,\sigma, S\cup [|i|,|j|-1]) \\
  &\vartriangleleft& (i-1,\sigma, (S\cup [|i|+1,|j|-1])\setminus \{|i|\}) \\
  &\vartriangleleft& (i-2,\sigma, (S\cup [|i|+2,|j|-1])\setminus \{|i|,|i|+1\}) \\
&\vartriangleleft ...\vartriangleleft& (j,\sigma, S \setminus [|i|,|j|-1])
\leqslant (j,\tau,T),
 \end{eqnarray*} where the last inequality follows from Proposition \ref{cor Bruhat B} and the facts that $\left(S \setminus [|i|,|j|-1]\right)\cap F(\sigma,\tau)= S(u,v) \subseteq T(u,v) \subseteq T$ and $(j,\sigma,S \setminus [|i|,|j|-1]) \leqslant (j,\tau, T)$ if and only if $(\sigma,S \setminus [|i|,|j|-1]) \leqslant (\tau,T)$.
 If $j<0<i$ and $|j|<i$
we have the following chain in $(\W(B_{2m+1}),\leqslant)$:
\begin{eqnarray*}
  (i,\sigma, S) &\leqslant& (i,\sigma, S\cup [|j|,i-1]) \\
  &\vartriangleleft& (i-1,\sigma, (S\cup [|j|,i-2])\setminus \{i-1\}) \\
  &\vartriangleleft& (i-2,\sigma, (S\cup [|j|,i-3])\setminus \{i-2,i-1\}) \\
&\vartriangleleft ...\vartriangleleft& (-j,\sigma, S \setminus [|j|,i-1])
\leqslant (j,\sigma,  S \setminus [|j|,i-1]) \leqslant (j,\tau,T),
 \end{eqnarray*} where the last inequality follows as in the previous case.
 If  $j<0<i$ and $|j|>i$ have that $(i,\sigma, S) \leqslant (-i,\sigma,S) \leqslant (j,\tau,T)$, where the second inequality follows by the first case above. The case $i>j>0$ is similar and easier, so we omit it.
 The length formula follows from Proposition \ref{Bposets-odd}.
\end{proof}

We illustrate the previous theorem with an example. Let $m=4$,
$u=[3,4,-5,-6,1,2,9,-7,-8]$ and $v=[-3,-4,-9,1,2,-5,-6,-8,-7]$.
Then $u=(4,[2,-3,1,-4], \{ 2,4 \})$, and $v=(-2,[-2,1,-3,-4],$
$\{ 1,3 \})$,
so $\sigma= [2,-3,1,-4] \leq [-2,1,-3,-4] = \tau$, $F(\sigma, \tau)=\{ 4 \}$, $j=-2 \leq 4=i$, and $S(u,v)= \{ 2,4 \} \cap \{ 1,4 \} \cap \{ 4 \} = \{ 4 \}$, $T(u,v)= \{ 1,3 \} \cap \{  1,4 \} \cap \{ 4 \}= \emptyset$,
so $u \not \leq v$ in $\W(B_9)$.

The following lemma is the analogue of Lemma \ref{lemma unione} and
can be easily deduced from Theorem \ref{Bbruhat-odd} so we omit its verification.
\begin{lem} \label{Blemma unione}
Let $m>0$ and $u,v\in \W(B_{2m+1})$. If $u\leqslant (i,\sigma, S_1) \leqslant v$ and $u\leqslant (i,\sigma, S_2) \leqslant v$ in $(\W(B_{2m+1}),\leqslant)$ then
$u\leqslant (i,\sigma, S_1 \cup S_2) \leqslant v$.
\end{lem}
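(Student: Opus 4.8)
The plan is to deduce Lemma \ref{Blemma unione} directly from the characterization of Bruhat order on $\W(B_{2m+1})$ given in Theorem \ref{Bbruhat-odd}, exactly as Lemma \ref{lemma unione} was obtained from Theorem \ref{bruhat-odd} in type $A$. First I would write $w_1 := (i,\sigma,S_1)$ and $w_2 := (i,\sigma,S_2)$ and apply Theorem \ref{Bbruhat-odd} to the four given inequalities $u \leqslant w_1$, $w_1 \leqslant v$, $u \leqslant w_2$, $w_2 \leqslant v$, extracting in each case the three conditions on the Coxeter-theoretic component, on the sets, and on the position index.

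The key observation is that the component and position conditions involve only $\sigma$ (and $i$), which is the same for $w_1$, $w_2$, and $w_1\cup\sigma$-data $(i,\sigma,S_1\cup S_2)$, so these transfer verbatim. Concretely, writing $u=(p,\rho,P)$ and $v=(q,\tau,T)$, from $u\leqslant w_1$ we get $\rho\leqslant\sigma$ and $i\leqslant p$, and from $w_1\leqslant v$ we get $\sigma\leqslant\tau$ and $q\leqslant i$; none of these changes if we replace $S_1$ by $S_1\cup S_2$. Thus it remains only to verify the set-containment conditions, namely $P(u,(i,\sigma,S_1\cup S_2))\subseteq (S_1\cup S_2)(u,(i,\sigma,S_1\cup S_2))$ and $(S_1\cup S_2)(w,v)\subseteq T(w,v)$ where $w:=(i,\sigma,S_1\cup S_2)$. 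The crucial point is that in the definition $X(u',v'):=X\cap\big([\min\{|i'|,|j'|\}-1]\cup[\max\{|i'|,|j'|\},m]\big)\cap F(\rho',\tau')$, the window $\big([\min-1]\cup[\max,m]\big)$ depends only on the two position indices and the set $F$ depends only on the two Coxeter-group components, so it is one and the same subset $A\subseteq[m]$ in the "lower" comparisons $u\leqslant w_1$, $u\leqslant w_2$, $u\leqslant w$, and one and the same subset $A'\subseteq[m]$ in the "upper" comparisons. Hence the lower condition reads $P\cap A\subseteq S_1\cap A$ and $P\cap A\subseteq S_2\cap A$, which together give $P\cap A\subseteq (S_1\cup S_2)\cap A$; similarly the upper condition reads $S_1\cap A'\subseteq T\cap A'$ and $S_2\cap A'\subseteq T\cap A'$, giving $(S_1\cup S_2)\cap A'\subseteq T\cap A'$. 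This is just the distributivity of intersection over union together with monotonicity of union, so both required containments hold and Theorem \ref{Bbruhat-odd} yields $u\leqslant w\leqslant v$.

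There is essentially no obstacle here: the only thing to be careful about is that $\min\{|i|,|j|\}$ and $\max\{|i|,|j|\}$, and likewise $F(\sigma,\tau)$ and $F(\rho,\sigma)$, genuinely do not depend on the third coordinate, which is immediate from their definitions. One should also note that the conditions $i\leqslant p$ and $q\leqslant i$ already force $p$, $q$, $i$ (hence all the windows) to be consistent, so no degenerate cases arise. I would therefore present the argument in three or four lines: invoke Theorem \ref{Bbruhat-odd} on the hypotheses, observe the component and position conditions are unchanged, and close with the elementary set identity $P\cap A\subseteq S_1\cap A$ and $P\cap A\subseteq S_2\cap A$ imply $P\cap A\subseteq(S_1\cup S_2)\cap A$ (and its dual), then re-apply Theorem \ref{Bbruhat-odd}. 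This mirrors precisely the (omitted) proof of Lemma \ref{lemma unione}, which is why the authors also omit the verification.
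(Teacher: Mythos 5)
Your proposal is correct and is exactly the route the paper intends: the authors omit the verification precisely because, as you do, one applies the characterization of Theorem \ref{Bbruhat-odd} to all four hypotheses, notes that the position and $B_m$-component conditions are unaffected by changing the third coordinate, and uses that the window $[\min\{|i'|,|j'|\}-1]\cup[\max\{|i'|,|j'|\},m]$ and the set $F$ depend only on the first two coordinates, so the set conditions reduce to $P\cap A\subseteq S_1\cap A\subseteq (S_1\cup S_2)\cap A$ and $(S_1\cup S_2)\cap A'=(S_1\cap A')\cup(S_2\cap A')\subseteq T\cap A'$. Nothing further is needed.
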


The next result gives an explicit expression for the M\"obius function of lower intervals in the poset of signed Wachs permutations partially ordered by Bruhat order, and shows,
in particular, that it has values in $\{ 0,1,-1 \}$. The proof is similar to the one of Proposition \ref{moebius-odd} and we omit it.
\begin{pro} \label{moebius-B}
  Let $m>0$, and $v=(j,\tau,T) \in \W(B_{2m+1})$.
Then
$$ \mu(e,v)=\left\{
              \begin{array}{ll}
                (-1)^{|T|}, & \hbox{if $\tau =e$ and $j=m+1$;} \\
                0, & \hbox{otherwise.}
              \end{array}
            \right.
$$
In particular, if $v=(\tau,T) \in \W(B_{2m})$ then
$$ \mu(e,v)=\left\{
              \begin{array}{ll}
                (-1)^{|T|}, & \hbox{if $\tau =e$;} \\
                0, & \hbox{otherwise.}
              \end{array}
            \right.
$$
\end{pro}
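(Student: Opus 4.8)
The plan is to argue by induction on $\ell_{\mathcal{W}}(v)$, imitating verbatim the proof of Proposition \ref{moebius-odd}, with Theorem \ref{Bbruhat-odd}, Corollary \ref{coverB-odd}, and Lemma \ref{Blemma unione} playing the roles of Theorem \ref{bruhat-odd}, Proposition \ref{cover-odd}, and Lemma \ref{lemma unione}, respectively.

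First I would dispose of the case $\tau=e$ and $j=m+1$. By Theorem \ref{Bbruhat-odd}, an element $u=(i,\sigma,S)$ satisfies $u\leqslant v=(m+1,e,T)$ if and only if $\sigma=e$, $i=m+1$ (forced by $j\leqslant i$), and $S\subseteq T$ (one checks that $X(u,v)=X$ in this configuration, since then $F(\sigma,\tau)=[m]$ and the bracketed index set is $[m]$). Hence $[e,v]$ is isomorphic to the Boolean algebra $\mathcal{P}(T)$ via $S\mapsto(m+1,e,S)$, and so $\mu(e,v)=(-1)^{|T|}$.

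Next, assuming $\tau\neq e$ or $j<m+1$, I would first record, by a short inspection of Corollary \ref{coverB-odd} applied with $u=e=(m+1,e,\varnothing)$, that the atoms of $\W(B_{2m+1})$ are exactly the elements $(m+1,e,\{k\})$, $k\in[m]$ (conditions 2 and 3 of that corollary are incompatible with $u=e$). Since $\W(B_{2m+1})$ is graded with rank function $\ell_{\mathcal{W}}$ (Theorem \ref{mainB}), this gives $\ell_{\mathcal{W}}(v)\geqslant 2$ and, more importantly, that some atom $(m+1,e,\{k\})$ lies below $v$. By Lemma \ref{Blemma unione} the set $\{U\subseteq[m]:(m+1,e,U)\leqslant v\}$ is closed under union, hence has a largest member $R$; using Theorem \ref{Bbruhat-odd} (which gives $(m+1,e,U)\leqslant(m+1,e,R)$ iff $U\subseteq R$, and which also forces any element $\leqslant(m+1,e,R)$ to have the form $(m+1,e,U)$) together with transitivity, one identifies $[e,v]\cap\{(k,\rho,U):\rho=e,\,k=m+1\}$ with $[e,(m+1,e,R)]$, a Boolean algebra on the set $R$; moreover $R\neq\varnothing$ (it contains some $\{k\}$ as above) and $(m+1,e,R)<v$ (else $v$ falls in the previous case). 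Splitting the defining recursion,
\begin{eqnarray*}
\mu(e,v) &=& -\sum_{x\in[e,v)}\mu(e,x)\\
&=& -\sum_{x\in[e,(m+1,e,R)]}\mu(e,x)-\sum_{x\in[e,v)\setminus[e,(m+1,e,R)]}\mu(e,x),
\end{eqnarray*}
the first sum equals $\sum_{U\subseteq R}(-1)^{|U|}=0$ because $R\neq\varnothing$ (each such $x$ falls in the Boolean case just treated, so $\mu(e,x)=(-1)^{|U|}$), while every $x=(k,\rho,U)$ in the second sum has $\rho\neq e$ or $k\neq m+1$ and $\ell_{\mathcal{W}}(x)<\ell_{\mathcal{W}}(v)$, so $\mu(e,x)=0$ by the inductive hypothesis. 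Hence $\mu(e,v)=0$.

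For the even case I would observe that $\{(i,\sigma,S)\in\W(B_{2m+1}):i=m+1\}$ is a lower set of $\W(B_{2m+1})$, isomorphic as a poset to $\W(B_{2m})$ via $(\sigma,S)\mapsto(m+1,\sigma,S)$ (this is exactly the identification used at the end of the proof of Theorem \ref{Bbruhat-odd}); since lower intervals are computed the same way inside a lower set and inside the ambient poset, the even formula is just the $j=m+1$ specialization of the odd one. The only step that needs genuine care — and hence the main obstacle — is the bookkeeping with Theorem \ref{Bbruhat-odd} needed for the two Boolean-algebra identifications (the interval $[e,(m+1,e,T)]$, and the sub-poset of elements $(m+1,e,U)$ below a general $v$); past that, the argument is the routine alternating-sum cancellation.
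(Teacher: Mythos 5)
Your proposal is correct and follows essentially the same route the paper intends: the paper omits the proof of Proposition \ref{moebius-B}, stating it is analogous to that of Proposition \ref{moebius-odd}, which is exactly the induction on $\ell_{\mathcal{W}}(v)$ you carry out, with the Boolean interval $[e,(m+1,e,T)]$, the maximal $R$ obtained from Lemma \ref{Blemma unione}, and the splitting of the M\"obius recursion. Your treatment of the even case via the lower-set embedding $(\sigma,S)\mapsto(m+1,\sigma,S)$ is also the identification the paper uses.
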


By Proposition \ref{moebius-B} and Theorem
\ref{mainB} we deduce the following result.
\begin{cor}
\label{charpolyB}
  The characteristic polynomial of $\W(B_n)$ with the Bruhat order  is
$$
(x-1)^{\left\lfloor\frac{n}{2}\right\rfloor}x^{n^2-\left\lfloor\frac{n}{2}\right\rfloor^2-\left\lfloor\frac{n}{2}\right\rfloor},
$$
for all $n\in \mathbb{P}$.
\end{cor}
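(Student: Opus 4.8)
The plan is to plug everything into the definition of the characteristic polynomial and let the Möbius function computation do the work. Recall that $C_{\W(B_n)}(x)=\sum_{z\in\W(B_n)}\mu(\hat{0},z)\,x^{\rho(z,\hat{1})}$, where $\hat{0}=e$ and $\hat{1}=w_0(B_n)$. By Theorem~\ref{mainB} the poset $\W(B_n)$ is graded with rank function $\ell_{\W}$ and total rank $n^2-\lfloor n/2\rfloor^2$, so $\rho(z,\hat{1})=\ell_{\W}(w_0(B_n))-\ell_{\W}(z)$ for every $z$. By Proposition~\ref{moebius-B} almost all terms vanish: writing $m:=\lfloor n/2\rfloor$, the surviving $z$ are, in the even case $n=2m$, the elements $(e,T)$ with $T\subseteq[m]$, and in the odd case $n=2m+1$, the elements $(m+1,e,T)$ with $T\subseteq[m]$; in both cases $\mu(e,z)=(-1)^{|T|}$.

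The only input still needed is the value of $\ell_{\W}$ on these special elements. First I would read off from the length formulas (Proposition~\ref{Bposets} in the even case, Theorem~\ref{Bbruhat-odd} or Proposition~\ref{Bposets-odd} in the odd case) that $\ell_{\W}(e,T)=3\ell_B(e)+|T|-\negg(e)=|T|$ and $\ell_{\W}(m+1,e,T)=|T|$, noting that in the odd case the extra term $2(m-(m+1)+1)-3\chi(m+1<0)$ is zero. Hence every surviving term is indexed by a subset $T\subseteq[m]$, contributes sign $(-1)^{|T|}$, and sits at co-rank $\rho(z,\hat{1})=\bigl(n^2-m^2\bigr)-|T|$.

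It then remains to evaluate the sum, which collapses by the binomial theorem:
$$
C_{\W(B_n)}(x)=\sum_{T\subseteq[m]}(-1)^{|T|}x^{(n^2-m^2)-|T|}
=x^{\,n^2-m^2}\Bigl(1-\tfrac{1}{x}\Bigr)^{m}
=(x-1)^{m}\,x^{\,n^2-m^2-m},
$$
and substituting $m=\lfloor n/2\rfloor$ yields precisely the asserted expression $(x-1)^{\lfloor n/2\rfloor}x^{\,n^2-\lfloor n/2\rfloor^2-\lfloor n/2\rfloor}$. I do not expect any real obstacle here: the substance is entirely contained in Proposition~\ref{moebius-B} and Theorem~\ref{mainB}, and the only point demanding a moment's care is confirming that the distinguished elements $(e,T)$ and $(m+1,e,T)$ have $\ell_{\W}$-value exactly $|T|$, which is immediate from the explicit rank formulas already established.
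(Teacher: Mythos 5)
Your proposal is correct and follows exactly the route the paper intends: the paper's proof is simply the citation of Proposition \ref{moebius-B} and Theorem \ref{mainB}, and your computation (identifying the surviving elements $(e,T)$, resp. $(m+1,e,T)$, checking via the length formulas that they have $\ell_{\W}$-value $|T|$, and summing the resulting binomial expansion) is precisely the intended expansion of that citation. No gaps.
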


\begin{figure} \begin{center}\begin{tikzpicture}

\matrix (a) [matrix of math nodes, column sep=0.3cm, row sep=0.6cm]{
                         &                      & \displaystyle[-1,-2,-3] &                         &        \\
                         &                      & \displaystyle[-2,-1,-3] &                         &        \\
                         &\displaystyle[2,1,-3] &                         & \displaystyle[-3,-1,-2] &        \\
   \displaystyle[1,2,-3] &                      & \displaystyle[-3,-2,-1] &                         &  \displaystyle[3,-1,-2]      \\
                         & \displaystyle[-3,2,1]&                         & \displaystyle[3,-2,-1]  &        \\
   \displaystyle[-3,1,2] &                      & \displaystyle[3,2,1]    &                         &  \displaystyle[-1,-2,3]      \\
                         & \displaystyle[3,1,2] &                         & \displaystyle[-2,-1,3]  &        \\
                         &                      & \displaystyle[2,1,3]    &                         &        \\
                         &                      & \displaystyle[1,2,3]    &                         &        \\};
\foreach \i/\j in {1-3/2-3, 2-3/3-2, 2-3/3-4, 3-2/4-1, 3-4/4-3, 3-4/4-5, 4-1/5-2, 4-3/5-2, 4-3/5-4, 4-5/5-4,%
5-2/6-1,5-2/6-3, 5-4/6-3, 5-4/6-5, 6-1/7-2, 6-3/7-2, 6-5/7-4, 7-2/8-3, 7-4/8-3, 8-3/9-3}
    \draw (a-\i) -- (a-\j);
\end{tikzpicture} \caption{Hasse diagram of $(\W(B_3),\leqslant)$.} \label{fig-D5} \end{center} \end{figure}

\begin{figure}\begin{center} \begin{tikzpicture}

\matrix (a) [matrix of math nodes, column sep=0.05cm, row sep=0.5cm]{
&  &   &33 &  & 32&       & 31    &       & 30    &       &  & \\
&  & 34  &   &16&   & 15    &       & 14    &       & 29    &  & \\
& 35 &   &17 &  & 5 &       & 4     &       & 13    &       &28& \\
\textbf{36}&  &\textbf{18} &   & \textbf{6} &       &       &       & \textbf{3}     &       & \textbf{12}    &  & \textbf{27} \\
&19&   &7  &  & \textbf{1}     &       & \textbf{2}     &       & 11    &       &26& \\
&  &20 &   & \textbf{8}&       & 9     &       & \textbf{10}    &       & 25    &  & \\
&  &   &\textbf{21} &  & 22    &       & 23    &       & \textbf{24}   &       &  & \\};

\foreach \i/\j in {5-6/5-8, 5-8/4-9, 4-9/3-8,3-8/3-6, 3-6/4-5, 4-5/5-4,%
5-4/6-5, 6-5/6-7, 6-7/6-9, 6-9/5-10, 5-10/4-11, 4-11/3-10, 3-10/2-9, 2-9/2-7, 2-7/2-5, 2-5/3-4,%
3-4/4-3, 4-3/5-2, 5-2/6-3, 6-3/7-4, 7-4/7-6, 7-6/7-8, 7-8/7-10, 7-10/6-11, 6-11/5-12, 5-12/4-13,%
4-13/3-12, 3-12/2-11, 2-11/1-10, 1-10/1-8, 1-8/1-6, 1-6/1-4, 1-4/2-3, 2-3/3-2, 3-2/4-1}
\draw (a-\i) -- (a-\j);

\foreach \i/\j in {4-1/4-3, 4-3/4-5, 4-9/4-11,4-11/4-13, 5-6/6-5, 6-5/7-4, 5-8/6-9, 6-9/7-10}
\draw[dashed] (a-\i)-- (a-\j);
\end{tikzpicture}\caption{Reading clockwise, the bold numbers on the diagonals of the hexagon are the sequences $\{3, 12, 27, \ldots\}=\{\rk(\W(B_{2m}))\}_{m>0}$, $\{2\rk(\W(S_{2m}))\}_{m>0}$, $\{\rk(\W(B_{2m+1}))\}_{m \geqslant 0}$
and $\{2\rk(\W(S_{2m+1}))\}_{m>0}$.} \end{center} \end{figure}

\section{Weak orders on Wachs permutations and signed Wachs permutations}

In the previous sections we have proved several results concerning the Bruhat order on Wachs permutations and signed Wachs permutations.
The Bruhat order of a Coxeter group is a refinement of two fundamental orders, the \emph{left weak order} $\leqslant_L$ and the right one $\leqslant_R$  (see, e.g. \cite[Chapter 3]{BB}), whose Hasse diagrams are isomorphic to the Cayley graph of the group, relative to the considered Coxeter presentation. Then it is natural to ask
when our results hold, and in what terms, for the left and right weak orders on Wachs permutations and signed Wachs permutations.

Differently from the Bruhat order, for the right weak order the answer is easily described as a Cartesian product (compare with Propositions \ref{Bposets} and \ref{Bposets-odd}). For reasons that will become clear
in the next two pages we begin with signed Wachs permutations.
Recall that the set of reflections of $B_n$ is $\{(a,b)(-a,-b):1\leqslant a < |b|\leqslant n\} \cup \{(a,-a): 1 \leqslant a \leqslant n\}$. Therefore,
if $v\in B_n$,
$(a,b)(-a,-b) \in T_L(v)$ if and only if $b>0$ and $b$ is to the left of $a$
in the complete notation of $v$, or $b<0$ and $b$ is to the right of $a$, while $(a,-a)\in T_L(v)$ if and only if $a$ is to the left of $-a$.

\begin{thm}
\label{weak-B}
Let $n>0$; then $(\W(B_n),\leqslant_R) \simeq \left(B_{\left \lceil \frac{n}{2} \right\rceil },\leqslant_R\right) \times \mathcal{P}\left(\left[\left\lfloor \frac{n}{2} \right\rfloor \right]\right)$. In particular, $(\W(B_n),\leqslant_R)$ is a complemented lattice.
\end{thm}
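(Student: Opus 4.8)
The plan is to use the left‑inversion characterization of the right weak order (Proposition \ref{weak-order}): $u\leqslant_R v$ iff $T_L(u)\subseteq T_L(v)$, and to show that for signed Wachs permutations the inversion set $T_L$ splits off a Boolean ``block‑flip'' part from a ``block‑permutation'' part. I would treat the even case $n=2m$ first, where $\W(B_{2m})=G_I$ for the independent set $I:=\{s^B_1,s^B_3,\dots,s^B_{2m-1}\}$, with $W_I\simeq\mathcal{P}([m])$ — a $w_I\in W_I$ being recorded by its support $I_T:=\{s^B_{2p-1}:p\in T\}$ — and $P^I(G_I)\simeq B_m$ (Proposition \ref{prop isomorfismo GI}); under $\phi$ the factor $v^I\in P^I(G_I)$ corresponds to $(\sigma,\varnothing)$ and $v_I$ to the flips $T$. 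Writing $v=v^Iv_I$ and concatenating a reduced word of $v^I$ with one of $v_I$, the usual description of the left inversions read off a reduced word gives
\[
T_L(v)=T_L(v^I)\ \sqcup\ v^I I_T (v^I)^{-1},
\]
a disjoint union in which $v^I I_T(v^I)^{-1}\subseteq I$ because $v^I\in G_I$.

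Next I would compare $u=(\rho,S)$ and $v=(\sigma,T)$. Since a Wachs permutation with no flipped block has no left inversion inside $I$, both $T_L(u^I)$ and $T_L(v^I)$ are disjoint from $I$, so $T_L(u)\subseteq T_L(v)$ splits into the two inclusions $T_L(u^I)\subseteq T_L(v^I)$ and $u^I I_S(u^I)^{-1}\subseteq v^I I_T(v^I)^{-1}$. The first is $u^I\leqslant_R v^I$; I would show this is equivalent to $\rho\leqslant_R\sigma$ in $B_m$ by checking that $\sigma\mapsto v^I$ carries reduced words of $B_m$ to reduced words of $B_{2m}$ (each generator of $B_m$ lifting to an element of $B_{2m}$ of length $4$, resp. $3$, in accordance with the length formula of Proposition \ref{Bposets}), hence is an embedding of right weak orders. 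For the second inclusion I would compute that conjugation by $v^I$ permutes $I$ according to the underlying unsigned permutation $|\sigma|\in S_m$, so $v^I I_T(v^I)^{-1}=\{s^B_{2q-1}:q\in|\sigma|(T)\}$ and the inclusion becomes $|\rho|(S)\subseteq|\sigma|(T)$. Thus the bijection $v=(\sigma,T)\mapsto(\sigma,|\sigma|(T))$ is an isomorphism $(\W(B_{2m}),\leqslant_R)\simeq(B_m,\leqslant_R)\times\mathcal{P}([m])$.

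For $n=2m+1$ the set $\W(B_{2m+1})$ is no longer a subgroup, but the value $2m+1$ is unconstrained in the Wachs condition, so $\W(B_{2m+1})$ is partitioned into translates of the copy of $\W(B_{2m})$ fixing $2m+1$ (this is the bijection $\phi$ of \S\,4). I would rerun the same left‑inversion bookkeeping while tracking the position of $2m+1$: that position together with the block‑permutation data of $\breve v$ assembles into an element of $B_{m+1}$, and the block‑flip data contributes an independent Boolean factor, yielding $(\W(B_{2m+1}),\leqslant_R)\simeq(B_{m+1},\leqslant_R)\times\mathcal{P}([m])$. For the final assertion, recall that the right weak order on a finite Coxeter group $W$ is a lattice \cite{BB}; it is moreover complemented, since for every $w$ one has $T_L(ww_0)=T\setminus T_L(w)$, so $ww_0$ is a complement of $w$ (their join has inversion set all of $T$, hence is $w_0$; their meet has empty inversion set, hence is $e$). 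Since $\mathcal{P}(X)$ is a Boolean, hence complemented, lattice, and a finite direct product of complemented lattices is a complemented lattice, $(\W(B_n),\leqslant_R)\simeq(B_{\lceil n/2\rceil},\leqslant_R)\times\mathcal{P}([\lfloor n/2\rfloor])$ is a complemented lattice.

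The hard part will be the reduced‑word and reflection bookkeeping: establishing the factorization of $T_L$ with the correct $|\sigma|$‑twist, and especially verifying that ``block permutation'' embeds $(B_m,\leqslant_R)$ (and, in the odd case, that ``position of $2m+1$ plus spine'' assembles $B_{m+1}$) as a subposet of the weak order, which is delicate precisely because these embeddings are not length‑preserving. The odd case requires a separate argument for the same reason that $\W(B_{2m+1})$ is not a group, as it was throughout \S\,4.
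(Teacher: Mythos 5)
Your skeleton follows the same strategy as the paper's proof: reduce everything to the containment criterion $T_L(u)\subseteq T_L(v)$ of Proposition \ref{weak-order}, and split the left inversion set of a signed Wachs permutation into a ``flip'' part living in $I=\{s^B_1,s^B_3,\dots\}$ and a ``spine'' part disjoint from $I$. Your packaging via $\W(B_{2m})=G_I=P^I(G_I)\ltimes W_I$, giving $T_L(v)=T_L(v^I)\sqcup v^II_T(v^I)^{-1}$ with $T_L(v^I)\cap I=\varnothing$, is correct and is a clean way to organize what the paper does by writing out $T_L(\tau,T)$ explicitly; the conjugation computation $v^II_T(v^I)^{-1}=\{s^B_{2q-1}:q\in|\sigma|(T)\}$ and the complemented-lattice argument are also fine.

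The genuine gap is the spine step, which is exactly the crux and which you defer as ``the hard part'': you need $u^I\leq_R v^I$ in $B_{2m}$ to be \emph{equivalent} to $\rho\leq_R\sigma$ in $B_m$, but the justification you offer --- that $\sigma\mapsto v^I$ carries reduced words to reduced words, ``hence is an embedding of right weak orders'' --- only gives one direction. Reduced-word preservation (which itself rests on the fact that the number of occurrences of $s_0$ in a reduced word equals $\negg$, so that lengths add as in Proposition \ref{Bposets}) shows the lift is order preserving; it does not show it is order reflecting, and order reflection is what you use when you translate the inclusion $T_L(u^I)\subseteq T_L(v^I)$ back to $T_L(\rho)\subseteq T_L(\sigma)$. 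To close this you must describe $T_L(v^I)$ explicitly: each reflection $(a,b)_B\in T_L(\sigma)$ contributes a block of three or four specified reflections of $B_{2m}$, distinct reflections contribute disjoint blocks, and these blocks are distinguishable both from $I$ and from each other by parity and sign of their entries --- this is precisely the displayed computation of $T_L(\tau,T)$ in the paper's proof, and without it the ``only if'' direction of your splitting is unproved. The same remark applies with more force to the odd case, which you only sketch: there the spine must be assembled from $\breve v$ together with the position of $2m+1$ (the paper's $\bar\tau\in B_{m+1}$), and the bookkeeping genuinely changes (reflections involving $\pm n$ occur and must be tracked separately), so ``rerun the same bookkeeping'' is a plan rather than a proof. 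In short: right architecture, but the two equivalences on which the isomorphism rests are asserted, not established.
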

\begin{proof}
  Let $n$ be even; for $v=(\tau,T)\in \W(B_n)$ we have that
  \begin{eqnarray*}
    T_L(\tau,T) &=& \biguplus\limits_{(a,-a)_B\in T_L(\tau)}\left\{(2a-1,-2a+1)_B, (2a,-2a)_B, (2a-1,-2a)_B \right\} \\
    &&\biguplus\limits_{(a,b)_B\in T_L(\tau):b>0} \{(2a-1,2b-1)_B,(2a-1,2b)_B,(2a,2b-1)_B,(2a,2b)_B\} \\
    && \biguplus\limits_{(a,b)_B\in T_L(\tau):b<-a} \{(2a-1,2b+1)_B,(2a-1,2b)_B,(2a,2b+1)_B,(2a,2b)_B\} \\
    && \uplus \left\{(v(2a),v(2a-1))_B: a \in T, \, \tau(a)>0 \right\} \\
    &&  \uplus \left\{(-v(2a-1),-v(2a))_B: a \in T, \, \tau(a)<0 \right\}.
  \end{eqnarray*}
  Note that in the first group of reflections the non-symmetric ones
  (i.e., not of the form $(k,-k)$ for some $k \in [n]$)
  are never simple and have odd first element, and even and negative second one. The only simple reflections in the second line have
  even first element and odd positive second one. The ones in the third line are never simple, have negative second element, and
  if the first one is odd and the second one even then the difference
  between the absolute values of the two is $\geq 3$ (and hence are disjoint from the non-symmetric ones in the first line). The reflections in the last two lines are always simple, and have odd first element and even and positive second one, since
  $(v(2a),v(2a-1))_B=(2 \tau(a)-1, 2 \tau(a))_B$ if $\tau(a)>0$ while
  $ (-v(2a-1),-v(2a))_B=(-2 \tau(a)-1, -2 \tau(a))_B$ if $\tau(a)<0$.

  Therefore, if $u=(\sigma,S)\in \W(B_n)$ and $v=(\tau,T)\in \W(B_n)$, we have that $T_L(u) \subseteq T_L(v)$ if and only if $T_L(\sigma) \subseteq T_L(\tau)$ and $\sigma_S \subseteq \tau_T$,
  where, for $X \subseteq [n/2]$ and $w \in B_{n/2}$, $w_X:=\left\{w(i): i\in X\right\}$.
  Indeed, if $T_L(u) \subseteq T_L(v)$ and $(a,b)_B \in T_L(\sigma)$ is such that (say) $b<-a$
  then, by the equation written at the beginning of this proof, $(2a,2b)_B \in T_L(u)$ so $(2a,2b)_B \in T_L(v)$ which implies,
  by the remarks above,
  that there is $(c,d)_B \in T_L(\tau)$ such that $c<-d$ and  $(2a,2b)_B=(2c,2d)_B$, so $(a,b)_B =(c,d)_B \in T_L(\tau)$. Similarly
  all the other cases. The converse is clear.
  Hence, by Proposition \ref{weak-order}, the map $(\tau,T) \mapsto (\tau,\tau_T)$ gives the desired poset isomorphism.

  Let $n$ be odd and $v=(j,\tau,T) \in \W(B_n)$. Then similarly
  \begin{eqnarray*}
    T_L(j,\tau,T) &=& \biguplus\limits_{(a,-a)_B\in T_L(\bar{\tau}): a\neq \lceil n/2 \rceil}\left\{(2a-1,-2a+1)_B, (2a,-2a)_B, (2a-1,-2a)_B \right\} \\
    &&\biguplus\limits_{(a,b)_B\in T_L(\bar{\tau}):0<b< \lceil n/2 \rceil} \{(2a-1,2b-1)_B,(2a-1,2b)_B,(2a,2b-1)_B,(2a,2b)_B\} \\
    && \biguplus\limits_{(a,b)_B\in T_L(\bar{\tau}):-\lceil n/2 \rceil<b<-a} \{(2a-1,2b+1)_B,(2a-1,2b)_B,(2a,2b+1)_B,(2a,2b)_B\} \\
    && \biguplus\limits_{(a,b)_B\in T_L(\bar{\tau}):b= \lceil n/2 \rceil} \{(2a-1,n)_B, (2a,n)_B\} \\
    && \biguplus\limits_{(a,b)_B\in T_L(\bar{\tau}):b=- \lceil n/2 \rceil, a\neq -b} \{(2a-1,-n)_B, (2a,-n)_B\} \\
    && \uplus  \{(n,-n)_B: (\lceil n/2 \rceil,-\lceil n/2 \rceil)_B\in T_L(\bar{\tau})\} \\
    && \uplus \left\{(2 \tau(a)-1,2\tau(a))_B: a \in T, \, \tau(a)>0 \right\}\\
    &&  \uplus \left\{(2 \tau(a)-1,2 \tau(a))_B: -a \in T, \, \tau(a)>0 \right\} \\
  \end{eqnarray*} where $$\bar{\tau}(k):= \left\{
                                                 \begin{array}{ll}
                                                   \tau(k), & \hbox{if $k<|j|$;} \\
                                                   \sgn(j)\cdot(n+1)/2, & \hbox{if $k=|j|$;} \\
                                                   \tau(k-1), & \hbox{if $k>|j|$}, \\
                                                 \end{array}
                                               \right.$$
for all $k\in \lceil n/2 \rceil$. The considerations made in the
even case apply line by line also to this case, with the added
remark that the reflections in lines 4, 5, and 6 are the only ones
with $n$ or $-n$ in the second position.
Therefore, if $u=(i,\sigma,S)\in \W(B_n)$ and $v=(j,\tau,T)\in \W(B_n)$,
we have that $T_L(u) \subseteq T_L(v)$ if and only if $T_L(\bar{\sigma}) \subseteq T_L(\bar{\tau})$ and $\sigma_S\subseteq \tau_S$, and we conclude as in the previous case. The last statement follows from the
fact that $(B_{\left \lceil \frac{n}{2} \right\rceil },\leqslant_R)$ is a complemented lattice (see, e.g., \cite[Cor. 3.2.2]{BB}).
\end{proof}
Regarding left weak order we have the following observations.
\begin{pro} Let $m>0$. The posets $(\W(B_{2m}),\leqslant_R)$ and $(\W(B_{2m}),\leqslant_L)$ are isomorphic.
\end{pro}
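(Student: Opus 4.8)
The plan is to deduce this from the classical fact that, in any Coxeter system $(W,S)$, the inversion map $w \mapsto w^{-1}$ is an isomorphism of posets $(W,\leqslant_R) \to (W,\leqslant_L)$. This is standard (see, e.g., \cite[Chapter 3]{BB}); it follows directly from the prefix/suffix descriptions of the two weak orders, since $u \leqslant_R v$ if and only if $\ell(u)+\ell(u^{-1}v)=\ell(v)$, and applying the length-preserving involution $x \mapsto x^{-1}$ rewrites this as $\ell(u^{-1})+\ell(v^{-1}u)=\ell(v^{-1})$, i.e. $u^{-1} \leqslant_L v^{-1}$. Equivalently, one can argue via reflection sets: $T_L(w^{-1})=T_R(w)$, so by Proposition \ref{weak-order} (and its left analogue) $u \leqslant_R v \iff T_L(u)\subseteq T_L(v) \iff T_R(u^{-1})\subseteq T_R(v^{-1}) \iff u^{-1}\leqslant_L v^{-1}$.

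First I would record that $\W(B_{2m})$ is a \emph{subgroup} of $B_{2m}$: this is exactly the group isomorphism $\W(B_{2m}) \simeq B_m \ltimes \mathcal{P}([m]) = S_2\wr B_m$ stated after Proposition \ref{prop isomorfismo GI}, which realizes $\W(B_{2m})$ as the group $G_I$ for the appropriate independent set $I\subseteq S_B$. In particular $\W(B_{2m})$ is closed under taking inverses, so $w\mapsto w^{-1}$ restricts to a bijection of $\W(B_{2m})$ onto itself. Then, since by definition the orders $\leqslant_R$ and $\leqslant_L$ on $\W(B_{2m})$ are the ones induced from $B_{2m}$ (as used throughout \S 5 and in Theorem \ref{weak-B}), the restriction of inversion is automatically an isomorphism $(\W(B_{2m}),\leqslant_R)\to(\W(B_{2m}),\leqslant_L)$: for $u,v\in\W(B_{2m})$ one has $u\leqslant_R v$ in $\W(B_{2m})$ iff $u\leqslant_R v$ in $B_{2m}$ iff $u^{-1}\leqslant_L v^{-1}$ in $B_{2m}$ iff $u^{-1}\leqslant_L v^{-1}$ in $\W(B_{2m})$.

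There is essentially no obstacle here beyond correctly invoking that $\W(B_{2m})$ is a group (so that inversion preserves the subset); the only point worth flagging is that this is genuinely special to the even case, since for odd $n$ the set $\W(B_n)$ is not a group and not closed under inversion, so the same proof does not apply and, in fact, the analogous statement need not hold.
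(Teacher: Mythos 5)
Your proposal is correct and is essentially the paper's own proof: the paper also argues that $v \mapsto v^{-1}$ restricts to a bijection of $\W(B_{2m})$ onto itself and that $u \leqslant_R v$ if and only if $u^{-1} \leqslant_L v^{-1}$ in $B_{2m}$. Your explicit justification that $\W(B_{2m})$ is closed under inversion (via its subgroup structure $G_I \simeq S_2 \wr B_m$) is a point the paper leaves implicit, and your remark about the failure in the odd case is consistent with the paper's observation that $(\W(B_3),\leqslant_L)$ is not graded.
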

\begin{proof}
  The assignment $v \mapsto v^{-1}$ defines a bijective function $\W(B_{2m}) \rightarrow \W(B_{2m})$ and $u \leqslant_R v$ if and ony if $u^{-1} \leqslant_L v^{-1}$, for all $u,v\in B_{2m}$.
\end{proof} \noindent On the other hand, the poset $(\W(B_3),\leqslant_L)$ is not graded.

For Wachs permutations the situation is analogous but simpler so we leave to the interested reader the proof of the following result.
\begin{thm} \label{weak-A}
Let $n>0$; then $(\W(S_n),\leqslant_R) \simeq \left(S_{\left \lceil \frac{n}{2} \right\rceil },\leqslant_R\right) \times \mathcal{P}\left(\left[\left\lfloor \frac{n}{2} \right\rfloor \right]\right)$. In particular, $(\W(S_n),\leqslant_R)$ is a
complemented lattice.
\end{thm}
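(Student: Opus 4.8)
The plan is to mimic the proof of Theorem~\ref{weak-B}, treating the even case $n=2m$ and the odd case $n=2m+1$ separately, and in each case exhibiting the isomorphism explicitly via Proposition~\ref{weak-order}, which reduces the whole question to computing and comparing the left inversion reflection sets $T_L$. Throughout I would use the notation $w_X:=\{w(i):i\in X\}$ for $w\in S_k$ and $X\subseteq[k]$, and recall that for $a<b$ one has $(a,b)\in T_L(w)$ if and only if $b$ lies to the left of $a$ in the one-line notation of $w$.

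\emph{Even case.} For $v=\phi_{2m}^{-1}(\sigma,T)\in\W(S_{2m})$ I would read off from the explicit window of $\phi_{2m}^{-1}$ that the value-pair $\{2k-1,2k\}$ occupies the position-pair $\{2\sigma^{-1}(k)-1,\,2\sigma^{-1}(k)\}$, in the order $(2k-1,2k)$ if $\sigma^{-1}(k)\notin T$ and in the reversed order if $\sigma^{-1}(k)\in T$. Classifying a reflection $(a,b)$ ($a<b$) by whether $\lceil a/2\rceil=\lceil b/2\rceil$ or not, one then checks that: if $k:=\lceil a/2\rceil<l:=\lceil b/2\rceil$ then $(a,b)\in T_L(v)$ if and only if $(k,l)\in T_L(\sigma)$, independently of the choice of $a,b$ within their pairs; and $(2k-1,2k)\in T_L(v)$ if and only if $k\in\sigma_T$. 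Since the two families of reflections are disjoint and each condition is insensitive to the other, this yields, for $u=(\rho,S)$ and $v=(\sigma,T)$ in $\W(S_{2m})$, that $T_L(u)\subseteq T_L(v)$ if and only if $T_L(\rho)\subseteq T_L(\sigma)$ and $\rho_S\subseteq\sigma_T$. By Proposition~\ref{weak-order} this says $u\leqslant_R v$ if and only if $\rho\leqslant_R\sigma$ in $S_m$ and $\rho_S\subseteq\sigma_T$; as $(\sigma,T)\mapsto(\sigma,\sigma_T)$ is a bijection $\W(S_{2m})\to S_m\times\mathcal{P}([m])$, it is the sought poset isomorphism onto $(S_m,\leqslant_R)\times\mathcal{P}([m])$.

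\emph{Odd case.} For $v=(i,\sigma,T)\in\W(S_{2m+1})$ one has $\pos(v)=2i-1$, the value $2m+1$ sits in position $2i-1$, and $\chi_{2m+1}(v)=\phi_{2m}^{-1}(\sigma,T)$. I would let $\bar\sigma\in S_{m+1}$ be obtained from $\sigma$ by inserting $m+1$ in position $i$, so that $\bar\sigma^{-1}(m+1)=i$ and $\bar\sigma^{-1}$ induces on $[m]$ the same linear order as $\sigma^{-1}$. Since passing from $\chi_{2m+1}(v)$ to $v$ only reinserts the singleton value $2m+1$, the pairs $\{2k-1,2k\}$ are untouched, and the same computation as above gives: for $a<b\leqslant 2m$ in distinct pairs $k=\lceil a/2\rceil<l=\lceil b/2\rceil$, $(a,b)\in T_L(v)$ iff $(k,l)\in T_L(\bar\sigma)$; for $a\leqslant 2m$, $(a,2m+1)\in T_L(v)$ iff $(\lceil a/2\rceil,m+1)\in T_L(\bar\sigma)$; and $(2k-1,2k)\in T_L(v)$ iff $k\in\sigma_T$. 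The first two families together account for all of $T_L(\bar\sigma)$ and are disjoint from the third, so for $u=(i',\rho,S)$ and $v=(i,\sigma,T)$ in $\W(S_{2m+1})$ one obtains $T_L(u)\subseteq T_L(v)$ iff $T_L(\bar\rho)\subseteq T_L(\bar\sigma)$ and $\rho_S\subseteq\sigma_T$; by Proposition~\ref{weak-order} this means $u\leqslant_R v$ iff $\bar\rho\leqslant_R\bar\sigma$ in $S_{m+1}$ and $\rho_S\subseteq\sigma_T$, and $(i,\sigma,T)\mapsto(\bar\sigma,\sigma_T)$ is the desired isomorphism onto $(S_{m+1},\leqslant_R)\times\mathcal{P}([m])$. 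Finally, $(S_{\lceil n/2\rceil},\leqslant_R)$ is a complemented lattice by \cite[Cor.~3.2.2]{BB}, $\mathcal{P}([\lfloor n/2\rfloor])$ is a Boolean algebra, and a direct product of complemented lattices is a complemented lattice, which gives the last assertion.

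The main obstacle I anticipate is purely the reflection bookkeeping: one must verify carefully that the ``between-pair'' reflections and the ``within-pair'' reflections are genuinely disjoint and that neither constrains the other, so that inclusion of the $T_L$-sets factors exactly as the conjunction of the two stated conditions, and in the odd case one must additionally track the reflections involving the singleton value $2m+1$ and see that they are absorbed into the $T_L(\bar\sigma)$ condition. These checks are routine — and, as the statement already notes, simpler than their type $B$ analogues — so the verification should go through without new ideas.
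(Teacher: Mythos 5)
Your proof is correct and follows essentially the same route as the paper's (intended) argument for this theorem, namely the type $A$ analogue of the proof of Theorem \ref{weak-B}: decompose $T_L(v)$ into the four reflections coming from each left inversion of the underlying permutation ($\bar\sigma$ in the odd case) together with the simple within-pair reflections indexed by $\sigma_T$, and conclude via Proposition \ref{weak-order} that $(\sigma,T)\mapsto(\sigma,\sigma_T)$ (resp. $(i,\sigma,T)\mapsto(\bar\sigma,\sigma_T)$) is the desired isomorphism. The bookkeeping you flag (disjointness of the two reflection families and the absorption of the reflections involving $2m+1$ into $T_L(\bar\sigma)$) checks out, so there is no gap.
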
 As in the case of signed Wachs permutations, the posets $(\W(S_{2m}),\leqslant_R)$ and $(\W(S_{2m}),\leqslant_L)$ are isomorphic, for all $m>0$. On the other hand, the poset $(\W(S_5),\leqslant_L)$ is not graded.

\section{Open problems}

In this section we collect some open problems and conjectures which arise in this work, and the evidence that we have in their favor.

We have proved in Propositions \ref{moebius-odd} and \ref{moebius-B}
that the M\"{o}bius function of lower intervals in the posets of Wachs permutations and signed Wachs permutations partially ordered by Bruhat order always
has values in $\{ 0, 1, -1 \}$. We feel that this is true in general.

\begin{con}
\label{mobiusA}
Let $n \in \PP$. Then
\[
\mu(u,v) \in \{ 0, 1, -1 \}
\]
for all $u,v \in \W(S_n)$.
\end{con}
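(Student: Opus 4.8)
The plan is to reduce the conjecture to an explicit formula for $\mu(u,v)$ and to prove that formula by combining a fibration argument with an inductive (or discrete–Morse) argument. I would begin with the even case, $n=2m$, writing $u=(\sigma,S)$, $v=(\tau,T)$ under the identification of Theorem~\ref{bruhat-odd}. That theorem exhibits $f_{2m}$ as realising $[u,v]$ as fibred over the Bruhat interval $[\sigma,\tau]$ of $S_m$: the fibre over $\rho$ is the Boolean interval $[L(\rho),M(\rho)]$ in $\mathcal{P}([m])$, where $L(\rho):=S\cap F(\sigma,\rho)$ and $M(\rho):=([m]\setminus F(\rho,\tau))\cup T$. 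The first ingredient I would establish is a fixed-point lemma for Bruhat order: if $\sigma\leq\rho\leq\tau$ in $S_m$ and $\sigma(k)=\tau(k)$, then $\rho(k)=\sigma(k)$; this follows from the tableau criterion (Theorem~\ref{tableau}) by checking that $\sigma$ and $\tau$ have the same number of values $<\sigma(k)$ among their first $k$ entries and then sandwiching $\rho$. Its consequences are that $F(\sigma,\rho)\cap F(\rho,\tau)=F(\sigma,\tau)$ (so the fibre description is consistent) and that $\rho\mapsto(\rho,L(\rho))$ and $\rho\mapsto(\rho,M(\rho))$ are order-preserving sections.

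Second, I would peel off coordinates. For $k\in F(\sigma,\tau)$ the fixed-point lemma gives $k\in F(\rho,\rho')$ for all $\rho\leq\rho'$ in $[\sigma,\tau]$, so the coordinate ``$k\in U$'' is monotone throughout $[u,v]$; hence $[u,v]$ is a direct product of a smaller interval of the same type with a two-element chain when $k\in T\setminus S$, and with a single point when $k\in S$ or $k\notin T$ (these cases cover all of $F(\sigma,\tau)$, using $S\cap F(\sigma,\tau)\subseteq T$). Taking $\mu$ of a product reduces the computation to the case $F(\sigma,\tau)=\varnothing$ (whence $\sigma<\tau$ unless $u=v$), at the cost of a sign $(-1)^{|(T\setminus S)\cap F(\sigma,\tau)|}$; in the reduced case the bottom fibre is $[S,[m]]$ and the top fibre is $[\varnothing,T]$. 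The target formula is then
\[
\mu(u,v)=\begin{cases}(-1)^{\ell(\sigma,\tau)+|(T\setminus S)\cap F(\sigma,\tau)|}, & \text{if }\ S\cup F(\sigma,\tau)=[m]\ \text{and}\ T\subseteq F(\sigma,\tau),\\ 0, & \text{otherwise,}\end{cases}
\]
which specialises correctly to the Boolean intervals of Theorem~\ref{bruhat-odd} and to Proposition~\ref{moebius-odd}.

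Third, the nonzero case of the reduced formula. When $F(\sigma,\tau)=\varnothing$, $S=[m]$ and $T=\varnothing$, the bottom and top fibres degenerate to $\{u\}$ and $\{v\}$, so $f_{2m}$ restricts to an order-preserving map $(u,v)\to[\sigma,\tau]$ landing in the open interval $(\sigma,\tau)$; for each $\rho\in(\sigma,\tau)$ the preimage $\{x\in(u,v):f_{2m}(x)\leq\rho\}$ has maximum $(\rho,[m]\setminus F(\rho,\tau))$ — here one uses the fixed-point lemma again — hence is contractible, so the poset version of Quillen's fibre lemma gives $\Delta((u,v))\simeq\Delta((\sigma,\tau))$, and therefore $\mu(u,v)=\mu_{S_m}(\sigma,\tau)=(-1)^{\ell(\sigma,\tau)}$, the Möbius function of Bruhat order on $S_m$ being $(-1)^{\ell(\sigma,\tau)}$. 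The odd case $n=2m+1$ I expect to reduce to the even one by the same scheme applied to the position coordinate of Theorem~\ref{Bbruhat-odd}: one peels this chain coordinate together with a companion Boolean coordinate, precisely as the bottom fibre of $[e,v]$ in Proposition~\ref{moebius-odd} picks up an extra free coordinate once $\pos(v)$ moves, which is what produces the ``$j=m+1$'' clause there.

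The main obstacle is the vanishing case: $F(\sigma,\tau)=\varnothing$, $\sigma<\tau$, and $S\neq[m]$ or $T\neq\varnothing$ (equivalently, the bottom or top fibre is nontrivial). The obvious sign-reversing device — toggle a coordinate $k\notin S$ (resp.\ $k\in T$) — is a fixed-point-free involution of the underlying set of $[u,v]$, but it fails to be order-preserving, because toggling $k$ reverses the monotonicity requirement exactly on those pairs $\rho\leq\rho'$ with $k\in F(\rho,\rho')$, which now genuinely occur since $k\notin F(\sigma,\tau)$. To force $\mu(u,v)=0$ one must interleave this ``vertical'' toggle with ``horizontal'' Bruhat matchings (moving $\rho$ along a cover in $S_m$ while adjusting the $S$-coordinate as in Proposition~\ref{cover-odd}); concretely I would aim to build an acyclic matching on $\Delta((u,v))$ with no critical cells, or an EL-shelling of $\W(S_n)$ refining both directions so that $\mu$ becomes $\pm$ the number of falling maximal chains, which one then shows is $0$ or $1$. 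The crux — and the reason the conjecture is still open — is exactly the compatibility of these two directions: showing that the Bruhat cancellation and the Boolean cancellation can always be organised so as not to obstruct one another.
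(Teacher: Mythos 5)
This statement is Conjecture \ref{mobiusA} of the paper: it appears in the section of open problems, is only verified computationally for $n\leq 8$ (and would follow from Conjecture \ref{mobiusB}), so there is no proof in the paper to compare yours against. Your proposal is a research plan rather than a proof, and you concede this yourself: the vanishing case --- organising the ``horizontal'' Bruhat cancellation and the ``vertical'' Boolean cancellation so that they do not obstruct each other --- is exactly what you leave open, so even on its own terms the argument does not establish the conjecture.

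Beyond that admitted gap, the foundation of your reduction is false. The ``fixed-point lemma'' --- if $\sigma\leqslant\rho\leqslant\tau$ in $S_m$ and $\sigma(k)=\tau(k)$ then $\rho(k)=\sigma(k)$ --- already fails for $m=3$: take $\sigma=123$, $\rho=213$, $\tau=321$, $k=2$; then $\sigma(2)=\tau(2)=2$ but $\rho(2)=1$, and the tableau-criterion check you invoke does not hold ($\sigma$ has one value smaller than $2$ among its first two entries, $\tau$ has none). Consequently $F(\sigma,\rho)\cap F(\rho,\tau)=F(\sigma,\tau)$ is false in general, and the monotonicity of the coordinate ``$k\in U$'' for $k\in F(\sigma,\tau)$ fails inside actual intervals: in $\W(S_6)$, with $u=(123,\{1,3\})$ and $v=(321,\{2\})$ (so $F(\sigma,\tau)=\{2\}$ and $2\in T\setminus S$), both $(123,\{1,2,3\})$ and $(213,\{3\})$ lie in $[u,v]$ and, by Theorem \ref{bruhat-odd}, $(123,\{1,2,3\})\leqslant(213,\{3\})$, yet $2$ belongs to the subset-component of the smaller element and not of the larger one. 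Hence the claimed splitting of $[u,v]$ as a product with a two-element chain, the reduction to the case $F(\sigma,\tau)=\varnothing$, and the Quillen-fibre step in the nonzero case (which reuses the same lemma to identify the maxima $(\rho,[m]\setminus F(\rho,\tau))$ of the fibres) all break down. Whether your closed formula for $\mu(u,v)$ is nonetheless correct would have to be established by a different argument; as written, the derivation is invalid and the conjecture remains open.
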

 We have verified Conjecture \ref{mobiusA} for $n \leq 8$.
\begin{con}
\label{mobiusB}
Let $n \in \PP$. Then
\[
\mu(u,v) \in \{ 0, 1, -1 \}
\]
for all $u,v \in \W(B_n)$.
\end{con}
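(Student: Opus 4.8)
The plan is to reduce everything to intervals in $\W(B_{2m})$ and then run the M\"obius recursion $\mu(u,v)=-\sum_{u\leqslant x<v}\mu(u,x)$ by induction on $\ell_{\W}(u,v)$, using the identification $\W(B_{2m})\leftrightarrow B_m\times\mathcal{P}([m])$ and the order characterization of Corollary \ref{cor Bruhat B}. Unlike in Propositions \ref{moebius-odd} and \ref{moebius-B}, one must now handle \emph{all} intervals of $\W(B_{2m})$, not only the lower ones.

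\textbf{Reduction to the even case.} Given $u=(i,\sigma,S)\leqslant v=(j,\tau,T)$ in $\W(B_{2m+1})$, I would split on whether the two position coordinates coincide. If they do, then by Theorem \ref{Bbruhat-odd} the set $X(u,v)$ collapses to $X\cap F(\sigma,\tau)$, so $[u,v]$ is order-isomorphic to an interval of $\W(B_{2m})$ and the claim follows from the even case. If they differ, I would invoke Theorem \ref{propB}: every $x<v$ with $x^{-1}(2m+1)\neq v^{-1}(2m+1)$ satisfies $x\leqslant c(v)$, so $[u,v)=[u,c(v)]\,\sqcup\,\bigl([u,v)\setminus[u,c(v)]\bigr)$ with the second part contained in the set of $x\in[u,v)$ having $x^{-1}(2m+1)=v^{-1}(2m+1)$, which Theorem \ref{Bbruhat-odd} (with equal position coordinates) identifies with an interval of $\W(B_{2m})$. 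Feeding this into the recursion and arguing as in the proof of Proposition \ref{moebius-B} should express $\mu(u,v)$ through M\"obius numbers of intervals of $\W(B_{2m})$ of smaller rank.

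\textbf{The even case.} Let $u=(\sigma,S)\leqslant v=(\tau,T)$ in $\W(B_{2m})$ and put $F:=F(\sigma,\tau)$. If $\sigma=\tau$, Corollary \ref{cor Bruhat B} gives $[u,v]=[S,T]\subseteq\mathcal{P}([m])$, a Boolean lattice, whence $\mu(u,v)=(-1)^{|T|-|S|}$. If $\sigma<\tau$, I would partition $[u,v)$ according to the $B_m$-coordinate $\rho$ of its elements; since the projection $\W(B_{2m})\to B_m$ is order preserving (Proposition \ref{morfismo B-even}), every such $\rho$ satisfies $\sigma\leqslant\rho\leqslant\tau$. The elements with $\rho=\sigma$ form, by the union-closure behind Lemma \ref{Blemma unione} (immediate from Corollary \ref{cor Bruhat B}), the Boolean interval $[u,(\sigma,R_{\max})]$ with $R_{\max}:=(T\cap F)\cup([m]\setminus F)$; their contribution to the recursion is $0$ unless $R_{\max}=S$, in which case it is $1$. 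The remaining $x=(\rho,R)$ have $\rho>\sigma$, so by induction $\mu(u,x)\in\{0,\pm1\}$, and it remains to prove that the resulting alternating sum stays in $\{0,\pm1\}$.

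\textbf{The main obstacle.} This final step is the crux, and is exactly where the $u=e$ arguments of Propositions \ref{moebius-odd} and \ref{moebius-B} break down: there the crude induction ``$\mu(e,x)=0$ unless $x$ has trivial $B_m$-component'' kills every non-Boolean summand, whereas for a general lower endpoint one can have $\mu(u,v)=\pm1$ with $\sigma<\tau$ (precisely when $R_{\max}=S$), so a bare vanishing statement is false. Closing the recursion will require a finer invariant — plausibly a closed formula for $\mu(u,v)$ in terms of $\sigma,\tau$ and of how $S$ and $T$ sit relative to $F(\sigma,\tau)$ — or a topological detour, e.g.\ showing that every interval of $\W(B_n)$ admits an EL- or CL-shelling (a recursive atom ordering) compatible with the wreath-product-plus-Bruhat structure, which would bound $|\mu(u,v)|$ by the number of falling maximal chains and, for the right labelling, pin it to $\{0,\pm1\}$. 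Identifying the correct formula, or the correct labelling, is the hard part; the reductions and bookkeeping above are modelled on arguments already present in the paper.
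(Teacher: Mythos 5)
The statement you are trying to prove is not a theorem of the paper: it is Conjecture \ref{mobiusB}, for which the paper offers no proof at all, only computational verification for $n \leq 6$ and the remark that, since $\W(S_n)$ is isomorphic as a poset to the interval $[e,[n,\ldots,2,1]]$ in $\W(B_n)$, it implies Conjecture \ref{mobiusA}. The only results actually proved in the paper are the much weaker Propositions \ref{moebius-odd} and \ref{moebius-B}, which compute $\mu(e,v)$ for \emph{lower} intervals only, and whose proofs hinge on the fact that with bottom element $e$ the induction gives $\mu(e,x)=0$ for every $x$ whose $B_m$-coordinate is nontrivial, so that the whole recursion collapses onto a single Boolean interval. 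So there is no ``paper proof'' to compare yours against.

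Your proposal, as you yourself say in the last paragraph, is not a proof either: it has a genuine gap at exactly the point that makes the statement a conjecture. The preliminary reductions are sound and consistent with the paper's machinery --- the odd-to-even reduction via Theorem \ref{propB} and Theorem \ref{Bbruhat-odd}, the observation that for $\sigma=\tau$ Corollary \ref{cor Bruhat B} makes $[u,v]$ Boolean, and the splitting of $[u,v)$ by the $B_m$-coordinate using Proposition \ref{morfismo B-even}, with the $\rho=\sigma$ slice summing to $0$ or $1$ as you describe. But for a general lower endpoint $u$ the inductive hypothesis $\mu(u,x)\in\{0,\pm 1\}$ for $\sigma<\rho$ gives no control whatsoever on the alternating sum $-\sum_{x\in[u,v)}\mu(u,x)$: unlike in the $u=e$ case there is no vanishing statement to kill the non-Boolean summands, and indeed nonzero values with $\sigma<\tau$ do occur. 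Closing this would require either a closed formula for $\mu(u,v)$ in terms of $\sigma,\tau,S,T,F(\sigma,\tau)$ or a shellability-type (EL/CL) argument for arbitrary intervals of $\W(B_n)$, neither of which is supplied here or in the paper. In short: your outline reproduces, in the general setting, the bookkeeping the paper uses for lower intervals, but the missing step is the entire content of the conjecture, which remains open.
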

We have verified Conjecture \ref{mobiusB} for $n \leq 6$.
Note that since $\W(S_n)$ is isomorphic, as a poset, to the
interval $[e,[n, \ldots ,3,2,1]]$ in $\W(B_n)$, Conjecture
\ref{mobiusB} implies Conjecture \ref{mobiusA}.

Recently Davis and Sagan \cite{sagan} studied the convex hull of various sets
of pattern avoiding permutations. Following this idea, it is natural
to look at the convex hulls $c(\W(S_n))$ and $c(\W(B_n))$ of Wachs
and signed Wachs permutations in $\RR^n$. In this respect, we feel that
the following is true.
\begin{con}
\label{simpleA}
Let $m \in \PP$. Then $c(\W(S_{2m}))$ is a simple polytope.
\end{con}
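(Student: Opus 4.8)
The plan is to show that $c(\W(S_{2m}))$ is affinely isomorphic to a Cartesian product of a permutohedron and a cube, and then to invoke the classical facts that each factor is a simple polytope and that a product of simple polytopes is simple. This approach is tailored to the even case: it is the value--based description $\W(S_{2m})=\{\sigma\in S_{2m}:|\sigma(i)-\sigma(i^{\ast})|\leq 1\}$ that produces the product structure, and since that description fails for $n$ odd, this is presumably why the conjecture is stated only for $2m$.

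First I would identify the vertex set. Since $\W(S_{2m})\subseteq S_{2m}$ and every permutation point is a vertex of the permutohedron $\mathrm{conv}(S_{2m})\supseteq c(\W(S_{2m}))$, each element of $\W(S_{2m})$ is a vertex of $c(\W(S_{2m}))$; hence its vertex set is exactly (the one--line vectors of) $\W(S_{2m})$. Next comes the key step, a linear change of coordinates. By Proposition \ref{Dposet}, if $v=\phi_{2m}^{-1}(u,T)$ with $u\in S_m$ and $T\subseteq[m]$ then, for every $i\in[m]$,
\[
v(2i-1)+v(2i)=4u(i)-1,\qquad
v(2i-1)-v(2i)=\begin{cases}+1,& i\in T,\\ -1,& i\notin T.\end{cases}
\]
The linear automorphism of $\RR^{2m}$ that replaces each coordinate pair $(x_{2i-1},x_{2i})$ by $(x_{2i-1}+x_{2i},\,x_{2i-1}-x_{2i})$ therefore carries $c(\W(S_{2m}))$ onto the convex hull of the points
\[
\bigl(\,(4u(1)-1,\dots,4u(m)-1),\ (\varepsilon_1,\dots,\varepsilon_m)\,\bigr),\qquad u\in S_m,\ \varepsilon\in\{-1,+1\}^m .
\]
As $(u,T)$ ranges over $S_m\times\mathcal{P}([m])$, the first block ranges over the $m!$ vertices of the image of the permutohedron $P_{m-1}$ under the invertible affine map $x\mapsto 4x-\mathbf 1$, while the second block ranges independently over the $2^m$ vertices of the cube $[-1,1]^m$. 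Since the convex hull of a full Cartesian product of two finite point sets is the product of their convex hulls, this gives an affine isomorphism of polytopes $c(\W(S_{2m}))\cong P_{m-1}\times[-1,1]^m$.

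Finally I would conclude by simplicity. The cube $[-1,1]^m$ is simple, and the permutohedron $P_{m-1}$ is simple: it has dimension $m-1$, its facets are indexed by the proper nonempty subsets $S\subsetneq[m]$, and a vertex $\pi$ lies on the facet indexed by $S$ precisely when $S$ is one of the $m-1$ initial segments $\{\pi(1),\dots,\pi(k)\}$ with $1\leq k\leq m-1$, so $\pi$ lies on exactly $m-1=\dim P_{m-1}$ facets. A product $P\times Q$ of simple polytopes is simple, because its facets are exactly the $F\times Q$ (for $F$ a facet of $P$) and the $P\times G$ (for $G$ a facet of $Q$), so a vertex $(v,w)$ lies on exactly $(\dim P)+(\dim Q)=\dim(P\times Q)$ of them. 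Hence $c(\W(S_{2m}))$ is a simple polytope, of dimension $2m-1$ with $2^m m!$ vertices.

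The only delicate point is the ``convex hull of a Cartesian product equals the product of convex hulls'' step: one must check that the coordinate change is a genuine linear isomorphism (so that no vertices are created or destroyed and the dimensions of the two factors add), and that the two coordinate blocks really do vary independently as $(u,T)$ runs over $S_m\times\mathcal{P}([m])$ — but this independence is precisely the content of the bijection $\phi_{2m}$ of Proposition \ref{Dposet}. Everything else is a routine appeal to the facet structure of products of simple polytopes.
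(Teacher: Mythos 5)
There is no proof in the paper to compare with: the statement is Conjecture \ref{simpleA}, supported only by computer verification for $m\leq 5$. Your argument is correct as far as I can check, and it would settle the conjecture affirmatively (and in a stronger form). The two identities $v(2i-1)+v(2i)=4u(i)-1$ and $v(2i-1)-v(2i)=\pm 1$ are exactly what the bijection $\phi_{2m}$ of Proposition \ref{Dposet} encodes; the map sending each pair $(x_{2i-1},x_{2i})$ to $(x_{2i-1}+x_{2i},\,x_{2i-1}-x_{2i})$ is an invertible linear map of $\RR^{2m}$, hence commutes with taking convex hulls and preserves the face lattice; the image of the vertex set is the \emph{full} product $A\times B$ precisely because $\phi_{2m}$ is a bijection onto $S_m\times\mathcal{P}([m])$; and $\mathrm{conv}(A\times B)=\mathrm{conv}(A)\times\mathrm{conv}(B)$ holds for finite point sets. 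So $c(\W(S_{2m}))$ is affinely isomorphic to the product of a permutohedron of dimension $m-1$ with the cube $[-1,1]^m$, and simplicity follows from the classical facts you cite. The only slip is notational: with the standard facet description $\sum_{i\in S}x_i\geq 1+\cdots+|S|$, the vertex $(\pi(1),\dots,\pi(m))$ lies on the facet indexed by $S$ if and only if $S=\pi^{-1}(\{1,\dots,k\})$ for some $k$, not $S=\{\pi(1),\dots,\pi(k)\}$; this does not affect the count of $m-1$ facets through each vertex nor the (classical) simplicity of the permutohedron. Note that your decomposition yields more than simplicity — the entire face lattice of $c(\W(S_{2m}))$ — and it also explains the even/odd dichotomy, consistent with the paper's observation that $c(\W(S_9))$ is not simple. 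Finally, the same computation in type $B$ gives $v(2i-1)+v(2i)=4\sigma(i)-\sgn(\sigma(i))$ and $v(2i-1)-v(2i)=\pm1$, so $c(\W(B_{2m}))$ is affinely isomorphic to the product of $[-1,1]^m$ with the convex hull of the $B_m$-orbit of the generic point $(3,7,\dots,4m-1)$, a simple $B_m$-permutohedron; thus your method should also settle Conjecture \ref{simpleB}.
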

We have verified Conjecture \ref{simpleA} for $m \leq 5$.
According to SageMath \cite{sagemath} $c(\W(S_9))$ is not simple.

\begin{con}
\label{simpleB}
Let $m \in \PP$. Then $c(\W(B_{2m}))$ is a simple polytope.
\end{con}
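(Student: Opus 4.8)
The plan is to realize $c(\W(B_{2m}))$, after a linear change of coordinates on $\RR^{2m}$, as a Cartesian product of two \emph{simple} polytopes, and then to invoke the elementary fact that a product of polytopes is simple exactly when both factors are.

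First I would group the coordinates of $\RR^{2m}$ into the consecutive pairs $(x_{2i-1},x_{2i})$, $i\in[m]$, and pass to the new coordinates $y_i:=x_{2i-1}+x_{2i}$ and $z_i:=x_{2i-1}-x_{2i}$. This is a linear automorphism of $\RR^{2m}$ (block diagonal with invertible $2\times 2$ blocks), hence it carries $c(\W(B_{2m}))$ to the convex hull of the images of the points $\phi^{-1}(\sigma,T)$ and preserves both the dimension and the property of being simple. Reading off the defining formulas for $\phi^{-1}(\sigma,T)$ and checking the four cases according to the sign of $\sigma(i)$ and to whether $i\in T$, one finds that the image of $\phi^{-1}(\sigma,T)$ satisfies
\[
y_i=4\sigma(i)-\sgn(\sigma(i)),\qquad z_i=2\chi(i\in T)-1,
\]
for every $i\in[m]$. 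The decisive observation is that $y_i$ depends only on $\sigma$ while $z_i$ depends only on $T$.

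It follows that, in the $(y,z)$-coordinates, the image of $\W(B_{2m})$ is exactly the Cartesian product $V\times\{-1,1\}^m$, where $V$ is the orbit of the \emph{generic} point $(3,7,11,\dots,4m-1)$ under the action of $B_m$ by signed coordinate permutations (indeed $\sigma(i)=\pm a$ gives $y_i=\pm(4a-1)$). Since taking convex hulls commutes with Cartesian products, $\operatorname{conv}(V\times\{-1,1\}^m)=\operatorname{conv}(V)\times\operatorname{conv}(\{-1,1\}^m)$, so $c(\W(B_{2m}))$ is linearly isomorphic to $P_{B_m}\times[-1,1]^m$, where $P_{B_m}:=\operatorname{conv}(V)$ is the type-$B_m$ permutohedron and $[-1,1]^m$ is the $m$-cube. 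Note that $V\times\{-1,1\}^m$ has $|B_m|\cdot 2^m=|\W(B_{2m})|$ elements, so every signed Wachs permutation is in fact a vertex and nothing is lost on passing to the hull.

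It then remains to run the product argument. The $m$-cube is simple, and the orbit polytope of a finite reflection group at a generic point is simple: its normal fan is the Coxeter fan, whose maximal cones are the (simplicial) Weyl chambers, each bounded by exactly $m$ walls; hence $P_{B_m}$ is a simple $m$-polytope. For a product $P\times Q$ the facets are precisely the sets $F\times Q$ and $P\times G$ with $F,G$ facets of $P,Q$, so a vertex $(v,w)$ lies on a number of facets equal to the number of facets of $P$ through $v$ plus the number of facets of $Q$ through $w$; when $P$ and $Q$ are simple this equals $\dim P+\dim Q=\dim(P\times Q)$, whence $P\times Q$ is simple. Therefore $c(\W(B_{2m}))\cong P_{B_m}\times[-1,1]^m$ is a simple $2m$-polytope, as claimed. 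The only step needing genuine care is the coordinate computation displayed above (the rest being formal), together with, if a self-contained treatment is wanted, the verification that $P_{B_m}$ is simple; I expect the former to be the main bookkeeping task. The very same argument, with $B_m$ replaced by $S_m$ and the orbit polytope by the ordinary permutohedron, applies verbatim to $c(\W(S_{2m}))$.
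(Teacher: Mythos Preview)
The paper does not prove this statement: it is listed as a conjecture in \S6, with only computational verification reported for $m\leqslant 3$. So there is no ``paper's own proof'' to compare against.

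Your argument, however, appears to be a genuine proof. The coordinate change $y_i=x_{2i-1}+x_{2i}$, $z_i=x_{2i-1}-x_{2i}$ does exactly what you claim: running through the four cases in the definition of $\phi^{-1}(\sigma,T)$ one finds $y_i=4\sigma(i)-\sgn(\sigma(i))$ (depending only on $\sigma$) and $z_i=2\chi(i\in T)-1$ (depending only on $T$), so the image of $\W(B_{2m})$ factors as the $B_m$-orbit of the generic point $(3,7,\ldots,4m-1)$ times $\{-1,1\}^m$. Convex hulls respect products, the $m$-cube is simple, and the $B_m$-permutohedron is simple because its normal fan is the Coxeter fan, whose maximal cones are simplicial. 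The product of simple polytopes is simple for the reason you give. Thus $c(\W(B_{2m}))$ is a simple $2m$-polytope, and, as you note, the same argument with $S_m$ in place of $B_m$ settles Conjecture~\ref{simpleA} as well. You have therefore resolved both conjectures that the authors left open; the paper itself offers no strategy beyond the SageMath checks.
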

We have verified Conjecture \ref{simpleB} for $m \leq 3$.
According to SageMath \cite{sagemath} $c(\W(B_3))$ is not simple.

Regarding left weak order, we feel that the following might be true.
\begin{prob}
\label{latticeAodd}
Is $(\W(S_{2m+1}),\leqslant_L)$ a lattice
for all $m \in \PP$?
\end{prob}
We have verified that the answer to Problem \ref{latticeAodd} is
yes if $m \leq 4$.

\noindent
{\bf Acknowledgments.}
The first named author was partially supported by the MIUR Excellence Department Project
CUP E83C18000100006.

\end{document}